\newcommand{\beq}{\begin{eqnarray*}}
\newcommand{\eeq}{\end{eqnarray*}}
\theoremstyle{plain}
\newtheorem{Thm}{Theorem}[section]
\newtheorem{Prop}[Thm]{Proposition}
\newtheorem{Lem}[Thm]{Lemma}
\newtheorem{Cor}[Thm]{Corollary}
\theoremstyle{definition}
\newtheorem{Rmk}[Thm]{Remark}
\newtheorem*{Acknow}{Acknowledgements}
\newtheorem*{Conflict}{Declaration of Interest Statement}
\newtheorem*{Dataava}{Data Availability Statement}
\newcommand{\authorfootnotes}{\renewcommand\thefootnote{\@fnsymbol\c@footnote}}%
\title[On the S-S estimates for the restricted one-center-two-body problem]
{On the Sundman-Sperling estimates for the restricted one-center-two-body problem}
\begin{document}






\thanks{2020 \em Mathematics Subject Classification\em. 70F07, 70F15, 70F16, 70G75, 70M20. \\
\indent E-mail: s9921803@m99.nthu.edu.tw,\ liulei30@email.sdu.edu.cn. \\
\indent Keywords: variational method, restricted one-center-two-body problem,  asymptotic estimates, three-body collision.}


\maketitle

\begin{center}

\normalsize
\authorfootnotes
Ku-Jung Hsu\textsuperscript{1} and
Lei Liu\textsuperscript{2}  \par
\bigskip
\textsuperscript{1} School of Mathematical Sciences, Huaqiao University \par

\textsuperscript{2}School of Mathematics, Shandong University

\end{center}


\begin{abstract}
In the past two decades, since the discovery of the figure-8 orbit by  Chenciner and Montgomery,
the variational method has became one of the most popular tools for constructing new solutions of the $N$-body problem and its extended problems.
However, finding solutions to the restricted three-body problem, in particular, the two primaries form a collision Kepler system, remains a great difficulty. One of the major reasons is the essential differences between two-body collisions and three-body collisions.

In this paper, we consider a similar three-body system with less difficulty, i.e. the restricted one-center-two-body system, that is involving a massless particle and a collision Kepler system with one body fixed. It is an intermediate system between the restricted three-body problem and the two-center problem. By an in-depth analysis of the asymptotic behavior of the minimizer, and an argument of critical and infliction points, we prove the Sundman-Sperling estimates near the three-body collision for the minimizers. With these estimates, we provide a class of collision-free solutions with prescribed boundary angles. Finally, under the extended collision Kepler system from Gordon, we constructed a family of periodic and quasi-periodic solutions.
\end{abstract}

\tableofcontents

\section{Introduction}\label{sec:int}

In 2000, Chenciner and Montgomery \cite{CM00} showed the existence of a remarkable periodic solution (called figure-eight solution) of three-body problem. Since then, inspired by this work, various new solutions of the $N$-body problems and $N$-center problems are constructed by variational methods, see \cite{Chen08,ChenYu18,FT04,FGN11,ST13,Yu16} and the references therein. The most crucial step to prove the existence of solutions for $N$-body problems through variational methods is to exclude the collisions of the minimizers.

The currently known methods are mainly the level estimates and the local deformation method. The former is to estimate the minimal action of all the collision paths, then find a test path with action lower than the previous minimal action. The latter is to locally perturb the collision paths near the colliding moments, such that their actions strictly decrease.
It is well-known that, the local deformation method is based on the asymptotic estimates of the paths near their collisions.

The asymptotic estimates of multi-body collisions have been studied by Sundman \cite{Sundman} since 1913, who provided estimates of the moment of inertia for the collision clusters. Another analogous estimates for two-body collisions was proved by Sperling \cite{Sperling69,Sperling70} in 1969, independently. Venturelli \cite{Venturelli02}, Ferrario and Terracini \cite{FT04} provided a general criterion for the Sundman estimates, which also fits the two-body collision. Unfortunately, in restricted multi-body problem, acquiring asymptotic behaviors of multi-body collisions might create more technical difficulties. To our best knowledge, there are no such estimates of multi-body collisions in the restricted multi-body problems. This might be the main reason that only few results regard the restricted multi-body problems by using variational methods, see \cite{KS22,Moeckel05,Shibayama19}.

The simplest restricted $N$-body problem is the restricted one-center-one-body problem (or the Kepler problem). It involves a fixed particle $c$ (called a \em center\em) at the origin with mass $\mu$ and a moving particle $q\in \mathbb{C}$  (called a \em primary\em) with mass $m$. The motion of $q$ is subjected to Newton's universal gravitational law:
\begin{align}\label{eqn:newton}
\ddot{q}=- \frac{\mu q}{\lvert q \rvert ^3}.
\end{align}
The solutions of \eqref{eqn:newton} are either conics or straight lines, and the latter are the only solutions with collision. According to the results in \cite{Sundman}, every solution of \eqref{eqn:newton} satisfies the Sundman-Sperling estimates near their collision. We refer some well-known applications of the Sundman-Sperling estimates to \cite{Chenciner98,FT04,KS22,TV2007,Venturelli02,Wintner41} and the references theirin. 

However, the restricted three-body problems include huge complexities, especially in the analysis of asymptotic behavior near the three-body collisions. There are several difficulties for this. Firstly, it is unclear that the massless particle satisfies Sundman-Sperling estimates. Secondly, the solutions might spin infinitely or with an oscillation near the three-body collisions. Thirdly, there are two singularities that close to one another, and it remains unclear whether such singularities in a restricted multi-body system can be regularized.

In this paper, to reduce the difficulties, we consider the Sundman-Sperling estimates of the three-body collisions in a simplified restricted three-body problems, which involves a collision Kepler system $(q,c)$ and a massless particle $z\in\mathbb{C}$. The motion of $z$ is governed by the following equation:
\begin{align}\label{eqn:1+1+1-body}
\ddot{z}=-\frac{\mu z}{\lvert z\rvert ^3}-\frac{m (z-q(t))}{\lvert z-q(t) \rvert ^3}=\frac{\partial}{\partial z}U(z,t),
\end{align}
where $U(z,t)$ is the time-dependent potential defined by
\begin{align}\label{eqn:potential}
U(z,t)= \frac{\mu}{\lvert z \rvert }+\frac{m}{\lvert z-q(t) \rvert }.
\end{align}
In fact, this is an intermediate problem between the restricted three-body problem and the Euler's problem with two fixed centers.
Since it involves the interaction of one center, one primary and one massless particle, we refer to it as \em the restricted one-center-two-body problem\em.

According to the fact (see \cite{Gordon77}) that the collision moments of the collision Kepler system $(q,c)$ are isolated, without loss of generality, for some $T>0$, we set
\begin{itemize}
\item[$(Q1)$]  $q$ collides with $c$ at moment $0$, i.e. $q(0)=0$.
\item[$(Q2)$]  $q$ doesn't collide with $c$ on $[-T,0)\cup (0,T]$, i.e. $q(t) \neq 0$ on $[-T,0)\cup (0,T]$.
\item[$(Q3)$]  $q$ lies on the negative real axis on $[-T,T]$, i.e. $q|_{[-T,T]} \subset \mathbb R^-:= (-\infty,0]$.
\end{itemize}
In this setting, we allow $|\dot q|$ to be nonzero at moment $\pm T$ so that the energy of $q$ could be even zero or positive. Moreover, one can see that the system (\ref{eqn:newton}) with conditions $(Q1) - (Q3)$ is symmetric with respect to
\begin{align}\label{eqn:symmetric}
q(t)=q(-t), \quad   \text{ for all } \  [-T,T],
\end{align}
and \eqref{eqn:1+1+1-body} is symmetric with respect to the complex conjugation. Therefore, $z(t)$, $\bar z(t)$, $z(-t)$ and $\bar z(-t)$ solve the equation \eqref{eqn:1+1+1-body} at the same time. In fact, for any $a<b$, the equation (\ref{eqn:1+1+1-body}) is the Euler-Lagrange equation of the action functional
\begin{align}\label{eqn:action0}
\mathcal A_{a,b}(z)=\int_{a}^{b}\frac{1}{2}|\dot z|^2+U(z,t)dt.
\end{align}

Consider the path space
$\Omega^{a,b}_{A,B}=\{x\subset H^1([a,b],\mathbb{C}):x(a)\in A,x(b)\in B\},$ where $A,B\subset \mathbb{C}^+=\{x+iy\in \mathbb{C}:y\geq 0\}$ are closed disjoint subsets.
If the minimizer $z(t)$ of $\mathcal{A}_{a,b}$ exists on $\Omega^{a,b}_{A,B}$, then
$z(t)$ is a weak solution of the restricted one-center-two-body problem (\ref{eqn:1+1+1-body}) for $t \in [a,b]$. We note that the minimizer $z(t)$ becomes a classical solution if it does not include any collision. Now, we introduce our main theorem as follows.

\begin{Thm}\label{main1}
Given $T>0$, a collision Kepler system $(q,c)$ satisfying \eqref{eqn:newton} and $(Q1)-(Q3)$, and an action functional $\mathcal{A}_{-T,0}$ $(\mathcal{A}_{0,T})$ as in \eqref{eqn:action0}. Assume $z(t)=r(t)e^{\theta(t)i}$ is a minimizer of $\mathcal{A}_{-T,0}$ $(\mathcal{A}_{0,T})$ on $\Omega^{-T,0}_{A,A_0}$ $(\Omega^{0,T}_{A_0,A})$ with $0\in A_0$. If $z$ admits a three-body collision, i.e. $z(0)=0$, then
\begin{itemize}
\item[$(a)$] The argument $\theta(t)$ is either a constant function $\theta(t)\equiv \{0,\pi\}$ or a  strictly decreasing function on $(-T,0)\ ($strictly increasing function on $(0,T))$. Moreover, the limit angles $\theta_*^{\pm}:= \lim_{t\rightarrow0^{\pm}}\theta(t)$ exist and $\theta_*^{\pm}\in\{0,\pi\}$.
\item[$(b)$] As $t \rightarrow 0^{\pm}$, there exists an $\alpha_*>0$ such that
\begin{equation}\label{eqn:sundman1}
\begin{aligned}
 r(t) &=  \ \alpha_* \lvert q(t) \rvert  +o( \lvert t \rvert ^{\frac{2}{3}})= \ \alpha_*\lambda_{\mu} \lvert t \rvert ^{\frac{2}{3}}+o( \lvert t \rvert ^{\frac{2}{3}}), \\
\dot r(t)  &= \ \alpha_*\frac{d}{dt} \lvert q(t) \rvert  +o( \lvert t \rvert ^{-\frac{1}{3}})= \pm\frac{2}{3}\alpha_*\lambda_{\mu} \lvert t \rvert ^{-\frac{1}{3}}+o( \lvert t \rvert ^{-\frac{1}{3}}), \\
\ddot r(t)  & = \ \alpha_*\frac{d^2}{dt^2} \lvert q(t) \rvert  +o( \lvert t \rvert ^{-\frac{4}{3}})=-\frac{2}{9}\alpha_*\lambda_{\mu} \lvert t \rvert ^{-\frac{4}{3}}+o( \lvert t \rvert ^{-\frac{4}{3}}),
\end{aligned}
\end{equation}
where $\lambda_{\mu}=(9\mu/2)^{1/3}$.
\begin{itemize}
\item[$(b_1)$] If $\theta_*^-=0$ ($\theta_*^+=0$), then $\alpha_*=\alpha_2>1$, which is the unique solution of
\begin{equation}\label{eqn:h(a,0)}
a^3-1-\frac{m}{\mu}\frac{a^2}{(a+1)^2}=0\quad \text{on}\quad a\in[0,+\infty).
\end{equation}

\item[$(b_2)$] If $\theta_*^-=\pi$ ($\theta_*^+=\pi$), then $\alpha_*=\alpha_1\in(0,1)$ or $\alpha_*=\alpha_3\in(1,+\infty)$, where $\alpha_1,\alpha_3$ are the unique two solutions of
\begin{equation}\label{eqn:h(a,pi)}
a^3-1-\frac{m}{\mu}\frac{a^2}{(a-1)^2}=0\quad \text{on}\quad a\in[0,+\infty).
\end{equation}
Moreover, the former case occurs if $z(t)\in(q(t),0)$ and the latter case occurs if $z(t)\in(-\infty,q(t))$, for any $t\in[-T,0)\ (t\in(0,T])$.
\end{itemize}
\end{itemize}
\end{Thm}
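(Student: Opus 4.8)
The plan is to treat the two symmetric cases simultaneously, say on $(0,T]$, and to combine three ingredients: the convergence $z(t)\to 0$ and its rate, a monotonicity/infliction argument for the angle $\theta(t)$, and the asymptotic matching of $r(t)$ with $|q(t)|$ that forces the algebraic equations \eqref{eqn:h(a,0)}, \eqref{eqn:h(a,pi)}. First I would record the basic facts from the Kepler system: by $(Q1)$–$(Q3)$ and Sundman's classical estimates, $|q(t)| = \lambda_\mu |t|^{2/3} + o(|t|^{2/3})$ with the matching derivative expansions, and $q(t)$ stays on $\mathbb R^-$. I would then show the minimizer cannot spiral: since $z$ is a minimizer with a three-body collision at $t=0$, an Euler–Wintner type argument on the energy and the fact that $U(z,t)\to+\infty$ like $|z|^{-1}$ force $|z(t)|\to 0$; combined with conservation-of-energy estimates along the solution (the usual bound $\tfrac12|\dot z|^2 - U \to$ finite or $-\infty$ is excluded by minimality, as in Venturelli–Ferrario–Terracini), one gets $r(t)\dot r(t)^2$ bounded and, crucially, the angular momentum-type quantity $r^2\dot\theta$ is bounded and in fact $\to 0$. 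This yields part $(a)$: either $\theta$ is identically $0$ or $\pi$, or it is strictly monotone with a limit in $\{0,\pi\}$, because any accumulation of $\theta$ at an interior angle would contradict the fact that the gradient $\partial_z U$ points essentially along the real axis near collision (the two attracting bodies both lie on $\mathbb R^-$), hence the transverse acceleration has a sign; an infliction-point argument (no interior critical point of $\theta$ is possible for a monotone-driven quantity) closes it.

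Next, with $\theta_*^\pm \in \{0,\pi\}$ fixed, I would derive the Sundman–Sperling expansion for $r$. Writing $z = re^{i\theta}$ and projecting \eqref{eqn:1+1+1-body} onto the radial direction, near $t=0$ one has
\[
\ddot r - r\dot\theta^2 = -\frac{\mu}{r^2} + (\text{real part of the } q\text{-force}),
\]
and since $z(t)$ and $q(t)$ are asymptotically collinear on $\mathbb R^-$ with $r(t)\sim \alpha_* |q(t)|$ for some ratio $\alpha_* := \lim r(t)/|q(t)|$ (this limit exists by the monotonicity and the boundedness of $r/|q|$, which I would establish via a blow-up/rescaling argument $t\mapsto s$, $r\mapsto r/|t|^{2/3}$), the $q$-force term behaves like $\mp m/((\,\alpha_*\mp 1)^2 |q|^2)$ depending on whether $\theta_*=0$ (bodies on opposite sides, denominator $(\alpha_*+1)$) or $\theta_*=\pi$ with $z$ between or beyond $q$ (denominator $(\alpha_*-1)$, sign depending on the ordering). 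Plugging the self-similar ansatz $r \sim \alpha_*\lambda_\mu |t|^{2/3}$ and using $|q|\sim \lambda_\mu|t|^{2/3}$, the leading-order balance of the ODE becomes exactly $\alpha_*^3 - 1 - \tfrac{m}{\mu}\cdot \alpha_*^2/(\alpha_*\pm 1)^2 = 0$; a direct sign analysis of the cubic-minus-rational function $h(a)$ shows it has a unique root $\alpha_2>1$ in the "$+$" case and exactly two roots $\alpha_1\in(0,1)$, $\alpha_3\in(1,+\infty)$ in the "$-$" case, and that the branch is selected by the geometric position of $z(t)$ relative to $q(t)$, giving $(b_1)$ and $(b_2)$. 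The error terms $o(|t|^{2/3})$, $o(|t|^{-1/3})$, $o(|t|^{-4/3})$ then follow from a bootstrap: one first gets the leading term, substitutes back, and iterates using the general Sundman-estimate machinery (the criterion of Ferrario–Terracini and Venturelli applies once the rescaled limit is shown to be a "central configuration," here just the collinear configuration determined by $\alpha_*$).

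The main obstacle I expect is establishing the existence of the finite, positive limit $\alpha_* = \lim_{t\to 0^+} r(t)/|q(t)|$ and ruling out oscillation of this ratio — i.e. the blow-up step. Unlike the unrestricted problem, here the "energy" of $z$ is not conserved (the potential is time-dependent through $q(t)$), so the standard Sundman–Sperling monotonicity formulas do not apply verbatim; one must instead exploit minimality (local deformations near $t=0$ cannot decrease the action) together with the precise asymptotics of $q$ to run a Tauberian/monotonicity argument on $r/|q|$. A secondary subtlety is the case distinction in $(b_2)$: showing that a minimizer with $\theta_*=\pi$ stays on one definite side of $q(t)$ for all small $t$ (rather than crossing through the $q$-collision), which I would handle by noting that crossing would force $z$ to pass through the singularity of the $m$-term, incurring infinite action on the approach unless it does so at an isolated instant — excluded again by the monotonicity of $\theta$ and a local action comparison. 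Once the rescaled limit configuration is pinned down, the rest is the (now routine) Sundman-estimate bootstrap and the elementary analysis of the root-counting for $h(a)$.
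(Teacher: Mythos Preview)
Your architecture is right and you correctly flag the main obstacle (existence of $\alpha_*=\lim r/|q|$ without a conserved energy), but the two crucial steps are not viable as sketched, and the logical order is inverted.

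For part~(a), the strict monotonicity of $\theta$ is \emph{not} an ODE/force-direction fact: a general solution of \eqref{eqn:1+1+1-body} with $z(0)=0$ can have $\theta$ oscillate. The paper's argument is purely variational: since $q(t)\in\mathbb R^-$, the potential $U(re^{i\theta},t)$ is strictly increasing in $|\theta|\in[0,\pi]$ for fixed $r,t$; hence if $\theta$ had a local maximum at some $\hat t$ with $\theta(\hat t)\in(0,\pi)$, flattening $\theta$ to its nearby level-set value strictly lowers both $\int r^2\dot\theta^2$ and $\int U$, contradicting minimality. Your ``transverse acceleration has a sign'' does not exclude a degenerate critical point of $\theta$ or a crossing of the real axis. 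More importantly, establishing $\theta_*^-\in\{0,\pi\}$ in the paper comes \emph{after} one already knows $0<c_a<a(t)<C_a$: with those bounds one shows $|J(t)|=|r^2\dot\theta|\asymp |t|^{1/3}$ (the lower bound uses $\dot J=m\,z\times q/|z-q|^3\gtrsim \sin\theta\,|t|^{-2/3}$), hence $|\dot\theta|\asymp|t|^{-1}$, which integrates to infinity unless $\theta_*^-\in\{0,\pi\}$. Your sketch reverses this dependency.

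For part~(b), the Ferrario--Terracini blow-up/central-configuration machinery needs a Lagrange--Jacobi or energy monotonicity that is simply absent here; ``Tauberian/monotonicity on $r/|q|$'' is not a method. The paper's substitute, which you are missing, is a \emph{critical-point analysis of $a(t)=r(t)/|q(t)|$}: at any critical point $\hat t$ of $a$ one computes
\[
\ddot a(\hat t)\;=\;\Big(h_{m/\mu}(a(\hat t),\theta(\hat t))+\tfrac{1}{\mu}\tan^2\eta(\hat t)\,\dot r^2(\hat t)\,r(\hat t)\Big)\frac{\mu}{r^2(\hat t)|q(\hat t)|},
\]
where $\eta$ is the angle from $z$ to $\dot z$ and $h_{m/\mu}(a,\theta)=a^3-1-\frac{m}{\mu}\frac{a^2(a+\cos\theta)}{(a^2+1+2a\cos\theta)^{3/2}}$ is exactly the function whose zeros at $\theta=0,\pi$ are $\alpha_2$ and $\alpha_1,\alpha_3$. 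A separate lemma (from the monotonicity of $\theta$ and of $\theta_d=\arg\dot z$) gives $|\theta-\theta_d|\to\pi$, hence $\tan\eta\to 0$; so near $t=0$ the sign of $\ddot a$ at critical points is dictated by $h_{m/\mu}$ alone. This forbids oscillation of $a(t)$ (any putative sequence of minima below $\alpha_1$, or maxima above $\alpha_3$, etc., would have the wrong sign of $\ddot a$) and forces $a^*\in\{\alpha_1,\alpha_2,\alpha_3\}$ once $\theta_*^-\in\{0,\pi\}$ is known. The same critical-point technique is then rerun on $b(t)=\dot r/\tfrac{d}{dt}|q|$ and on $c(t)=\ddot r/\tfrac{d^2}{dt^2}|q|$ to get the derivative estimates in \eqref{eqn:sundman1}. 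The bootstrap you propose is not needed; the self-similar ansatz you write down is the \emph{conclusion}, not the mechanism.
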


This theorem mainly characterizes the asymptotic behaviours of the minimizer $z(t)$ near the three-body collision. Term $(a)$ describes the behaviour of the argument function $\theta(t)$. Term $(b)$ describes the behaviours of the norm $r(t)$, which is so-called the Sundman-Sperling estimates for the minimizer $z(t)$. The main technique of the proof is the analysis of critical and inflection points together with the properties of the minimizer.

Generally, for any restricted multi-body system, we believe that the Theorem \ref{main1} remains valid, provided the collision involves one center, one primary, and one massless particle. This is because when the massless particle approaches the three-body collision,  the effect of the other non-colliding particles is negligible.

Notice that, the montonicity of $\theta(t)$ for the minimizer $z(t)$ is the most fundamental property in the restricted one-center-two-body problem. It highly relies on the monotonicity of the potential function $U(z,t)$, i.e. the potential $U$ is strictly decreasing from $\theta=\pi$ to $\theta=0$ for any fixed $r>0$. However, the potential function possesses no such monotonicity in the restricted three-body problem, which leads to great difficulties in the analysis. 



As an application, we consider the existence of collision-free solutions in the restricted one-center-two-body problem for all precribed boundary angles $(\phi_1,\phi_2)\in[0,\pi]\times[0,\pi]$ with $\phi_1\neq \phi_2$, which is a solution $z(t)$ jointing from the ray $e^{\phi_1i}\mathbb{R}^+$ to $e^{\phi_2i}\mathbb{R}^+$ in $t\in[-T,0]$ ($t\in[0,T]$). In the one center problem, it is well-known that the collision-free solutions exist for all $(\phi_1,\phi_2)$ with $|\phi_1-\phi_2|\in(0,\pi)$, see \cite{Chen03}, \cite[Prop.3]{Chen08}. In this paper, by using the natures of the minimizers and Theorem \ref{main1}, we obtain the following results.


\begin{Thm}\label{thm:main1.1}
Given $T>0$  and a collision Kepler system $(q,c)$ which satisfies (\ref{eqn:newton}) and  $(Q1) - (Q3)$.
For any $(\phi,\phi_0) \in [0,\pi)\times [0,\pi/2]$ with $\phi \neq \phi_0$, the restricted one-center-two-body problem (\ref{eqn:1+1+1-body}) with system $(q,c)$ possesses a solution $z(t)=r(t)e^{\theta(t) i}$ satisfying the following properties:
\begin{itemize}
\item[$(a)$] $z$ is collision-free on $[-T,0]$.
\item[$(b)$] $\theta(-T)=\phi$ and  $\theta(0) = \phi_0$.
\item[$(c)$] There is a unique $t_* \in [-T,0]$ such that  $\theta(t_*) \in [0,\min\{\phi,\phi_0\}]$ and $\theta(t)$ is strictly decreasing on $[-T,t_*]$ and strictly increasing on $[t_*,0]$. Especially, if $\min\{\phi,\phi_0\}=0$, $\theta(t)$ is strictly monotone on $[-T,0]$.
\item[$(d)$] $\dot z$ is orthogonal to the ray $e^{\phi i}\mathbb R^+$ at $z(-T)$ and orthogonal to the ray $e^{\phi_0 i}\mathbb R^+$ at $z(0)$.
\end{itemize}
\end{Thm}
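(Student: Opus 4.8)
The plan is to realize the solution as a minimizer $z=re^{\theta i}$ of $\mathcal{A}_{-T,0}$ on $\Omega^{-T,0}_{A,A_0}$ with $A=e^{\phi i}\mathbb{R}^+$ and $A_0=e^{\phi_0 i}\mathbb{R}^+$ (so that $0\in A_0$, matching the hypothesis of Theorem~\ref{main1}), and then to extract $(a)$--$(d)$ from its structure; the interval $[0,T]$ is handled identically after the reversal $t\mapsto-t$ together with the symmetry \eqref{eqn:symmetric}. First I would produce $z$ by the direct method: the kinetic term makes $\mathcal{A}_{-T,0}$ coercive on $H^1([-T,0],\mathbb{C})$, the potential $U\ge 0$ is weakly lower semicontinuous (continuous where finite, $+\infty$ on the collision set), and both rays are closed, so a Tonelli minimizer exists. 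If $z$ is collision-free on $[-T,0]$, then it is a classical solution, which gives $(a)$; since the rays are smooth $1$-manifolds away from the origin and $z(-T),z(0)\ne 0$, the free-boundary first variation forces $\dot z\perp A$ at $z(-T)$ and $\dot z\perp A_0$ at $z(0)$, which is $(d)$; and $(b)$ is then immediate from the definition of the path space. So the theorem reduces to excluding collisions and to the angular analysis giving $(c)$.

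For the collision exclusion, interior two-body collisions --- $z(t)=0$ with $t\ne 0$ (hence $q(t)\ne 0$), or $z(t)=q(t)$, and likewise at an endpoint --- are ruled out by the classical Gordon--Sperling local deformation: near such an instant the singular part of $U$ is a single $1/(\text{distance})$ term, and rounding off the collision arc strictly lowers the action. By $(Q1)$--$(Q2)$ a three-body collision can occur only at $t=0$, i.e. $z(0)=0$; here I would invoke Theorem~\ref{main1}. If $\theta_*^-=\pi$, part $(b_2)$ forces $z(t)\in(-\infty,0)\setminus\{q(t)\}$ for every $t\in[-T,0)$, hence $\theta\equiv\pi$ there and $\phi=\theta(-T)=\pi$, contradicting $\phi\in[0,\pi)$ --- so this possibility is immediately eliminated.

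The genuine difficulty is the case $\theta_*^-=0$ (including the degenerate sub-case $\theta\equiv 0$, which would force $\phi=0$): then $z$ falls into the triple collision along the positive real axis with the rigid asymptotics of Theorem~\ref{main1}(b), $r=\alpha_2\lambda_\mu|t|^{2/3}(1+o(1))$, $\dot r=-\tfrac23\alpha_2\lambda_\mu|t|^{-1/3}(1+o(1))$, $|z-q|=(\alpha_2+1)\lambda_\mu|t|^{2/3}(1+o(1))$, so that $\mathcal{A}_{-\delta,0}(z)$ is a computable positive multiple of $\delta^{1/3}$. The plan is to build a comparison path $\hat z$ that coincides with $z$ on $[-T,-\delta]$ and on $[-\delta,0]$ leaves $z$ and runs to a point $\hat\rho\,e^{\phi_0 i}\ne 0$ on $A_0$ while staying a fixed fraction of $r(-\delta)$ away from both $0$ and $q(t)$ --- the hypothesis $\phi_0\le\pi/2$ being used to keep the detour in the closed first quadrant, hence uniformly away from the primary on the negative axis --- and to show $\mathcal{A}_{-\delta,0}(\hat z)<\mathcal{A}_{-\delta,0}(z)$ for small $\delta$, contradicting minimality. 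I expect this to be the main obstacle: both actions scale like $\delta^{1/3}$, so the comparison has to be made with sharp constants, uniformly in the mass ratio $m/\mu$ and in $\phi_0\in[0,\pi/2]$, and a crude ``chord at radius $r(-\delta)$'' detour is not by itself enough; one must use more of the rigid structure --- that $\alpha_2>1$ keeps $z$ strictly outside the $(q,c)$-cluster, and the exact Keplerian action of the one-center collision arc --- to close the estimate.

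For $(c)$, assume now $z$ is collision-free, hence classical. I would first confine $z$ to $\mathbb{C}^+$: if a sub-arc of $z$ dipped into the open lower half-plane, reflecting it across $\mathbb{R}$ gives a path with the same endpoints and, since $U$ is invariant under conjugation ($q(t)$ being real), the same action, hence another minimizer; by the collision exclusion it is again a collision-free classical solution, yet it has corners at the crossing points unless $z\equiv\bar z$, which is impossible by uniqueness for the Cauchy problem. Hence $\theta\in[0,\pi]$. Since $\phi\ne\phi_0$, $z$ cannot be contained in $\mathbb{R}$ (a collision-free solution lying in $\mathbb{R}$ has constant argument $0$ or $\pi$, forcing $\phi=\phi_0$ or $\phi=\pi$); combined with the confinement and Cauchy uniqueness this forces $z(t)$ into the open upper half-plane, $\theta(t)\in(0,\pi)$, for all $t\in(-T,0)$. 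Now the angular Euler--Lagrange equation in polar coordinates is
\begin{align*}
\frac{d}{dt}\bigl(r^{2}\dot\theta\bigr)=\frac{\partial U}{\partial\theta}=\frac{m\,r\,|q(t)|\sin\theta}{|z-q(t)|^{3}}>0\qquad\text{whenever }\ \theta\in(0,\pi),
\end{align*}
so $r^{2}\dot\theta$ is strictly increasing on $(-T,0)$; hence $\dot\theta$ vanishes at most once and only with a sign change from $-$ to $+$. This yields a unique $t_*\in[-T,0]$ with $\theta$ strictly decreasing on $[-T,t_*]$ and strictly increasing on $[t_*,0]$, so $\theta$ attains its minimum at $t_*$, giving $\theta(t_*)\in[0,\min\{\phi,\phi_0\}]$; and when $\min\{\phi,\phi_0\}=0$ this minimum can only be attained at an endpoint, which makes $\theta$ strictly monotone on $[-T,0]$ --- this is $(c)$.
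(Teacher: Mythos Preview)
Your overall architecture matches the paper's: existence by the direct method (the paper's Lemma~\ref{lem:minimizer}), two-body exclusion by local deformation (the paper's Theorem~\ref{thm:two-body1}), three-body exclusion at $t=0$ using Theorem~\ref{main1}, then $(c)$ and $(d)$ from the variational structure. Your treatment of $(d)$ and of the $\theta_*^-=\pi$ alternative is correct, and your argument for $(c)$ via monotonicity of the angular momentum $r^2\dot\theta$ is actually cleaner than the paper's route through Corollary~\ref{cor:monotonity1}.

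The genuine gap is exactly where you flag it: the exclusion of the three-body collision when $\theta_*^-=0$. You anticipate a delicate $\delta^{1/3}$-scaling comparison requiring sharp constants, but the paper's deformation avoids this entirely by exploiting a structural fact you have not used: since $\theta_*^-=0$, the minimizer approaches the origin from the \emph{positive} real direction, while $q(t)$ lies on the \emph{negative} real axis. Writing $z=x+iy$, for small $\epsilon>0$ one has $0<x(t)<x(-\epsilon)$ on $(-\epsilon,0)$. In the case $\phi_0=0$ the paper simply freezes the $x$-coordinate: set $\hat z_\epsilon(t)=x(-\epsilon)+iy(t)$ on $[-\epsilon,0]$ and $\hat z_\epsilon=z$ elsewhere. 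Then pointwise on $(-\epsilon,0)$,
\[
|\dot{\hat z}_\epsilon|^2=\dot y^2\le \dot x^2+\dot y^2=|\dot z|^2,\qquad |\hat z_\epsilon|>|z|,\qquad |\hat z_\epsilon-q(t)|>|z-q(t)|,
\]
the last because $q(t)<0<x(t)<x(-\epsilon)$. Hence the integrand strictly decreases, and no constants need to be balanced. For $\phi_0\in(0,\pi/2]$ the paper uses a two-stage freeze (first $y$, then $x$) landing on $e^{\phi_0 i}\mathbb R^+$; the hypothesis $\phi_0\le\pi/2$ is what guarantees the intermediate point $x(t_\epsilon)+iy(-\epsilon)$ can be placed on that ray while keeping both inequalities $x(t)\le\tilde x_\epsilon(t)$ and $y(t)<\tilde y_\epsilon(t)$. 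So the obstacle you identify dissolves once you replace the ``chord/detour'' picture by a coordinate-freezing deformation that gives strict pointwise inequalities in both the kinetic and potential terms.

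A minor secondary point: your endpoint two-body exclusion at $t=-T$ glosses over the edge case $\phi=0$ with asymptotic direction $\theta^+_{c,-T}=\pi$, where the standard local deformation (your ``rounding off'') fails because the opening angle is exactly $\pi$; the paper handles this residual case separately by observing that then $z|_{(-T,0)}\subset(q(t),0)$ and the reflected path $-z$ has strictly smaller action.
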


According to the symmetry (\ref{eqn:symmetric}), a similar result as Theorem \ref{thm:main1.1} holds after the collision moment $t=0$ between $q$ and $c$.

\begin{Thm}\label{thm:main1.2}
Given $T>0$  and a collision Kepler system $(q,c)$ which satisfies (\ref{eqn:newton}) and  $(Q1) - (Q3)$. For any $(\phi,\phi_0) \in [0,\pi)\times [0,\pi/2]$ with $\phi \neq \phi_0$, the restricted one-center-two-body problem (\ref{eqn:1+1+1-body}) with system $(q,c)$ possesses a solution $z(t)=r(t)e^{\theta(t) i}$ satisfying the following properties:
\begin{itemize}
\item[$(a)$] $z$ is collision-free on $[0,T]$.
\item[$(b)$] $\theta(0)=\phi_0$ and  $\theta(T) = \phi$.
\item[$(c)$]  there is a unique $t_* \in [0,T]$ such that  $\theta(t_*) \in [0,\min\{\phi,\phi_0\}]$ and $\theta(t)$ is strictly decreasing on $[0,t_*]$ and strictly  increasing on $[t_*,T]$. In particular, if $\min\{\phi,\phi_0\}=0$, then $\theta(t)$ is strictly monotone on $[0,T]$.
\item[$(d)$] $\dot z$ is orthogonal to $e^{\phi_0 i}\mathbb R^+$ at $z(0)$ and orthogonal to $e^{\phi i}\mathbb R^+$ at $z(T)$.
\end{itemize}
\end{Thm}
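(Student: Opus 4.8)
The plan is to deduce Theorem \ref{thm:main1.2} from Theorem \ref{thm:main1.1} by exploiting the time-reversal symmetry of the system. First I would record the relevant symmetry: by \eqref{eqn:symmetric} we have $q(-t)=q(t)$ on $[-T,T]$, hence $U(\zeta,-t)=U(\zeta,t)$ for every $\zeta\in\mathbb C$ and $t\in[-T,T]$, where $U$ is the potential \eqref{eqn:potential}. Consequently, if $z(t)$ is a solution of \eqref{eqn:1+1+1-body} on $[-T,0]$, then $w(t):=z(-t)$ is a solution of \eqref{eqn:1+1+1-body} on $[0,T]$, since $\ddot w(t)=\ddot z(-t)=\partial_z U(z(-t),-t)=\partial_z U(w(t),t)$. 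Writing $z(t)=r(t)e^{\theta(t)i}$ gives $w(t)=r(-t)e^{\theta(-t)i}$, so the modulus and argument of $w$ are $\rho(t):=r(-t)$ and $\vartheta(t):=\theta(-t)$, and in particular $\vartheta(t)\in[0,\pi]$ whenever $\theta$ takes values in $[0,\pi]$.

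Then, given $(\phi,\phi_0)\in[0,\pi)\times[0,\pi/2]$ with $\phi\neq\phi_0$, I would apply Theorem \ref{thm:main1.1} to obtain a collision-free solution $z(t)=r(t)e^{\theta(t)i}$ on $[-T,0]$ with $\theta(-T)=\phi$, $\theta(0)=\phi_0$, a unique $t_*\in[-T,0]$ satisfying $\theta(t_*)\in[0,\min\{\phi,\phi_0\}]$ with $\theta$ strictly decreasing on $[-T,t_*]$ and strictly increasing on $[t_*,0]$, and with $\dot z$ orthogonal to $e^{\phi i}\mathbb R^+$ at $z(-T)$ and to $e^{\phi_0 i}\mathbb R^+$ at $z(0)$. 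Setting $w(t)=z(-t)$ on $[0,T]$, all of $(a)$--$(d)$ transfer directly. For $(a)$, $w$ is collision-free on $[0,T]$ because $z$ is collision-free on $[-T,0]$. For $(b)$, $\vartheta(0)=\theta(0)=\phi_0$ and $\vartheta(T)=\theta(-T)=\phi$. For $(c)$, the reparametrization $t\mapsto -t$ turns the decrease of $\theta$ on $[-T,t_*]$ into the increase of $\vartheta$ on $[-t_*,T]$ and the increase of $\theta$ on $[t_*,0]$ into the decrease of $\vartheta$ on $[0,-t_*]$, so $-t_*\in[0,T]$ is the unique critical point of $\vartheta$ with $\vartheta(-t_*)=\theta(t_*)\in[0,\min\{\phi,\phi_0\}]$; moreover when $\min\{\phi,\phi_0\}=0$ the point $-t_*$ lies at an endpoint of $[0,T]$, so $\vartheta$ is strictly monotone there. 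For $(d)$, since $\dot w(t)=-\dot z(-t)$, orthogonality of $\dot w$ to $e^{\phi_0 i}\mathbb R^+$ at $w(0)$ and to $e^{\phi i}\mathbb R^+$ at $w(T)$ is equivalent to the orthogonality of $\dot z$ to those rays at $z(0)$ and $z(-T)$, already furnished by Theorem \ref{thm:main1.1}$(d)$.

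Because the argument is essentially a verbatim transcription under the involution $t\mapsto -t$, I do not expect an analytic obstacle. The only point deserving a line of care is the bookkeeping of which symmetry to use: the system \eqref{eqn:1+1+1-body} under $(Q1)$--$(Q3)$ is invariant both under complex conjugation and under $t\mapsto -t$ (and under their composition), but only the map $t\mapsto -t$ fixes the collision instant $t=0$. It is precisely this choice that keeps the prescribed angle $\phi_0$ attached to the collision side of the interval while transporting $\phi$ from $t=-T$ to $t=T$, matching the boundary data demanded in Theorem \ref{thm:main1.2}; had I used conjugation or the composed symmetry, the roles of $\phi$ and $\phi_0$ (or the sign of the angular variable) would be permuted and the statement would not match.
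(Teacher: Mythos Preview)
Your proposal is correct and matches the paper's own reasoning: the paper does not give a separate proof of Theorem~\ref{thm:main1.2} but simply notes, right before its statement, that ``according to the symmetry~\eqref{eqn:symmetric}, a similar result as Theorem~\ref{thm:main1.1} holds after the collision moment $t=0$,'' i.e.\ exactly the time-reversal $t\mapsto -t$ you spell out. Your write-up is in fact more explicit than the paper about how each item $(a)$--$(d)$ transfers.
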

As a conclusion, we have the following corollaries.
\begin{Rmk}\label{thm:main1.3}
When $\phi_0=0$, it follows from Theorem \ref{thm:main1.1}, \ref{thm:main1.2} that the argument $\theta(t)$ of the collision-free solution $z(t)$ is strictly decreasing on $[-T,0]$ and strictly increasing on $[0,T]$.
\end{Rmk}

Theorem~\ref{thm:main1.1}, \ref{thm:main1.2} show
the existence of the collision-free solution of $(\ref{eqn:1+1+1-body})$ for any choice of masses $\mu,m>0$ and boundary angles $(\phi,\phi_0)\in[0,\pi)\times [0,\pi/2]$ with $\phi\neq \phi_0$.
In the classical restricted three-body problem, different energy on the two primaries will alter the nature of the problem, significantly.
It is worth noting that Theorem \ref{thm:main1.1}, \ref{thm:main1.2} are independent of the choice of energy on the two-body system $(q,c)$.

Notice that, in restricted multi-body problem,  almost all two-body collisions of minimizers can be excluded by local deformation. However, there is also a lack of asymptotic estimates for the three-body collisions, such as the Sundman-Sperling estimates. This makes the exclusion of the three-body collisions much more challenging.

In Theorem \ref{thm:main1.1}, \ref{thm:main1.2}, as an application of Theorem \ref{main1}, we successfully exclude the three-body collision for the minimizer in the restricted one-center-two-body problem. Unfortunately, there is no regularization to the three-body collisions in our problem, unlike the Levi-Civita regularization in the two-body collisions. This causes that the action of the local deformation paths are highly difficult to estimate under the behavior of the two singularities, and then the boundary angle $\phi_0$ can only be choosed in $[0,\pi/2]$ rather than $[0,\pi]$. More specifically, the regularization method requests more regularity than we have.

Based on our results above, although the restricted one-center-two-body problem is not the classical restricted three-body problem, the authors believe that the methods in this paper and the extension of Sundman-Sperling estimates for three-body collisions will be useful in advancing the study of collisions in celestial mechanics, and provides a promising direction for future investigations into general three-body collisions or even multi-body collisions.

\begin{Rmk}
For the sake of intuition, several numerical examples of the solution $z(t)$ are listed in Figure~\ref{fig:mainthm} including different masses $(\mu,m)$ and boundary angles $(\phi, \phi_0)$, in which the collision Kepler system $(q,c)$ satisfies the boundary conditions $q(\pm T)=-1$ and $\dot{q}(\pm T)=0$.
In Figure~\ref{fig:mainthm_angle}, we also provide an example for Theorem~\ref{thm:main1.1}(c), \ref{thm:main1.2}(c), where the argument $\theta(t)$ of the solution is not monotonic.
\end{Rmk}

\begin{figure}[ht]
\begin{center}
\includegraphics[width=0.7 \textwidth]{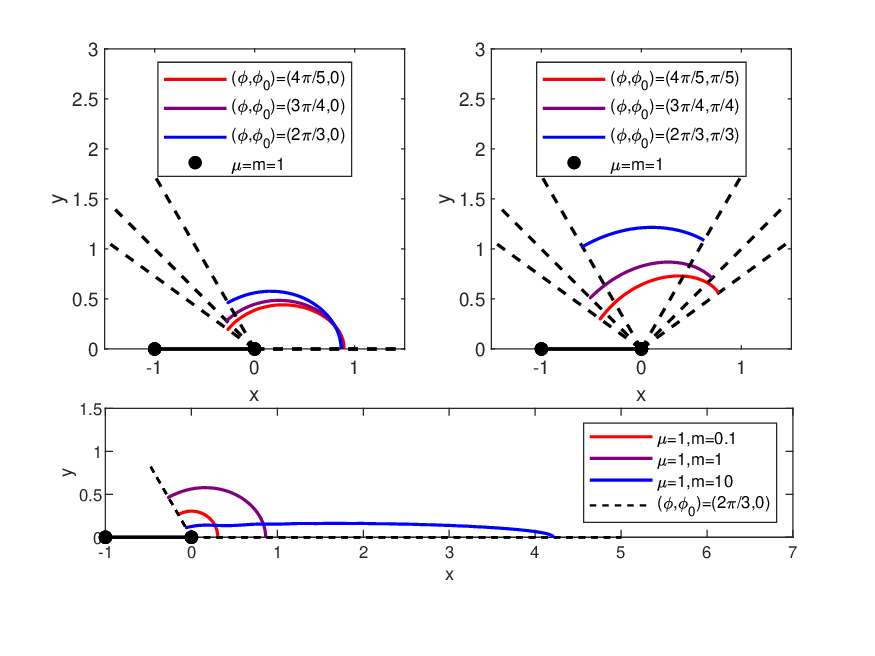}
\vspace{-7mm}
\caption{The solutions in Theorem~\ref{thm:main1.1}, \ref{thm:main1.2} with different choice of masses and boundary angles.}
\label{fig:mainthm}
\includegraphics[width=0.7 \textwidth]{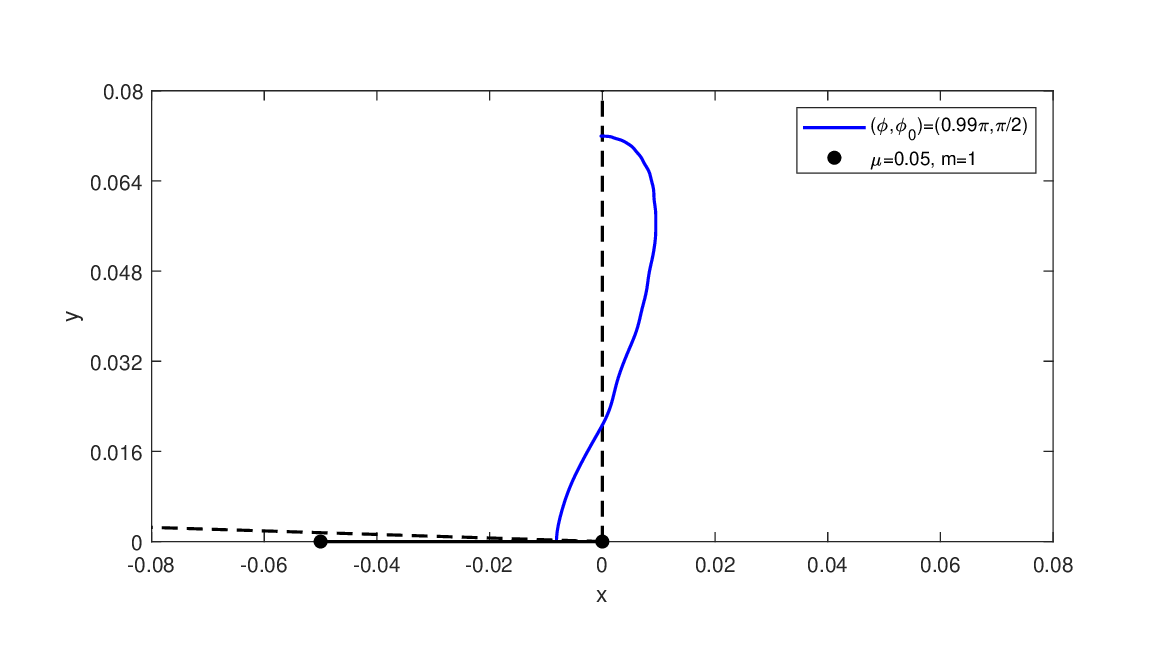}
\vspace{-3mm}
\caption{An example of Theorem~\ref{thm:main1.1}(c), in which $q$
 satisfies $(q(-T),\dot q(-T))=(-0.05,1)$ and $q$ is much heavier than $c$.}
\label{fig:mainthm_angle}
\end{center}
\end{figure}
This paper is organized as follows. In Section~\ref{sec2}, we recall the Sundman-Sperling estimates for the two-body collisions, and introduce a classical approach for the exclusion of the two-body collisions. In Section~\ref{sec21}, we show some important monotonicities for both potential function $U$ and action minimizers of the restricted one-center-two-body problem. In Section \ref{sec3}, by analysing the asymptotic behaviors for the three-body collision, we prove the Sundman-Sperling estimates for the action minimizers (Theorem \ref{main1}). In Section \ref{sec4}, as an application of Theorem \ref{main1}, we prove the existence of the collision-free solutions with prescribed boundary angles in the restricted one-center-two-body problem (Theorem \ref{thm:main1.1}, \ref{thm:main1.2}). Finally, in Appendix \ref{sec:periodic}, we construct a class of periodic and quasi-periodic orbits under the extended collision Kepler system.


\section{Preliminaries}\label{sec2}



\subsection{Asymptotic behavior near two-body collision}\label{subsec: 2.1}
This subsection will review the asymptotic analysis near a two-body collision. Given a collision Kepler system $(q,c)$ which satisfies (\ref{eqn:newton}) and $(Q1) - (Q3)$, and let $z$ be a solution of the restricted one-center-two-body problem (\ref{eqn:1+1+1-body}). In this problem, there are three possibilities of two-body collisions: collisions between $q$ and $c$, $z$ and $c$, and  $z$ and $q$. The collision moments between $q$ and $c$ are clearly isolated, and the following lemma shows that the collision moments are isolated for the other two types of two-body collisions. The proof is based on the regularization method, which we refer to \cite[Sec.3.3]{Chenciner02}, \cite[Sec.4.1]{Venturelli02} and the proof is omitted here.

\begin{Lem}\label{lem:isolated}
The sets of two-body collision moments $\triangle_c(q)=\{t\in\mathbb{R}: q(t)=0\ \text{and}\ z(t)\neq 0\}$, $\triangle_{c}(z):=\{t\in \mathbb R :  z(t)=c \text{ and } z(t) \neq q(t) \}$ and $\triangle_{q}(z):=\{t\in \mathbb R :  z(t)=q(t) \text{ and } z(t) \neq c \}$ are isolated.
\end{Lem}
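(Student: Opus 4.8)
The plan is to prove that each of the three collision sets is isolated by a local argument near a putative accumulation point, using the two classical ingredients for isolatedness of collisions: energy-type bounds that keep the relevant relative velocity controlled, and a regularization (Levi-Civita / McGehee-type) change of variables that turns the singular flow into a regular one so that the preimages of the collision set are isolated zeros of a real-analytic (or at least $C^1$ with nonvanishing derivative) function.

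For $\triangle_c(q)$, there is nothing to do beyond what is already granted: by Gordon's result cited in the setup, the collision moments of the Kepler system $(q,c)$ are isolated, so the subset of those moments at which additionally $z(t)\neq 0$ is automatically isolated. For $\triangle_c(z)$ and $\triangle_q(z)$ the argument is parallel, so I would treat $\triangle_q(z)$ (the genuinely time-dependent case) in detail and remark that $\triangle_c(z)$ is strictly easier since the center is fixed. First I would fix $t_0$ with $z(t_0)=q(t_0)\neq c$ and work on a short interval $I=[t_0-\delta,t_0+\delta]$ on which $q$ stays uniformly away from $c$; on such an interval the potential term $\mu/|z|$ and its gradient are bounded, so the equation for $w:=z-q$ reads $\ddot w = -m\,w/|w|^3 + g(t)$ with $g$ bounded and Lipschitz on $I$. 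This is a perturbed Kepler problem in $w$, and the standard energy estimate shows that near an isolated collision $|w(t)|\to 0$ while $\tfrac12|\dot w|^2 - m/|w|$ stays bounded, so $|w||\dot w|^2$ is bounded. Then I would apply the Levi-Civita regularization: write $w=\zeta^2$, rescale time by $ds = dt/|w|$, and check that the transformed system extends to a $C^1$ (indeed real-analytic away from the bounded perturbation) vector field across the collision, with the collision corresponding to $\zeta=0$ and $\zeta'\neq 0$ there. Hence in the new time the collision instants are isolated zeros of $s\mapsto |\zeta(s)|^2$ with nonvanishing one-sided derivatives, and pulling back through the (locally bi-Lipschitz) time change shows $t_0$ is isolated in $\triangle_q(z)$. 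Since $t_0$ was arbitrary and $I$ is compact, $\triangle_q(z)\cap I$ is finite, and covering $\mathbb R$ by such intervals gives the claim.

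The step I expect to be the main obstacle is verifying that the Levi-Civita (or McGehee) regularization survives the presence of the bounded, time-dependent forcing term $g(t)$ coming from the center and from the motion of $q$ — i.e. that one still obtains a vector field regular enough across $\{\zeta=0\}$ that collisions remain isolated, rather than accumulating. The resolution is that $g$ enters the regularized equations multiplied by positive powers of $|\zeta|$ (since $ds=dt/|w|$ and $|w|=|\zeta|^2$), so it vanishes to the appropriate order at the collision and does not destroy the $C^1$ extension; one must also check that the energy of the perturbed Kepler subsystem cannot blow up on the finite interval $I$, which follows from Gronwall applied to $\tfrac12|\dot w|^2 - m/|w|$ against the bounded work done by $g$. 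This is exactly the content of the references cited in the statement (\cite[Sec.3.3]{Chenciner02}, \cite[Sec.4.1]{Venturelli02}), so in the paper it is legitimate to invoke them and omit the computation; I would simply indicate the above structure and refer the reader there.
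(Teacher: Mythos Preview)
Your proposal is correct and follows exactly the approach the paper indicates: the paper does not give a proof at all but simply states that it ``is based on the regularization method'' and refers the reader to \cite[Sec.~3.3]{Chenciner02} and \cite[Sec.~4.1]{Venturelli02}, which is precisely the Levi-Civita regularization of the perturbed Kepler subsystem that you outline. Your sketch is in fact more detailed than what the paper provides, and your identification of the key technical point---that the bounded forcing $g(t)$ enters the regularized equations multiplied by positive powers of $|\zeta|$ and hence does not obstruct the $C^1$ extension across $\zeta=0$---is exactly what makes the cited references applicable.
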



Next, we introduce a famous work of Sperling \cite{Sperling69}, in which the asymptotic behaviors of a particle near two-body collisions are given in the perturbed Kepler problem. As an application, we can obtain the asymptotic behaviors of the aforementioned three types of two-body collisions in the restricted one-center-two-body problem.

\begin{Prop}\cite{Sperling69} \label{pro:Sperling69}
Consider the perturbed Kepler problem
\begin{align}\label{eqn:sperling}
\ddot{u}=-\hat{\mu} \frac{u}{ \lvert u \rvert ^{3}}+P(u,t),
\end{align}
where $\hat{\mu}>0$ and $P(u,t)$ is a bounded and continuous function near $0$. Assume the solution $u$ of (\ref{eqn:sperling}) has a collision at $\tau$, that is $u(\tau)=0$, then there exist $\lambda=(9\hat{\mu}/2)^{1/3} $ and  $\eta^{\pm} \in \mathbb{S} :=\{x \in \mathbb C: \  \lvert x \rvert =1 \}$ such that
\begin{enumerate}
\item[$(a)$] $ \lvert u(t) \rvert = \lambda \lvert t-\tau \rvert ^{\frac{2}{3}}+o( \lvert t-\tau \rvert ^{\frac{2}{3}})$ as $t \rightarrow \tau^\pm$.\vspace{1ex}
\item[$(b)$] $\frac{d}{dt} \lvert u(t) \rvert = \pm\frac{2}{3}\lambda \lvert t-\tau \rvert ^{-\frac{1}{3}}+o( \lvert t-\tau \rvert ^{-\frac{1}{3}})$ as $t \rightarrow \tau^\pm$.\vspace{1ex}
\item[$(c)$] $\frac{d^2}{dt^2} \lvert u(t) \rvert = -\frac{2}{9}\lambda \lvert t-\tau \rvert ^{-\frac{4}{3}}+o( \lvert t-\tau \rvert ^{-\frac{4}{3}})$ as $t \rightarrow \tau^\pm$.\vspace{1ex}
\item[$(d)$] $\lim_{t \rightarrow \tau^\pm}\frac{u(t)}{ \lvert u(t) \rvert }=\eta^{\pm}$ exist.
\end{enumerate}
\end{Prop}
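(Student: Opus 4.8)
The plan is to prove that, near the collision, the perturbed Kepler equation~\eqref{eqn:sperling} is governed to leading order by the scalar radial equation $\ddot r=-\hat\mu/r^{2}$, and then to read off $(a)$--$(d)$ by integrating it. After a time translation we may take $\tau=0$, and by the reflection $t\mapsto -t$ it suffices to study $t\to0^{+}$; on the one-sided punctured interval near $0$ on which the collision solution does not vanish write $r=|u|$ and $u=re^{i\theta}$. I would start from the standard identities: the angular momentum $c:=\mathrm{Im}(\bar u\dot u)=r^{2}\dot\theta$ obeys $\dot c=\mathrm{Im}(\bar uP)$, which is bounded near $0$; the energy $E:=\tfrac12|\dot u|^{2}-\hat\mu/r$ obeys $\dot E=\mathrm{Re}(\overline{\dot u}P)$; the kinetic energy splits as $|\dot u|^{2}=\dot r^{2}+c^{2}/r^{2}$; the radial projection of~\eqref{eqn:sperling} is $\ddot r=c^{2}/r^{3}-\hat\mu/r^{2}+\mathrm{Re}(e^{-i\theta}P)$; and the Lagrange--Jacobi identity reads $\tfrac{d^{2}}{dt^{2}}\bigl(\tfrac12 r^{2}\bigr)=|\dot u|^{2}-\hat\mu/r+\mathrm{Re}(\bar uP)=2E+\hat\mu/r+\mathrm{Re}(\bar uP)$.

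Then I would argue as follows. \textbf{(i)} Since $\dot c$ is bounded, $c$ has a finite limit $c_{0}$ at $0^{+}$. \textbf{(ii)} $c_{0}=0$: if not, then $|c|\ge|c_{0}|/2$ near $0$, so $c^{2}/r^{3}-\hat\mu/r^{2}=r^{-3}(c^{2}-\hat\mu r)\ge c_{0}^{2}/(8r^{3})$ for $t$ small, whence $\ddot r\ge c_{0}^{2}/(16r^{3})>0$ on some $(0,\delta)$; this forces $\dot r>0$ there (otherwise $r$ would be decreasing on an interval abutting $0$, contradicting $r(0^{+})=0<r$), and multiplying $\ddot r\ge c_{0}^{2}/(16r^{3})$ by $2\dot r$ gives $\tfrac{d}{dt}\bigl(\dot r^{2}+c_{0}^{2}/(16r^{2})\bigr)\ge0$, so $\dot r^{2}+c_{0}^{2}/(16r^{2})$ is nondecreasing on $(0,\delta)$ yet tends to $+\infty$ as $t\to0^{+}$ --- impossible. \textbf{(iii)} Establish the \emph{Sundman estimate} (the technical heart --- see below) from the Lagrange--Jacobi identity together with (i)--(ii): $r$ is eventually monotone with $\dot r>0$ on a right neighbourhood of $0$, one has $r\dot r\to0$, $E$ is bounded, and $c_{1}t^{2/3}\le r(t)\le c_{2}t^{2/3}$ for some $0<c_{1}\le c_{2}$. \textbf{(iv)} With $r\gtrsim t^{2/3}$ and $E$ bounded, $|\dot u|^{2}=2E+2\hat\mu/r=O(t^{-2/3})$, so $|\dot u|\in L^{1}$ near $0$, hence $\dot E=\mathrm{Re}(\overline{\dot u}P)\in L^{1}$ and $E(t)\to E_{0}$ for a finite $E_{0}$; moreover $|c(t)|=O(t)$ by (ii) and the boundedness of $\dot c$. \textbf{(v)} Now the energy relation $\dot r^{2}=2\hat\mu/r+2E-c^{2}/r^{2}$ reads $\dot r^{2}=(2\hat\mu/r)(1+o(1))$ and the radial equation reads $\ddot r=-\hat\mu/r^{2}+o(r^{-2})$; since $\dot r>0$, $\tfrac{d}{dt}(r^{3/2})=\tfrac32 r^{1/2}\dot r=\tfrac32\sqrt{2\hat\mu}\,(1+o(1))$, so $r^{3/2}=\tfrac32\sqrt{2\hat\mu}\,t+o(t)$, i.e. $r(t)=\lambda t^{2/3}+o(t^{2/3})$ with $\lambda=\bigl(\tfrac32\sqrt{2\hat\mu}\bigr)^{2/3}=(9\hat\mu/2)^{1/3}$, which is $(a)$. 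Inserting $(a)$ into $\dot r^{2}=(2\hat\mu/r)(1+o(1))$ and taking the positive root gives $(b)$, and inserting it into $\ddot r=-\hat\mu/r^{2}+o(r^{-2})$ gives $\ddot r=-(\hat\mu/\lambda^{2})t^{-4/3}+o(t^{-4/3})=-\tfrac29\lambda t^{-4/3}+o(t^{-4/3})$ since $\hat\mu/\lambda^{2}=\tfrac29\lambda$, which is $(c)$. \textbf{(vi)} Finally $|\dot\theta|=|c|/r^{2}=O(t)/(\lambda^{2}t^{4/3})=O(t^{-1/3})$ is integrable near $0$, so $\theta(t)\to\theta^{+}$ and $u(t)/|u(t)|=e^{i\theta(t)}\to\eta^{+}:=e^{i\theta^{+}}$, which is $(d)$. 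The $t\to\tau^{-}$ assertions follow from the reflection $t\mapsto-t$, which reverses the sign in $(b)$.

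I expect step \textbf{(iii)} to be the main obstacle: producing the two-sided a priori bound for $r$ and ruling out that the collision solution oscillates radially. Because $E$ and $c$ are not conserved in the perturbed problem, one cannot invoke energy conservation and must instead run the classical Sundman--Sperling bootstrap --- from the radial equation and (i)--(ii), extract a first crude lower bound $r\gtrsim t^{\gamma}$ with some $\gamma<2$ together with the eventual convexity of $t\mapsto r^{2}/2$ (whence $\dot r>0$) from the Lagrange--Jacobi identity; feed this into $|\dot u|\le\sqrt{2E+2\hat\mu/r}$ and a Gr\"onwall-type estimate to bound $E$; and only then upgrade the exponent to the sharp value $2/3$. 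The delicate point is checking that this loop is self-consistent and that no subsequence of $r(t)/t^{2/3}$ escapes to $0$ or $\infty$; once that is done, steps (iv)--(vi) are routine ODE asymptotics. As an alternative to this direct route, $(a)$--$(d)$ can also be deduced from the general collision criteria of Venturelli and of Ferrario--Terracini cited in Section~\ref{sec:int} (see \cite{Venturelli02,FT04}), which contain the two-body perturbed Kepler case as a special instance.
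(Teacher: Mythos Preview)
The paper does not prove Proposition~\ref{pro:Sperling69}; it is quoted verbatim from Sperling~\cite{Sperling69} and used as a black box (the surrounding text only says ``we introduce a famous work of Sperling\ldots'' and then applies it to derive Propositions~\ref{pro:sundmanq-}--\ref{pro:sundmanzq}). So there is no ``paper's own proof'' to compare against.

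Your outline is the classical Sundman--Sperling argument and is sound. Steps (i), (ii), (iv)--(vi) are correct as written: the computation $c_{0}=0$ via the monotonicity contradiction is clean, the passage from the energy relation to $r^{3/2}=\tfrac32\sqrt{2\hat\mu}\,t+o(t)$ is the right way to extract $(a)$, and the integrability of $\dot\theta$ giving $(d)$ is exactly Sperling's mechanism. You have correctly flagged step~(iii) as the place where the real work lives; the bootstrap you describe (convexity of $r^{2}/2$ from Lagrange--Jacobi once $\hat\mu/r$ dominates, then $r\dot r\to 0$, then boundedness of $E$, then the two-sided $t^{2/3}$ bound) is precisely how Sperling and Wintner carry it out, and nothing in your sketch is wrong --- it is just compressed. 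One small remark on your closing sentence: the Ferrario--Terracini and Venturelli criteria are formulated for genuine $N$-body clusters rather than for a Kepler problem with an \emph{arbitrary} bounded continuous perturbation $P(u,t)$, so invoking them as a drop-in alternative would require checking that their hypotheses cover this more general setting; the direct route you give is the safer one here.
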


The properties $(a)$ - $(c)$ demonstrate that the asymptotic positions, velocities and accelerations of colliding particles satisfy the Sundman-Sperling estimates (see \cite{Sperling69,Sperling70,Sundman}); the property $(d)$ further reveals the existence of the asymptotic angles for colliding particles.

In system $(q,c)$, the motions of $q$, $z$ and $z-q$ satisfy the equations $(\ref{eqn:newton})$, $(\ref{eqn:1+1+1-body})$ and
\begin{align*}
\frac{d^2}{dt^2}(z-q)= -\frac{m (z-q)}{ \lvert z-q \rvert ^3} + \frac{\mu q}{ \lvert q \rvert ^3}-\frac{\mu z}{ \lvert z \rvert ^3},
\end{align*}
respectively. We can also rewrite these equations as the perturbed Kepler systems
$(\ref{eqn:sperling})$:
\begin{align*}
\ddot{q} = - \frac{\mu q}{ \lvert q \rvert ^3}+P_0(q,t),  \quad\quad\quad\quad \ \ &\text{where} \ \ P_0(u,t) \equiv 0, \\
\ddot{z} = -\frac{\mu z}{ \lvert z \rvert ^3}+P_1(z,t),  \quad\quad\quad\quad\ \ &\text{where} \ \  P_1(u,t) = -\frac{m (u-q(t))}{ \lvert u-q(t) \rvert ^3},\\
\frac{d^2}{dt^2}(z-q) = -\frac{m (z-q)}{ \lvert z-q \rvert ^3} +P_2(z-q,t), \ \ &\text{where} \ \  P_2(u,t) = -\frac{\mu (u+q(t))}{ \lvert u+q(t) \rvert ^3} + \frac{\mu q(t)}{ \lvert q(t) \rvert ^3}.
\end{align*}

Since $P_0,P_1,P_2$ are bounded and continuous near the collision moment sets $\triangle_{c}(q)$, $\triangle_{c}(z)$ and $\triangle_{q}(z)$, as an application of Proposition~\ref{pro:Sperling69}, we obtain the Sundman-Sperling estimates for $q,z$ and $z-q$ as follows.

\begin{Prop}\label{pro:sundmanq-}
There exists $\lambda_{\mu} =(9\mu /2)^{1/3}$ such that
\begin{enumerate}
\item[$(a)$] $ \lvert q(t) \rvert = \lambda_{\mu} \lvert t \rvert ^{2/3}+o( \lvert t \rvert ^{2/3})$ \ as $t \rightarrow 0^\pm$.
\item[$(b)$] $\frac{d}{dt} \lvert q(t) \rvert = \pm\frac{2}{3}\lambda_{\mu} \lvert t \rvert ^{-1/3}+o( \lvert t \rvert ^{-1/3})$ \ as $t \rightarrow 0^\pm$.
\item[$(c)$] $\frac{d^2}{dt^2} \lvert q(t) \rvert = -\frac{2}{9}\lambda_{\mu} \lvert t \rvert ^{-4/3}+o( \lvert t \rvert ^{-4/3})$ \ as $t \rightarrow 0^\pm$.
\end{enumerate}
\end{Prop}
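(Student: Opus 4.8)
The plan is to obtain Proposition~\ref{pro:sundmanq-} as an immediate special case of Sperling's Proposition~\ref{pro:Sperling69}, exploiting that $q$ is itself a Kepler solution. First I would note that, by hypothesis, $q$ satisfies \eqref{eqn:newton} on $[-T,T]$, and \eqref{eqn:newton} is precisely the perturbed Kepler equation \eqref{eqn:sperling} with $\hat\mu=\mu$ and trivial perturbation $P_0(u,t)\equiv 0$; the zero function is bounded and continuous near $0$, so the hypothesis of Proposition~\ref{pro:Sperling69} on the perturbation is satisfied. By condition $(Q1)$ we have $q(0)=0$, so $q$ has a collision at $\tau=0$.

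Next I would check the remaining hypothesis, namely that $0$ is an isolated collision moment of $q$, so that $q$ is a genuine (classical) solution of \eqref{eqn:sperling} on a punctured neighborhood of $0$ and the conclusions of Proposition~\ref{pro:Sperling69} are meaningful as $t\to 0^\pm$. This is exactly what condition $(Q2)$ guarantees on $[-T,T]$ (and, in general, it is Gordon's fact \cite{Gordon77} that the collision moments of a collision Kepler system are isolated, already used in the setup of the problem). With all hypotheses verified, items $(a)$--$(c)$ of Proposition~\ref{pro:Sperling69} applied at $\tau=0$ give
\[
|q(t)|=\lambda|t|^{2/3}+o(|t|^{2/3}),\quad \tfrac{d}{dt}|q(t)|=\pm\tfrac{2}{3}\lambda|t|^{-1/3}+o(|t|^{-1/3}),\quad \tfrac{d^{2}}{dt^{2}}|q(t)|=-\tfrac{2}{9}\lambda|t|^{-4/3}+o(|t|^{-4/3})
\]
as $t\to 0^\pm$, where $\lambda=(9\hat\mu/2)^{1/3}=(9\mu/2)^{1/3}$. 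Writing $\lambda_\mu:=(9\mu/2)^{1/3}$, these are precisely statements $(a)$, $(b)$, $(c)$.

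There is essentially no obstacle in this argument: the entire analytic content lies in Proposition~\ref{pro:Sperling69} (Sperling's theorem), and what remains is only to recognize the two-body equation \eqref{eqn:newton} as the degenerate case $P\equiv 0$ of the template \eqref{eqn:sperling}. For completeness I would also record that Proposition~\ref{pro:Sperling69}$(d)$ yields the existence of $\lim_{t\to 0^\pm} q(t)/|q(t)|$, which by condition $(Q3)$ must equal $-1$; the same scheme, now applied to the perturbed Kepler forms of $\ddot z$ and $\tfrac{d^{2}}{dt^{2}}(z-q)$ written just above with the bounded continuous perturbations $P_1$ and $P_2$ (near $\triangle_c(z)$ and $\triangle_q(z)$ respectively), will give the analogous Sundman--Sperling estimates for $z$ and $z-q$ near their own two-body collisions.
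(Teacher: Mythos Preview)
Your proposal is correct and matches the paper's approach exactly: the paper likewise observes that $\ddot q = -\mu q/|q|^3 + P_0(q,t)$ with $P_0\equiv 0$ bounded and continuous, and then reads off Proposition~\ref{pro:sundmanq-} (together with the companion Propositions for $z$ and $z-q$) directly from Proposition~\ref{pro:Sperling69}.
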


\begin{Prop}\label{pro:sundmanzc}
Assume $\tau \in \triangle_{c}(z)$,
there exist $\lambda_{\mu} =(9\mu /2)^{1/3}$ and $\theta^{\pm}_{c,\tau} \in \mathbb R$ such that
\begin{enumerate}
\item[$(a)$] $ \lvert z(t) \rvert = \lambda_{\mu} \lvert t-\tau \rvert ^{2/3}+o( \lvert t-\tau \rvert ^{2/3})$ \ as $t \rightarrow \tau^\pm$.
\item[$(b)$] $\frac{d}{dt} \lvert z(t) \rvert = \pm\frac{2}{3}\lambda_{\mu} \lvert t-\tau \rvert ^{-1/3}+o( \lvert t-\tau \rvert ^{-1/3})$ \ as $t \rightarrow \tau^\pm$.
\item[$(c)$] $\frac{d^2}{dt^2} \lvert z(t) \rvert = -\frac{2}{9}\lambda_{\mu} \lvert t-\tau \rvert ^{-4/3}+o( \lvert t-\tau \rvert ^{-4/3})$ \ as $t \rightarrow \tau^\pm$.
\item[$(d)$] $\lim_{t \rightarrow \tau^\pm}\frac{z(t)}{ \lvert z(t) \rvert }=e^{i\theta^{\pm}_{c,\tau}}$ exist.
\end{enumerate}
\end{Prop}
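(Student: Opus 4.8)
The plan is to obtain Proposition~\ref{pro:sundmanzc} as an immediate consequence of Sperling's theorem, Proposition~\ref{pro:Sperling69}, applied to the motion of $z$ written as the perturbed Kepler system $\ddot z=-\mu z/|z|^3+P_1(z,t)$, where $P_1(u,t)=-m(u-q(t))/|u-q(t)|^3$ as recorded above. The whole task therefore reduces to verifying that $P_1$ meets the hypotheses of Proposition~\ref{pro:Sperling69} in a neighbourhood of the collision time $\tau$, i.e. that it is bounded and continuous near the collision point $(0,\tau)$.

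First I would note that $\tau\neq 0$. Since the center is fixed at the origin, the condition $z(\tau)=c$ becomes $z(\tau)=0$; the defining condition of $\triangle_c(z)$ then forces $q(\tau)\neq z(\tau)=0$, so if we had $\tau=0$ this would contradict $(Q1)$. Hence by $(Q2)$ the primary $q$ does not collide with $c$ on a neighbourhood of $\tau$, so $q$ is smooth and nonvanishing there with $q(\tau)\neq 0$. By continuity of $q$ there are constants $\delta,\rho>0$ such that $|u-q(t)|\geq\rho$ whenever $|u|<\delta$ and $|t-\tau|<\delta$, and consequently $P_1$ is bounded and continuous on this neighbourhood of $(0,\tau)$. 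Taking $u=z(t)$ and using $z(\tau)=0$, this also shows that $z(t)\neq q(t)$ for $t$ near $\tau$ (consistent with Lemma~\ref{lem:isolated}), so that on a punctured neighbourhood of $\tau$ the curve $z$ is a genuine $C^2$ solution of the displayed perturbed Kepler equation with $z(\tau)=0$.

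It then remains to invoke Proposition~\ref{pro:Sperling69} with $\hat\mu=\mu$: it provides $\lambda=(9\mu/2)^{1/3}=\lambda_\mu$ and unit vectors $\eta^\pm\in\mathbb{S}$ for which items $(a)$–$(d)$ of that proposition hold for $u=z$ at $\tau$; choosing real numbers $\theta^\pm_{c,\tau}$ with $e^{i\theta^\pm_{c,\tau}}=\eta^\pm$ turns them verbatim into the four claimed statements. Since all the asymptotic analysis is already packaged in the cited theorem of Sperling, there is no serious obstacle here; the only point requiring care is that the perturbation $P_1$ must not itself become singular as $t\to\tau$, and this is exactly what the non-collision requirement $z(\tau)\neq q(\tau)$ built into the definition of $\triangle_c(z)$, together with $\tau\neq 0$, guarantees.
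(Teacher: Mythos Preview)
Your proof is correct and follows exactly the paper's approach: write $\ddot z=-\mu z/|z|^3+P_1(z,t)$ and apply Proposition~\ref{pro:Sperling69} after checking that $P_1$ is bounded and continuous near $(0,\tau)$, which follows from $q(\tau)\neq 0$. The paper merely asserts this boundedness in one line, while you supply the (easy) justification via $\tau\neq 0$ and $(Q2)$.
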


\begin{Prop}\label{pro:sundmanzq}
Assume $\tau \in \triangle_{q}(z)$,
there exist $\lambda_{m} =(9m /2)^{1/3}$ and $\theta^{\pm}_{q,\tau}\in \mathbb R$ such that
\begin{enumerate}
\item[$(a)$] $ \lvert z(t)-q(t) \rvert = \lambda_{m}  \lvert t-\tau \rvert ^{2/3}+o( \lvert t-\tau \rvert ^{2/3})$ \ as $t \rightarrow \tau^\pm$.
\item[$(b)$] $\frac{d}{dt} \lvert z(t)-q(t) \rvert = \pm\frac{2}{3}\lambda_{m}  \lvert t-\tau \rvert ^{-1/3}+o( \lvert t-\tau \rvert ^{-1/3})$ \ as $t \rightarrow \tau^\pm$.
\item[$(c)$] $\frac{d^2}{dt^2} \lvert z(t)-q(t) \rvert = -\frac{2}{9}\lambda_{m}  \lvert t-\tau \rvert ^{-4/3}+o( \lvert t-\tau \rvert ^{-4/3})$ \ as $t \rightarrow \tau^\pm$.
\item[$(d)$] $\lim_{t \rightarrow \tau^\pm}\frac{z(t)-q(t)}{ \lvert z(t)-q(t) \rvert }=e^{i\theta^{\pm}_{q,\tau}}$ exist.
\end{enumerate}
\end{Prop}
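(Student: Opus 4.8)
The plan is to obtain Proposition~\ref{pro:sundmanzq} as a direct application of Sperling's Proposition~\ref{pro:Sperling69} to the curve $u(t) := z(t)-q(t)$. Recall from the perturbed‑Kepler rewriting displayed above that $u$ satisfies $\ddot u = -m\,u/|u|^3 + P_2(u,t)$ with $P_2(u,t) = -\mu(u+q(t))/|u+q(t)|^3 + \mu\,q(t)/|q(t)|^3$, which is exactly of the form \eqref{eqn:sperling} with $\hat\mu = m$. Thus it suffices to check that the hypothesis of Proposition~\ref{pro:Sperling69} holds, namely that $P_2(u,t)$ is bounded and continuous for $(u,t)$ in a neighbourhood of $(0,\tau)$, and then to translate its conclusions back to $z-q$.

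First I would localize around $\tau$. Since $\tau\in\triangle_{q}(z)$ we have $z(\tau)=q(\tau)$ and $z(\tau)\neq c=0$, hence $q(\tau)\neq 0$; by continuity there are $\delta,\varepsilon>0$ with $|q(t)|\geq\delta$ for $|t-\tau|\leq\varepsilon$, and, shrinking $\varepsilon$ using Lemma~\ref{lem:isolated}, we may also assume $z(t)\neq q(t)$ for $0<|t-\tau|\leq\varepsilon$ (so that $u(t)\neq 0$ on the punctured neighbourhood). Then for $|u|<\delta/2$ and $|t-\tau|\leq\varepsilon$ one has $|u+q(t)|\geq\delta/2>0$ and $|q(t)|\geq\delta>0$, so each of the two terms defining $P_2$ is smooth on this set; in particular $P_2$ is bounded and continuous near $(0,\tau)$. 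The conceptual point is just that $u+q=z$ stays away from the center $c$ near a $z$–$q$ collision, so the $\mu$‑interaction with $c$ contributes no singularity to this binary collision.

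With the hypothesis verified, Proposition~\ref{pro:Sperling69} applies verbatim with $\hat\mu=m$: there exist $\lambda=(9m/2)^{1/3}=:\lambda_{m}$ and $\eta^{\pm}\in\mathbb{S}$ such that $|u(t)|=\lambda_{m}|t-\tau|^{2/3}+o(|t-\tau|^{2/3})$ as $t\to\tau^{\pm}$, together with the first‑ and second‑derivative estimates $(b)$ and $(c)$, and $u(t)/|u(t)|\to\eta^{\pm}$. Writing $\eta^{\pm}=e^{i\theta^{\pm}_{q,\tau}}$ for suitable $\theta^{\pm}_{q,\tau}\in\mathbb{R}$ and recalling $u=z-q$ gives statements $(a)$–$(d)$. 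Since the argument is a straight citation of Proposition~\ref{pro:Sperling69}, the only genuine point — and hence the ``main obstacle'', though a mild one — is the boundedness of $P_2$ near $\tau$, which is exactly where it matters that a collision in $\triangle_{q}(z)$ is a binary $z$–$q$ collision well separated from $c$; I do not foresee any further difficulty.
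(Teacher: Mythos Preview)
Your proposal is correct and follows exactly the paper's approach: rewrite $z-q$ as a perturbed Kepler problem with $\hat\mu=m$ and perturbation $P_2$, observe that $P_2$ is bounded and continuous near $\tau$ because $q(\tau)\neq 0$ (since $\tau\in\triangle_q(z)$ forces $z(\tau)=q(\tau)\neq c$), and then invoke Proposition~\ref{pro:Sperling69}. Your write-up simply supplies the $\delta,\varepsilon$ details that the paper leaves implicit in the single sentence ``$P_0,P_1,P_2$ are bounded and continuous near the collision moment sets''.
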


\subsection{Exclusion of two-body collisions for minimizers}\label{subsec: exclude twwo-body collision}

In this subsection, we follow the local deformation method,  which is one of the main approaches to excluding two-body collisions for minimizers, see \cite{ChenYu18,ST13,Yu16}. More precisely, under the blow-up technique, we can construct a new path without any two-body collision, and its action is strictly below the original colliding path, so that the minimizer does not involve any two-body collision.

We first list some useful results. For details of the proofs, we refer to \cite{FGN11,Hsu22,ST12,Yu16}.

Given an action functional $\mathcal{A}_{a,b}$ on $\Omega^{a,b}_{A,B}$ as in \eqref{eqn:action0}. Consider a colliding path $z(t)$ of the restricted one-center-two-body problem \eqref{eqn:1+1+1-body} with $\Delta_{\xi}(z)\cap (\tau-\delta,\tau+\delta)=\{\tau\}$, $\xi\in\{c,q\}$, for some $\delta>0$.

\begin{Prop}\label{pro:local1}
If $ \lvert \theta_{\xi,\tau}^+-\theta_{\xi,\tau}^- \rvert <2\pi$ where $\theta_{\xi,\tau}^{\pm}$ is given by Proposition~\ref{pro:sundmanzc}, \ref{pro:sundmanzq}, then for any $\delta^*>0$ sufficiently small,
there exists an $\epsilon=\epsilon(\delta^*)>0$  with $\lim_{\delta^* \rightarrow 0}\epsilon(\delta^*)=0$
and a collision-free path $\eta \in H^1([\tau-\delta,\tau+\delta],\mathbb C)$ such that
\begin{enumerate}
\item[$(a)$] $\eta(t)=z(t)$ for any $t \in [\tau-\delta,\tau+\delta]\backslash[\tau-\delta^*,\tau+\delta^*]$.
\item[$(b)$] $ \lvert \eta(t)-\xi(t) \rvert  \leq \epsilon$  for any $t \in [\tau-\delta^*,\tau+\delta^*]$.
\item[$(c)$] $Arg(\eta(\tau\pm\delta^*)-\xi(\tau\pm\delta^*))=Arg(z(\tau\pm\delta^*)-\xi(\tau\pm\delta^*))$.
\item[$(d)$] $\mathcal A_{\tau-\delta,\tau+\delta}(\eta)<\mathcal A_{\tau-\delta,\tau+\delta}(z)$.
\end{enumerate}
\end{Prop}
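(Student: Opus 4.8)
The plan is to exhibit the deformed path $\eta$ explicitly in blow-up coordinates centered at the colliding particle $\xi$, using a ``parabolic'' collision-free arc interpolating between the incoming and outgoing directions, and then estimate the change in action. First I would work with the relative coordinate $w(t) = z(t) - \xi(t)$, which by Proposition~\ref{pro:sundmanzc} or \ref{pro:sundmanzq} satisfies $|w(t)| \sim \lambda |t-\tau|^{2/3}$ with limiting unit vectors $e^{i\theta^\pm_{\xi,\tau}}$ as $t \to \tau^\pm$; the hypothesis $|\theta^+_{\xi,\tau} - \theta^-_{\xi,\tau}| < 2\pi$ is precisely what allows one to rotate the outgoing ray to the incoming ray through an angle strictly less than $2\pi$ without ``wrapping'' (equivalently, one can join $e^{i\theta^-}$ to $e^{i\theta^+}$ by a short arc on $\mathbb S$). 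I would normalize so $\tau = 0$ and split the action $\mathcal A_{\tau-\delta,\tau+\delta}$ into the self-interaction Kepler part for $w$ (governed by the $\hat\mu/|w|$ term, with $\hat\mu \in \{\mu, m\}$) and a remainder which is Lipschitz/bounded near the collision (coming from the perturbation $P_\xi$ and the smooth motion of $\xi(t)$ itself).

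The key construction is standard: on the subinterval $[-\delta^*,\delta^*]$ replace $w(t)$ by $\eta(t) - \xi(t) =: \zeta(t)$, where $\zeta$ is a collision-free path lying in a small disk of radius $\epsilon(\delta^*)$, chosen so that $|\zeta|$ still follows a near-Keplerian $|t|^{2/3}$ profile on most of the interval but the argument $\operatorname{Arg}\zeta(t)$ sweeps smoothly from $\operatorname{Arg} w(-\delta^*)$ to $\operatorname{Arg} w(\delta^*)$, matching positions and, via (c), matching arguments exactly at the endpoints $\pm\delta^*$. The natural model is to take $\zeta$ to be (a time-reparametrized piece of) a genuine Kepler ejection–collision–ejection arc of the two-body problem with mass $\hat\mu$ but with small nonzero angular momentum, so the true collision is replaced by a close pericenter passage; such arcs are explicit conics and their action can be computed. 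The point is that introducing a small angular momentum strictly lowers the Kepler action compared to the rectilinear collision arc (the Keplerian collision arc is a strict local maximizer of the action among nearby arcs with the same endpoints when the swept angle is $<2\pi$ — this is the content of Gordon's/Venturelli's computation), gaining a definite amount $-c_0 < 0$ that is independent of $\delta^*$ for $\delta^*$ small, while the cross terms and the error functional contribute only $O(\epsilon(\delta^*)) + O(\delta^*) = o(1)$ because both $w$ and $\zeta$ stay within $O(\epsilon)$ of $\xi$ and the perturbing potential is bounded there. Choosing $\delta^*$ small enough that this $o(1)$ is dominated by $c_0$ yields (d), and (a)--(c) are built into the construction by fixing $\eta = z$ outside $[\tau-\delta^*,\tau+\delta^*]$ and matching value and argument at the junction.

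The main obstacle, and the step I would spend the most care on, is the action estimate (d): one must verify that the ``gain'' from replacing the rectilinear Kepler collision arc by a nearby collision-free conic is strictly negative and bounded away from zero uniformly as $\delta^* \to 0$, and that this gain genuinely beats the perturbation error. Concretely this requires (i) the Sundman–Sperling asymptotics of Proposition~\ref{pro:sundmanzc}, \ref{pro:sundmanzq} to control the endpoint data $w(\pm\delta^*)$, $\dot w(\pm\delta^*)$ to leading order in $\delta^*$, so that the comparison conic is well-defined and close to the true arc; (ii) a careful bookkeeping that the replacement does not disturb the $\mu/|z|$ term in the $\xi = q$ case (resp.\ the $m/|z-q|$ term in the $\xi = c$ case), since only the colliding pair's self-potential is singular — the other, non-colliding potential stays bounded and contributes to the error; and (iii) invoking the fact that when $|\theta^+_{\xi,\tau} - \theta^-_{\xi,\tau}| < 2\pi$ the model Keplerian collision arc is \emph{not} action-minimizing (if it were $\geq 2\pi$, the collision could be the true minimizer, which is exactly why that hypothesis is needed). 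I would lean on the references \cite{FGN11, ST12, Yu16} for the quantitative version of (iii) and adapt their estimates to the present time-dependent potential $U(z,t)$, the only new feature being the moving center $\xi = q(t)$ in the $\triangle_q(z)$ case, whose velocity is $O(|t|^{-1/3})$ near $\tau$ if $\tau = 0$ but bounded if $\tau \neq 0$ — and $\tau \in \triangle_q(z)$ with $\tau \neq 0$ since a triple collision is excluded here, so $q$ moves smoothly near $\tau$ and the adaptation is routine.
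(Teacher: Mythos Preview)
The paper does not actually prove Proposition~\ref{pro:local1}: it is stated as a known result, with the sentence ``For details of the proofs, we refer to \cite{FGN11,Hsu22,ST12,Yu16}'' immediately preceding it. Your proposal is therefore not competing with a proof in the paper but rather reconstructing the standard argument from those references, and in outline it does so correctly: pass to relative coordinates $w=z-\xi$, use the Sundman--Sperling asymptotics to reduce to a perturbed Kepler collision, replace the rectilinear ejection--collision arc on $[\tau-\delta^*,\tau+\delta^*]$ by a nearby collision-free arc matching endpoints and arguments, and show that the Kepler self-interaction term gains a fixed negative amount while the bounded perturbation contributes $o(1)$ as $\delta^*\to 0$.

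Two small remarks on the write-up. First, the phrase ``the Keplerian collision arc is a strict local maximizer of the action'' is not accurate; the correct statement is that it is \emph{not a minimizer} (there exists a collision-free competitor with strictly smaller action), which is weaker and is what is actually proved in the cited works via either an explicit parabolic/elliptic comparison arc or Marchal-type averaging. Second, the gain in the Kepler part is not ``independent of $\delta^*$'' in absolute terms; rather, after blow-up rescaling the gain is a fixed positive constant at the blown-up scale, which translates back to a gain of order $(\delta^*)^{1/3}$ in the original action --- still dominating the perturbation error, which is $O(\delta^*)$ or smaller, so the conclusion (d) survives. With those corrections your sketch matches the standard proof in \cite{FGN11,ST12,Yu16}.
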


For a colliding path $z(t)$ of \eqref{eqn:1+1+1-body} with $\Delta_{\xi}(z)\cap [\tau,\tau+\delta)=\{\tau\}$ or $\Delta_{\xi}(z)\cap (\tau-\delta,\tau]=\{\tau\}$, $\xi\in\{c,q\}$, for some $\delta>0$, we have the following results.


\begin{Prop}\label{pro:local2}
Given $\theta_0 \in \mathbb R$. If $ \lvert \theta_{\xi,\tau}^+-\theta_0 \rvert <\pi$, where $\theta_{\xi,\tau}^{+}$ is given by Proposition~\ref{pro:sundmanzc}, \ref{pro:sundmanzq}, then for any $\delta^*>0$ sufficiently small,
there exists an $\epsilon=\epsilon(\delta^*)>0$  with $\lim_{\delta^* \rightarrow 0}\epsilon(\delta^*)=0$
and a collision-free path $\eta \in H^1([\tau,\tau+\delta],\mathbb C)$ such that
\begin{enumerate}
\item[$(a)$] $\eta(t)=z(t)$ for any $t \in [\tau+\delta^*,\tau+\delta]$.
\item[$(b)$] $ \lvert \eta(t)-\xi(t) \rvert  \leq \epsilon$  for any $t \in [\tau,\tau+\delta^*]$.
\item[$(c)$] $Arg(\eta(\tau)-\xi(\tau))=\theta_0$ and $Arg(\eta(\tau+\delta^*)-\xi(\tau+\delta^*))=Arg(z(\tau+\delta^*)-\xi(\tau+\delta^*))$.
\item[$(d)$] $\mathcal A_{\tau,\tau+\delta}(\eta)<\mathcal A_{\tau,\tau+\delta}(z)$.
\end{enumerate}
\end{Prop}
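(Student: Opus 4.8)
The plan is the classical blow-up / local deformation argument, carried out in the relative coordinate $w:=z-\xi$. It suffices to treat the configuration $\Delta_\xi(z)\cap[\tau,\tau+\delta)=\{\tau\}$ with the deformation performed on $[\tau,\tau+\delta]$; the mirror configuration is obtained by the time reflection $t\mapsto 2\tau-t$. By the equations displayed just before Proposition~\ref{pro:sundmanq-}, $w$ solves a perturbed Kepler equation $\ddot w=-\hat\mu\,w/|w|^3+P(w,t)$ with $\hat\mu=\mu$ if $\xi=c$ and $\hat\mu=m$ if $\xi=q$, where $P$ is bounded near $\tau$ since the distance from $z$ to the non-colliding body stays bounded below near $\tau$ (as $q(\tau)\neq0$ for $\tau\in\Delta_\xi(z)$). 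Put $\lambda:=(9\hat\mu/2)^{1/3}$ and $e^{i\theta_{\xi,\tau}^+}:=\lim_{t\to\tau^+}w/|w|$, as in Proposition~\ref{pro:sundmanzc},~\ref{pro:sundmanzq}. Shrinking $\delta$ so that $\tau$ is the only collision moment in $[\tau,\tau+\delta]$ (Lemma~\ref{lem:isolated}), I would first isolate the Keplerian part of the action,
\[
\mathcal A_{\tau,\tau+\delta^*}(z)=\int_\tau^{\tau+\delta^*}\Bigl(\tfrac12|\dot w|^2+\tfrac{\hat\mu}{|w|}\Bigr)\,dt+R_z ,
\]
with $R_z$ absorbing the bounded potential term and, when $\xi=q$, the $\dot q$-cross terms. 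The quantitative heart is that this Keplerian integral equals $\tfrac43\lambda^2(\delta^*)^{1/3}\bigl(1+o(1)\bigr)$ as $\delta^*\to0^+$: the radial contribution is immediate from Proposition~\ref{pro:sundmanzc},~\ref{pro:sundmanzq}(a)--(b), while the angular contribution $\tfrac12|w|^2\dot\theta_w^2$ is negligible because $\frac{d}{dt}(w\wedge\dot w)=w\wedge P=O(|w|)$ forces $w\wedge\dot w$ to have a limit at $\tau$, which must vanish (otherwise $\theta_w$ would blow up like $|t-\tau|^{-1/3}$, contradicting part~(d)), so that $|w|^2\dot\theta_w^2=o(|t-\tau|^{-2/3})$. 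In particular $\|\dot w\|_{L^2[\tau,\tau+\delta^*]}=O((\delta^*)^{1/6})$, hence $R_z=O((\delta^*)^{2/3})$.

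To build the competitor I would exploit that, in the zero-energy Maupertuis metric $ds_J^2=\tfrac{2\hat\mu}{|w|}(dr^2+r^2d\theta^2)$, the Kepler singularity is a flat cone of total angle $\pi$: setting $\rho=2\sqrt{2\hat\mu\,r}$ gives $ds_J^2=d\rho^2+\tfrac14\rho^2d\theta^2$. Hence a point near the apex on one generator, and a point on another, are joined by a metric segment strictly shorter than the through-apex (collision--ejection) path \emph{precisely} when the swept angle is $<\pi$, that segment being a parabolic Kepler arc with positive perihelion. Concretely, let $R_*:=|w(\tau+\delta^*)|$, $\psi_*:=\Arg w(\tau+\delta^*)$, $\alpha:=|\psi_*-\theta_0|$; since $R_*=\lambda(\delta^*)^{2/3}(1+o(1))$ and $\psi_*=\theta_{\xi,\tau}^++o(1)$ by Sundman--Sperling, the hypothesis $|\theta_{\xi,\tau}^+-\theta_0|<\pi$ gives $\alpha\leq\pi-\rho_0$ for a fixed $\rho_0>0$ and all small $\delta^*$. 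Let $\gamma$ be the parabolic ($h=0$) Kepler arc whose perihelion lies on $e^{i\theta_0}\mathbb R^+$ at distance $R_*\cos^2(\alpha/2)$ and which reaches $w(\tau+\delta^*)$ after sweeping the angle $\alpha$ in its natural transit time $T_0$; one finds $\mathcal J(\gamma)=2\sqrt{2\hat\mu R_*}\,\sin(\alpha/2)$ — strictly below the collision--ejection length $2\sqrt{2\hat\mu R_*}$ by the factor $\sin(\alpha/2)<1$ — and a Barker-equation computation gives $T_0=3\delta^*\sin(\alpha/2)\bigl(1-\tfrac23\sin^2(\alpha/2)\bigr)(1+o(1))$. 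Set $\eta:=z$ on $[\tau+\delta^*,\tau+\delta]$ and $\eta(t):=\gamma\bigl(\tfrac{T_0}{\delta^*}(t-\tau)\bigr)+\xi(t)$ on $[\tau,\tau+\delta^*]$, and $\epsilon(\delta^*):=2\lambda(\delta^*)^{2/3}\to0$. Then $\eta\in H^1([\tau,\tau+\delta],\mathbb C)$ (the pieces match at $\tau+\delta^*$), properties (a)--(c) hold by construction, and $\eta$ is collision-free: $|\eta-\xi|=|\gamma(\cdot)|\in[R_*\cos^2(\alpha/2),R_*]\subset(0,\epsilon]$ keeps it off $\xi$ and within $\epsilon$, $\epsilon$-closeness together with $|q(\tau)|>0$ keeps it off the other body, and on $[\tau+\delta^*,\tau+\delta]$ it is the locally collision-free path $z$.

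For the strict decrease (d), a reparametrization identity (using $\tfrac12|\dot\gamma|^2\equiv\hat\mu/|\gamma|$ along the $h=0$ arc) gives
\[
\int_\tau^{\tau+\delta^*}\!\Bigl(\tfrac12|\dot\eta-\dot\xi|^2+\tfrac{\hat\mu}{|\eta-\xi|}\Bigr)dt=\tfrac12\,\mathcal J(\gamma)\Bigl(\tfrac{T_0}{\delta^*}+\tfrac{\delta^*}{T_0}\Bigr)=\tfrac23\lambda^2(\delta^*)^{1/3}\,f\!\bigl(\sin\tfrac\alpha2\bigr)(1+o(1)),\quad f(x):=3x^2-2x^4+\tfrac1{3-2x^2},
\]
after inserting $R_*\sim\lambda(\delta^*)^{2/3}$ and the above $T_0$. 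The elementary identity $2-f(x)=\dfrac{(1-x^2)\bigl(4(x^2-1)^2+1\bigr)}{3-2x^2}\geq\dfrac{1-x^2}{3}$ on $(0,1)$, with $\sin(\alpha/2)\leq\cos(\rho_0/2)<1$, yields $f(\sin\tfrac\alpha2)\leq2-\tfrac13\sin^2(\rho_0/2)$. Adding the remainder (bounded exactly as $R_z$, again $O((\delta^*)^{2/3})$) and comparing with $\mathcal A_{\tau,\tau+\delta^*}(z)=\tfrac43\lambda^2(\delta^*)^{1/3}(1+o(1))$, the deformed path has strictly smaller action on $[\tau,\tau+\delta^*]$ for all small $\delta^*$; since $\eta\equiv z$ on $[\tau+\delta^*,\tau+\delta]$, $\mathcal A_{\tau,\tau+\delta}(\eta)<\mathcal A_{\tau,\tau+\delta}(z)$, which is (d).

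I expect the main obstacle to be the two estimates that render the scheme \emph{quantitative}: the sharp asymptotics $\int(\tfrac12|\dot w|^2+\hat\mu/|w|)\sim\tfrac43\lambda^2(\delta^*)^{1/3}$, which forces one to upgrade the radial Sundman--Sperling bounds to control of $|\dot w|^2$ through the angular-momentum decay; and the bookkeeping ensuring the competitor's \emph{definite} gap $2-f(\sin\tfrac\alpha2)\geq\tfrac13\sin^2(\rho_0/2)>0$ dominates the $O((\delta^*)^{2/3})$ perturbative error — which is exactly where the strictness of $|\theta_{\xi,\tau}^+-\theta_0|<\pi$ is used, since $f(\sin\tfrac\alpha2)\uparrow2$ as $\alpha\uparrow\pi$.
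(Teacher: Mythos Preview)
The paper does not give its own proof of this proposition: it is stated as one of ``some useful results'' whose proofs are deferred to \cite{FGN11,Hsu22,ST12,Yu16}. Your argument is the standard blow-up/local deformation scheme from precisely those references --- pass to the relative coordinate $w=z-\xi$, use the Sundman--Sperling asymptotics to identify the leading Kepler action $\tfrac43\lambda^2(\delta^*)^{1/3}$, exploit the flat-cone structure of the zero-energy Jacobi metric to produce a parabolic arc that undercuts the collision--ejection length by the factor $\sin(\alpha/2)<1$, and absorb the bounded perturbation into an $O((\delta^*)^{2/3})$ remainder --- and it is carried out correctly, including the angular-momentum decay, the Barker-equation timing, and the algebraic identity $2-f(x)=(1-x^2)\bigl(4(1-x^2)^2+1\bigr)/(3-2x^2)$ that gives the uniform gap. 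This is exactly the approach the paper is citing, so there is nothing to contrast.
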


\begin{Prop}\label{pro:local3}
Given $\theta_0 \in \mathbb R$. If $ \lvert \theta_{\xi,\tau}^--\theta_0 \rvert <\pi$, where $\theta_{\xi,\tau}^{-}$ is given by Proposition~\ref{pro:sundmanzc}, \ref{pro:sundmanzq}, then for any $\delta^*>0$ sufficiently small, there exists an $\epsilon=\epsilon(\delta^*)>0$  with $\lim_{\delta^* \rightarrow 0}\epsilon(\delta^*)=0$
and a collision-free path $\eta \in H^1([\tau-\delta,\tau],\mathbb C)$ such that
\begin{enumerate}
\item[$(a)$] $\eta(t)=z(t)$ for any $t \in [\tau-\delta,\tau-\delta^*]$.
\item[$(b)$] $ \lvert \eta(t)-\xi(t) \rvert  \leq \epsilon$  for any $t \in [\tau-\delta^*,\tau]$.
\item[$(c)$] $Arg(\eta(\tau)-\xi(\tau))=\theta_0$ and $Arg(\eta(\tau-\delta^*)-\xi(\tau-\delta^*))=Arg(z(\tau-\delta^*)-\xi(\tau-\delta^*))$.
\item[$(d)$] $\mathcal A_{\tau-\delta,\tau}(\eta)<\mathcal A_{\tau-\delta,\tau}(z)$.
\end{enumerate}
\end{Prop}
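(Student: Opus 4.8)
\noindent\textbf{Proof strategy for Proposition~\ref{pro:local3}.}
The plan is to perform a localized Keplerian deformation near the collision time $\tau$, in the spirit of \cite{ST12,Hsu22,Yu16}. Since $\eta$ is required to coincide with $z$ on $[\tau-\delta,\tau-\delta^*]$, it suffices to produce a collision-free $\eta\in H^1([\tau-\delta^*,\tau],\C)$ with $\eta(\tau-\delta^*)=z(\tau-\delta^*)$, $\Arg(\eta(\tau)-\xi(\tau))=\theta_0$, $\sup_{[\tau-\delta^*,\tau]}|\eta-\xi|\le\epsilon$, and $\mathcal A_{\tau-\delta^*,\tau}(\eta)<\mathcal A_{\tau-\delta^*,\tau}(z)$; then $(a)$--$(d)$ follow. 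First I would shrink $\delta$ so that $[\tau-\delta,\tau]$ avoids the triple-collision time $0$ and every collision moment of $z$ other than $\tau$; by Lemma~\ref{lem:isolated} this is possible, and then $\xi$ is smooth with $\dot\xi$ bounded on $[\tau-\delta,\tau]$, while $\rho:=|z-\xi|$ is monotone decreasing on $[\tau-\delta^*,\tau]$ by the Sundman-Sperling estimate for $\dot\rho$ (Proposition~\ref{pro:sundmanzc} or~\ref{pro:sundmanzq}). In the relative coordinate $w:=z-\xi$ the motion is a perturbed Kepler problem $\ddot w=-\hat\mu\,w/|w|^{3}+P(w,t)$, with $\hat\mu\in\{\mu,m\}$ and $P$ bounded continuous near $\tau$ (the rewriting preceding Proposition~\ref{pro:sundmanq-}); writing $w=\rho e^{i\phi}$, Proposition~\ref{pro:sundmanzc} or~\ref{pro:sundmanzq} gives, with $\lambda=(9\hat\mu/2)^{1/3}$, that $\rho(t)=\lambda|t-\tau|^{2/3}+o(|t-\tau|^{2/3})$, $\dot\rho(t)=-\tfrac23\lambda|t-\tau|^{-1/3}+o(|t-\tau|^{-1/3})$, and $\phi(t)\to\theta^-_{\xi,\tau}$ as $t\to\tau^-$.

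Next I would build the competitor. Choose $\delta_1\in(0,\delta^*)$, to be fixed below, and set $\rho_1:=\rho(\tau-\delta_1)=\lambda\delta_1^{2/3}(1+o(1))$. On $[\tau-\delta^*,\tau-\delta_1]$ take $\eta:=z$; it is collision-free there and, by monotonicity of $\rho$, satisfies $|\eta-\xi|\le\rho(\tau-\delta^*)=\lambda(\delta^*)^{2/3}(1+o(1))=:\epsilon(\delta^*)\to0$. On $[\tau-\delta_1,\tau]$ I replace the near-collision piece of $z$ by a collision-free near-collision Keplerian arc of the center $\xi$: a path $\eta=\xi+\rho_\eta e^{i\phi_\eta}$ with $\rho_\eta(\tau-\delta_1)=\rho_1$, $\phi_\eta(\tau-\delta_1)=\phi(\tau-\delta_1)$ (so that $\eta(\tau-\delta_1)=z(\tau-\delta_1)$), $0<\rho_\eta\le\rho_1$ throughout, and $\phi_\eta(\tau)=\theta_0$, sweeping a total angle $\Delta\Phi=|\theta^-_{\xi,\tau}-\theta_0|+o(1)$. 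Because $|\theta^-_{\xi,\tau}-\theta_0|<\pi$, such an arc can be chosen collision-free inside $\{|w|\le\rho_1\}$, e.g.\ a slightly off-axis ejection ellipse, or an explicit radius-varying arc going ``around'' the center on one side, so that $|\eta-\xi|\le\epsilon$ on all of $[\tau-\delta^*,\tau]$; the resulting $\eta$ is continuous and piecewise $C^1$, hence in $H^1$, and is extended by $z$ on $[\tau-\delta,\tau-\delta^*]$.

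The action comparison is the heart of the matter. For a lower bound on $z$, the Sundman-Sperling asymptotics together with the boundedness of $|P|$, $|\dot\xi|$ and the non-colliding part of the potential over the short interval $[\tau-\delta_1,\tau]$ show that the angular kinetic term $\tfrac12\rho^2\dot\phi^2$, the $\dot\xi$-cross term, and the perturbative and non-colliding potential terms all contribute $o(\delta_1^{1/3})$; hence $\mathcal A_{\tau-\delta_1,\tau}(z)\ge\int_{\tau-\delta_1}^{\tau}\bigl(\tfrac12\dot\rho^2+\hat\mu/\rho\bigr)\,dt-o(\delta_1^{1/3})=c_{\mathrm K}\hat\mu^{2/3}\delta_1^{1/3}-o(\delta_1^{1/3})$ with $c_{\mathrm K}=6(2/9)^{1/3}$. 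For an upper bound on $\eta$, the Keplerian scaling shows that a suitably chosen arc as above has action $c(\Delta\Phi)\hat\mu^{2/3}\delta_1^{1/3}+o(\delta_1^{1/3})$ on $[\tau-\delta_1,\tau]$, where $c:[0,\pi]\to(0,\infty)$ is continuous and strictly increasing with $c(\Delta\Phi)<c_{\mathrm K}$ for every $\Delta\Phi<\pi$. Since $\eta=z$ off $[\tau-\delta_1,\tau]$, subtracting yields $\mathcal A_{\tau-\delta^*,\tau}(\eta)-\mathcal A_{\tau-\delta^*,\tau}(z)\le\bigl(c(\Delta\Phi)-c_{\mathrm K}\bigr)\hat\mu^{2/3}\delta_1^{1/3}+o(\delta_1^{1/3})<0$ once $\delta_1$, hence $\delta^*$, is small, which gives $(d)$.

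I expect the crux to be the sharp inequality $c(\Delta\Phi)<c_{\mathrm K}$ for every $\Delta\Phi<\pi$: one must exhibit an explicit near-collision Keplerian competitor, or invoke a Gordon-type minimization lemma as in \cite{Gordon77,ST12,Hsu22}, whose angular sweep costs strictly less than the saved radial collision action precisely under the hypothesis $|\theta^-_{\xi,\tau}-\theta_0|<\pi$, the value $\pi$ being exactly the threshold beyond which no collision-free arc can beat the collision path. The remaining points --- uniformity of the $o(\cdot)$ estimates, the piecewise-$C^1$ gluing matching the value $z(\tau-\delta^*)$, and the bookkeeping of $P$, of the second attracting center, and of the motion of $\xi$ --- should be routine given the Sundman-Sperling estimates and the boundedness of $P$ and $\dot\xi$. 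As a shortcut, one may instead deduce the statement from Proposition~\ref{pro:local2} by the time reversal $t\mapsto 2\tau-t$ applied to the companion system with primary $\widetilde q(s):=q(2\tau-s)$, which interchanges $\theta^-_{\xi,\tau}$ with $\theta^+_{\xi,\tau}$.
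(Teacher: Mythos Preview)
The paper does not actually prove Proposition~\ref{pro:local3}: it is stated as a known result with the proof deferred to the references \cite{FGN11,Hsu22,ST12,Yu16}. Your proposal reconstructs precisely the standard blow-up/local-deformation argument from that literature --- pass to the relative coordinate $w=z-\xi$, use the Sundman--Sperling asymptotics to isolate the dominant Kepler contribution $\int(\tfrac12\dot\rho^2+\hat\mu/\rho)\,dt\sim c_{\mathrm K}\hat\mu^{2/3}\delta_1^{1/3}$, and replace the collision arc by a collision-free Kepler arc whose cost is strictly smaller whenever the swept angle is below $\pi$ --- so your approach is the one the paper is implicitly invoking.

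Two remarks. First, the time-reversal shortcut you mention at the end is entirely legitimate here and is the cleanest route: the primary $q$ satisfies $q(t)=q(-t)$ on $[-T,T]$ (equation~(\ref{eqn:symmetric})), and more generally the equations (\ref{eqn:newton}) and (\ref{eqn:1+1+1-body}) are time-reversible, so the substitution $t\mapsto 2\tau-t$ carries the setting of Proposition~\ref{pro:local3} exactly onto that of Proposition~\ref{pro:local2}, swapping $\theta^-_{\xi,\tau}$ with $\theta^+_{\xi,\tau}$; this avoids repeating the entire estimate. Second, in your direct argument the only point requiring care is the strict inequality $c(\Delta\Phi)<c_{\mathrm K}$ for $\Delta\Phi<\pi$; this is precisely Gordon's lemma \cite{Gordon77} (or its refinements in \cite{ST12,Yu16}), so you should cite it explicitly rather than leave it as an expectation.
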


As an application, we prove the following result.

\begin{Thm}\label{thm:two-body}
Given $T>0$, a collision Kepler system $(q,c)$ satisfying (\ref{eqn:newton}) and  $(Q1) - (Q3)$. Let $\mathcal{A}_{-T,0}\ (\mathcal{A}_{0,T})$ be an action functional on $\Omega^{-T,0}_{A,A_0}\ (\Omega^{0,T}_{A_0,A})$ as in \eqref{eqn:action0} and assume $z(t)$ is an associated minimizer. Then $z(t)$ possesses no two-body collision on $(-T,0]\ (\text{or}\ [0,T))$.
\end{Thm}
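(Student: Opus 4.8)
The plan is to argue by contradiction: suppose the minimizer $z(t)$ has a two-body collision at some $\tau$, and produce a competitor path of strictly smaller action, contradicting minimality. By Lemma~\ref{lem:isolated}, the collision moments of the types $\triangle_c(z)$ and $\triangle_q(z)$ are isolated, so one may fix $\delta>0$ with the relevant collision set intersected with $(\tau-\delta,\tau+\delta)$ (or a one-sided neighborhood, if $\tau$ is an endpoint) equal to $\{\tau\}$. The colliding particle is either $c$ (when $z(\tau)=0\neq q(\tau)$) or $q$ (when $z(\tau)=q(\tau)\neq 0$); note the $q$-$c$ collision at $t=0$ is \emph{not} in the interior $(-T,0)$ and at $t=0$ we only have the one-sided statement, so for the $z$-$c$ and $z$-$q$ collisions we apply Propositions~\ref{pro:sundmanzc} and \ref{pro:sundmanzq} to get the asymptotic angles $\theta^{\pm}_{\xi,\tau}$ with $\xi\in\{c,q\}$.

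First I would treat an \emph{interior} collision $\tau\in(-T,0)$. The local deformation result Proposition~\ref{pro:local1} applies as soon as $|\theta^{+}_{\xi,\tau}-\theta^{-}_{\xi,\tau}|<2\pi$. To verify this I would invoke the monotonicity properties of minimizers established in Section~\ref{sec21}: since the argument of $z$ (or of $z-q$) is monotone near a collision and the potential forces the rotation to stay within a half-turn, the incoming and outgoing asymptotic directions cannot differ by a full $2\pi$. (If in fact $\theta^+_{\xi,\tau}=\theta^-_{\xi,\tau}$, which is the "brake-type" collision, the bound $0<2\pi$ is trivial.) Then Proposition~\ref{pro:local1}(a)--(d) gives a collision-free $\eta$ on $[\tau-\delta,\tau+\delta]$ agreeing with $z$ near the endpoints and with strictly smaller action there; gluing $\eta$ to $z$ outside $[\tau-\delta,\tau+\delta]$ yields a path in the same space $\Omega$ (the boundary values at $-T$ and $0$ are untouched) with strictly smaller total action $\mathcal A_{-T,0}$, contradicting that $z$ is a minimizer.

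Next I would handle a \emph{boundary} collision, i.e. $\tau=-T$ or $\tau=0$ (recalling the theorem only claims no collision on $(-T,0]$, so $\tau=-T$ with $z(-T)\in A$ is the relevant endpoint case along with a possible three-body-adjacent $z$-$c$ or $z$-$q$ collision at $\tau=0$). Here I would use the one-sided deformation Propositions~\ref{pro:local2} or \ref{pro:local3}: pick $\theta_0$ to be the argument of the boundary datum ($z(\tau)\in A$, respectively $z(\tau)\in A_0$ with $0\in A_0$), and check $|\theta^{+}_{\xi,\tau}-\theta_0|<\pi$ (resp.\ $|\theta^{-}_{\xi,\tau}-\theta_0|<\pi$). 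Again this angular bound is where I expect the real work: it must follow from the monotonicity of $\theta(t)$ near the endpoint together with the fact that the boundary sets $A,A_0$ lie in the closed upper half-plane $\mathbb C^+$, so the asymptotic collision direction is pinned to within a half-turn of the boundary ray. Granting the bound, the one-sided deformation produces $\eta$ with the same endpoint behavior (including $\mathrm{Arg}(\eta(\tau)-\xi(\tau))=\theta_0$, so $\eta(\tau)$ still lies on the correct boundary set after a harmless scaling) and strictly smaller action, again contradicting minimality.

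The main obstacle is exactly the verification of the angular separation hypotheses ($<2\pi$ in the interior, $<\pi$ at the boundary) needed to invoke Propositions~\ref{pro:local1}--\ref{pro:local3}; everything else is the standard cut-and-glue argument. This is handled by the monotonicity of the argument of the minimizer proved in Section~\ref{sec21}: the potential $U$ is strictly monotone in $\theta$ on $(0,\pi)$ for fixed $r$, which forces $\theta(t)$ for the minimizer to be monotone, hence of bounded variation less than $2\pi$ near an isolated collision, and less than $\pi$ when additionally constrained to start or end on a ray in $\mathbb C^+$. One subtlety to address carefully is that for the $z$-$q$ collision the relevant "argument" is that of $z-q$, not of $z$, so one must transfer the monotonicity statement to the relative coordinate (using $(Q3)$, that $q$ stays on the negative real axis) before reading off the bound.
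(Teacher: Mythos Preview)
Your overall strategy---local deformation via Propositions~\ref{pro:local1}--\ref{pro:local3}---is the right one, but you have over-complicated the argument and misread the boundary situation.

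First, the interval in the statement is $(-T,0]$, so $\tau=-T$ is not under consideration at all; your discussion of that endpoint is superfluous here (it is treated later, in Theorem~\ref{thm:two-body1}, for the specific ray boundary data). Second, the endpoint $\tau=0$ is handled by a one-line observation you missed: since $q(0)=c=0$ by $(Q1)$, any collision of $z$ with either $c$ or $q$ at $t=0$ is automatically a \emph{three}-body collision, so by definition $0\notin\Delta_c(z)\cup\Delta_q(z)$. Thus only interior collisions $\tau\in(-T,0)$ remain, and Propositions~\ref{pro:local2}, \ref{pro:local3} are never needed. (Your proposed use of them would in any case be problematic: $A,A_0$ are arbitrary closed subsets of $\mathbb{C}^+$, not rays, so ``the argument of the boundary datum'' is not well-defined.)

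For the interior case, the paper obtains the angular bound far more cheaply than by invoking the Section~\ref{sec21} monotonicity lemmas. Since the action functional is invariant under complex conjugation and $A,A_0\subset\mathbb{C}^+$, one may assume without loss of generality that $\theta(t)\in[0,\pi]$ on $[-T,0]$ (reflect any lower-half-plane excursions). Then both asymptotic angles $\theta^{\pm}_{\xi,\tau}$ automatically lie in $[0,\pi]$; for $\xi=q$ this uses $(Q3)$, since $q$ lies on the negative real axis and hence $z-q$ has the same imaginary part as $z$. Consequently $|\theta^+_{\xi,\tau}-\theta^-_{\xi,\tau}|\le\pi<2\pi$ trivially, and Proposition~\ref{pro:local1} applies directly. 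Your route through the monotonicity lemmas could be made to give the same bound, but those lemmas are formulated for neighborhoods of $t=0$ rather than an arbitrary interior $\tau$, so you would need to restate and reprove them---unnecessary work given the two-line reflection argument.
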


\begin{proof}
According to the reversibility of \eqref{eqn:1+1+1-body}, it sufficient to prove for any $t\in (-T,0]$. Since \eqref{eqn:1+1+1-body} is also symmetric with respect to the real axis, without loss of generality, we assume $z(t)=r(t)e^{i\theta(t)}$ satisfies $\theta(t) \in [0,\pi]$ for all $t\in[-T,0]$.

Suppose $\Delta_{q}(z)\cup \Delta_{c}(z)\neq \emptyset$. Choose a collision moment $\tau \in \Delta_{\xi}(z)$ with $\xi\in\{c,q\}$. We first observe that $z$ is impossible to experience a two-body collision with $\xi$ at $t=0$ since $q=c=0$ at $t=0$.  This tells us that $\tau \neq 0$.

If $\tau \in (-T,0)$, by Proposition~\ref{pro:sundmanzc}, \ref{pro:sundmanzq} and the assumption $\theta \in [0,\pi]$,  the limit angles $\theta^{\pm}_{\xi,\tau} \in [0,\pi]$ exist.
According to Proposition~\ref{pro:local1}, we get a contradiction to the assumption that $z$ is a minimizer.
\end{proof}

\section{Monotonicities in restricted one-center-two-body problem}\label{sec21}
In restricted one-center-two-body problem, we explore several monotonicities for both the potential function $U$ and the action minimizer $z$. Based on these important properties, it is available to prove Sundman-Sperling estimates for the action minimizer of this problem, i.e. Theorem \ref{main1}.

\subsection{Monotonicity of the potential function $U$}
In this subsection, to characterize the restricted one-center-two-body problem \eqref{eqn:1+1+1-body}, we firstly reveal the monotonicity of the potential $U(z,t)$, which is one of the fundamental property in this problem. With this monotonicity, a series of nontrivial conclusions can be proven in the later sections.

\begin{Lem}\label{lem:anglepotential}
Given potential $U$ as in (\ref{eqn:potential}). For any fixed $t\in \mathbb{R}$ and $r\in\mathbb{R}^+$, we have
$$U(re^{i\theta_1},t) < U(re^{i\theta_2},t),\ \forall\ 0\leq |\theta_1| < |\theta_2|\leq \pi.$$
In particular, for any $\eta\in[-\pi,\pi]\backslash\{0\}$,  we have $U(r,t) < U(re^{i\eta},t)$.
\end{Lem}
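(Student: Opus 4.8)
The plan is to work with a fixed $t$ and a fixed radius $r>0$, and simply compare the two terms of $U(re^{i\theta},t)$ as functions of $\theta$. The first term $\mu/|re^{i\theta}|=\mu/r$ is manifestly independent of $\theta$, so the entire $\theta$-dependence of $U$ comes from the second term $m/|re^{i\theta}-q(t)|$. By condition $(Q3)$ we may write $q(t)=-\rho$ with $\rho=|q(t)|\ge 0$ (and $\rho>0$ when $t\ne 0$, while at $t=0$ the claim is trivial since the second term is again $m/r$, constant in $\theta$). Thus it suffices to show that the distance
\[
d(\theta) := |re^{i\theta}-q(t)| = |re^{i\theta}+\rho| = \sqrt{r^2+\rho^2+2r\rho\cos\theta}
\]
is a strictly decreasing function of $|\theta|$ on $[0,\pi]$; since $m/d(\theta)$ is then strictly increasing in $|\theta|$ and $\mu/r$ is constant, $U(re^{i\theta},t)$ is strictly increasing in $|\theta|$, which is exactly the assertion.

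The computation of $d(\theta)^2 = r^2+\rho^2+2r\rho\cos\theta$ makes the monotonicity transparent: $\cos\theta$ is a strictly decreasing function of $|\theta|$ on $[0,\pi]$ (it is even, and strictly decreasing on $[0,\pi]$), and $2r\rho>0$ whenever $\rho>0$, so $d(\theta)^2$ is strictly decreasing in $|\theta|$, hence so is $d(\theta)$, hence $m/d(\theta)$ is strictly increasing in $|\theta|$. Concretely, for $0\le|\theta_1|<|\theta_2|\le\pi$ we have $\cos|\theta_1|>\cos|\theta_2|$, so $d(\theta_1)^2-d(\theta_2)^2 = 2r\rho(\cos\theta_1-\cos\theta_2) = 2r\rho(\cos|\theta_1|-\cos|\theta_2|)>0$, giving $d(\theta_1)>d(\theta_2)$ and therefore
\[
U(re^{i\theta_1},t) = \frac{\mu}{r}+\frac{m}{d(\theta_1)} < \frac{\mu}{r}+\frac{m}{d(\theta_2)} = U(re^{i\theta_2},t).
\]
The final ``in particular'' sentence is the special case $\theta_1=0$, $\theta_2=\eta$ (using $|\eta|\in(0,\pi]$), combined with the observation that $U(re^{i\eta},t)=U(re^{-i\eta},t)$ since $d$ depends only on $|\eta|$.

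There is essentially no serious obstacle here; the only point requiring a moment's care is the degenerate case $\rho=|q(t)|=0$, i.e. $t=0$, where the separating factor $2r\rho$ vanishes and $d(\theta)\equiv r$ is constant — but then the \emph{strict} inequality in the lemma as stated would fail, so one should note that the lemma is applied for $t\ne 0$ (equivalently, $q(t)\ne 0$), or interpret the statement with that implicit hypothesis; in any case the argument above is uniform and clean for all $t$ with $q(t)\ne 0$. One should also make sure the identification $q(t)\in\mathbb{R}^-$ (condition $(Q3)$) is invoked explicitly, since it is what lets us reduce the two-dimensional distance to the single cosine expression; without $(Q3)$ the level sets of the distance would not be symmetric about the real axis and the statement's symmetry in $\pm\theta$ would break.
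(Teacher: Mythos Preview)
Your proof is correct and follows essentially the same approach as the paper: both express $|re^{i\theta}-q(t)|^2 = r^2 + |q(t)|^2 + 2r|q(t)|\cos\theta$ using $(Q3)$ and read off the monotonicity in $|\theta|$ from that of $\cos\theta$. Your observation about the degenerate case $q(t)=0$ is a nice bit of extra care that the paper does not spell out.
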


\begin{proof}
Let $t\in \mathbb R$ and  $u=re^{i\theta}$ with $r \in \mathbb{R}^+$ and $\theta\in[-\pi,0]$ (resp. $[0, \pi]$),
then we can write $U(u,t)$ as
\begin{align*}
U(u,t)=\frac{\mu}{ \lvert u \rvert }+\frac{m}{ \lvert u-q(t) \rvert }= \frac{\mu}{r}+\frac{m}{\left(r^2+ \lvert q(t) \rvert ^2+2r \lvert q(t) \rvert \cos\theta\right)^{1/2}}.
\end{align*}
We see that $U(u,t)$ is strictly increasing (resp.  strictly decreasing) on $[0,\pi]$ (resp.  $[-\pi,0]$) with respect to $\theta$. Hence, the lemma holds.
\end{proof}

\subsection{Monotonicity of the arguments for an action minimizer $z$ and $\dot z$}\label{subsec: 2.2}

Based on the monotonicity of the potential function $U$ in Lemma \ref{lem:anglepotential}, we can further explore the monotonicity of the argument for the action minimizer.

Given $T>0$. Assume $z(t)$ is the action minimizer of $\mathcal{A}_{-T,0}$ \eqref{eqn:action0} on the path space $\Omega^{-T,0}_{A,A_0}\subset H^1([-T,0],\mathbb{C})$. By Lemma \ref{lem:isolated}, there exists a $t_0\in(0,T)$ such that $z(t)$ is smooth on $[-t_0,0)$ (resp. $(0,t_0]$). Next, we consider the following two cases: $z(0)\neq 0$ and $z(0)=0$. For the later case, we need to assume $0\in A_0$. The former means there is no three-body collision at moment $0$. Therefore, we can write $\theta(0)=\lim_{t\rightarrow0^-}\theta(t)$ (resp. $\theta(0)=\lim_{t\rightarrow0^+}\theta(t)$). The latter means there is a three-body collision at moment $0$. Since these two cases have different nature, in this subsection, we will deal with them separately.

Firstly, we assume $z(0)\neq 0$. Since the restricted one-center-two-body problem \eqref{eqn:1+1+1-body} satisfies the symmetry \eqref{eqn:symmetric}, we only need to consider $z(t)$ in $t\in[-t_0,0]$. Moreover,
since the potential $U$ is symmetric with respect to the real axis, without loss of generality, we also assume $\theta(-t_0)\in[0,\pi]$, that is $z(-t_0)\in \mathbb{C}^+_*:=\{x+iy\in\mathbb{C}: y\geq 0\}\setminus\{0\}$. Then we prove the monotonicity of the argument $\theta(t)$ as follows.

\begin{Lem}\label{lem:monotonity0}
Given a $t_0\in(0,T]$ and an action minimizer $z(t)=r(t)e^{\theta(t) i}$ of $\mathcal A_{-T,0}$ on $\Omega^{-T,0}_{A,A_0}$. Assume $z(t)$ is smooth on $(-t_0,0)$, $z(0)\neq 0$ and $z(-t_0)\in \mathbb{C}^+_*$, then
\begin{enumerate}
\item[$(a)$] the argument $\theta(t)$ has no local maximum in $\{t\in(-t_0,0):\theta(t)\in(0,\pi)\}$. In particular, $\ddot \theta(t)\geq0$ if $\dot \theta(t)=0$.

\item[$(b)$] the argument $\theta(t)$ has no local minimum in $\{t\in(-t_0,0):\theta(t)\in(-\pi,0)\}$. In particular, $\ddot \theta(t)\leq 0$ if $\dot \theta(t)=0$.

\item[$(c)$] if there exists a moment $\tilde t\in(-t_0,0)$ such that $\dot \theta(\tilde t)=0$ and $\ddot \theta(\tilde t)=0$, then the argument $\theta(t)\equiv 0$ or  $\theta(t)\equiv\pi$.
\end{enumerate}
\end{Lem}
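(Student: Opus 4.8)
The plan is to write the Euler--Lagrange equation \eqref{eqn:1+1+1-body} in polar coordinates and then extract all three statements from the sign of $\partial_\theta U$, which is exactly what Lemma~\ref{lem:anglepotential} pins down. Since $z$ is smooth on $(-t_0,0)$ by hypothesis, and a $C^2$ solution of \eqref{eqn:1+1+1-body} cannot meet either singularity (the right-hand side is unbounded as $z\to0$ or $z\to q(t)$), on $(-t_0,0)$ we have $r(t)=|z(t)|>0$, $z(t)\neq q(t)$, and the lift $z=re^{i\theta}$ has $r,\theta$ smooth. Projecting \eqref{eqn:1+1+1-body} onto the orthonormal frame $\{e^{i\theta},ie^{i\theta}\}$ gives the radial equation $\ddot r-r\dot\theta^2=\partial_r U$ and the angular equation
\[
\frac{d}{dt}\bigl(r^2\dot\theta\bigr)=r^2\ddot\theta+2r\dot r\dot\theta=\partial_\theta U(z,t).
\]
From the explicit form of $U$ used in the proof of Lemma~\ref{lem:anglepotential} one has $\partial_\theta U(re^{i\theta},t)=m\,r\,|q(t)|\sin\theta\,\bigl(r^2+|q(t)|^2+2r|q(t)|\cos\theta\bigr)^{-3/2}$; hence, for $t\in(-t_0,0)$ where $q(t)\neq0$ by $(Q2)$, the sign of $\partial_\theta U$ coincides with that of $\sin\theta(t)$: strictly positive when $z(t)$ lies in the open upper half-plane, strictly negative in the open lower half-plane, and zero exactly when $z(t)$ is real.

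Parts $(a)$ and $(b)$ then follow pointwise. If $\dot\theta(t^*)=0$ at some $t^*\in(-t_0,0)$, the angular equation reduces to $r(t^*)^2\ddot\theta(t^*)=\partial_\theta U(z(t^*),t^*)$. When $\theta(t^*)\in(0,\pi)$ the right-hand side is strictly positive, so $\ddot\theta(t^*)>0$ and $t^*$ is a strict local minimum of $\theta$; since $\{t:\theta(t)\in(0,\pi)\}$ is open and every critical point of $\theta$ in it is a strict local minimum, this set cannot contain a local maximum, proving $(a)$ (and $\ddot\theta\ge0$ at its critical points is immediate). Part $(b)$ is the mirror image: on $\{t:\theta(t)\in(-\pi,0)\}$ the angular equation gives $\ddot\theta(t^*)<0$ at critical points, so each is a strict local maximum and no local minimum can occur.

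For $(c)$, suppose $\dot\theta(\tilde t)=\ddot\theta(\tilde t)=0$ at some $\tilde t\in(-t_0,0)$. The angular equation forces $\partial_\theta U(z(\tilde t),\tilde t)=0$, and since $r(\tilde t)>0$ and $q(\tilde t)\neq0$ this requires $\sin\theta(\tilde t)=0$, i.e. $z(\tilde t)\in\mathbb R$; moreover $\dot z(\tilde t)=(\dot r(\tilde t)+ir(\tilde t)\dot\theta(\tilde t))e^{i\theta(\tilde t)}=\dot r(\tilde t)e^{i\theta(\tilde t)}$ is real as well. The reflected path $\bar z(t)$ is also a solution of \eqref{eqn:1+1+1-body} and has the same Cauchy data $(z(\tilde t),\dot z(\tilde t))$ at $\tilde t$; because the right-hand side of \eqref{eqn:1+1+1-body} is locally Lipschitz away from the singularities and $z$ is collision-free on $(-t_0,0)$, uniqueness of solutions gives $z\equiv\bar z$, hence $z(t)\in\mathbb R$ on $(-t_0,0)$. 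Since $z$ is continuous and avoids both $0$ and $q(t)$ there, $z(t)$ stays in a single one of the three intervals $(-\infty,q(t))$, $(q(t),0)$, $(0,\infty)$, so $\theta\equiv0$ if $z(\tilde t)>0$ and $\theta\equiv\pi$ if $z(\tilde t)<0$. Finally, since $z$ is a minimizer, Theorem~\ref{thm:two-body} together with $z(0)\neq0$ shows $z$ is collision-free on all of $(-T,0)$, so the same uniqueness argument propagates $z\equiv\bar z$ over $(-T,0)$, and by continuity $\theta$ is identically $0$ or identically $\pi$ on $[-T,0]$.

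The polar projection in the first step is routine. The point I expect to need the most care is the continuation in $(c)$: propagating the reflection symmetry $z\mapsto\bar z$ past a neighbourhood of $\tilde t$ relies on the minimizer staying strictly away from both singularities throughout $(-T,0)$, which is exactly the role of Theorem~\ref{thm:two-body}; without it one would only obtain $\theta\equiv0$ or $\theta\equiv\pi$ on the sub-interval where smoothness is known a priori.
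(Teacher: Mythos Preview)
Your proof is correct, but for parts $(a)$ and $(b)$ it takes a genuinely different route from the paper. The paper argues variationally: assuming a local maximum of $\theta$ exists in the open upper half-plane, it builds an explicit competitor $\hat z=r e^{i\hat\theta}$ by flattening $\theta$ to a constant on a subinterval and shows, via Lemma~\ref{lem:anglepotential}, that $\mathcal A_{-T,0}(\hat z)<\mathcal A_{-T,0}(z)$, contradicting minimality. You instead work pointwise with the angular component of the Euler--Lagrange equation, $\frac{d}{dt}(r^2\dot\theta)=\partial_\theta U$, and read off the sign of $\ddot\theta$ at critical points directly from the sign of $\sin\theta$. Your argument is cleaner here and in fact never uses that $z$ is a minimizer for $(a)$ and $(b)$: it applies to any smooth solution of \eqref{eqn:1+1+1-body}, and yields the strict inequality $\ddot\theta>0$ rather than merely $\ddot\theta\ge0$. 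The paper's comparison argument, on the other hand, is the one that carries over verbatim to the singular endpoint case in Lemma~\ref{lem:monotonity=0-} (where $z(0)=0$), since there one compares against test paths constrained only at the boundary and the ODE may not extend smoothly to $t=0$. For part $(c)$ the two proofs coincide: both reduce to $z(\tilde t),\dot z(\tilde t)\in\mathbb R$ and invoke uniqueness for \eqref{eqn:1+1+1-body} together with its reflection symmetry; your explicit appeal to Theorem~\ref{thm:two-body} to propagate the conclusion over all of $(-T,0)$ is a welcome clarification of a step the paper leaves implicit.
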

\begin{proof}
To prove (a). By contradiction, there exists $\hat t,\hat{t}_1, \hat{t}_2\in(-t_0,0)$ and $\epsilon>0$ with $\hat{t}_1 <\hat{t}-\epsilon<\hat{t}<\hat{t}+\epsilon< \hat{t}_2 $ such that  $\theta(\hat{t}_1)=\theta(\hat{t}_2)<\theta(t)$  for $t\in (\hat{t}-\epsilon,\hat{t}+\epsilon)$. Define $\hat{\theta}(t):=\theta(\hat{t}_1)$ on $[\hat{t}_1,\hat{t}_2]$ and  $\hat{\theta}(t):=\theta(t)$ on $[-T,0]\backslash[\hat{t}_1,\hat{t}_2]$, and let $\hat{z}(t)=r(t)e^{i\hat{\theta}(t)}$.
Note that, by  Lemma~\ref{lem:anglepotential},  $U(z,t) \geq U(\hat{z},t)$ and $\dot{\hat{\theta}}(t)=0$ on $(\hat{t}_1,\hat{t}_2)$,  and $U(z,t) > U(\hat{z},t)$ on $(\hat{t}-\epsilon,\hat{t}+\epsilon)$.
Then
\begin{align*}
\mathcal A_{-T,0}(z)-\mathcal A_{-T,0}(\hat{z})
&=\int^{\hat{t}_2}_{\hat{t}_1}r^2\dot{\theta}^2(t)-r^2\dot{\hat{\theta}}^2(t) +U(z,t)-U(\hat{z},t) dt \\
&\geq \int^{\hat{t}+\epsilon}_{\hat{t}-\epsilon}r^2\dot{\theta}^2(t) +U(z,t)-U(\hat{z},t) dt >0.
\end{align*}
This gives a contradiction to the minimizer $z$, (a) holds. Since $\mathcal{A}_{-T,0}$ is invariant under the complex conjugation, we can also prove (b), similarly.

To prove (c). If the moment $\tilde t\in(-t_0,0)$ satisfies $\dot{\theta}(\tilde t)=\ddot{\theta}(\tilde t)=0$, then we have
\begin{align*}
\ddot z(\tilde{t})=\left(\ddot r(\tilde{t})-r(\tilde{t})\dot\theta(\tilde{t})^2\right)e^{i\theta(\tilde{t})}+\left(2\dot r(\tilde{t})\dot\theta(\tilde{t})+r(\tilde{t})\ddot \theta(\tilde{t})\right)e^{i\left(\theta(\tilde{t})+\frac{\pi}{2}\right)}
=\ddot r(\tilde{t})e^{i\theta(\tilde{t})}.
\end{align*}
This implies $\theta(\tilde t)\in\{0,\pi\}$ since the force of $z(\tilde t)$ never points to the original unless $z(\tilde t)\in \mathbb{R}\setminus \{0\}$. By the existence and uniqueness theorem, the minimizer $z$ lies on the real axis on $[-T,0]$. This implies (c).
\end{proof}

\begin{Cor}\label{cor:monotonity1}
Given a $t_0\in(0,T]$ and an action minimizer $z(t)=r(t)e^{\theta(t) i}$ of $\mathcal A_{-T,0}$ on $\Omega^{-T,0}_{A,A_0}$. Assume $z(t)\in\mathbb{C}^+_*$ is smooth on $(-t_0,0)$, then one of the following situations must happen.
\begin{enumerate}
\item[$(a)$] the argument $\theta(t) \equiv 0$ on $[-t_0,0]$.
\item[$(b)$] there is a unique $t_* \in [-t_0,0]$ such that $\theta(t_*)\geq 0$, $\theta$ is strictly decreasing on $[-t_0,t_*]$ and strictly  increasing on $[t_*,0]$.
\item[$(c)$] the argument $\theta(t) \equiv \pi$ on $[-t_0,0]$.
\end{enumerate}
\end{Cor}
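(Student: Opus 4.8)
The plan is to read off the trichotomy from Lemma~\ref{lem:monotonity0} by analysing the critical points of the argument $\theta(t)$, keeping in mind that we are in the case $z(0)\neq 0$ of the present subsection.

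First I would fix the branch of the argument so that $\theta(t)\in[0,\pi]$ for every $t\in(-t_0,0)$; this is legitimate because $z(t)\in\mathbb{C}^+_*$ stays in the closed upper half-plane and therefore cannot wind around the origin. As $z$ is a minimizer with $z(0)\neq 0$, Theorem~\ref{thm:two-body} excludes every two-body collision on $(-t_0,0]$, so $r(t)>0$ and $z(t)\neq q(t)$ there and $z$ is a genuine (hence real-analytic) solution of \eqref{eqn:1+1+1-body} on $[-t_0,0]$. If $\theta\equiv 0$ we are in case $(a)$ and if $\theta\equiv\pi$ we are in case $(c)$; a constant $\theta\equiv\theta_0\in(0,\pi)$ is impossible since then $\dot\theta\equiv\ddot\theta\equiv0$ and Lemma~\ref{lem:monotonity0}$(c)$ forces $\theta_0\in\{0,\pi\}$. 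Hence it suffices to treat a non-constant $\theta$ and prove that it realises case $(b)$.

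For non-constant $\theta$ I would first describe the interior critical set $\{t\in(-t_0,0):\dot\theta(t)=0\}$. No critical point $\hat t$ can have $\theta(\hat t)=\pi$: there $\theta$ attains an interior maximum (because $\theta\le\pi$), so $\dot\theta(\hat t)=0$ gives $z(\hat t)\in\mathbb{R}^-$ and $\dot z(\hat t)\in\mathbb{R}$; since $q|_{[-T,T]}\subset\mathbb{R}^-$ the vector field of \eqref{eqn:1+1+1-body} keeps real Cauchy data real, so by uniqueness — exactly as in the proof of Lemma~\ref{lem:monotonity0}$(c)$ — the minimizer $z$ stays on $\mathbb{R}^-$ on all of $[-t_0,0]$, forcing $\theta\equiv\pi$, a contradiction. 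Symmetrically no critical point can have $\theta(\hat t)=0$. Thus each interior critical point has $\theta(\hat t)\in(0,\pi)$, where Lemma~\ref{lem:monotonity0}$(a)$ yields $\ddot\theta(\hat t)\ge 0$, and Lemma~\ref{lem:monotonity0}$(c)$ rules out $\ddot\theta(\hat t)=0$; hence $\ddot\theta(\hat t)>0$ and $\hat t$ is a strict local minimum of $\theta$.

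It remains to see that there is at most one interior critical point and to conclude. If $t_a<t_b$ were two of them, let $t_c\in[t_a,t_b]$ maximise $\theta$ over $[t_a,t_b]$: if $t_c\in(t_a,t_b)$ then $t_c$ is a local maximum of $\theta$, whose value is neither $\pi$ nor $0$ by the previous paragraph and cannot lie in $(0,\pi)$ by Lemma~\ref{lem:monotonity0}$(a)$; and if $t_c\in\{t_a,t_b\}$ then, $t_a$ and $t_b$ being local minima, $\theta$ is constant on a one-sided neighbourhood of $t_c$ inside $[t_a,t_b]$, so $\dot\theta\equiv\ddot\theta\equiv0$ there and Lemma~\ref{lem:monotonity0}$(c)$ gives a contradiction. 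So $\theta$ has at most one interior critical point. If it has none, $\dot\theta$ has constant sign, $\theta$ is strictly monotone, and case $(b)$ holds with $t_*=-t_0$ or $t_*=0$; if it has exactly one, say $t_*$, then $t_*$ is a strict local minimum, $\dot\theta<0$ on $(-t_0,t_*)$ and $\dot\theta>0$ on $(t_*,0)$, and $\theta(t_*)\ge0$, which is case $(b)$ together with the uniqueness of $t_*$. I expect the main obstacle to be this critical-point bookkeeping — especially excluding an interior value $\theta=\pi$ through the uniqueness argument and eliminating flat or degenerate critical points via Lemma~\ref{lem:monotonity0}$(c)$ — after which the monotonicity structure drops out by elementary calculus.
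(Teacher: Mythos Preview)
Your proof is correct and follows essentially the same route as the paper's: both arguments use Lemma~\ref{lem:monotonity0} to show that every interior critical point of $\theta$ satisfies $\ddot\theta>0$ (hence is a strict local minimum), and then that two such minima would force an interior local maximum in between, which is impossible. You are in fact more careful than the paper about the boundary values $\theta(\hat t)\in\{0,\pi\}$ at a critical point---the paper invokes Lemma~\ref{lem:monotonity0}(a) directly, while you correctly note that (a) only applies when $\theta(\hat t)\in(0,\pi)$ and dispose of the remaining cases via the real-data uniqueness argument; your case $t_c\in\{t_a,t_b\}$ in the final step is vacuous once $\ddot\theta(t_a),\ddot\theta(t_b)>0$ are established, but the reasoning there is harmless.
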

\begin{proof}
By assumption, for any $\tilde t\in(-t_0,0)$ with $\dot \theta(\tilde t)=0$, Lemma \ref{lem:monotonity0}(a) implies that $\ddot \theta(\tilde t)\geq 0$. If there is a $\tilde t_0$ such that $\ddot \theta(\tilde t_0)=0$, then by Lemma \ref{lem:monotonity0}(c), (a) or (c) must happen. Otherwise, $\ddot \theta(\tilde t)>0$ for all critical point $\tilde t$. If there exists two moments $\tilde t_1<\tilde t_2\in (-t_0,0)$ such that $\ddot \theta(\tilde t_i)>0$, $i=1,2$, then by continuity, there must be a moment $\hat t\in (\tilde t_1,\tilde t_2)$ such that $\dot \theta(\hat t)=0$ and $\ddot \theta(\hat t)\leq0$, which contradicts the argument above. Therefore, the number of the critical points is at most one, which is a local minimum. This implies (b).
\end{proof}


Now, we assume $0\in A_0$ and $z(0)=0$, i.e. the three-body collision occurs at moment $0$. Due to the same reason, it is sufficient to consider $z(t)$ with $z(-t_0)\in \mathbb{C}^+_*$ in $t\in[-t_0,0)$. Then we prove the following monotonicity for the argument $\theta(t)$.

\begin{Lem}\label{lem:monotonity=0-}
Given a $t_0\in(0,T)$ and an action minimizer $z(t)=r(t)e^{\theta(t) i}$ of $\mathcal A_{-T,0}$ on $\Omega^{-T,0}_{A,A_0}$. Assume $z(t)$ is smooth on $(-t_0,0)$, $z(0)=0$ and $z(-t_0)\in \mathbb{C}^{+}_*$, then one of the following situations must happen.
\begin{enumerate}
\item[$(a)$] the argument $\theta(t) \equiv 0$ on $[-t_0,0)$.
\item[$(b)$] the argument $\theta(t)$ is strictly decreasing on $[-t_0,0)$ and the limit angle $\theta^-_*=\lim_{t\rightarrow 0^-}\theta(t)$ exists in $[0,\pi)$.
\item[$(c)$] the argument $\theta(t) \equiv \pi$ on $[-t_0,0)$.
\end{enumerate}
\end{Lem}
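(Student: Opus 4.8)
\emph{Proof plan.} The plan is to reuse the argument behind Corollary~\ref{cor:monotonity1} to pin down the shape of $\theta$ on $(-t_0,0)$, and then to exploit the one new feature of the present situation --- $r(0)=|z(0)|=0$ at a three-body collision --- by means of a comparison path obtained by \emph{freezing the argument of $z$ on a terminal interval}. This freezing device is precisely what rules out any increasing behaviour of $\theta$ as $t\to 0^-$, and hence removes the ``decreasing--then--increasing'' alternative that is allowed in Corollary~\ref{cor:monotonity1}.

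\emph{Step 1: the shape of $\theta$.} I would first note that the proof of Lemma~\ref{lem:monotonity0} uses only smoothness of $z$ on $(-t_0,0)$ and minimality, never $z(0)\neq 0$, so its conclusions (a)--(c) are available here. Next I claim the minimizer satisfies $\theta(t)\in[0,\pi]$ on $[-t_0,0]$: otherwise $z$ would cross the real axis at some interior point, and then the path $\tilde z$ defined by $\tilde z(t)=z(t)$ if $z(t)\in\mathbb{C}^+$ and $\tilde z(t)=\overline{z(t)}$ otherwise would, by $\overline{q(t)}=q(t)$ and the invariance of \eqref{eqn:1+1+1-body} under complex conjugation, again be a minimizer with the same collision set, hence (being collision-free on $(-t_0,0)$) a smooth solution there, yet $\tilde z$ fails to be smooth at the crossing --- a contradiction. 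With $\theta(t)\in[0,\pi]$ in force, the argument of Corollary~\ref{cor:monotonity1} applies and shows that $\theta$ has at most one critical point in $(-t_0,0)$, necessarily a strict local minimum. Thus $\theta$ is constant $0$, constant $\pi$, strictly monotone on $[-t_0,0)$, or strictly decreasing on $[-t_0,t_*]$ and strictly increasing on $[t_*,0)$ for a unique $t_*$ with $\theta(t_*)\geq 0$.

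\emph{Step 2: the freezing comparison.} Suppose $\theta$ is non-constant and non-decreasing on a terminal interval $(s,0)\subseteq(-t_0,0)$, and put
\[
\hat z(t)=\begin{cases} z(t), & t\in[-T,s],\\ r(t)\,e^{i\theta(s)}, & t\in[s,0].\end{cases}
\]
The key point is $\hat z(0)=r(0)e^{i\theta(s)}=0=z(0)\in A_0$, so $\hat z\in\Omega^{-T,0}_{A,A_0}$; moreover $\hat z\in H^1$ (since $|\dot r|\leq|\dot z|$ a.e.) and $\mathcal A_{-T,0}(\hat z)<\infty$ (since $\int_s^0 r^{-1}\,dt\leq\mu^{-1}\int_s^0 U(z,t)\,dt<\infty$ and $|\hat z(t)-q(t)|\geq c\,r(t)$ for some $c>0$, as $q$ stays on the negative real axis and $\theta(s)\neq\pi$). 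Comparing actions,
\[
\mathcal A_{-T,0}(z)-\mathcal A_{-T,0}(\hat z)=\int_s^0\frac{1}{2}\,r^2\dot\theta^2\,dt+\int_s^0\bigl(U(re^{i\theta(t)},t)-U(re^{i\theta(s)},t)\bigr)\,dt,
\]
and both integrands are non-negative --- the first trivially, the second by Lemma~\ref{lem:anglepotential} because $0\leq\theta(s)\leq\theta(t)\leq\pi$ on $(s,0)$ --- while one of them is positive on a set of positive measure since $\theta$ is non-constant there. This contradicts minimality of $z$. Applying it with $s=t_*$ kills the ``decreasing--then--increasing'' alternative, and with an arbitrary $s\in(-t_0,0)$ it kills the strictly increasing alternative and shows that $\theta$ cannot be constant in $(0,\pi)$. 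Hence only cases (a), (c), or ``$\theta$ strictly decreasing on $[-t_0,0)$'' remain.

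\emph{Step 3: the limit, and the main difficulty.} In the surviving case $\theta$ is strictly decreasing and takes values in $[0,\pi]$, so $\theta^-_*=\lim_{t\to0^-}\theta(t)=\inf_{(-t_0,0)}\theta$ exists; for any $t'\in(-t_0,0)$ we have $\theta^-_*\leq\theta(t')<\theta(-t_0)\leq\pi$, hence $\theta^-_*<\pi$, while $\theta^-_*\geq 0$ (directly: if $\theta(t_1)\leq 0$ for some $t_1$, then $\theta(t)<\theta(t_1)\leq 0$ on $(t_1,0)$, and freezing the argument to $\theta(t_1)$ on $[t_1,0]$ again lowers the action through Lemma~\ref{lem:anglepotential}, a contradiction). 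This gives case (b). In this scheme the routine part is Step~1, essentially a copy of Corollary~\ref{cor:monotonity1}; the genuinely new and most delicate point is the admissibility of the frozen competitor in Step~2 --- verifying $\hat z\in\Omega^{-T,0}_{A,A_0}$ (this is exactly where $r(0)=0$ enters, and the reason the ``decreasing--then--increasing'' case of Corollary~\ref{cor:monotonity1} disappears) and that $\hat z$ has finite action near the triple collision --- together with the bookkeeping of the sign of $\theta$ needed to apply Lemma~\ref{lem:anglepotential} in the right direction. Once admissibility is in place, the strict decrease of the action is immediate from Lemma~\ref{lem:anglepotential}.
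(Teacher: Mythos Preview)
Your proof is correct and follows essentially the same route as the paper's: both first confine $\theta$ to $[0,\pi]$, invoke the Lemma~\ref{lem:monotonity0}/Corollary~\ref{cor:monotonity1} trichotomy, and then eliminate any terminal increasing portion of $\theta$ by the same ``freeze the argument on $[s,0]$'' competitor, whose admissibility hinges precisely on $r(0)=0$. The only minor differences are that you obtain $\theta\in[0,\pi]$ via a reflection/uniqueness argument (the paper instead freezes at $0$ on the set $\{\theta\in(-\pi,0)\}$), and you are more explicit than the paper about checking $\hat z\in H^1$ and $\mathcal A_{-T,0}(\hat z)<\infty$ near the triple collision.
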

\begin{proof}
Let $I=\{t\in(-t_0,0):\theta(t)\in(-\pi,0)\}$. We define $\hat \theta_1(t)=0$ on $I$ and $\hat \theta_1(t)=\theta(t)$ on $[-T,0)\setminus I$ and let $\hat z_1(t)=r(t)e^{i\hat \theta_1(t)}$. By Lemma \ref{lem:anglepotential}, $U(z,t)>U( \hat z_1,t)$ and $\dot {\hat\theta}_1(t)=0$ on $I$. Similar to the proof of Lemma \ref{lem:monotonity0}(a), we obtain $\mathcal{A}_{-T,0}(z)>\mathcal{A}_{-T,0}(\hat z_1)$, which is a contradiction to minimizer $z$. Then $z(t)\in \mathbb{C}^+_*$ on $[-t_0,0)$.

Applying the approaches in Lemma \ref{lem:monotonity0} and Corollary \ref{cor:monotonity1}, we can similarly show that if both (a) and (c) do not happen on $(-t_0,0)$, then there exists $t_*\in[-t_0,0]$, such that $\theta(t)$ is strictly decreasing in $[-t_0,t_*]$ and strictly increasing in $[t_*,0]$. Define another path $\hat z_2(t)=r(t)e^{i\hat \theta_2(t)}$ with $\hat \theta_2(t)=\theta(t)$ on $[-T,t_*]$ and $\hat \theta_2(t)=\theta(t_*)$ on $[t_*,0]$. If $t_*<0$, by using Lemma~\ref{lem:anglepotential} again, $U(z,t)>U(\hat z_2,t)$ and $\dot{\hat\theta}_2=0$ on $(t_*,0)$. Then we have
\begin{align*}
\mathcal A_{-T,0}(z)-\mathcal A_{-T,0}(\hat z_2)
\geq  \int^{0}_{t_*}r^2\dot{\theta}^2(t) +U(z,t)-U(\hat z_2,t) dt >0.
\end{align*}
This leads to a contradiction to minimizer $z$ and implies (b), i.e. $t_*=0$. The proof is completed.
\end{proof}

Under the assumption $z(0)=0$, we can further prove the monotonicity of the argument for the velocity $\dot z(t)$. We first put $\dot{z}$ in the polar coordinates, $\dot{z}(t)=r_d(t)e^{\theta_d(t)i}$, with $r_d\in\mathbb R^+$ and $\theta_d\in [-\pi,\pi)$. Before we explore the monotonicity of $\theta_d$, we introduce the following lemma.

\begin{Lem}\label{lem:dot theta d equation}
The derivative of argument $\theta_d(t)$ is
\begin{equation}\label{eqn:dot theta_d}
\dot \theta_d(t)=\frac{1}{r_d(t)}\ddot z(t)\cdot e^{i(\theta_d(t)+\frac{\pi}{2})}.
\end{equation}
Moreover,
\begin{itemize}
\item[$(a)$] if $\theta_d(t)\in (\arg(\ddot z(t)),\arg(\ddot z(t))+\pi)$, then $\dot \theta_d(t)<0$.
\item[$(b)$] if $\theta_d(t)\in (\arg(\ddot z(t))-\pi,\arg(\ddot z(t)))$, then $\dot \theta_d(t)>0$.
\end{itemize}
\end{Lem}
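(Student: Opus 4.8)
The plan is to start from the polar decomposition $\dot z(t)=r_d(t)e^{\theta_d(t)i}$ and simply differentiate it, in exact analogy with the standard expansion of $\ddot z$ in terms of $r,\theta$ already used in the proof of Lemma~\ref{lem:monotonity0}(c). Writing $\dot z = r_d e^{i\theta_d}$, one gets $\ddot z = \dot r_d e^{i\theta_d} + r_d\dot\theta_d e^{i(\theta_d+\pi/2)}$, so the radial component of $\ddot z$ is $\dot r_d$ and the transverse component is $r_d\dot\theta_d$. To isolate $\dot\theta_d$ I would take the inner product of $\ddot z$ with the unit vector $e^{i(\theta_d+\pi/2)}$ (viewing $\mathbb C\cong\mathbb R^2$ with the real inner product $a\cdot b = \mathrm{Re}(a\bar b)$), which annihilates the $e^{i\theta_d}$ term and leaves $\ddot z\cdot e^{i(\theta_d+\pi/2)} = r_d\dot\theta_d$. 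Dividing by $r_d(t)>0$ (which is legitimate as long as $\dot z(t)\neq 0$, i.e.\ away from the isolated moments where the velocity vanishes) gives exactly \eqref{eqn:dot theta_d}.

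For the sign statements $(a)$ and $(b)$, I would rewrite the right-hand side of \eqref{eqn:dot theta_d} in a more transparent form. Setting $\phi = \arg(\ddot z(t))$, we have $\ddot z(t) = |\ddot z(t)|e^{i\phi}$, and then
\[
\ddot z(t)\cdot e^{i(\theta_d(t)+\frac{\pi}{2})} = |\ddot z(t)|\,\mathrm{Re}\!\left(e^{i\phi}e^{-i(\theta_d(t)+\frac{\pi}{2})}\right) = |\ddot z(t)|\cos\!\left(\phi-\theta_d(t)-\tfrac{\pi}{2}\right) = |\ddot z(t)|\sin\!\left(\phi-\theta_d(t)\right).
\]
Hence $\dot\theta_d(t) = \dfrac{|\ddot z(t)|}{r_d(t)}\sin\!\left(\arg(\ddot z(t))-\theta_d(t)\right)$, and the sign of $\dot\theta_d$ is governed entirely by the sign of $\sin(\arg(\ddot z(t))-\theta_d(t))$. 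If $\theta_d(t)\in(\arg(\ddot z(t)),\arg(\ddot z(t))+\pi)$, then $\arg(\ddot z(t))-\theta_d(t)\in(-\pi,0)$, so the sine is negative and $\dot\theta_d(t)<0$; if $\theta_d(t)\in(\arg(\ddot z(t))-\pi,\arg(\ddot z(t)))$, then $\arg(\ddot z(t))-\theta_d(t)\in(0,\pi)$, so the sine is positive and $\dot\theta_d(t)>0$. This is the full content of $(a)$ and $(b)$.

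This lemma is essentially a routine computation, so I do not expect a genuine obstacle; the only points requiring a modicum of care are bookkeeping ones. First, one should note that $\dot\theta_d$ is only defined (and the formula only meaningful) where $\dot z(t)\neq 0$, so the statement is implicitly restricted to such $t$; since the colliding minimizer is smooth on $(-t_0,0)$ and $(0,t_0)$ and its velocity blows up near the three-body collision by the Sundman–Sperling estimates, $r_d(t)>0$ holds on a neighbourhood of the collision, which is the regime where the lemma will be applied. Second, one must be consistent about the branch of $\arg$: the quantities $\arg(\ddot z(t))$ and $\theta_d(t)$ enter only through the difference inside the sine, which is branch-independent, so no genuine issue arises. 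With these remarks in place the proof is just the two displayed identities above.
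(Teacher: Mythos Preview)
Your proof is correct and follows essentially the same route as the paper: differentiate the polar form of $\dot z$ and take the inner product with $e^{i(\theta_d+\pi/2)}$ to isolate $r_d\dot\theta_d$. For $(a)$ and $(b)$ the paper reads off the sign directly from the angle between $\ddot z$ and $e^{i(\theta_d+\pi/2)}$, whereas you rewrite the dot product as $|\ddot z|\sin(\arg\ddot z-\theta_d)$; these are the same computation.
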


\begin{proof}
Since $e^{i\theta_d(t)}=\frac{\dot z(t)}{r_d(t)}$, by differentiating $t$ on both sides, we have
$$\dot \theta_d(t)e^{i(\theta_d(t)+\frac{\pi}{2})}=\frac{\ddot z(t)}{r_d(t)}-\frac{\dot r_d(t)}{r_d(t)^2}\dot z(t).$$
Then taking the inner product with $e^{i(\theta_d(t)+\frac{\pi}{2})}$ on both sides, we obtain (\ref{eqn:dot theta_d}). Moreover, (a) and (b) follow from the fact that $\dot \theta_d(t)<0$ (resp. $>0$) if and only if
$$\theta_d(t)+\frac{\pi}{2}\in \left(\arg\ddot z(t)+\frac{\pi}{2}, \ \arg\ddot z(t)+\frac{3\pi}{2}\right)\ \left(\mathrm{resp.} \left(\arg\ddot z(t)-\frac{\pi}{2},\ \arg\ddot z(t)+\frac{\pi}{2}\right) \right).$$
\end{proof}

Now we are ready to prove the following lemma.
\begin{Lem}\label{lem:dot theta d-}
Given a $t_0\in(0,T)$ and an action minimizer $z(t)=r(t)e^{\theta(t) i}$ of $\mathcal A_{-T,0}$ on $\Omega^{-T,0}_{A,A_0}$. Assume $z(t)$ is smooth on $(-t_0,0)$, $z(0)=0$, and $z(-t_0)\in \mathbb{C}^+_*$. Then the argument $\theta_d(t)$ satisfies the following properties:
\begin{enumerate}
\item[$(a)$] if $\theta(-t_0)=0$, then there exists an $\epsilon\in(0,t_0)$ such that the argument $\theta_d(t) \equiv \pi$ for all $t \in (-\epsilon,0)$.
\item[$(b)$] if $\theta(-t_0)\in(0,\pi)$, there exists an $\epsilon\in (0,t_0)$ such that the argument $\theta_d(t)$ is strictly decreasing on $(-\epsilon,0)$.
\item[$(c)$] if $\theta(-t_0)=\pi$, then the argument $\theta_d(t)$ either satisfies $\theta_d(t)\in\{0,\pi\}$ on $\{t\in(-t_0,0): \dot z(t)\neq 0\}$ or there exists an $\epsilon\in (0,t_0)$ such that $\theta_d(t)$ is strictly decreasing on $(-\epsilon,0)$.
\end{enumerate}
Moreover, if $\theta(t)\not\equiv\pi$ on $(-t_0,0)$, then the limit $\lim_{t \rightarrow 0^-} \lvert \theta(t)-\theta_d(t) \rvert =\pi$ for any $\theta(-t_0)\in[0,\pi]$.
\end{Lem}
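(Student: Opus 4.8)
The plan is to split the ``Moreover'' according to Lemma~\ref{lem:monotonity=0-}. Since $\theta\not\equiv\pi$ on $(-t_0,0)$ and $z(-t_0)\in\mathbb{C}^+_*$, that lemma leaves two possibilities: $\theta\equiv0$ on $[-t_0,0)$, or $\theta$ strictly decreasing on $[-t_0,0)$ with a finite limit $\theta_*^-\in[0,\pi)$. In the first case $z(t)=r(t)\in\mathbb{R}^+$, so $\dot z=\dot r$ is real; the radial equation reads $\ddot r=-\mu r^{-2}-m(r+|q|)^{-2}<0$, so $r$ is strictly concave, and since $r>0$ on $(-t_0,0)$ while $r\to0$ this forces $\dot r<0$ on a left neighbourhood of $0$. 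Hence $\theta_d\equiv\pi$ there and $|\theta-\theta_d|\equiv\pi$; this also establishes part~$(a)$.

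In the remaining case $\theta$ is strictly decreasing, and I would first prove part~$(b)$ and the nondegenerate alternative of part~$(c)$ — that $\theta_d$ is strictly decreasing near $0$. Writing $\dot z=(\dot r+ir\dot\theta)e^{i\theta}$ gives $\theta_d-\theta\equiv\arg(\dot r+ir\dot\theta)\pmod{2\pi}\in(-\pi,0)$ since $\dot\theta<0$. A direct computation from the equation of motion gives the tangential component $\ddot z\cdot e^{i(\theta+\pi/2)}=m|q|\sin\theta\,|z-q|^{-3}>0$ for $\theta\in(0,\pi)$, whence $\arg\ddot z\in(\theta,\theta+\pi)$. By the sign criterion of Lemma~\ref{lem:dot theta d equation}, $\dot\theta_d<0$ is equivalent to $\theta_d\in(\arg\ddot z,\arg\ddot z+\pi)$; given the two inclusions above, this can only fail if $\theta_d-\theta\ge(\arg\ddot z-\theta)-\pi$, and I would exclude that by the paper's ``critical/inflection point'' analysis of $\theta_d$ (via Lemma~\ref{lem:dot theta d equation}) together with the properties of the minimizer, in the spirit of Lemma~\ref{lem:monotonity0} and Corollary~\ref{cor:monotonity1}. (The degenerate alternative of~$(c)$, $\theta\equiv\pi$, has $z\in\mathbb{R}^-$, $\dot z$ real, hence $\theta_d\in\{0,\pi\}$.)

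Granting parts~$(b)$--$(c)$, $\theta_d$ is monotone near $0$ and bounded (since $\theta\to\theta_*^-$ and $\theta_d-\theta\in(-\pi,0)$), so it has a finite limit and $\psi_*:=\lim_{t\to0^-}(\theta_d-\theta)\in[-\pi,0]$; it remains to show $\psi_*=-\pi$. If $\psi_*=0$, then $\dot r/r_d=\cos(\theta_d-\theta)\to1$, so $\dot r>0$ near $0$, contradicting $r\to0^+$ with $r>0$. If $\psi_*\in(-\pi,0)$, then $r\dot\theta/r_d\to\sin\psi_*<0$, so $\tfrac12 r_d^2\asymp r^2\dot\theta^2=\ell^2 r^{-2}$ with $\ell:=r^2\dot\theta$; here one uses $\ell\to0^-$, which follows because the torque identity gives $\dot\ell=z\times\ddot z=m\,r|q|\sin\theta\,|z-q|^{-3}\ge0$, so $\ell\nearrow\ell_*\le0$, and $\ell_*<0$ is impossible (then $\ddot r=\ell^2r^{-3}-\mu r^{-2}-m(|q|\cos\theta+r)|z-q|^{-3}>0$ for $r$ small, using $|z-q|\gtrsim\max(r,|q|)$ which holds since $\theta_*^-<\pi$; so $r$ is convex near $0$, hence $r=O(|t|)$ and $\int^{0^-}|\dot\theta|\ge|\ell_*|\int^{0^-}r^{-2}\,dt=\infty$, contradicting the existence of $\theta_*^-$). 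Combining $\tfrac12 r_d^2\asymp\ell^2 r^{-2}$ with $\tfrac12 r_d^2=U(z,t)+E(t)\asymp r^{-1}+E(t)$, where $E=\tfrac12 r_d^2-U$ and $\dot E=-\partial_tU=-m(z-q)\cdot\dot q\,|z-q|^{-3}$, and with the radial equation, then forces an incompatible decay rate for $r$ near $0$; so $\psi_*=-\pi$, i.e. $|\theta-\theta_d|\to\pi$.

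The main obstacle is the a priori control hidden in the last step. Near a \emph{triple} collision the perturbation $-m(z-q)/|z-q|^3$ in the equation for $z$ is unbounded, so Proposition~\ref{pro:Sperling69} does not apply to $z$: there is no Sundman rate for $r$ to begin from, the centrifugal term $\ell^2 r^{-3}$ and the Keplerian term $\mu r^{-2}$ may be of the same order so that the sign of $\ddot r$ and the decay of $r$ are invisible at leading order, and the energy defect $E$ need not be bounded. All of the estimates above must be bootstrapped \emph{without} invoking Theorem~\ref{main1}, since this lemma feeds into its proof; the subcase $\theta_*^-\in(\pi/2,\pi)$ with $m$ large — where even $\arg\ddot z$ can approach $\theta+\pi$, making the failure inequality $\theta_d-\theta\ge(\arg\ddot z-\theta)-\pi$ asymptotically delicate — is where this is hardest.
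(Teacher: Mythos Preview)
Your treatment of part~$(a)$ and the degenerate alternative of~$(c)$ is fine, but the core of part~$(b)$ has a real gap. From $\dot\theta<0$ you get $\theta_d\in(\theta-\pi,\theta)$, and from the tangential component you get the half-plane inclusion $\arg\ddot z\in(\theta,\theta+\pi)$; as you noticed, these two half-plane conditions alone do \emph{not} force $\theta_d\in(\arg\ddot z,\arg\ddot z+\pi)$, and the ``critical/inflection point analysis'' you invoke is neither carried out nor needed. The paper closes this with two elementary geometric observations you are missing. First, since both attractive forces point from $z$ toward points on the nonpositive real axis with $q$ to the left of $c=0$, one has the sharper inclusion $\arg\ddot z(t)\in(-\pi,\theta(t)-\pi)$ (not merely the half-plane through $e^{i(\theta+\pi/2)}$). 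Second, writing $z=x+iy$, one has $y>0$ on $(-t_0,0)$, $y(0)=0$, and $\ddot y<0$ (the force points downward), which forces $\dot y<0$ on some $(-\epsilon,0)$; hence $\theta_d(t)\in(-\pi,0)$ there. Combining, $\theta_d\in(\theta-\pi,0)\subset(\arg\ddot z,\arg\ddot z+\pi)$ since $\arg\ddot z<\theta-\pi$ and $\arg\ddot z+\pi>0$, and Lemma~\ref{lem:dot theta d equation}$(a)$ gives $\dot\theta_d<0$ directly. No critical-point argument, no case $\theta_*^-\in(\pi/2,\pi)$ difficulty.

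For the ``Moreover'' you are working much too hard, and the obstacles you list (no Sundman rate, unbounded energy defect, centrifugal vs.\ Keplerian balance) are precisely why your route does not close. The paper avoids all of this with a one-line secant argument: once $\theta_d$ is monotone near $0$ it has a limit, and that limit is identified by observing that for any $t_k\to 0^-$ the secant $\tfrac{z(0)-z(t_k)}{|t_k|}=-\tfrac{z(t_k)}{|t_k|}$ has argument $\theta(t_k)+\pi\to\theta_*^-+\pi$; since this secant is the integral average of $\dot z$ over $(t_k,0)$ and the arguments $\theta_d(s)$ there lie in an interval shrinking to the limit, the limit of $\theta_d$ must equal $\theta_*^-+\pi$. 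No energy, no angular-momentum asymptotics, no bootstrap is required, and nothing circular with Theorem~\ref{main1} arises.
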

\begin{proof}
Firstly, (a) follows from Lemma \ref{lem:monotonity=0-} and the assumption $z(0)=0$ directly.

To prove (b). By Lemma \ref{lem:dot theta d equation} (a),
it is sufficient to show that there exists an $\epsilon>0$ sufficiently small such that $\theta_d(t)\in (\arg(\ddot z(t)),\arg(\ddot z(t))+\pi)$ for all $t \in (-\epsilon,0)$.
From Lemma~\ref{lem:monotonity=0-}$(b)$, $\theta(t)$ is strictly decreasing to $\theta^-_*\geq0$ on $(-t_0,0)$, which implies that
\begin{equation}\label{eqn:range of theta d1}
\theta(t)\in(0,\pi)\quad\mathrm{and}\quad \theta_d(t)\in (\theta(t)-\pi,\theta(t)),\ \forall t\in(-t_0,0).
\end{equation}
We see that $y(t)>0$ ($z(t)=x(t)+iy(t)$) for any $t\in(-t_0,0)$. Since the force is always pointing downward, then we have
\begin{equation}\label{eqn:range of force}
\arg\ddot z(t)\in(-\pi,\theta(t)-\pi),\ \forall t\in(-t_0,0).
\end{equation}
Moreover, there exists an $\epsilon\in(0,t_0)$ such that $y(t)$ is strictly decrease to $0$ during $t\in(-\epsilon,0)$, i.e.
\begin{equation}\label{eqn:range of theta d2}
\theta_d(t)\in (-\pi,0),\ \forall t\in(-\epsilon,0),
\end{equation}
Combining (\ref{eqn:range of theta d1}), (\ref{eqn:range of force}) and (\ref{eqn:range of theta d2}), we have
$$\theta_d(t)\in (\theta(t)-\pi,0)\subset (\arg\ddot z(t),\arg\ddot z(t)+\pi),\ \forall t\in[-\epsilon,0),$$
and prove that $\dot \theta_d(t)<0$ on $(-\epsilon,0)$.

To prove (c). By Lemma~\ref{lem:monotonity=0-}, $\theta(t)$ is either identically $\pi$ or strictly decreasing to $\theta_*^-\geq0$. The former case implies that $\theta(t)\in\{0,\pi\}$ on $\{t\in(-t_0,0):\dot z\neq 0\}$. For the latter case, we have $\theta(t)\in(0,\pi)$ for any $t\in (-t_0,0)$. Then the proof is similar to the case (b).

To see $\lim_{t\rightarrow 0^-}|\theta(t)-\theta_d(t)|=\pi$. According to the assumption $z(0)=0$, for any sequence of moments $t_k\rightarrow 0^-$, there exists a sequence $t_k'\in(t_k,0)$ such that
$$\dot z(t_k')= \lvert t_k \rvert ^{-1}(z(0)-z(t_k))=- \lvert t_k \rvert ^{-1}z(t_k)=\frac{r(t_k)}{ \lvert t_k \rvert }e^{i(\theta(t_k)+\pi)}.$$
By Lemma \ref{lem:monotonity=0-}, the argument $\theta(t)$ is always non-increasing and converging to $\theta_*^-\geq 0$ as $t\rightarrow 0^-$, then we have $\lim_{k\rightarrow +\infty}\theta_d(t_k')=\lim_{k\rightarrow +\infty}\theta(t_k)+\pi=\theta_*^-+\pi$. Finally, by the assumption $\theta(t)\not\equiv\pi$ and $(a)-(c)$, there exists an $\epsilon\in(0,t_0)$ such that the argument $\theta_d(t)$ is monotonic in $(-\epsilon,0)$, then we have
$\lim_{t\rightarrow 0^-}\theta_d(t)=\lim_{k\rightarrow +\infty}\theta_d(t_k')=\theta_*^-+\pi$. The proof is now complete.
\end{proof}

From the previous lemmas, we obtain some monotonicities for the minimizer of $\mathcal A_{-T,0}$ on $\Omega^{-T,0}_{A,A_0}$. By the reversibility of \eqref{eqn:1+1+1-body}, we have the following analog results for the minimizer of $\mathcal A_{0,T}$ on $\Omega^{0,T}_{A_0,A}$.

\begin{Lem}\label{lem:monotonity11}
Given a $t_0\in(0,T)$ and an action minimizer $z(t)=r(t)e^{\theta(t) i}$ of $\mathcal A_{0,T}$ on $\Omega^{0,T}_{A_0,A}$. Assume $z(t)\in\mathbb{C}^+_*$ is smooth on $(0,t_0)$, then one of the following situations must happen.
\begin{enumerate}
\item[$(a)$] the argument $\theta(t) \equiv 0$ on $[0,t_0]$.
\item[$(b)$] there is a unique $t_* \in [0,t_0]$ such that $\theta(t_*)\geq 0$, $\theta$ is strictly decreasing on $[0,t_0]$ and strictly increasing on $[t_*,t_0]$.
\item[$(c)$] the argument $\theta(t) \equiv \pi$ on $[0,t_0]$.
\end{enumerate}
\end{Lem}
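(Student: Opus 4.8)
The plan is to deduce Lemma~\ref{lem:monotonity11} from Corollary~\ref{cor:monotonity1} via the time-reversal symmetry of the problem, exactly as the sentence preceding the statement suggests. The key point is that, under conditions $(Q1)$--$(Q3)$, the collision Kepler system is even in time, $q(t)=q(-t)$ (this is \eqref{eqn:symmetric}), so the potential satisfies $U(w,t)=U(w,-t)$ for all $w\in\mathbb{C}$ and all $t$, and hence the Lagrangian $\tfrac12|\dot z|^2+U(z,t)$ is invariant under the change of variables $s=-t$. Consequently, if $z$ is a minimizer of $\mathcal{A}_{0,T}$ on $\Omega^{0,T}_{A_0,A}$, then the reflected path $\tilde z(t):=z(-t)$, $t\in[-T,0]$, satisfies $\mathcal{A}_{-T,0}(\tilde z)=\mathcal{A}_{0,T}(z)$; moreover $s\mapsto -s$ is a bijection from $\Omega^{0,T}_{A_0,A}$ onto $\Omega^{-T,0}_{A,A_0}$ preserving the action, so $\tilde z$ is in turn a minimizer of $\mathcal{A}_{-T,0}$ on $\Omega^{-T,0}_{A,A_0}$.

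First I would check that the hypotheses transfer. Since $z$ is smooth on $(0,t_0)$ with $z(t)\in\mathbb{C}^+_*$ there, the path $\tilde z$ is smooth on $(-t_0,0)$ with $\tilde z(t)\in\mathbb{C}^+_*$ for $t\in(-t_0,0)$, and being an $H^1$ minimizer it extends continuously to the closed interval. Thus Corollary~\ref{cor:monotonity1} applies to $\tilde z$ and yields, for $\tilde\theta(t):=\arg\tilde z(t)$, that exactly one of the following holds on $[-t_0,0]$: $\tilde\theta\equiv 0$; or $\tilde\theta\equiv\pi$; or there is a unique $t_*'\in[-t_0,0]$ with $\tilde\theta(t_*')\ge 0$ such that $\tilde\theta$ is strictly decreasing on $[-t_0,t_*']$ and strictly increasing on $[t_*',0]$.

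Finally I would translate these conclusions back using $\theta(t)=\arg z(t)=\arg\tilde z(-t)=\tilde\theta(-t)$ for $t\in[0,t_0]$. The two constant alternatives give cases $(a)$ and $(c)$ immediately. In the remaining case, put $t_*:=-t_*'\in[0,t_0]$; then as $t$ increases from $0$ the argument $-t$ decreases from $0$, sweeping first the subinterval $[t_*',0]$ (where $\tilde\theta$ increases, so $\theta$ decreases) and then $[-t_0,t_*']$ (where $\tilde\theta$ decreases, so $\theta$ increases). Hence $\theta$ is strictly decreasing on $[0,t_*]$ and strictly increasing on $[t_*,t_0]$, with $\theta(t_*)=\tilde\theta(t_*')\ge 0$ and $t_*$ unique, which is case $(b)$ (here the subinterval of decrease should of course be read as $[0,t_*]$). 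There is essentially no analytic obstacle: the whole content lies in verifying that reflection in time carries the path space and functional of $\mathcal{A}_{0,T}$ onto those of $\mathcal{A}_{-T,0}$, which is immediate from the evenness of $q$; after that Corollary~\ref{cor:monotonity1} does all the work, and the only thing requiring care is the bookkeeping of how the direction of monotonicity flips under $t\mapsto -t$.
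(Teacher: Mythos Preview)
Your proposal is correct and is exactly the approach the paper takes: the paper does not give a separate proof of Lemma~\ref{lem:monotonity11} but simply states that it follows from the reversibility of \eqref{eqn:1+1+1-body} applied to Corollary~\ref{cor:monotonity1}. Your careful bookkeeping of how the monotonicity flips under $t\mapsto -t$ (and your observation that the decrease interval in case~$(b)$ should read $[0,t_*]$ rather than $[0,t_0]$) is precisely what is needed.
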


\begin{Lem}\label{lem:monotonity=0+}
Given a $t_0\in(0,T)$ and an action minimizer $z(t)=r(t)e^{\theta(t) i}$ of $\mathcal A_{0,T}$ on $\Omega^{0,T}_{A_0,A}$. Assume $z(t)$ is smooth on $(0,t_0)$, $z(0)=0$ and $z(t_0)\in \mathbb{C}^+_*$, then one of the following situations must happen.
\begin{enumerate}
\item[$(a)$] the argument $\theta(t) \equiv 0$ on $(0,t_0]$.
\item[$(b)$] the argument $\theta(t)$ is strictly increasing on $(0,t_0]$ and the limit angle $\theta^+_*=\lim_{t\rightarrow 0^+}\theta(t)$ exists and non-negative.
\item[$(c)$] the argument $\theta(t) \equiv \pi$ on $(0,t_0]$.
\end{enumerate}
\end{Lem}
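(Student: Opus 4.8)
The plan is to deduce this statement from its already-established ``past'' counterpart, Lemma~\ref{lem:monotonity=0-}, via the time-reversal symmetry of the problem. Set $\tilde z(t):=z(-t)$ for $t\in[-t_0,0]$; I would check that $\tilde z$ satisfies all the hypotheses of Lemma~\ref{lem:monotonity=0-}, read off its trichotomy, and transcribe the conclusion back through $t\mapsto -t$.

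First I would verify that reversal preserves the action. Since $\dot{\tilde z}(t)=-\dot z(-t)$ we have $|\dot{\tilde z}(t)|^2=|\dot z(-t)|^2$; and because the primary is time-symmetric about the collision, $q(t)=q(-t)$ by~\eqref{eqn:symmetric}, the potential~\eqref{eqn:potential} is even in $t$, i.e.\ $U(w,t)=U(w,-t)$ for every $w\in\mathbb C$. The change of variables $s=-t$ then gives $\mathcal A_{-T,0}(\tilde z)=\mathcal A_{0,T}(z)$, and since $x\mapsto x(-\,\cdot\,)$ is an action-preserving bijection from $\Omega^{0,T}_{A_0,A}$ onto $\Omega^{-T,0}_{A,A_0}$, the path $\tilde z$ is an action minimizer of $\mathcal A_{-T,0}$ on $\Omega^{-T,0}_{A,A_0}$. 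Moreover $\tilde z$ is smooth on $(-t_0,0)$, $\tilde z(0)=z(0)=0$, and $\tilde z(-t_0)=z(t_0)\in\mathbb C^+_*$, so Lemma~\ref{lem:monotonity=0-} applies to $\tilde z=\tilde r e^{i\tilde\theta}$.

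Lemma~\ref{lem:monotonity=0-} then forces one of: $\tilde\theta\equiv 0$ on $[-t_0,0)$; or $\tilde\theta$ strictly decreasing on $[-t_0,0)$ with $\lim_{t\to 0^-}\tilde\theta(t)\in[0,\pi)$; or $\tilde\theta\equiv\pi$ on $[-t_0,0)$. Using $\tilde\theta(t)=\theta(-t)$, these translate verbatim into, respectively, case~$(a)$ ($\theta\equiv 0$ on $(0,t_0]$), case~$(b)$ ($\theta$ strictly increasing on $(0,t_0]$ with $\theta^+_*=\lim_{t\to 0^+}\theta(t)$ existing and non-negative), and case~$(c)$ ($\theta\equiv\pi$ on $(0,t_0]$), which completes the proof.

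The only point genuinely requiring care — though it is routine — is the claim that time-reversal maps minimizers to minimizers, and it is precisely here that hypothesis~\eqref{eqn:symmetric} (the symmetry $q(t)=q(-t)$ of the Kepler primary about its collision moment) enters in an essential way; everything else is transcription. The companion Lemma~\ref{lem:monotonity11} is obtained in exactly the same manner, applying Corollary~\ref{cor:monotonity1} to $\tilde z$ in place of Lemma~\ref{lem:monotonity=0-}.
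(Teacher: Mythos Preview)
Your proof is correct and is precisely the approach the paper takes: the paper states Lemmas~\ref{lem:monotonity11}--\ref{lem:dot theta d+} without proof, prefacing them with ``By the reversibility of \eqref{eqn:1+1+1-body}, we have the following analog results,'' which is exactly the time-reversal argument you have written out in detail.
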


\begin{Lem}\label{lem:dot theta d+}
Given a $t_0\in(0,T)$ and an action minimizer $z(t)=r(t)e^{\theta(t) i}$ of $\mathcal A_{0,T}$ on $\Omega^{0,T}_{A_0,A}$. Assume $z(t)$ is smooth on $(0,t_0)$, $z(0)=0$, and $z(t_0)\in \mathbb{C}^+_*$. Then the argument $\theta_d(t)$ satisfies the following properties:
\begin{enumerate}
\item[$(a)$] if $\theta(t_0)=0$, then the argument $\theta_d(t) \equiv \pi$ for all $t \in (0,t_0)$.
\item[$(b)$] if $\theta(t_0)\in(0,\pi)$, there exists an $\epsilon\in (0,t_0)$ such that the argument $\theta_d(t)$ is strictly increasing on $(0,\epsilon)$.
\item[$(c)$] if $\theta(t_0)=\pi$, then the argument $\theta_d(t)$ is either $\theta_d(t)\equiv 0$ on $(0,t_0)$ or there exists an $\epsilon\in (0,t_0)$ such that $\theta_d(t)$ is strictly increasing on $(0,\epsilon)$.
\end{enumerate}
Moreover, the limit $\lim_{t \rightarrow 0^+} \lvert \theta(t)-\theta_d(t) \rvert =\pi$ for any $\theta(t_0)\in[0,\pi]$.
\end{Lem}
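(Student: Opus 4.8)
The plan is to deduce Lemma~\ref{lem:dot theta d+} from its ``before collision'' analogue, Lemma~\ref{lem:dot theta d-}, using the time-reversal symmetry of \eqref{eqn:1+1+1-body}. First I would set $w(s):=z(-s)$ for $s\in[-T,0]$. Because the collision Kepler system is even in time, $q(-s)=q(s)$ by \eqref{eqn:symmetric}, so the potential \eqref{eqn:potential} satisfies $U(\cdot,-s)=U(\cdot,s)$, and substituting directly into \eqref{eqn:1+1+1-body} shows that $w$ solves \eqref{eqn:1+1+1-body} on $[-T,0]$ whenever $z$ solves it on $[0,T]$. Moreover, for any path $\zeta$ on $[0,T]$ and its time reflection $\tilde\zeta(s):=\zeta(-s)$ on $[-T,0]$, the change of variables $t=-s$ gives $\mathcal A_{-T,0}(\tilde\zeta)=\mathcal A_{0,T}(\zeta)$; since $w(-T)=z(T)\in A$ and $w(0)=z(0)\in A_0$, it follows that $w$ is an action minimizer of $\mathcal A_{-T,0}$ on $\Omega^{-T,0}_{A,A_0}$. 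Together with the hypotheses of Lemma~\ref{lem:dot theta d+} --- $z$ smooth on $(0,t_0)$, $z(0)=0$, $z(t_0)\in\mathbb{C}^+_*$ --- this puts $w$ exactly in the setting of Lemma~\ref{lem:dot theta d-}: smooth on $(-t_0,0)$, with $w(0)=0$ and $w(-t_0)=z(t_0)\in\mathbb{C}^+_*$.

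The second step is to translate the polar and velocity data across the reflection. Writing $w(s)=r_w(s)e^{i\theta_w(s)}$ and $\dot w(s)=r_{d,w}(s)e^{i\theta_{d,w}(s)}$, the identities $w(s)=z(-s)$ and $\dot w(s)=-\dot z(-s)$ give $r_w(s)=r(-s)$, $\theta_w(s)=\theta(-s)$, $r_{d,w}(s)=r_d(-s)$ and $\theta_{d,w}(s)\equiv\theta_d(-s)+\pi\pmod{2\pi}$. Two observations make the transfer mechanical: the substitution $s=-t$ turns a function monotone near $0^-$ into a function monotone in the opposite sense near $0^+$, and the extra $+\pi$ in the velocity angle permutes the constant values of $\theta_d$ and carries the limit relation for $|\theta-\theta_d|$ from $t\to0^-$ to $t\to0^+$. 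One should note that near the collision $\dot z$ does not vanish, so $\theta_d$ admits a continuous determination on a one-sided neighbourhood of $0$, and it is for that determination that the monotonicity and limit statements are to be read; respecting the branch convention $\theta_d\in[-\pi,\pi)$ is the one point that needs care.

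Feeding these identifications into Lemma~\ref{lem:dot theta d-} applied to $w$ then yields the three cases of Lemma~\ref{lem:dot theta d+}: part~(a) from Lemma~\ref{lem:dot theta d-}(a) with $\theta_w(-t_0)=\theta(t_0)=0$; part~(b) from Lemma~\ref{lem:dot theta d-}(b) with $\theta_w(-t_0)=\theta(t_0)\in(0,\pi)$, the strictly decreasing $\theta_{d,w}$ near $0^-$ becoming a strictly increasing $\theta_d$ near $0^+$; part~(c) from Lemma~\ref{lem:dot theta d-}(c) with $\theta_w(-t_0)=\theta(t_0)=\pi$; and the final limit assertion from the ``Moreover'' clause of Lemma~\ref{lem:dot theta d-}, the degenerate subcase $\theta\equiv\pi$ (where $z$ stays on the negative real axis) being handled by direct inspection. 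As an independent route one may instead reprove the lemma directly, repeating the proof of Lemma~\ref{lem:dot theta d-} verbatim but invoking Lemma~\ref{lem:monotonity=0+} in place of Lemma~\ref{lem:monotonity=0-} while still using Lemma~\ref{lem:dot theta d equation}, and using that a minimizer emerging from the collision at $t=0$ with $\theta\not\equiv 0$ and $\theta\not\equiv\pi$ has $y(t)>0$ with $y$ strictly increasing for small $t>0$, while the gravitational force continues to point downward.

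I expect the only genuine obstacle to be the bookkeeping of the second step --- applying the $\pi$-shift of $\arg\dot z$ induced by $t\mapsto-t$ consistently, and respecting the $[-\pi,\pi)$ branch cut when unwinding the monotonicity and limit conclusions. No new analytic ingredient beyond Lemma~\ref{lem:dot theta d-} is required.
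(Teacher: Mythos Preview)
Your approach is exactly what the paper does: Lemmas~\ref{lem:monotonity11}--\ref{lem:dot theta d+} are stated without individual proofs, preceded only by the sentence ``By the reversibility of \eqref{eqn:1+1+1-body}, we have the following analog results,'' so the time-reversal reduction $w(s)=z(-s)$ to Lemma~\ref{lem:dot theta d-} is precisely what is intended, and your identification $\theta_{d,w}(s)\equiv\theta_d(-s)+\pi\pmod{2\pi}$ is the correct bookkeeping.

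One point worth flagging, since you single out the $\pi$-shift as the only delicate step: if you actually carry it through, Lemma~\ref{lem:dot theta d-}(a) gives $\theta_{d,w}\equiv\pi$ near $0^-$, which under the shift yields $\theta_d\equiv 0$ near $0^+$, not $\theta_d\equiv\pi$; likewise the ``Moreover'' clause of Lemma~\ref{lem:dot theta d-} transfers to $\lim_{t\to0^+}|\theta(t)-\theta_d(t)|=0$, not $\pi$. A direct check confirms this: when $\theta\equiv 0$ and $z(0)=0$ the path lies on $\mathbb R^+$ with $r$ increasing from $0$, so $\dot z>0$ and $\theta_d=0$; more generally $\dot r>0$ for small $t>0$ forces the angle $\eta(t)$ between $z$ and $\dot z$ to be acute. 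The constants ``$\pi$'' in the stated Lemma~\ref{lem:dot theta d+}(a) and in its ``Moreover'' clause thus appear to be copy-paste slips from Lemma~\ref{lem:dot theta d-}. The later arguments in Section~\ref{sec3} only ever use that $\tan\eta(t)\to 0$, which holds whether the limit is $0$ or $\pi$, so nothing downstream is affected---but you should not expect your reduction to reproduce those two constants verbatim, and your write-up should note the discrepancy rather than claim it yields part~(a) as literally stated.
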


\section{Asymptotic properties of minimizer near the three-body collision}\label{sec3}
Given $T>0$ and a collision Kepler system $(q,c)$ which satisfies \eqref{eqn:newton} and $(Q1)-(Q3)$. Since the restricted one-center-two-body problem \eqref{eqn:1+1+1-body} is reversible, it is sufficient to consider our problem on $[-T,0]$. Let $z(t)=r(t)e^{i\theta(t)}$ be the action minimizer of $\mathcal{A}_{-T,0}$ on $\Omega^{-T,0}_{A,A_0}$ with $0\in A_0$. In this section, we aim to prove the Theorem \ref{main1}, i.e. the Sundman-Sperling estimates of the minimizer $z$ near the three-body collision. The proof includes the following three steps:
\begin{itemize}
\item By using the technique of critical and infliction points, we first show that the ratio $a(t)=r(t)/|q(t)|$ admits both positive upper and lower bound, see Section
    \ref{subsec: 3.1}.

\item By using the properties of $a(t)$, we prove that the asymptotic limit $\theta_*^-$ of $\theta(t)$ can only be $0$ or $\pi$ near the three-body collision, see Section \ref{subsec: 3.2}.

\item Based on the result $\theta_*^-\in\{0,\pi\}$ above, we can further improve the estimates of $a(t)$ by using the technique of critical and infliction points again. This enhanced estimates are enough for us to complete the proof of Theorem \ref{main1}, see Section \ref{subsec:sundman}.
\end{itemize}

Throughout this section, we assume $z(0)=0$. Without loss of generality, we further assume $z(t)$ is smooth on $[-t_0,0)$ and $z(-t_0)\in \mathbb{C}^+_*$ for some $t_0\in(0,T]$, since $\mathcal{A}_{-T,0}$ is invariant under the complex conjugation.

\subsection{Asymptotic behavior of $|z|$ near the three-body collision}\label{subsec: 3.1}
In this section, we aim to prove Theorem \ref{thm:main 3.1}, i.e. the ratio $a(t)=r(t)/|q(t)|$ is both bounded from above and below by two positive numbers near $t=0$.


\begin{Thm}\label{thm:main 3.1}
Let $z(t)=r(t)e^{i\theta(t)}$ be an action minimizer of $\mathcal{A}_{-T,0}$ on $\Omega^{-T,0}_{A,A_0}$. If $z(0)=0$, then
there exist an $\epsilon\in(0,T)$ and $0<c_a<C_a<+\infty$ such that $c_a<a(t)<C_a$ for all $t\in(-\epsilon,0)$.
\end{Thm}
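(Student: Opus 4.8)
The plan is to study the function $a(t)=r(t)/|q(t)|$ directly via its critical and inflection points, exploiting both the Euler--Lagrange equation for $z$ and the known Sundman--Sperling estimates for $|q(t)|$ from Proposition \ref{pro:sundmanq-}. First I would derive a second-order differential relation for $a(t)$: writing $r=a|q|$ and differentiating twice, one gets $\ddot r = \ddot a\,|q| + 2\dot a\,\tfrac{d}{dt}|q| + a\,\tfrac{d^2}{dt^2}|q|$. On the other hand, projecting the equation \eqref{eqn:1+1+1-body} onto the radial direction $e^{i\theta}$ yields $\ddot r - r\dot\theta^2 = -\mu/r^2 + (\text{radial component of the }m\text{-term})$, and likewise $\tfrac{d^2}{dt^2}|q| = -\mu/|q|^2$ since $q$ is a pure Kepler collision on $\mathbb{R}^-$. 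Subtracting, the leading $-\mu/r^2$ and $-a\mu/|q|^2 = -\mu/(a|q|^2)$ terms combine into a factor that vanishes precisely when $a^3=1$ (up to the bounded perturbation coming from the primary $q$), which is the algebraic origin of the numbers $\alpha_1,\alpha_2,\alpha_3$ appearing in Theorem \ref{main1}. This identifies $a\equiv$ const as the ``equilibrium'' behavior and tells us that at a critical point $\dot a(\tilde t)=0$ the sign of $\ddot a(\tilde t)$ is governed by the sign of a function like $a^3-1-\tfrac{m}{\mu}\cdot(\text{bounded factor})$.

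Next I would rule out $a(t)\to 0$ and $a(t)\to+\infty$. Suppose $\liminf_{t\to 0^-} a(t)=0$. Using the monotonicity results of Section \ref{sec21} (in particular Lemma \ref{lem:monotonity=0-}, which gives $\theta(t)$ monotone with limit $\theta_*^-$, so the angular kinetic term $r^2\dot\theta^2$ does not interfere destructively) together with a comparison/test-path argument: if $r$ is much smaller than $|q|$, then near collision $z$ essentially solves the pure Kepler problem with center $c$ (the force from $q$ is bounded since $|z-q|\approx|q|\not\to 0$ faster), so by Proposition \ref{pro:sundmanzc}-type reasoning $r(t)\sim\lambda_\mu|t|^{2/3}\sim|q(t)|$, i.e. $a(t)$ is bounded below after all — contradiction. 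Symmetrically, if $a(t)\to+\infty$ then $|z-q|\approx r\to\infty$ relative to... no: one instead observes that $r\gg|q|$ forces the whole configuration to shrink like $|t|^{2/3}$ again with $r\sim\lambda_\mu|t|^{2/3}$ from the center term dominating, contradicting $a\to\infty$. The clean way to organize both is: show $\liminf a>0$ and $\limsup a<\infty$ separately, in each case assuming the contrary produces a sequence of critical points $\tilde t_k$ of $a$ at which Lemma \ref{lem:monotonity0}-style convexity of the relevant quantity (here the sign analysis of $\ddot a$ via the function in \eqref{eqn:h(a,0)}–\eqref{eqn:h(a,pi)}) is violated, because $a^3-1$ has a definite sign when $a$ is near $0$ or near $\infty$, pinning down $\ddot a$ to one sign at every critical point and hence forbidding $a$ from oscillating down to $0$ or up to $\infty$.

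More concretely for the quantitative bounds: I expect to show that there is $\epsilon>0$ so that on $(-\epsilon,0)$ the function $a(t)$ has at most finitely many critical points, and at each one the second-derivative sign is forced. If $a$ were not bounded above on $(-\epsilon,0)$, pick $\tilde t$ near $0$ where $a$ is large and $\dot a(\tilde t)=0$ (such a point exists on any subinterval where $a$ attains a large local max, using that $a$ is continuous and $a(-t_0)$ is finite); then $\ddot a(\tilde t)$ has the sign of $-(a^3-1-\tfrac{m}{\mu}(\text{bdd}))<0$ for $a$ large, so every large critical point is a strict local max — but then $a$ can have at most one large local max and must decrease on either side of it toward $0$, forcing $a$ small near $0$, contradicting the lower bound established first (or directly contradicting $\limsup a=\infty$). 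The lower bound $c_a$ is obtained the same way with the inequality reversed: for $a$ small, $a^3-1-\tfrac{m}{\mu}(\text{bdd})<0$ as well, so $\ddot a(\tilde t)>0$ at small critical points, making them strict local minima, which again caps their number and yields a uniform positive lower bound on $(-\epsilon,0)$.

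The main obstacle I anticipate is controlling the perturbation term coming from the primary $q$ in the radial equation for $z$ — the factor $\tfrac{m(z-q)}{|z-q|^3}\cdot e^{i\theta}$ — uniformly as $t\to 0^-$, since both $|z|$ and $|q|$ tend to $0$ and $|z-q|$ could a priori be comparable to either. One must show this term, after dividing by $|q|^{-2}$ or the appropriate power, contributes exactly the bounded quantity $\tfrac{m}{\mu}\cdot\tfrac{a^2}{(a\pm 1)^2}$ in the limit (the sign depending on whether $z$ is on the same side as $q$, $\theta_*^-=\pi$, or opposite, $\theta_*^-=0$) — but at this stage of the argument we do not yet know $\theta_*^-\in\{0,\pi\}$, only that $\theta$ is monotone. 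So the delicate point is to run the boundedness argument for $a$ using only the crude bound $|z-q|\ge \big||z|-|q|\big|$ together with $|z-q|\le|z|+|q|$, which gives $a$-dependent but still usable two-sided control of the perturbation, deferring the precise limiting value of the perturbation (and hence the exact constants $\alpha_i$) to Section \ref{subsec:sundman} after $\theta_*^-\in\{0,\pi\}$ is known. Making this ``crude control suffices for boundedness'' step rigorous — i.e. checking that even with the worst-case perturbation the sign of $\ddot a$ at extreme critical points is still forced — is where the real work lies.
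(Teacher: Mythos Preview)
Your critical-point strategy captures part of the picture but has a structural gap: it only controls \emph{oscillatory} behavior of $a(t)$ and says nothing when $a$ is eventually monotone. If $a\to 0$ (or $a\to+\infty$) monotonically as $t\to 0^-$, there are no critical points of $a$ near $0$ at all, so the sign of $\ddot a$ at critical points is vacuous. Your sentence ``assuming the contrary produces a sequence of critical points $\tilde t_k$'' is precisely where this breaks---$\liminf a=0$ does not by itself force any critical points to exist. (There is also a sign slip: for $a<\alpha_1$ one has $h_{m/\mu}(a,\theta)<0$ for all $\theta$, and since the correction term $\tfrac{1}{\mu}\tan^2\eta\,\dot r^2 r\to 0$, Lemma \ref{lem: ddot a(t) and h(a,theta)} gives $\ddot a<0$, so small critical points are local \emph{maxima}, not minima.) The paper's Proposition \ref{prop: convergency of a} runs exactly your critical-point argument and obtains only what one should expect from it: either $\underline a^-\ge\alpha_1$, or $a(t)$ \emph{converges} to some limit $a^*\le\alpha_1$. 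The case $a^*=0$ is not excluded by this and requires a separate argument.

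The paper supplies two ingredients your proposal lacks. For the upper bound (Lemma \ref{lem: upper bound of r}) it does \emph{not} use critical points of $a$ at all, but a direct double-integration of the force bound $-\ddot r\le|\ddot z|\le C|t|^{-4/3}$; the perturbation control you flagged as the ``main obstacle'' is obtained (when $\theta\not\equiv\pi$) from the geometric inequality $a^2+1+2a\cos\theta\ge\sin^2\theta(-t_1)$, which comes from the monotonicity of $\theta$ and is strictly sharper than your triangle-inequality bound $|z-q|\ge\big||z|-|q|\big|$ (the latter degenerates near $a=1$). For the lower bound, to rule out $a^*=0$ the paper first proves $|\dot r(t)|<\hat C|t|^{-1/3}$ on $(-\epsilon,0)$ via a separate inflection-point argument applied to $\beta(t)=|\dot r(t)|\,|t|^{1/3}$, and then runs a convexity argument on $f_k(t)=r(t)-a(t_k)|q(t)|$: one has $f_k(t_k)=f_k(0)=0$ and $\dot f_k(t_k)<0$, while the dominant term $-\mu/r^2=-\mu/(a^2|q|^2)$ in $\ddot r$ blows up as $a\to 0$, forcing $\ddot f_k<0$ throughout $(t_k,0)$, which is incompatible with $f_k$ returning to $0$. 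Your heuristic ``$z$ essentially solves the pure Kepler problem, so $r\sim\lambda_\mu|t|^{2/3}$'' is the right intuition here, but it cannot be made rigorous by invoking Proposition \ref{pro:Sperling69} directly, since the perturbation $-m(z-q)/|z-q|^3$ has magnitude $\sim m/|q|^2\to\infty$ and is not bounded near $t=0$.
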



The following lemma is immediately obtained from Proposition \ref{pro:sundmanq-}, and provides more convenience to the later discussions.

\begin{Lem}\label{lem:sundman}
There exist a $\delta>0$ small and $0<c_q<C_q<+\infty$ such that for any $t \in (-\delta,0)$, we have
\begin{align}\label{eqn:sundman}
 \lvert q(t) \rvert \in (c_q \lvert t \rvert ^{\frac{2}{3}},C_q \lvert t \rvert ^{\frac{2}{3}}),  \quad
 \lvert \dot q(t) \rvert \in (c_q \lvert t \rvert ^{-\frac{1}{3}},C_q \lvert t \rvert ^{-\frac{1}{3}}) \quad \text{and} \quad
 \lvert \ddot q(t) \rvert \in (c_q \lvert t \rvert ^{-\frac{4}{3}},C_q \lvert t \rvert ^{-\frac{4}{3}}).
\end{align}
\end{Lem}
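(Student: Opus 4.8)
The statement to prove is Lemma~\ref{lem:sundman}, which simply asserts the existence of two-sided bounds $c_q|t|^{2/3}\le|q(t)|\le C_q|t|^{2/3}$ (and analogously for the first and second derivatives) for $t$ in a small punctured interval near the collision moment $0$. This is an immediate corollary of Proposition~\ref{pro:sundmanq-}, so the plan is essentially to unwind the little-$o$ asymptotics into crude inequalities.

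\begin{proof}
By Proposition~\ref{pro:sundmanq-}$(a)$, we have $|q(t)| = \lambda_{\mu}|t|^{2/3} + o(|t|^{2/3})$ as $t\to 0^-$, where $\lambda_{\mu} = (9\mu/2)^{1/3} > 0$. Hence $|q(t)|/|t|^{2/3} \to \lambda_{\mu}$ as $t\to 0^-$, so there exists $\delta_1 > 0$ such that for all $t\in(-\delta_1,0)$,
$$\frac{1}{2}\lambda_{\mu} < \frac{|q(t)|}{|t|^{2/3}} < 2\lambda_{\mu}.$$
Similarly, by Proposition~\ref{pro:sundmanq-}$(b)$, $\frac{d}{dt}|q(t)| = \pm\frac{2}{3}\lambda_{\mu}|t|^{-1/3} + o(|t|^{-1/3})$, so $\big|\frac{d}{dt}|q(t)|\big|\,|t|^{1/3} \to \frac{2}{3}\lambda_{\mu}$, and since $\big|\,|\dot q(t)|\,\big| = \big|\frac{d}{dt}|q(t)|\big|$ away from collision (as $q$ stays on the negative real axis by $(Q3)$, so $|q(t)| = -q(t)$ and $\frac{d}{dt}|q(t)| = -\dot q(t)$, giving $|\dot q(t)| = \big|\frac{d}{dt}|q(t)|\big|$), there exists $\delta_2 > 0$ with
$$\frac{1}{3}\lambda_{\mu}\,|t|^{-1/3} < |\dot q(t)| < \lambda_{\mu}\,|t|^{-1/3}, \quad \forall\, t\in(-\delta_2,0).$$
Finally, by Proposition~\ref{pro:sundmanq-}$(c)$, $\frac{d^2}{dt^2}|q(t)| = -\frac{2}{9}\lambda_{\mu}|t|^{-4/3} + o(|t|^{-4/3})$, and again $|\ddot q(t)| = \big|\frac{d^2}{dt^2}|q(t)|\big|$ since $q$ lies on a line through the origin, so there exists $\delta_3 > 0$ with
$$\frac{1}{9}\lambda_{\mu}\,|t|^{-4/3} < |\ddot q(t)| < \lambda_{\mu}\,|t|^{-4/3}, \quad \forall\, t\in(-\delta_3,0).$$
Taking $\delta = \min\{\delta_1,\delta_2,\delta_3\} > 0$, $c_q = \frac{1}{9}\lambda_{\mu} > 0$ and $C_q = 2\lambda_{\mu} < +\infty$, all three inclusions in \eqref{eqn:sundman} hold for every $t\in(-\delta,0)$.
\end{proof}

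The only point requiring a moment's care — and the closest thing to an obstacle here — is the identification $|\dot q(t)| = \big|\tfrac{d}{dt}|q(t)|\big|$ and $|\ddot q(t)| = \big|\tfrac{d^2}{dt^2}|q(t)|\big|$, since in general $\frac{d}{dt}|q|$ only captures the radial component of $\dot q$. This is resolved by invoking hypothesis $(Q3)$: $q$ stays on the ray $\mathbb{R}^-$, so $q(t) = -|q(t)|$ identically on $(-T,T)$, and both derivative identities follow by direct differentiation. After that, everything is a routine conversion of asymptotic equalities into rough two-sided bounds by shrinking the interval.
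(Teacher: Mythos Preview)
Your proof is correct and matches the paper's approach: the paper simply states that the lemma ``is immediately obtained from Proposition~\ref{pro:sundmanq-}'' and gives no further argument, so your explicit unwinding of the little-$o$ asymptotics into two-sided bounds is exactly what is intended. Your observation that $(Q3)$ forces $q(t)=-|q(t)|$, making $|\dot q|=\big|\tfrac{d}{dt}|q|\big|$ and $|\ddot q|=\big|\tfrac{d^2}{dt^2}|q|\big|$, is the only nontrivial point and you handle it correctly.
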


We first show the positive upper bound for $a(t)$ near $t=0$. \begin{Lem}\label{lem: upper bound of r}
There exist $\delta>0$ and $\hat C>0$ such that $r(t)\leq \hat C |t|^{\frac{2}{3}}$ for all $t \in (-\delta,0)$. Moreover, $a(t)<\hat C/c_q$ for all $t\in(-\delta,0)$.
\end{Lem}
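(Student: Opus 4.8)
The plan is to argue by contradiction: if $r(t)$ is not bounded above by any multiple of $|t|^{2/3}$ near $0$, then we can use the minimality of $z$ to construct a competitor path with strictly smaller action. The natural competitor is obtained by ``cutting off'' the excursion of $z$ to large radius, replacing it with a path that stays comparable in size to $|q(t)|$ while joining the same endpoints. More precisely, I would compare $z$ on a small interval $[-\delta^*,0]$ with a path of the form $w(t)=\rho(t)e^{i\theta(t)}$ (same argument) where $\rho$ interpolates between $r(-\delta^*)$ and $0$ by a profile like $\rho(t)=r(-\delta^*)\,(|t|/\delta^*)^{2/3}$ (or, to be safe, by a small constant multiple of $|q(t)|$, possibly rescaled so that $\rho(-\delta^*)=r(-\delta^*)$). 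The key feature is that such a path has finite, controlled kinetic energy (the $|t|^{2/3}$ profile is exactly the borderline integrable one), and since it stays at radius comparable to $|q(t)|$, the potential term $U(w,t)\ge \mu/\rho(t)$ is of order $|t|^{-2/3}$, hence integrable.

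First I would set up the contradiction hypothesis quantitatively: suppose for every $C>0$ there are points $t\to 0^-$ with $r(t)>C|t|^{2/3}$; combined with the lower bound on $r$ that will come from the Sundman-type behavior (or simply from $r(0)=0$ and continuity), one extracts on a suitable small interval a large ``bump'' in $r$. Then I would estimate the action of $z$ from below on that interval — the kinetic part $\int \tfrac12(\dot r^2+r^2\dot\theta^2)\,dt$ together with $\int U(z,t)\,dt\ge \int \mu/r(t)\,dt$ — and show it exceeds the action of the competitor $w$ by a definite amount. For the competitor, $\mathcal A_{-\delta^*,0}(w)=\int_{-\delta^*}^{0}\tfrac12(\dot\rho^2+\rho^2\dot\theta^2)+U(w,t)\,dt$; one bounds $\int \dot\rho^2$ explicitly for the chosen profile, bounds $\rho^2\dot\theta^2 \le r^2\dot\theta^2$ when $\rho\le r$ (so the angular kinetic energy only decreases), and bounds $\int U(w,t)\,dt \le \int (\mu/\rho + m/(\rho+|q|))\,dt$, all of which are $O((\delta^*)^{1/3})\to 0$. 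The contradiction forces $r(t)\le \hat C|t|^{2/3}$ near $0$. The final assertion $a(t)=r(t)/|q(t)|<\hat C/c_q$ is then immediate from Lemma \ref{lem:sundman}, which gives $|q(t)|>c_q|t|^{2/3}$.

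The main obstacle I anticipate is making the cut-and-paste competitor genuinely admissible and genuinely cheaper: one must ensure the new path lies in the correct path space (it does, since only the interior is modified and the endpoint $z(-\delta^*)$ is preserved while $z(0)=0\in A_0$ is preserved), and — more delicately — that replacing $r$ by $\rho$ does not inadvertently increase the potential because the competitor could pass closer to the primary $q(t)$ than $z$ did. This is where one uses the freedom to keep $\rho(t)$ of the same order as $|q(t)|$ but with a controlled constant, together with the already-established fact (Lemma \ref{lem:monotonity=0-}, or the hypothesis $z(-t_0)\in\mathbb C^+_*$ and the monotonicity of $\theta$) that $\theta(t)$ stays in $[0,\pi]$, so that $|w(t)-q(t)|^2=\rho^2+|q|^2+2\rho|q|\cos\theta$ can be bounded below by a fixed multiple of $(\rho+|q|)^2$ away from the collinear-collision configuration, keeping $\int m/|w-q|\,dt$ finite and small. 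Handling the borderline case where $\theta\to 0$ (so $w$ could be nearly collinear with $q$ between $c$ and $q$) requires choosing the constant in $\rho$ so that $\rho$ stays, say, below $|q|/2$ or above $2|q|$ on the cut interval; this is the one place where a little care is needed, but it does not affect the integrability exponent.
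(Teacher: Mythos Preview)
Your approach is fundamentally different from the paper's, and as written it has a genuine gap.

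\textbf{The gap.} The heart of your argument is the claim that the competitor $w=\rho e^{i\theta}$ has strictly smaller action than $z$ on $[-\delta^*,0]$, but you never produce a lower bound on $\mathcal A_{-\delta^*,0}(z)$ that actually uses the contradiction hypothesis. Concretely: if you take $\delta^*=|t_k|$ at a point where $r(t_k)=C_k|t_k|^{2/3}$ with $C_k\to\infty$, then your profile $\rho(t)=r(-\delta^*)(|t|/\delta^*)^{2/3}=C_k|t|^{2/3}$ gives
\[
\int_{-\delta^*}^{0}\tfrac12\dot\rho^2\,dt=\tfrac23\,C_k^{2}\,(\delta^*)^{1/3},
\]
which is \emph{not} $O((\delta^*)^{1/3})$ uniformly in $k$; in fact it dominates the only lower bound you have for $z$, namely $\int\tfrac12\dot r^2+\mu/r\ge 2\sqrt{2\mu\,r(-\delta^*)}=2\sqrt{2\mu C_k}\,(\delta^*)^{1/3}$. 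If instead you fix $\delta^*$ at a ``good'' point where $r(-\delta^*)$ is of moderate size, then both $\mathcal A_{-\delta^*,0}(z)$ and $\mathcal A_{-\delta^*,0}(w)$ are $O((\delta^*)^{1/3})$ and no contradiction emerges from the orders alone. You also need $\rho\le r$ pointwise to get $\rho^2\dot\theta^2\le r^2\dot\theta^2$, and at this stage of the argument there is no lower bound on $r$ (that comes \emph{after} this lemma in the paper), so this inequality is not available either.

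\textbf{What the paper does instead.} The paper abandons the variational viewpoint here and works directly with the Euler--Lagrange equation. The key observation is that since $\theta(t)\le\theta(-t_1)<\pi$ (monotonicity of $\theta$, Lemma~\ref{lem:monotonity=0-}), one has the geometric lower bound $|z-q|^2\ge |q|^2\min\{1,\sin^2\theta(-t_1)\}$, so that on the set $\Delta=\{r>|q|\}$ one gets
\[
-\ddot r\le|\ddot z|\le \frac{\mu}{r^2}+\frac{m}{|z-q|^2}\le C|t|^{-4/3}.
\]
Decomposing $\Delta$ into intervals $(a_k,b_k)$ with $r(a_k)=|q(a_k)|$, $r(b_k)=|q(b_k)|$, and integrating this differential inequality twice from the matching endpoints yields $r(t)\le\tfrac92 C|t|^{2/3}$ directly. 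The case $\theta\equiv\pi$ is handled separately by a comparison with a one-center Kepler system of mass $m+\mu$. This ODE approach never needs a competitor path at all; it uses the equation that $z$ already satisfies as a critical point.
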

\begin{proof}
We prove this lemma in two situations: $\theta(t)\equiv\pi$ and $\theta(t)\not\equiv\pi$ for any $t\in[-t_0,0)$.

To the former case, if $r(-t_0)\in(0,|q(-t_0)|)$, then by Lemma \ref{lem:sundman}, we have $r(t)\leq |q(t)|<C_q|t|^{\frac{2}{3}}$ for all $t\in[-t_0,0)$, then this lemma follows.

If $r(-t_0)\in(|q(-t_0)|,+\infty)$, then we define a $(\hat z, \hat q)$-system, where $\hat q(t):=q(t)$ on $(-t_0,0)$ with mass $m+\mu$, and $\hat z(t)$ satisfies $\dot {\hat z}(-t_0)=\dot z(-t_0)$ and $\hat z(0)=0$. Since $(\hat z,\hat q)$ also forms a Kepler system, relatively, according to Proposition \ref{pro:sundmanq-}, we have $$|\hat z(t)|=|\hat z(t)-q(t)|+|q(t)|<(\lambda_{m+\mu}+\lambda_{\mu})|t|^{2/3}+o(|t|^{2/3}),$$ where $\lambda_{m+\mu}=(9(m+\mu)/2)^{1/3}$. We claim that $|\hat z(t)|>|z(t)|$ for any $t\in[-t_0,0)$. Otherwise, assume there exists a moment $\hat t\in[-t_0,0)$ such that $\hat z(\hat t)=z(\hat t)$ and $\dot{\hat z}(\hat t)\geq \dot z(\hat t)$. Since the total force of $\hat z$ in $(\hat z,\hat q)$-system is always greater than the total force of $z$ in $(q,c)$-system, whenever they are at the position, then $\hat z(t)$ reaches $0$ earlier than $z(t)$. This contradicts to the assumption that $\hat z(0)=0$.
Therefore, this claim holds. Finally, combing the arguments above, we conclude that
$|z(t)|<|\hat z(t)|<(\lambda_{m+\mu}+\lambda_{\mu})|t|^{2/3}+o(|t|^{2/3})$ for any $t\in (-t_0,0)$.

To prove the latter case. Fix any $t_1\in(0,t_0)$. Since $\theta(t)\not \equiv \pi$, then by Lemma \ref{lem:monotonity=0-} (b), $\theta(-t_1)\in(\theta_*^-,\pi)$ and $\theta(t)\leq \theta(-t_1)$ for any $t\in(-t_1,0)$. Since $z\cdot \dot z=r\dot r$, then $r\ddot r-z\cdot \ddot z= \lvert \dot z \rvert ^2-\dot r^2\geq0$, which implies that $r\ddot r\geq z\cdot \ddot z\geq -r \lvert \ddot z \rvert $.
By (\ref{eqn:1+1+1-body}), we have
\begin{equation}\label{eqn:-ddot r leq}
-\ddot r(t) \leq  \lvert \ddot z(t) \rvert \leq \frac{\mu}{ \lvert z(t) \rvert ^2}+\frac{m}{ \lvert z(t)-q(t) \rvert ^2}\leq 2\max\left\{\frac{\mu}{ \lvert z(t) \rvert ^2},\frac{m}{ \lvert z(t)-q(t) \rvert ^2}\right\}.
\end{equation}

If $r(t)\leq \lvert q(t) \rvert $, then we obtain the desired result immediately by Lemma \ref{lem:sundman}.

If $r(t)> \lvert q(t) \rvert $, then due to the facts that $a^2(t)+1+2a(t)\cos\theta(t) \geq 1 $ for any $\theta(t)\in [0,\pi/2]$ and
$a^2(t)+1+2a(t)\cos\theta(t) \geq \sin^2\theta(-t_1)>0$ for any $\theta(t) \in (\pi/2,\theta(-t_1))$, providing $\theta(-t_1)>\pi/2$. Then we have
\begin{equation}\label{eqn:lower bound of (a^2+1+2acos)}
\frac{m}{ \lvert z(t)-q(t) \rvert ^2}
=\frac{m}{(a(t)^2+1+2a(t)\cos\theta(t)) \lvert q(t) \rvert ^2}
\leq \frac{m}{\sin^2\theta(-t_1) \lvert q(t) \rvert ^2}.
\end{equation}
According to Lemma~\ref{lem:sundman}, (\ref{eqn:-ddot r leq}) and (\ref{eqn:lower bound of (a^2+1+2acos)}), there exists a $\delta\in(0,t_0)$ such that
\begin{equation}\label{eqn:-ddot r leq 2}
-\ddot r(t)\leq C|t|^{-\frac{4}{3}},\quad \forall t\in(-\delta,0),
\end{equation}
where $C=2\max\{\mu/c_q^2,\ m/(c_q\sin\theta(-t_1))^2, \ C_q\}$.

Denote $\Delta:=\{t\in(-\delta,0),\ r(t)> \lvert q(t) \rvert \}$.
Since $\Delta\subset (-\delta,0)$ is open, then we can write $\Delta=\cup_{k\geq 1}(a_k,b_k)$, where $r(a_k)= \lvert q(a_k) \rvert $ and $r(b_k)= \lvert q(b_k) \rvert $. Since $r(a_k)= \lvert q(a_k) \rvert $ and $r(t)> \lvert q(t) \rvert$ for any $t\in(a_k,b_k)$ and $k$, by $(\ref{eqn:sundman})$, we have
\begin{align}\label{eqn:-dot r a_k}
0<-\dot r(a_k)\leq -\frac{d}{dt} \lvert q(a_k) \rvert \leq C_q \lvert a_k \rvert ^{-\frac{1}{3}}.
\end{align}
By integration, for any $t\in(a_k,b_k)$, from $(\ref{eqn:-ddot r leq 2})$ and $(\ref{eqn:-dot r a_k})$, we compute that
\begin{eqnarray*}
r(t)&=&r(b_k)+\int^{b_k}_t \left( -\dot r(a_k)-\int^s_{a_k}\ddot r(\tau)d\tau \right)ds\\
&\leq& C_q \lvert b_k \rvert ^{\frac{2}{3}}+\int^{b_k}_t\left(C_q \lvert a_k \rvert ^{-\frac{1}{3}}+\int^s_{a_k}C \lvert \tau \rvert ^{-\frac{4}{3}}d\tau\right)ds\\
&=& C_q \lvert b_k \rvert ^{\frac{2}{3}}+(b_k-t)C_q \lvert a_k \rvert ^{-\frac{1}{3}}+\int^{b_k}_t3C\left( \lvert s \rvert ^{-\frac{1}{3}}- \lvert a_k \rvert ^{-\frac{1}{3}}\right)ds\\
&=& C_q \lvert b_k \rvert ^{\frac{2}{3}}+(b_k-t)C_q \lvert a_k \rvert ^{-\frac{1}{3}}+3C\left(\frac{3}{2}\left( \lvert t \rvert ^{\frac{2}{3}}- \lvert b_k \rvert ^{\frac{2}{3}}\right)-(b_k-t) \lvert a_k \rvert ^{-\frac{1}{3}}\right)\\
&=& \frac{9}{2}C \lvert t \rvert ^{\frac{2}{3}}
-\left(\frac{9}{2}C-C_q\right) \lvert b_k \rvert ^{\frac{2}{3}}
-\left(3C-C_q\right)(b_k-t) \lvert a_k \rvert ^{-\frac{1}{3}}\\
&<& \frac{9}{2}C \lvert t \rvert ^{\frac{2}{3}},
\end{eqnarray*}
the last inequality is obtained by $C\geq 2C_q$.
Hence, by taking $\hat C=9C/2$, the lemma follows.
\end{proof}


Next, we show that $a(t)$ also possesses a positive lower bound. We first introduce the following function
\begin{equation}\label{eqn:def of h}
h_{m/\mu}(a,\theta):=a^3-1-\frac{m}{\mu}\frac{a^2(a+\cos\theta)}{(a^2+1+2a\cos\theta)^{\frac{3}{2}}},
\quad \forall\ (a,\theta)\in \mathbb R^+\times\mathbb [0,\pi],\end{equation}
and study the relation between $h_{m/\mu}(a(t),\theta(t))$ and $\ddot{a}(t)$.

\begin{Lem}\label{lem: ddot a(t) and h(a,theta)}
Assume $\hat t\in(-t_0,0)$ is a critical point of $a(t)$, then $\ddot a(\hat t)>0$ (resp. $<0$) if and only if $$h_{m/\mu}(a(\hat t),\theta(\hat t))+\frac{1}{\mu}\tan^2(\eta(\hat t))\dot r^2(\hat t)r(\hat t)>0\ (\mathrm{resp. <0}),$$
where $\eta(t)$ denotes the angle from $z(t)$ to $\dot z(t)$. In particular, $\ddot a(\hat t)=0$ is equivalent to the case with equality.
\end{Lem}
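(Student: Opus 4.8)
The plan is to write everything in polar form and compute $\ddot a(\hat t)$ directly. Since $a(t)=r(t)/|q(t)|$, we have $\dot a = (\dot r |q| - r \frac{d}{dt}|q|)/|q|^2$, and the critical point condition $\dot a(\hat t)=0$ gives $\dot r(\hat t)|q(\hat t)| = r(\hat t)\frac{d}{dt}|q(\hat t)|$. Differentiating once more and evaluating at $\hat t$, the mixed term drops out and we are left with an expression of the form $\ddot a(\hat t) = \big(\ddot r(\hat t) - a(\hat t)\frac{d^2}{dt^2}|q(\hat t)|\big)/|q(\hat t)|$ (up to lower-order rearrangement using the critical-point relation). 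So the sign of $\ddot a(\hat t)$ is the sign of $\ddot r(\hat t) - a(\hat t)\frac{d^2}{dt^2}|q(\hat t)|$.

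**Expressing $\ddot r$ and $\frac{d^2}{dt^2}|q|$.** For the Kepler system $(q,c)$, equation \eqref{eqn:newton} together with $|q|$ being a straight-line motion (condition $(Q3)$) gives $\frac{d^2}{dt^2}|q(t)| = -\mu/|q(t)|^2 + (\dot q \cdot \dot q - (\frac{d}{dt}|q|)^2)/|q| $; but since $q$ is collinear this angular term vanishes and $\frac{d^2}{dt^2}|q| = -\mu/|q|^2$. For $z$, I would use the standard decomposition $r\ddot r = z\cdot\ddot z + (|\dot z|^2 - \dot r^2)$; the quantity $|\dot z|^2 - \dot r^2 = r^2\dot\theta^2$ is the squared transverse speed, which I would rewrite using the angle $\eta(t)$ between $z$ and $\dot z$ as $\tan^2(\eta)\,\dot r^2$ (valid since $\dot r = |\dot z|\cos\eta$ and the transverse component is $|\dot z|\sin\eta$). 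Meanwhile $z\cdot\ddot z$ is computed from \eqref{eqn:1+1+1-body}: $z\cdot\ddot z = -\mu/r - m\, z\cdot(z-q)/|z-q|^3$. Writing $z = re^{i\theta}$, $q = -|q|$ (negative real axis), one gets $z\cdot(z-q) = r^2 + r|q|\cos\theta$ and $|z-q|^2 = r^2 + |q|^2 + 2r|q|\cos\theta$, so dividing through by appropriate powers of $|q|$ and using $a = r/|q|$ produces exactly the combination appearing in $h_{m/\mu}(a,\theta)$ from \eqref{eqn:def of h}.

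**Assembling.** Substituting, $\ddot r(\hat t) - a(\hat t)\frac{d^2}{dt^2}|q(\hat t)| = \frac{1}{r}\big(z\cdot\ddot z + \tan^2(\eta)\dot r^2 r\big) + a\frac{\mu}{|q|^2}$. Multiplying by $r/|q|$ (a positive factor, which doesn't affect the sign) and collecting the $\mu$-terms, the $-\mu/r$ from $z\cdot\ddot z$ combines with $a\mu/|q|^2 = \mu r/|q|^3$ to give $\mu(r/|q|^3)(1 - 1/a^3)\cdot(\text{something})$ — more precisely, after clearing $|q|$ powers one recovers $\frac{\mu}{|q|^3}\big(a^3 - 1 - \frac{m}{\mu}\frac{a^2(a+\cos\theta)}{(a^2+1+2a\cos\theta)^{3/2}}\big) + \frac{1}{|q|^3}\tan^2(\eta)\dot r^2 r \cdot |q|^2 \cdot \frac{1}{r}$; chasing the bookkeeping carefully gives that $\ddot a(\hat t)$ has the same sign as $h_{m/\mu}(a(\hat t),\theta(\hat t)) + \frac{1}{\mu}\tan^2(\eta(\hat t))\dot r^2(\hat t) r(\hat t)$, with equality corresponding to $\ddot a(\hat t)=0$.

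**Expected main obstacle.** The computation itself is routine but bookkeeping-heavy; the one genuine subtlety is handling the transverse-velocity term correctly. I need to be careful that $\eta(t)$ is well-defined (i.e. $z(\hat t)\neq 0$ and $\dot z(\hat t)\neq 0$) at a critical point of $a$, and that the identity $|\dot z|^2 - \dot r^2 = \tan^2(\eta)\dot r^2$ is used only where $\dot r \neq 0$; if $\dot r(\hat t)=0$ at a critical point, then $\dot z$ is purely transverse, $\eta = \pm\pi/2$, and the formula must be interpreted as a limit — I would either treat that degenerate case separately (there $|\dot z|^2 - \dot r^2 = |\dot z|^2$ directly) or note that $\tan^2(\eta)\dot r^2$ should be read as $r^2\dot\theta^2$ throughout, which is the unambiguous quantity. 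The rest is algebraic simplification of the Newtonian force terms into the $h_{m/\mu}$ form, which is mechanical once the geometry ($q$ on the negative real axis, $z = re^{i\theta}$) is fixed.
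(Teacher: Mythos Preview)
Your approach is essentially identical to the paper's: both compute $\ddot a(\hat t) = \big(\ddot r(\hat t) - a(\hat t)\tfrac{d^2}{dt^2}|q(\hat t)|\big)/|q(\hat t)|$ at a critical point, express $\ddot r$ via the identity $r\ddot r = z\cdot\ddot z + r^2\dot\theta^2$, substitute the equation of motion for $z\cdot\ddot z$, and rewrite $r^2\dot\theta^2$ as $\tan^2(\eta)\,\dot r^2$ using $\tan\eta = r\dot\theta/\dot r$. The paper packages the result as the clean factorization $\ddot r - a\tfrac{d^2}{dt^2}|q| = \big(h_{m/\mu}(a,\theta) + \tfrac{1}{\mu}\tan^2(\eta)\,\dot r^2 r\big)\tfrac{\mu}{r^2}$, which makes the sign equivalence immediate; your explicit remark on the degenerate case $\dot r=0$ (reading $\tan^2(\eta)\,\dot r^2$ as $r^2\dot\theta^2$) is more careful than the paper's own treatment.
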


\begin{proof}
By definition of $a(\cdot)$, we have
\begin{equation}\label{eqn:dot a and ddot a}
\dot a(\hat t)=\left(\dot r(\hat t)-a(\hat t)\frac{d}{dt} \lvert q(\hat t) \rvert \right)\frac{1}{ \lvert q(\hat t) \rvert }=0, \ \ \ddot a(\hat t)=\left(\ddot r(\hat t)-a(\hat t)\frac{d^2}{dt^2} \lvert q(\hat t) \rvert \right)\frac{1}{ \lvert q(\hat t) \rvert }.
\end{equation}
According to identities $ \lvert \dot z \rvert ^2=\dot r^2+r^2\dot\theta^2$ and $ \lvert \dot z \rvert ^2+z\cdot \ddot z={\dot r}^2+r\ddot r$, one can show that
\begin{align}\label{eqn:ddotrequation}
\ddot r=-\frac{\mu}{r^2}-\frac{mz\cdot(z-q)}{r \lvert z-q \rvert ^3}+{\dot\theta}^2r=-\frac{\mu}{r^2}-\frac{m(a+\cos\theta)}{(a^2+1+2a\cos\theta)^{\frac{3}{2}} \lvert q \rvert ^2}+{\dot\theta}^2r.
\end{align}
Then we have
\begin{equation}\label{eqn:ddotrh}
\begin{aligned}
\ddot r(t)-a(t)\frac{d^2}{dt^2} \lvert q(t) \rvert &
=-\frac{\mu}{r^2(t)}-\frac{m(a(t)+\cos\theta(t))}{(a^2(t)+1+2a(t)\cos\theta(t))^{\frac{3}{2}}\lvert q(t) \rvert ^2 }+\dot\theta^2(t)r(t)+a(t)\frac{\mu}{ \lvert q(t) \rvert ^2}\\
&=\left(a(t)-\frac{1}{a^2(t)}-\frac{m}{\mu}\frac{(a(t)+\cos\theta(t))}{(a^2(t)+1+2a(t)\cos\theta(t))^{\frac{3}{2}}}\right)\frac{\mu}{ \lvert q(t) \rvert ^2}+\dot\theta^2(t)r(t)\\
&=\left(h_{m/\mu}(a(t),\theta(t))+\frac{1}{\mu}\dot\theta^2(t)r^3(t)\right)\frac{\mu}{r^2(t)}\\
&=\left(h_{m/\mu}(a(t),\theta(t))+\frac{1}{\mu}\tan^2(\eta(t))\dot r^2(t)r(t)\right)\frac{\mu}{r^2(t)}.
\end{aligned}
\end{equation}
The last equality follows from $\tan\eta(t)=\frac{r(t)\dot\theta(t)}{\dot r(t)}$, since
$\cos(\eta(t)) \lvert z(t) \rvert  \lvert \dot{z}(t) \rvert =z(t)\cdot \dot{z}(t)=r(t)\dot{r}(t)$ and
$\sin(\eta(t)) \lvert z(t) \rvert  \lvert \dot{z}(t) \rvert =z(t)\times\dot{z}(t)=r^2(t)\dot{\theta}(t)$.
The lemma holds.
\end{proof}

The following lemma introduces the zero set $\mathcal{S}$ of $h_{m/\mu}(a,\theta)$. See Figure~\ref{fig:h} for the simulation of $\mathcal{S}$ with $m/\mu=1$.

\begin{Lem}\label{lem:zero set of h}
Given $h_{m/\mu}(a,\theta)$ as in (\ref{eqn:def of h}).  Let $\mathcal{S}:= \{(a,\theta): h_{m/\mu}(a,\theta)=0\}$,  we have
\begin{enumerate}
\item[$(a)$] if $\theta=\pi$, there exist unique two  $\alpha_1,\alpha_3$ with $\alpha_1<1<\alpha_3$ such that $(\alpha_1,\pi),(\alpha_3,\pi) \in \mathcal{S}$.
\item[$(b)$] if $\theta=0$,  there exists a unique $\alpha_2$ with $1<\alpha_2<\alpha_3$ such that $(\alpha_2,0)\in \mathcal{S}$.
\item[$(c)$] for each $a\in[\alpha_1,1)\cup[\alpha_2,\alpha_3]$, there are exactly one zero $\theta_{a}$ such that $(a,\theta_{a}) \in \mathcal{S}$.
\item[$(d)$] for any $a\in[0,\alpha_1)\cup[1,\alpha_2)\cup(\alpha_3,+\infty)$,  $(a,\theta) \notin \mathcal{S}$ for all $\theta \in [0,\pi]$.
\end{enumerate}
\end{Lem}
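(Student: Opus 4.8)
The plan is to reduce $h_{m/\mu}(a,\theta)$ on the two boundary rays $\theta\in\{0,\pi\}$ to explicit rational functions of the single variable $a$, settle parts $(a)$ and $(b)$ there by one-variable calculus, and then transport that information across $\theta\in(0,\pi)$ by pinning down the sign of $\partial_\theta h_{m/\mu}$.

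For the boundary rays: at $\theta=\pi$ one has $a^2+1+2a\cos\theta=(a-1)^2$ and $a+\cos\theta=a-1$, so $h_{m/\mu}(a,\pi)$ equals $a^3-1+\tfrac{m}{\mu}\tfrac{a^2}{(1-a)^2}$ for $a\in(0,1)$ and $a^3-1-\tfrac{m}{\mu}\tfrac{a^2}{(a-1)^2}$ for $a>1$, with a genuine singularity at $a=1$ (where $a^2+1+2a\cos\theta$ vanishes). On $(0,1)$ both summands are strictly increasing (using $\tfrac{d}{da}\tfrac{a^2}{(1-a)^2}=\tfrac{2a}{(1-a)^3}>0$) and the values run from $-1$ to $+\infty$, giving the unique $\alpha_1\in(0,1)$; on $(1,+\infty)$ the derivative is $3a^2+\tfrac{2(m/\mu)a}{(a-1)^3}>0$ and the values run from $-\infty$ to $+\infty$, giving the unique $\alpha_3>1$, which is $(a)$. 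At $\theta=0$, $h_{m/\mu}(a,0)=a^3-1-\tfrac{m}{\mu}\tfrac{a^2}{(a+1)^2}$ and $\partial_a h_{m/\mu}(a,0)=\tfrac{a}{(a+1)^3}\bigl(3a(a+1)^3-2m/\mu\bigr)$; since $a\mapsto 3a(a+1)^3$ increases strictly from $0$ to $+\infty$, $h_{m/\mu}(\cdot,0)$ has exactly one interior minimum, equals $-1<0$ at $a=0$, and tends to $+\infty$, so it has a unique zero $\alpha_2$, while $h_{m/\mu}(1,0)=-\tfrac{m}{4\mu}<0$ forces $\alpha_2>1$. For $\alpha_2<\alpha_3$ I would note that $(a-1)^2<(a+1)^2$ gives $h_{m/\mu}(a,\pi)<h_{m/\mu}(a,0)$ for $a>1$, hence $h_{m/\mu}(\alpha_3,0)>h_{m/\mu}(\alpha_3,\pi)=0$, which together with the sign pattern of $h_{m/\mu}(\cdot,0)$ forces $\alpha_2<\alpha_3$; this is $(b)$.

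For the interior, I would differentiate to get $\partial_\theta h_{m/\mu}(a,\theta)=\tfrac{m}{\mu}\,a^2\sin\theta\,\dfrac{1-2a^2-a\cos\theta}{(a^2+1+2a\cos\theta)^{5/2}}$, so on $(0,\pi)$ its sign is that of $\phi(a,\theta):=1-2a^2-a\cos\theta$, which is strictly increasing in $\theta$ with endpoint values $1-2a^2+a$ at $\theta=\pi$ and $1-2a^2-a$ at $\theta=0$. If $a\ge1$ then $\phi\le 1-2a^2+a=-(2a+1)(a-1)\le0$, so $\theta\mapsto h_{m/\mu}(a,\theta)$ is strictly decreasing, and a zero $\theta_a$ exists and is unique exactly when $h_{m/\mu}(a,0)\ge0\ge h_{m/\mu}(a,\pi)$, i.e. (by the boundary step) when $\alpha_2\le a\le\alpha_3$. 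If $a<1$ then $h_{m/\mu}(a,0)<0$ automatically, while $\theta\mapsto h_{m/\mu}(a,\theta)$ is strictly increasing when $a\le\tfrac12$ (there $\phi\ge 1-2a^2-a=-(2a-1)(a+1)\ge0$) and strictly decreasing then strictly increasing when $\tfrac12<a<1$ ($\phi$ changing sign once); in either case $h_{m/\mu}(a,\cdot)$ is ``valley-shaped'' with a negative left endpoint, so it has a zero, necessarily unique, precisely when $h_{m/\mu}(a,\pi)\ge0$, i.e. when $a\ge\alpha_1$. Combining the ranges $a\ge1$ and $a<1$ produces exactly the sets in $(c)$ and $(d)$, and the endpoint identifications $\theta_{\alpha_1}=\theta_{\alpha_3}=\pi$ and $\theta_{\alpha_2}=0$ are consistent with $(a)$--$(b)$.

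I expect the one-variable calculus on the boundary rays to be routine; the main obstacle will be the interior analysis, namely isolating the clean criterion $\operatorname{sgn}\partial_\theta h_{m/\mu}=\operatorname{sgn}(1-2a^2-a\cos\theta)$ and then handling the non-monotone band $\tfrac12<a<1$, where one must verify that the interior minimum of $h_{m/\mu}(a,\cdot)$ stays strictly negative (which it does, because $h_{m/\mu}(a,0)<0$ and that minimum lies to the right of $\theta=0$), so that at most one zero can occur. I would also keep careful track of the fact that $(1,\pi)$ is the only point of $[0,+\infty)\times[0,\pi]$ where the denominator degenerates, since this is precisely what confines the branch issuing from $(\alpha_1,\pi)$ to $a<1$ and keeps it separated from the branch through $(\alpha_2,0)$ and $(\alpha_3,\pi)$ in $a>1$.
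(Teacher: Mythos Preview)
Your proposal is correct and follows essentially the same approach as the paper: compute $\partial_\theta h_{m/\mu}$ to obtain the criterion $\operatorname{sgn}\partial_\theta h_{m/\mu}=\operatorname{sgn}(1-2a^2-a\cos\theta)$, analyze the boundary rays $\theta=0$ and $\theta=\pi$ by one-variable calculus to locate $\alpha_1,\alpha_2,\alpha_3$, and then use the $\theta$-monotonicity to count zeros for each fixed $a$. The only organizational difference is that for $0<a<1$ the paper first localizes any zero to the region $D_1=\{a<-\cos\theta\}$ (where the bound $1-2a^2-a\cos\theta>1-a^2>0$ yields strict $\theta$-monotonicity directly), whereas you do a global shape analysis on $[0,\pi]$ splitting at $a=\tfrac12$; your handling of the non-monotone band $\tfrac12<a<1$ via ``valley-shaped with negative left endpoint'' is a clean alternative to the paper's localization step.
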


\begin{figure}\bigskip
\begin{center}

\includegraphics[width=0.7\textwidth]{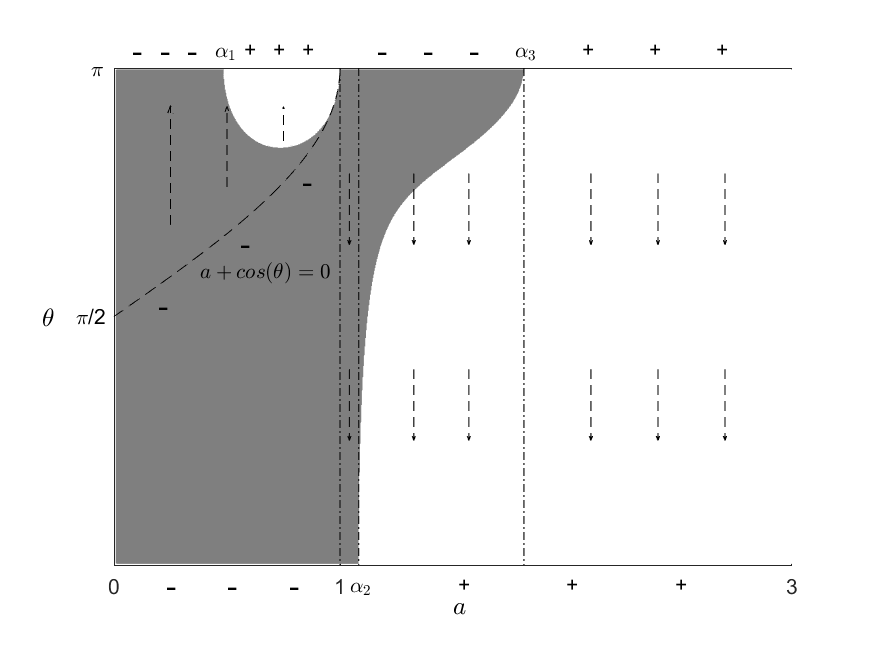}
\caption{
The white area and the gray area  indicate the collection of $(a,\theta)$ satisfying $h_{m/\mu}(a,\theta)>0$ and  $h_{m/\mu}(a,\theta)<0$ respectively,  and
the intersection curve between them is the zero set $\mathcal S$ of $h(a,\cdot)$.
The arrows are pointing to the direction in which $h(a,\cdot)$ increases.   Finally,  $h$ has a singular point $(1,\pi)$.
}

\label{fig:h}
\end{center}
\end{figure}

\begin{proof}
First of all,  we compute
the partial differential with respect to $\theta$ of function $h_{m/\mu}(a,\theta)$,
\begin{align}\label{eqn:partialangle}
\frac{\partial }{\partial \theta}h_{m/\mu}(a,\theta):=\frac{\frac{m}{\mu}a^2\sin\theta(1-2a^2-a\cos\theta)}{(a^2+1+2a\cos\theta)^{\frac{5}{2}}}.
\end{align}

Next,  according to the range of $a$,  we divide the proof into 3 cases: \\

\noindent  Case 1: $a=1$.

Since
$$h_{m/\mu}(1,\theta)=-\frac{m}{\mu}\frac{1+\cos\theta}{(2+2\cos\theta)^{\frac{3}{2}}}
=-\frac{m}{\mu}\frac{1}{2(2+2\cos\theta)^{\frac{1}{2}}}$$
is decreasing on $[0,\pi]$ from $-m/(4\mu)$ to $-\infty$ with respect to $\theta$,  we know that  $\{(a,\theta) \in \mathcal{S}:a=1\}=\emptyset$.

\noindent  Case 2:  $0 \leq a <1$.

We first assume  $(a_*,\theta_*) \in \mathcal{S}$ and $0\leq a_*<1$,
then $\theta_* \in (\pi/2,\pi]$ and $a_* \in (0,  -\cos \theta)$.
This implies the set $\{(a,\theta) \in \mathcal{S}: 0\leq a<1\} \subset D_1$ where $D_1$ denotes the area surrounded by the lines $a=0$,  $\theta=\pi$ and  curve $\{(a,\theta)\in[0,1]\times[0,\pi]:a=-\cos \theta\}$
(see the upper left part of Figure~\ref{fig:h}).

In the area $D_1$,  we have $1-2a^2-a\cos\theta>0$ and then,  by (\ref{eqn:partialangle}),  $\frac{\partial }{\partial \theta}h_{m/\mu}(a,\theta)>0$.
This means that,   when $a$ is fixed, the function $h_{m/\mu}(a,\cdot)$ is strictly increasing.

On the line $\theta=\pi$.  The equations
\begin{align*}
h_{m/\mu}(a,\pi)=a^3-1-\frac{m}{\mu}\frac{a^2}{(1-a)^2}\quad \ \text{ and } \ \quad
\frac{d}{da}h_{m/\mu}(a,\pi)=3a^2+\frac{m}{\mu}\frac{2a}{(1-a)^3}>0
\end{align*}
 imply that $h_{m/\mu}(\cdot,\pi)$ is strictly increasing on $[0,1]$ from $-1$ to $+\infty$. Then there is a unique $\alpha_1$ satisfying $h_{m/\mu}(\alpha_1,\pi)=0$,  $h_{m/\mu}(a,\pi)<0$ if $ a\in [0,\alpha_1)$ and $h_{m/\mu}(a,\pi)>0$ if $  a \in (\alpha_1,1)$.

On the curve $\{(a,\theta)\in[0,1]\times[0,\pi]:a=-\cos \theta\}$,  it is easy to check that $h_{m/\mu}(a,\theta)<0$.

According to the monotonicity of $h_{m/\mu}(a,\cdot)$ and the signs of $h_{m/\mu}(a,\theta)$ on line $\theta=\pi$ and on curve $\{(a,\theta)\in[0,1]\times[0,\pi]:a=-\cos \theta\}$,  we demonstrate that,  for each $a\in [\alpha_1,1)$,  there exists exactly one $\theta_a \in [\pi/2,\pi]$ satisfying $(a,\theta_a) \in S$ and,  for any $a \in [0,\alpha_1)$,   $(a,\theta_a) \notin S$ for all $\theta \in [0,\pi]$,  that is
$\{(a,\theta) \in \mathcal{S}:0\leq a <1\}=\{(a,\theta_a):\alpha_1\leq a <1\}$.

\noindent  Case 3:  $a >1$.

Let $D_2:=\{(a,\theta):1<a<+\infty,  0\leq \theta \leq \pi\}$,
similar to Case 2,  we discuss the monotonicity of $h_{m/\mu}(a,\cdot)$ in $D_2$ and the signs of $h_{m/\mu}(a,\theta)$ on lines $\theta=0$ and $\theta=\pi$.

In the area $D_2$,   we have $1-2a^2-a\cos\theta<0$ and then,  by (\ref{eqn:partialangle}),  $\frac{\partial }{\partial \theta}h_{m/\mu}(a,\theta)<0$.
This means that,  when $a$ is fixed,  the function $h_{m/\mu}(a,\cdot)$ is strictly decreasing.

On the line $\theta=0$.  The equations
\begin{align*}
h_{m/\mu}(a,0)=a^3-1-\frac{m}{\mu}\frac{a^2}{(a+1)^2}\quad \ \text{ and } \ \quad
\frac{d}{da}h_{m/\mu}(a,0)=\frac{2a}{(a+1)^3}\left(\frac{3}{2}a(a+1)^3-\frac{m}{\mu}\right)
\end{align*}
implies that $h_{m/\mu}(\cdot,0)$ is either strictly increasing from $-m/(4\mu)$ to $+\infty$ or strictly decreasing from $-m/(4\mu)$ to some negative value $-C<0$ and then  strictly increasing to $+\infty$. For both of these cases, there are only one $\alpha_2>1$ such that $h_{m/\mu}(\alpha_2,0)=0$, $h_{m/\mu}(a,0)<0$ if $ a\in (1,\alpha_2)$ and $h_{m/\mu}(a,0)>0$ if $  a \in (\alpha_2,+\infty)$.

On the line $\theta=\pi$.  The equations
\begin{align*}
h_{m/\mu}(a,\pi)=a^3-1-\frac{m}{\mu}\frac{a^2}{(a-1)^2}\quad \ \text{ and } \ \quad
\frac{d}{da}h_{m/\mu}(a,\pi)=3a^2+\frac{m}{\mu}\frac{2a}{(a-1)^3}>0
\end{align*}
 imply that $h_{m/\mu}(\cdot,\pi)$ is strictly increasing on $(1,+\infty)$ from $-\infty$ to $+\infty$.
Then there is a unique $\alpha_3$ satisfying $h_{m/\mu}(\alpha_3,\pi)=0$,  $h_{m/\mu}(a,\pi)<0$ if $ a\in (1,\alpha_3)$ and $h_{m/\mu}(a,\pi)>0$ if $  a \in (\alpha_3,+\infty)$.
Note that $\alpha_2<\alpha_3$ because $h_{m/\mu}(\alpha_2,\pi)<h_{m/\mu}(\alpha_2,0)=0$.

Summarize the above discussions,
we demonstrate that,  for each $a\in [\alpha_2,\alpha_3]$,  there exists exactly one $\theta_a \in [0,\pi]$ satisfying $(a,\theta_a) \in S$ and,  for any $a \in (1,\alpha_2)\cup(\alpha_3,+\infty)$,   $(a,\theta) \notin S$ for all $\theta \in [0,\pi]$
(see the right part of Figure~\ref{fig:h}),
that is
$\{(a,\theta) \in \mathcal{S}:a >1\}=\{(a,\theta_a):\alpha_2\leq a \leq \alpha_3\}$.
This completes the proof.
\end{proof}

By using Lemma \ref{lem:zero set of h}, we further obtain the following properties for $h_{m/\mu}(a,\theta)$.
\begin{Lem}\label{lem: zeros of h(a,theta)}
Given $\alpha_1,\alpha_2,  \alpha_3$ as in Lemma~\ref{lem:zero set of h}. Then we have
\begin{itemize}
\item[(a)] if $a>\alpha_3$,  $h_{m/\mu}(a,\theta)>0$ for any $\theta\in[0,\pi]$.
\item[(b)] if $a<\alpha_1$,  $h_{m/\mu}(a,\theta)<0$ for any $\theta\in[0,\pi]$.
\item[(c)] if $a<\alpha_2$ (resp.  $a>\alpha_2$),  $h_{m/\mu}(a,0)<0$ (resp.  $h_{m/\mu}(a,0)>0$).
\item[(d)] if $a\in[0,\alpha_1)\cup (1,\alpha_3)$ (resp.  $a\in(\alpha_1,1)\cup(\alpha_3,+\infty)$),  $h_{m/\mu}(a,\pi)<0$ (resp.  $h_{m/\mu}(a,\pi)>0$).
\end{itemize}
\end{Lem}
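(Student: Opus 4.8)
The plan is to read all four statements directly off the structure established in Lemma \ref{lem:zero set of h}, using the monotonicity of $h_{m/\mu}(a,\cdot)$ in $\theta$ together with the sign information at the endpoints $\theta = 0$ and $\theta = \pi$, which was already worked out case by case in the proof of that lemma. The key point is that $h_{m/\mu}(a,\cdot)$ is strictly monotone in $\theta$ on each of the relevant $a$-ranges (strictly increasing when $0 \le a < 1$, strictly decreasing when $a > 1$, and identically negative and decreasing when $a = 1$), so once we know the sign of $h_{m/\mu}(a,\theta)$ at one endpoint and whether it vanishes in $[0,\pi]$, the sign throughout $[0,\pi]$ is forced.

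First I would dispose of (c) and (d), which are just restatements of the endpoint analysis. For (c): the proof of Lemma \ref{lem:zero set of h}, Case 3, shows that $h_{m/\mu}(\cdot,0)$ is negative on $(1,\alpha_2)$ and positive on $(\alpha_2,+\infty)$; combined with Case 2, where for $0 \le a < 1$ one has $(a,\theta_a) \in \mathcal{S}$ only for $\theta_a \in [\pi/2,\pi]$ and hence $h_{m/\mu}(a,0) < 0$ for all $a \in [0,1)$ (as $h_{m/\mu}(a,\cdot)$ is increasing and its zero, when it exists, lies at $\theta \ge \pi/2 > 0$), and with Case 1 giving $h_{m/\mu}(1,0) = -m/(4\mu) < 0$, we get $h_{m/\mu}(a,0) < 0$ for $a < \alpha_2$ and $h_{m/\mu}(a,0) > 0$ for $a > \alpha_2$. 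For (d): the two explicit computations of $h_{m/\mu}(\cdot,\pi)$ in Cases 2 and 3 give that $h_{m/\mu}(\cdot,\pi)$ is strictly increasing on $[0,1)$ from $-1$ to $+\infty$ with unique zero $\alpha_1$, and strictly increasing on $(1,+\infty)$ from $-\infty$ to $+\infty$ with unique zero $\alpha_3$; reading off signs yields $h_{m/\mu}(a,\pi) < 0$ for $a \in [0,\alpha_1) \cup (1,\alpha_3)$ and $h_{m/\mu}(a,\pi) > 0$ for $a \in (\alpha_1,1) \cup (\alpha_3,+\infty)$, which is exactly (d).

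Next I would prove (a) and (b) by combining (c), (d) with the $\theta$-monotonicity. For (a), let $a > \alpha_3$. By Lemma \ref{lem:zero set of h}(d), $(a,\theta) \notin \mathcal{S}$ for any $\theta \in [0,\pi]$, so $h_{m/\mu}(a,\cdot)$ never vanishes on $[0,\pi]$ and is therefore of one sign there by continuity; since $a > \alpha_3 > \alpha_2$ gives $h_{m/\mu}(a,0) > 0$ by part (c), that sign is positive, proving (a). For (b), let $a < \alpha_1$. Again by Lemma \ref{lem:zero set of h}(d), $h_{m/\mu}(a,\cdot)$ has no zero on $[0,\pi]$, hence is of one sign; and $h_{m/\mu}(a,\pi) < 0$ by part (d) (since $a < \alpha_1$), so that sign is negative, proving (b). Alternatively one can invoke $\theta$-monotonicity directly: for $a < \alpha_1 < 1$, $h_{m/\mu}(a,\cdot)$ is strictly increasing on $[0,\pi]$ with $h_{m/\mu}(a,\pi) < 0$, so it is negative throughout; and for $a > \alpha_3 > 1$, $h_{m/\mu}(a,\cdot)$ is strictly decreasing on $[0,\pi]$ with $h_{m/\mu}(a,\pi) > 0$ (the minimum over $\theta$), so it is positive throughout.

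I do not expect any genuine obstacle here: the lemma is a bookkeeping corollary of Lemma \ref{lem:zero set of h}, and the only mild subtlety is making sure the endpoint signs used for (a) and (b) are the ones consistent with the side on which the zero curve $\mathcal{S}$ lives (zeros for $a < 1$ sit near $\theta = \pi$, zeros for $a > 1$ sweep across $[0,\pi]$), which the proof of Lemma \ref{lem:zero set of h} already pins down. The mildest care needed is in (c) for the range $a \in [\alpha_1,1)$, where one must note that although $(a,\theta_a) \in \mathcal{S}$ for some $\theta_a$, that $\theta_a \ge \pi/2$, so the value at $\theta = 0$ is still strictly negative.
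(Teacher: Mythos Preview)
Your proposal is correct and matches the paper's treatment: the paper states this lemma without proof, as an immediate corollary of Lemma~\ref{lem:zero set of h}, and you have simply filled in the details in the natural way. One small caution: your claim that $h_{m/\mu}(a,\cdot)$ is strictly increasing on all of $[0,\pi]$ for every $a\in[0,1)$ is not quite right (by \eqref{eqn:partialangle} the sign of $\partial_\theta h$ is that of $1-2a^2-a\cos\theta$, which is negative near $\theta=0$ once $a>1/2$, and indeed $\alpha_1>1/2$ always), but this does not affect your actual arguments: for (b) your primary ``no zeros plus sign at one endpoint'' argument is valid, and for (c) on the range $a\le 1$ the inequality $h_{m/\mu}(a,0)=a^3-1-\tfrac{m}{\mu}\tfrac{a^2}{(a+1)^2}<0$ is immediate by inspection.
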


Based on the results above, we now delve into the study of asymptotic behavior of $a(t)$ as $t\rightarrow 0^-$.
Let $\underline{a}^-=\liminf_{t\rightarrow 0^-} a(t)$ and $\bar a^-=\limsup_{t\rightarrow 0^-}a(t)$.

\begin{Prop}\label{prop: convergency of a}
Given $\alpha_1$ as in Lemma~\ref{lem:zero set of h},  we have
\begin{itemize}
\item[(a)] if $\bar a^- \leq \alpha_1$, then $\bar a^-=\underline{a}^-=:a^*$, i.e. $a(t)$ is convergent as $t\rightarrow 0^-$.
\item[(b)] if $\bar a^->\alpha_1$, then $\underline{a}^-\geq \alpha_1$.
\end{itemize}
\end{Prop}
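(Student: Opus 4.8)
The plan is to read the sign of $\ddot a$ at the critical points of $a(t)=r(t)/|q(t)|$ near the three‑body collision from Lemma~\ref{lem: ddot a(t) and h(a,theta)}, to combine it with the sign pattern of $h_{m/\mu}$ established in Lemma~\ref{lem: zeros of h(a,theta)}, and then to conclude by an elementary argument that $a$ cannot oscillate across the level $\alpha_1$. At the outset, Lemma~\ref{lem: upper bound of r} gives $\bar a^-\le \hat C/c_q<\infty$, so $\underline a^-$ and $\bar a^-$ are well‑defined finite numbers, and by Lemma~\ref{lem:monotonity=0-} we may assume that we are in one of the cases $\theta\equiv0$, $\theta\equiv\pi$, or $\theta$ strictly decreasing on $(-t_0,0)$.

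The key step will be the assertion: \emph{for every $c<\alpha_1$ there is $\epsilon_c>0$ such that $a$ has no local minimum at any $\hat t\in(-\epsilon_c,0)$ with $a(\hat t)<c$.} To prove it, note that at a critical point $\hat t$ of $a$, Lemma~\ref{lem: ddot a(t) and h(a,theta)} says $\ddot a(\hat t)$ has the sign of $h_{m/\mu}(a(\hat t),\theta(\hat t))+\frac{1}{\mu}\tan^2(\eta(\hat t))\,\dot r^2(\hat t)\,r(\hat t)$. Since $a(\hat t)<c<\alpha_1<1$, Lemma~\ref{lem: zeros of h(a,theta)}(b) gives $h_{m/\mu}(a(\hat t),\theta(\hat t))<0$, and in fact $h_{m/\mu}(a,\theta)\le-\kappa_c<0$ uniformly for $(a,\theta)\in[0,c]\times[0,\pi]$, a compact set which avoids the singular point $(1,\pi)$. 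For the perturbation term $\frac{1}{\mu}r^3\dot\theta^2=\frac{1}{\mu}\tan^2(\eta)\,\dot r^2 r$: it vanishes identically when $\theta$ is constant, and otherwise $|\theta-\theta_d|\to\pi$ by Lemma~\ref{lem:dot theta d-}, so $\tan^2(\eta(\hat t))\to0$, while at a critical point $\dot r(\hat t)=a(\hat t)\,\frac{d}{dt}|q(\hat t)|$, so $\dot r^2(\hat t)\,r(\hat t)$ stays bounded as $\hat t\to0^-$ by Lemmas~\ref{lem: upper bound of r} and~\ref{lem:sundman}; hence the perturbation term tends to $0$ along the critical points of $a$. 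Choosing $\epsilon_c$ so that this term is $<\kappa_c/2$ on $(-\epsilon_c,0)$ then gives $\ddot a(\hat t)<0$ at every critical point with $a(\hat t)<c$, which is the assertion. I expect this control of the perturbation term to be the main obstacle: its two ingredients are the critical‑point identity $\dot r=a\,\frac{d}{dt}|q|$ together with the Sundman bounds for $q$ (which keep $\dot r^2 r$ bounded), and the geometric fact that near the collision $\dot z$ points essentially opposite to $z$ (which forces $\tan\eta\to0$).

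Granting the assertion, the proposition follows by a soft argument. Fix $c<\alpha_1$ and consider the open set $E_c:=\{t\in(-\epsilon_c,0):a(t)<c\}$; if some connected component of $E_c$ had its right endpoint strictly inside $(-\epsilon_c,0)$, then on the closure of that component $a$ would attain an interior minimum of value $<c<\alpha_1$, contradicting the assertion. Hence, near $t=0$, the set $E_c$ is either empty (so $\underline a^-\ge c$) or a left‑neighbourhood of $0$ (so $\bar a^-\le c$). Now, in case $(a)$, where $\bar a^-\le\alpha_1$: if $\underline a^-<\bar a^-$, then choosing $c\in(\underline a^-,\bar a^-)\subset(\underline a^-,\alpha_1)$ contradicts both alternatives, so $\underline a^-=\bar a^-$. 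In case $(b)$, where $\bar a^->\alpha_1$: if $\underline a^-<\alpha_1$, then choosing $c\in(\underline a^-,\alpha_1)$ again contradicts both alternatives, so $\underline a^-\ge\alpha_1$.
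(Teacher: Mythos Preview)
Your proof is correct and follows essentially the same approach as the paper: both use Lemma~\ref{lem: ddot a(t) and h(a,theta)} together with Lemma~\ref{lem: zeros of h(a,theta)}(b) to show that at any critical point $\hat t$ of $a$ near $0$ with $a(\hat t)<\alpha_1$ one has $\ddot a(\hat t)<0$ (the perturbation term being controlled via $\dot r(\hat t)=a(\hat t)\frac{d}{dt}|q(\hat t)|$, Lemma~\ref{lem:sundman}, and $\tan\eta\to0$ from Lemma~\ref{lem:dot theta d-}), and then derive a contradiction with the existence of local minima of $a$ below $\alpha_1$. The only cosmetic difference is that the paper selects such a sequence of local minima directly, whereas you package the concluding step via the open sets $E_c$; the content is the same.
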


\begin{proof}
To show (a). Assume $\bar a^->\underline{a}^-$.
From $\underline{a}^-<\alpha_1$,   there exists a sequence $t_k\rightarrow 0^-$ such that
\begin{equation}\label{eqn:moment tk}
a(t_k)<\alpha_1,\ \ \dot a(t_k)=0 \ \text{ and }\  \ddot a(t_k)\geq0.
\end{equation}
By (\ref{eqn:dot a and ddot a}) and Lemma \ref{lem: ddot a(t) and h(a,theta)}, (\ref{eqn:moment tk}) implies that $\dot r(t_k)=a(t_k)\frac{d}{dt} \lvert q(t_k) \rvert $ and
\begin{equation}\label{eqn:moment tk 1}
h_{m/\mu}(a(t_k),\theta(t_k))+\frac{\tan^2\eta(t_k)}{\mu}a^3(t_k)\left(\frac{d}{dt} \lvert q(t_k) \rvert \right)^2 \lvert q(t_k) \rvert \geq0,
\end{equation}
where $\eta(t)$  denotes  the  angle  from  $z(t)$  to  $\dot{z}(t)$.
Up to a subsequence, we assume the sequence
$\{a(t_k)\}$ is convergent and $ \hat a:=\lim_{k\rightarrow +\infty}a(t_k) \leq\frac{\alpha_1+\underline a^-}{2}<\alpha_1$.
Then by Lemma \ref{lem: zeros of h(a,theta)}(b), we have
\begin{align*}
\lim_{k\rightarrow +\infty}h_{m/\mu}(a(t_k),\theta(t_k)) = h_{m/\mu}(\hat a,\theta_*^-)<-\epsilon,
\end{align*}
for some $\epsilon>0$ sufficiently small.
Since $a(t_k)$ and
$$\left(\frac{d}{dt} \lvert q(t_k) \rvert \right)^2 \lvert q(t_k) \rvert \leq C_q^2 \lvert t_k \rvert ^{-\frac{2}{3}}C_q \lvert t_k \rvert ^{\frac{2}{3}}=C_q^3$$
are bounded by $(\ref{eqn:sundman})$ and $(\ref{eqn:moment tk})$,
and $\tan^2\eta(t_k)$ is converging to $0$ by Lemma~\ref{lem:dot theta d-},  the left side of (\ref{eqn:moment tk 1}) is negative when $k$ is large enough, which is a contradiction. Hence, (a) follows.

To show (b).
If $\bar a^->\alpha_1$,  but $\underline a^-<\alpha_1$,  then there also exist a sequence $t_k\rightarrow 0^-$ with satisfying (\ref{eqn:moment tk}). By following the argument in (a),  we obtain a  similar contradiction. The proof is completed.
\end{proof}
Now we are ready to prove Theorem \ref{thm:main 3.1}.


\begin{proof}[Proof of Theorem \ref{thm:main 3.1}]
According to Lemma \ref{lem: upper bound of r} and Proposition \ref{prop: convergency of a}, it is sufficient to prove $a^*>0$, where $a^*=\lim_{t\rightarrow 0^-}a(t)$ providing $\bar a^-=\underline a^- \leq \alpha_1$. By contradiction, we assume $a^*=0$. The proof is separated in two cases: $\theta(t)\not\equiv \pi$ and $\theta(t)\equiv \pi$ on $[-t_0,0)$.

To the former case, we first introduce the following two claims.

\noindent \textbf{Claim 1}: There exists a sequence $\{t'_k\}$ with $t'_k \rightarrow 0^-$ as $k\rightarrow +\infty$ such that $0< \lvert \dot r(t'_k) \rvert <\hat C \lvert t'_k \rvert ^{-\frac{1}{3}}$, for all  $k$,  where $\hat C>0$ is given in Lemma~\ref{lem: upper bound of r}.

Indeed. Recall that $\eta(t)$ is the angle between $z(t)=r(t)e^{i\theta(t)}$ and $\dot z(t)=r_d(t)e^{i\theta_d(t)}$.
From Lemma~\ref{lem:monotonity=0-} and Lemma~\ref{lem:dot theta d-},
as $t \rightarrow 0^-$,  we have
\begin{equation}\label{eqn:dot r<0}
\begin{aligned}
&\eta(t)\rightarrow \pi,\quad \tan\eta(t)\rightarrow0\ \mathrm{and}\  \dot r(t)<0, \quad \text{ if } \theta(t)\not \equiv 0; \\
&\eta(t)\equiv \pi,\quad  \tan\eta(t)=0\ \mathrm{and}\  \dot r(t)<0, \quad \text{ if } \theta(t)\equiv0.
\end{aligned}
\end{equation}
Given $\delta,\hat C>0$ as in Lemma~\ref{lem: upper bound of r} and  a convergent sequence $\{t_k\}\subset(-\delta,0)$ with $t_k \rightarrow 0^-$ as $k \rightarrow +\infty$.
For each $k$,  by mean value theorem,  there exists a moment  $t'_k \in (t_k,0)$ such that
\begin{align}\label{eqn:dotr<estimate}
 \lvert \dot{r}(t'_k) \rvert  = \frac{ \lvert r(t_k)-r(0) \rvert }{ \lvert t_k-0 \rvert } = \frac{ \lvert r(t_k) \rvert }{ \lvert t_k \rvert }
<\hat{C} \lvert t_k \rvert ^{-\frac{1}{3}}<\hat{C}\lvert t'_k \rvert ^{-\frac{1}{3}}.
\end{align}
By (\ref{eqn:dot r<0}) and (\ref{eqn:dotr<estimate}),  $\{t'_k\}$ is the desired sequence in Claim 1.

\noindent \textbf{Claim 2}: There exists an $\epsilon>0$ small, such that $0< \lvert \dot r(t) \rvert <\hat C\lvert t \rvert ^{-\frac{1}{3}}$, for all  $t\in(-\epsilon,0)$.

Write $\beta(t)= \lvert \dot r(t) \rvert \cdot \lvert t \rvert ^{\frac{1}{3}}$. It is sufficient to prove $\beta(t)<\hat C$ for any $t\in (-\epsilon,0)$.
If not,  there is a sequence of moments $\bar t_k\rightarrow 0^-$
such that $\beta(\bar t_k)\geq \hat C$.
For each $t'_k$ as in Claim~1, there exists $j_k$ such that
$t_k'\in(\bar t_{j_k},\bar t_{j_k+1})$,
 then the inequalities $\beta(t_k')<\hat C$,  $\beta(\bar t_{j_k})\geq\hat C$ and $\beta(\bar t_{j_k+1})\geq\hat C$ imply that there is a moment $s_k\in(\bar t_{j_k},\bar t_{j_k+1})$ such that $\beta(s_k)<\hat C$ and $\dot \beta(s_k)=0$.
On the one hand, we can conclude that
\begin{equation}\label{eqn:estimates of ddot r}
\ddot r(s_k)=-\dot \beta(s_k) \lvert s_k \rvert ^{-\frac{1}{3}}-\frac{1}{3}\beta(s_k) \lvert s_k \rvert ^{-\frac{4}{3}}=-\frac{1}{3}\beta(s_k) \lvert s_k \rvert ^{-\frac{4}{3}}
>-\frac{1}{3}\hat C \lvert s_k \rvert ^{-\frac{4}{3}}.
\end{equation}
On the other hand, combining (\ref{eqn:1+1+1-body}),
$ \lvert \dot z \rvert ^2=\dot r^2+\dot\theta^2r^2$,  $ \lvert \dot z \rvert ^2+z\cdot \ddot z={\dot r}^2+r\ddot r$ and $\tan\eta(t)=\frac{r(t)\dot\theta(t)}{\dot r(t)}$, one can show that
\begin{equation}\label{eqn:equation of ddot r}
\ddot r=-\frac{\mu}{r^2}-\frac{mz\cdot(z-q)}{r \lvert z-q \rvert ^3}+{\dot\theta}^2r
=-\frac{\mu}{r^2}-\frac{m(a+\cos\theta)}{(a^2+1+2a\cos\theta)^{\frac{3}{2}} \lvert q \rvert ^2}+\tan^2\eta\frac{\dot r^2}{r}.
\end{equation}
By using (\ref{eqn:equation of ddot r}), we can further conclude that,  for $k$ sufficiently large,
\begin{equation}\label{eqn:estimates of ddot r 1}
\begin{aligned}
\ddot r(s_k)&=\left(-\frac{\mu}{a^2(s_k)}-\frac{m(a(s_k)+\cos\theta(s_k))}{\left(a^2(s_k)+1+2a(s_k)\cos\theta(s_k)\right)^{\frac{3}{2}}}
+\tan^2\eta(s_k)\frac{\beta^2(s_k) \lvert q(s_k) \rvert }{a(s_k) \lvert s_k \rvert ^{\frac{2}{3}}}\right)\frac{1}{ \lvert q(s_k) \rvert ^2}\\
&\leq \left(-\frac{\mu}{a^2(s_k)}+\frac{m}{ \lvert a^2(s_k)+1+2a(s_k)\cos\theta(s_k) \rvert }
+\tan^2\eta(s_k)\frac{\hat C^2C_q}{a(s_k)}\right)\frac{1}{ \lvert q(s_k) \rvert ^2}\\
&<-\frac{\mu}{2a^2(s_k)}\frac{1}{ \lvert q(s_k) \rvert ^2}<-\frac{\mu}{2a^2(s_k)C_q^2} \lvert s_k \rvert ^{-\frac{4}{3}}.
\end{aligned}
\end{equation}
The last two inequalities above follow by
 (\ref{eqn:sundman}),  $\beta(s_k)<\hat C$, $\lim_{k \rightarrow +\infty} a(s_k) = a^*=0$ and $\lim_{t\rightarrow0^-}\tan\eta(t)=0$, since the term $-\mu a^{-2}(s_k)$ is the dominate term in (\ref{eqn:estimates of ddot r 1}), for $k$ sufficiently large.
From (\ref{eqn:estimates of ddot r}) and (\ref{eqn:estimates of ddot r 1}),  we observe
$$-\frac{1}{3}\hat C
<\ddot r(s_k) \lvert s_k \rvert ^{\frac{4}{3}}
<-\frac{\mu}{2a^2(s_k)C_q^2},
$$
which gives a contradiction since the right hand side is smaller than the left hand side, as $k \rightarrow +\infty$. Hence, Claim 2 holds.

Now to the latter case, i.e. $\theta(t)\equiv \pi$ on $[-t_0,0)$, we prove a similar claim as before.

\noindent\textbf{Claim 3}: There exists an $\epsilon>0$ small, such that $|\dot r(t)|<\hat C|t|^{\frac{1}{3}}$ for any $t\in(-\epsilon,0)$.

By contradiction, we assume there exists a sequence $\bar t_k\rightarrow 0^-$ such that $\beta(\bar t_k)=|\dot r(\bar t_k)|\cdot |\bar t_k|^{\frac{1}{3}}\geq \hat C$. Similar to Claim 1, by taking $\delta>0$ as in Lemma \ref{lem: upper bound of r} and a sequence $t_k\rightarrow 0^-$, there exists another sequence $t_k'\rightarrow 0$ such that $|\beta(t_k')|<\hat C$ for any $k$. Similar to Claim 2, due to the properties of $\bar t_k$ and $t_k'$, there also exists a sequence $s_k\rightarrow 0^-$ such that $\beta(s_k)<\hat C$ and $\dot \beta(s_k)=0$. On the one hand, by \eqref{eqn:estimates of ddot r}, we have $\ddot r(s_k)>-\frac{1}{3}\hat C|s_k|^{-\frac{4}{3}}$. On the other hand, since $a^*=0$, for any $t\in(-\epsilon,0)$ sufficiently small, by  \eqref{eqn:sundman}, we have
\begin{equation}\label{eqn:estimates of ddot r 2}
\ddot r=-\frac{\mu}{r^2}-\frac{mz\cdot(z-q)}{r \lvert z-q \rvert ^3}
=-\frac{\mu}{r^2}+\frac{m}{(1-a)^2|q|^2}
=(-\frac{\mu}{a^2}+\frac{m}{(1-a)^2})\frac{1}{|q|^2}\leq -\frac{\mu}{2a^2C_q^2}|t|^{-\frac{4}{3}}.
\end{equation}
This contradicts to $\ddot r(s_k)>-\frac{1}{3}\hat C|s_k|^{-\frac{4}{3}}$ for $k$ sufficiently large. Therefore, Claim 3 holds.

Now we complete the proof for both two cases. Since $a^*=\lim_{t\rightarrow 0^-}a(t)=0$, then there is a sequence $\{t_k\}$ with $t_k \rightarrow 0^-$ as $k \rightarrow +\infty$ such that $\dot a(t_k)<0$ and $a(t_k)\rightarrow 0$ as $k \rightarrow +\infty$,   then
$$\dot r(t_k)=\dot a(t_k) \lvert q(t_k) \rvert +a(t_k)\frac{d}{dt} \lvert q(t_k) \rvert <a(t_k)\frac{d}{dt} \lvert q(t_k) \rvert .$$
Write $f_k(t)=r(t)-a(t_k) \lvert q(t) \rvert $, then $f_k(0)=f_k(t_k)=0$ and $\dot f_k(t_k)<0$. Similar to (\ref{eqn:estimates of ddot r 1}) and (\ref{eqn:estimates of ddot r 2}), we have $\ddot r(t)<-\frac{\mu}{2a^2(t)C_q^2} \lvert t \rvert ^{-\frac{4}{3}}$. Together with (\ref{eqn:sundman}),
we have
$$\ddot f_k(t)=\ddot r(t)-a(t_k)\frac{d^2}{dt^2} \lvert q(t) \rvert \leq \left(-\frac{\mu}{2a^2(t)C_q^2}+a(t_k)C_q\right) \lvert t \rvert ^{-\frac{4}{3}}<0.$$
Furthermore, if $k$ is large enough, we have $\dot f_k(t)=\dot f_k(t_k)+\int^t_{t_k}\ddot f_k(s)ds<0,\ \forall t\in (t_k,0)$. This means that $f_k$ is strictly decreasing on $(t_k,0)$. However, this is impossible since $f_k(t_k)=f_k(0)=0$, contradiction. This completes the proof.
\end{proof}

\subsection{Asymptotic behavior of $\mathrm{arg}z$ near the three-body collision}\label{subsec: 3.2}

In this subsection, we devote our attention to proving Theorem~\ref{thm:main 3.2}.

Based on Lemma~\ref{lem:monotonity=0-}, we know that when the minimizer $z=re^{i\theta}$ experiences the three-body collision,  the asymptotic angle
$\theta_*^{-}:=\lim_{t\rightarrow0^{-}}\theta(t)$ exists.
In the following, we further reveal that the asymptotic angle $\theta_*^{-}$ must be $0$ or $\pi$.

\begin{Thm}\label{thm:main 3.2}
Let $z(t)=r(t)e^{i\theta(t)}$ be the action minimizer of $\mathcal{A}_{-T,0}$ on $\Omega^{-T,0}_{A,A_0}$. If $z(0)=0$, then the limit angle $\theta_*^-\in\{0,\pi\}$. More precisely, if $\theta(t)\not\equiv \pi$ (resp. $\theta(t)\equiv \pi$), then we always have $\theta_*^-=0$ (resp. $\theta_*^-=\pi$).
\end{Thm}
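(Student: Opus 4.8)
The plan is to exploit the trichotomy of Lemma~\ref{lem:monotonity=0-}: on $[-t_0,0)$ the argument is either $\theta\equiv 0$, or $\theta\equiv\pi$, or $\theta$ is strictly decreasing with a limit $\theta_*^-\in[0,\pi)$. In the first two cases $\theta_*^-=0$ and $\theta_*^-=\pi$ respectively, so the whole statement reduces to ruling out $\theta_*^-\in(0,\pi)$ in the strictly decreasing case. From now on I assume, for contradiction, that $\theta$ is strictly decreasing on $(-t_0,0)$ with $\theta(t)\in(0,\pi)$ there (cf.\ \eqref{eqn:range of theta d1}) and $\theta_*^-\in(0,\pi)$.

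First I would write out the angular equation of motion. Since $q$ lies on the negative real axis, the central term $-\mu z/|z|^3$ contributes nothing tangentially, and a short computation of the tangential component of $-m(z-q)/|z-q|^3$ gives
\[
\frac{d}{dt}\big(r^2(t)\dot\theta(t)\big)=\frac{m\,r(t)\,|q(t)|\,\sin\theta(t)}{|z(t)-q(t)|^{3}}.
\]
On $(-t_0,0)$ we have $\sin\theta(t)>0$, so the right side is strictly positive; hence $t\mapsto r^2\dot\theta$ is strictly increasing on $(-t_0,0)$, and since $\dot\theta\le 0$ there it is also $\le 0$, so it converges to a finite limit $L:=\lim_{t\to 0^-}r^2(t)\dot\theta(t)\le 0$.

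Next I would insert the quantitative estimates already at hand. Lemma~\ref{lem:sundman} gives $|q(t)|\asymp|t|^{2/3}$; Theorem~\ref{thm:main 3.1} gives $c_a<a(t)<C_a$, hence $r(t)=a(t)|q(t)|\asymp|t|^{2/3}$; and $|z-q|^2=|q|^2\big(a^2+1+2a\cos\theta\big)\le (C_a+1)^2|q|^2$ gives $|z(t)-q(t)|\lesssim|t|^{2/3}$. Moreover $\theta(t)\to\theta_*^-\in(0,\pi)$ forces $\sin\theta(t)\ge\tfrac12\sin\theta_*^->0$ for $t$ near $0$. Substituting into the torque identity yields a constant $c_1>0$ with $\frac{d}{dt}\big(r^2\dot\theta\big)\ge c_1|t|^{-2/3}$ for $t$ near $0$. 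Integrating from $t$ to $0$ and using $L\le 0$,
\[
L-r^2(t)\dot\theta(t)=\int_t^0\frac{d}{ds}\big(r^2\dot\theta\big)\,ds\ \ge\ c_1\int_t^0|s|^{-2/3}\,ds=3c_1|t|^{1/3},
\]
so $r^2(t)\dot\theta(t)\le-3c_1|t|^{1/3}$. Dividing by $r^2(t)\le C_a^2C_q^2|t|^{4/3}$ gives $\dot\theta(t)\le-c_2|t|^{-1}$ near $0$ for some $c_2>0$. Since $|t|^{-1}$ is not integrable at $0$, this forces $\theta(t)\to-\infty$ as $t\to 0^-$, contradicting $\theta(t)\to\theta_*^-\ge 0$. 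Therefore $\theta_*^-\notin(0,\pi)$, and combined with the trichotomy of Lemma~\ref{lem:monotonity=0-} this yields $\theta_*^-=0$ when $\theta\not\equiv\pi$ and $\theta_*^-=\pi$ when $\theta\equiv\pi$; the reversibility of \eqref{eqn:1+1+1-body} gives the analogous statement after the collision.

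The main obstacle — and the reason the earlier sections are indispensable — is squeezing out a lower bound on $\frac{d}{dt}(r^2\dot\theta)$ strong enough that the smallness of $r^2\dot\theta$ it produces makes $\dot\theta$ blow up faster than $|t|^{-1}$. This needs the lower bound $r(t)\gtrsim|t|^{2/3}$ (i.e.\ $a(t)>c_a$, Theorem~\ref{thm:main 3.1}) in the numerator of the torque and the upper bound $r(t)\lesssim|t|^{2/3}$ (i.e.\ $a(t)<C_a$) when dividing out $r^2$; a one-sided bound on $a$ would not close the estimate. Two points require care: verifying the sign of the tangential force so that $r^2\dot\theta$ is genuinely monotone (and thus has a limit $\le 0$), and the fact that $\sin\theta$ stays bounded away from $0$ near the collision — which is exactly where the hypothesis $\theta_*^-\in(0,\pi)$ is used, and exactly why the argument has nothing to say at the admissible endpoints $\theta_*^-\in\{0,\pi\}$.
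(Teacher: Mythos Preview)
Your argument is correct and follows essentially the same route as the paper: derive the torque identity $\dot J=\frac{m\,r\,|q|\,\sin\theta}{|z-q|^{3}}$ for the angular momentum $J=r^2\dot\theta$, use Theorem~\ref{thm:main 3.1} and Lemma~\ref{lem:sundman} to bound $\dot J\gtrsim|t|^{-2/3}$, integrate to get $|J(t)|\gtrsim|t|^{1/3}$, then divide by $r^2\lesssim|t|^{4/3}$ to obtain $|\dot\theta|\gtrsim|t|^{-1}$ and a non-integrability contradiction. The paper packages the $|J|$-estimate into a separate Lemma~\ref{lem: estimates of J} (and also proves an upper bound $|J|\lesssim|t|^{1/3}$, used there to pin down $J(0)=0$); your observation that the weaker fact $L=\lim_{t\to0^-}J(t)\le 0$ already suffices is a small but genuine streamlining.
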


Firstly, we recall that $z(0)=0$, $z(t)$ is smooth on $[-t_0,0)$ and $z(-t_0)\in \mathbb{C}^+_*$ for some $t_0\in(0,T]$. By Lemma \ref{lem:monotonity=0-}, we have $\theta(t)\in[0,\pi]$ on $[-t_0,0]$. To prove this theorem, we need to estimate the angular momentum $J(t):=z(t)\times \dot z(t)=r^2(t)\dot{\theta}(t)$, which is non-positive since $\dot \theta(t)\leq 0$ on $t\in[-t_0,0)$.

\begin{Lem}\label{lem: estimates of J}
Assume $\theta_*^-\in(0,\pi)$. Then there exists an $\epsilon>0$ small and a pair of constants $c_J, C_J>0$ such that  $c_J |t|^{\frac{1}{3}} <|J(t)|< C_J |t|^{\frac{1}{3}}$ for all $t\in (-\epsilon,0)$.
\end{Lem}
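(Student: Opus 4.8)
The plan is to control the angular momentum $J(t)=z(t)\times\dot z(t)=r^2(t)\dot\theta(t)$ through its time derivative, which the equation of motion renders remarkably simple. Differentiating $J$ and inserting \eqref{eqn:1+1+1-body}, the central ($\mu$) term disappears because $z\times z=0$, and since $q(t)$ lies on the negative real axis by $(Q3)$ one computes
\[
\dot J(t)=z(t)\times\ddot z(t)=\frac{m}{|z(t)-q(t)|^{3}}\big(z(t)\times q(t)\big)=\frac{m\,r(t)\,|q(t)|\sin\theta(t)}{|z(t)-q(t)|^{3}}\ \ge 0 .
\]
Writing $|z-q|^{3}=|q|^{3}(a^{2}+1+2a\cos\theta)^{3/2}$ with $a=r/|q|$, this takes the form
\[
\dot J(t)=\frac{m}{|q(t)|}\cdot\frac{a(t)\sin\theta(t)}{\big(a^{2}(t)+1+2a(t)\cos\theta(t)\big)^{3/2}} .
\]
In particular $J$ is nondecreasing, which is consistent with $J\le 0$ coming from $\dot\theta\le 0$ in Lemma~\ref{lem:monotonity=0-}(b), so that $|J|=-J$ throughout.

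Next I would establish the two-sided bound $\dot J(t)\asymp|t|^{-2/3}$. By Theorem~\ref{thm:main 3.1} the ratio $a(t)$ stays in a compact subinterval of $(0,\infty)$ near the collision. Since $\theta_*^-\in(0,\pi)$ forces case (b) of Lemma~\ref{lem:monotonity=0-}, the angle $\theta(t)$ decreases strictly to $\theta_*^-$, so after shrinking $\epsilon$ the values $\theta(t)$ lie in a compact subinterval of $(0,\pi)$; hence $\sin\theta(t)$ is bounded away from $0$ and $a^{2}+1+2a\cos\theta=(a+\cos\theta)^{2}+\sin^{2}\theta$ is pinched between two positive constants. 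Therefore the fraction multiplying $m/|q|$ lies between positive constants, and combining with $|q(t)|\asymp|t|^{2/3}$ from Lemma~\ref{lem:sundman} gives $c_1|t|^{-2/3}<\dot J(t)<C_1|t|^{-2/3}$ on some $(-\epsilon,0)$. Because $|t|^{-2/3}$ is integrable near $0$, the limit $L:=\lim_{t\to0^-}J(t)$ exists and is finite.

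The crux is to show $L=0$; granting this, integrating $\dot J$ from $t$ to $0$ yields $|J(t)|=-J(t)=\int_t^0\dot J(s)\,ds\asymp|t|^{1/3}$, which is exactly the claim with $c_J=3c_1$ and $C_J=3C_1$. To prove $L=0$, I argue by contradiction: if $L<0$, then $J(t)<L/2$ for $t$ near $0$, and using $r^2(t)\le C_a^2|q(t)|^2\le C_a^2C_q^2|t|^{4/3}$ we get
\[
\dot\theta(t)=\frac{J(t)}{r^{2}(t)}<\frac{L/2}{C_a^{2}C_q^{2}}\,|t|^{-4/3}=-c\,|t|^{-4/3},\qquad c>0 .
\]
Integrating over $(-\epsilon,t)$ and using $\int_{-\epsilon}^{t}|s|^{-4/3}\,ds=3\big(|t|^{-1/3}-\epsilon^{-1/3}\big)\to+\infty$ would force $\theta(t)\to-\infty$ as $t\to0^-$, contradicting $\theta(t)\ge 0$; hence $L=0$. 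The only genuinely delicate point is this vanishing of the limiting angular momentum: the estimate $\dot J\asymp|t|^{-2/3}$ and the final integration are routine consequences of Theorem~\ref{thm:main 3.1} and Lemma~\ref{lem:sundman}, but identifying the integration constant as $0$ relies essentially on the finiteness of $\theta_*^-$, i.e. on the convergence of $\theta$ furnished by Lemma~\ref{lem:monotonity=0-}.
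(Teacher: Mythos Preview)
Your argument is correct, and it is in fact cleaner than the paper's. Both proofs compute $\dot J=\dfrac{m\,a\sin\theta}{|q|\,(a^2+1+2a\cos\theta)^{3/2}}$ and obtain the lower bound $|J(t)|\ge c_J|t|^{1/3}$ by integrating $\dot J\ge\check C|t|^{-2/3}$ from $t$ to $0$ and using $J(0)=0$. The difference lies in the upper bound and in how $J(0)=0$ is justified. The paper estimates $J^2\le r^2|\dot z|^2$ and then runs a somewhat involved energy-type computation (integrating $\dot z\cdot\ddot z$ and bounding the resulting potential and $\dot q$-interaction terms) to show $|\dot z|^2=O(|t|^{-2/3})$; this simultaneously gives the upper bound $|J|=O(|t|^{1/3})$ and forces $J(0)=0$. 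You instead extract a \emph{two-sided} bound $\dot J\asymp|t|^{-2/3}$ directly from the same formula, using Theorem~\ref{thm:main 3.1} and the confinement of $\theta$ to a compact subinterval of $(0,\pi)$; then you identify the integration constant $J(0^-)=0$ by the neat contradiction that a nonzero limit would make $\dot\theta\lesssim -|t|^{-4/3}$ and drive $\theta$ to $-\infty$. Your route avoids the energy computation entirely and uses only the already-available bounds on $a(t)$, $|q(t)|$ and $\theta(t)$; the paper's route has the minor advantage that its upper bound on $|J|$ does not require the lower bound $a(t)>c_a$, but since that lower bound is needed anyway for the other inequality, this is not a real saving.
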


\begin{proof}
To show the upper bounded of $J(t)$. According to Lemma~\ref{lem:sundman} -~\ref{lem: upper bound of r},
we can choose a $\delta>0$ which makes (\ref{eqn:sundman}) and Lemma~\ref{lem: upper bound of r} hold simultaneously.
By Lemma \ref{lem: upper bound of r},  we have
\begin{equation}\label{eqn:estimate of J}
J(t)^2=r^4(t)\dot\theta^2(t)\leq r^2(t) \lvert \dot z(t) \rvert ^2\leq \hat C^2 \lvert t \rvert ^{\frac{4}{3}} \lvert \dot z(t) \rvert ^2, \quad \text{ for all }t\in(-\delta,0).
\end{equation}
To estimate $ \lvert \dot{z}(t) \rvert ^2$,   we consider the following.
By (\ref{eqn:1+1+1-body}), for  $t\in(-\delta,0)$, we have
\begin{equation}\label{eqn:half of dot z square}
\begin{aligned}
\frac{1}{2} \lvert \dot{z}(t) \rvert ^2
&=\frac{1}{2} \lvert \dot{z}(-\delta) \rvert ^2+\int^{t}_{-\delta}\dot z(s)\cdot\ddot z(s)ds \\
&=\frac{1}{2} \lvert \dot{z}(-\delta) \rvert ^2+\int^t_{-\delta}\dot z(s)\cdot \left(-\frac{\mu z(s)}{ \lvert z(s) \rvert ^3}-\frac{m (z(s)-q(s))}{ \lvert z(s)-q(s) \rvert ^3}\right)ds\\
&=\frac{1}{2} \lvert \dot{z}(-\delta) \rvert ^2+\int^t_{-\delta}-\left(\dot z(s)\cdot \frac{\mu z(s)}{ \lvert z(s) \rvert ^3}\right)-\left(\left(\dot z(s)-\dot q(s)+\dot q(s)\right) \cdot\frac{m(z(s)-q(s))}{ \lvert z(s)-q(s) \rvert ^3}\right)ds\\
&=C_{1,\delta}+\frac{\mu}{r(t)}+\frac{m}{ \lvert z(t)-q(t) \rvert }-\int^t_{-\delta}\frac{m\cos \psi(s)\lvert \dot q(s) \rvert }{ \lvert z(s)-q(s) \rvert ^2}ds,
\end{aligned}
\end{equation}
where $C_{1,\delta}=\frac{1}{2} \lvert \dot{z}(-\delta) \rvert ^2-(\frac{\mu}{r(-\delta)}+\frac{m}{ \lvert z(-\delta)-q(-\delta) \rvert })$ and $\psi(s)$ denotes the angle between $z(s)-q(s)$ and $\dot q(s)$.
By Theorem~\ref{thm:main 3.1},  we know that $a(t)>c_a$ for some constant $c_a>0$.
Then, applying (\ref{eqn:sundman}) and (\ref{eqn:lower bound of (a^2+1+2acos)}), we have
\begin{equation}\label{eqn:middle term}
\begin{aligned}
\frac{\mu}{r(t)}+\frac{m}{ \lvert z(t)-q(t) \rvert }
&=\left(\frac{\mu}{a(t)}+\frac{m}{(a^2(t)+1+2a(t)\cos\theta(t))^{\frac{1}{2}}}\right)\frac{1}{ \lvert q(t) \rvert }\\
&<\left(\frac{\mu}{ c_qc_a}+\frac{m}{ c_q\sin \theta(-\delta)}\right) \lvert t \rvert ^{-\frac{2}{3}}.
\end{aligned}
\end{equation}
For the last term of (\ref{eqn:half of dot z square}),
by (\ref{eqn:sundman}) and (\ref{eqn:lower bound of (a^2+1+2acos)}) again,
we can further obtain that
\begin{equation}\label{eqn:last term}
\begin{aligned}
\left \lvert \int^t_{-\delta}\frac{m\cos \psi(t) \lvert \dot q(t) \rvert }{ \lvert z(t)-q(t) \rvert ^2}dt\right \rvert
&\leq\int^t_{-\delta}\frac{m \lvert \cos \psi(t) \rvert \left \lvert \dot q(t)\right \rvert  }{(a^2(t)+1+2a(t)\cos\theta(t)) \lvert q(t) \rvert ^2}dt \\
&\leq\int^t_{-\delta}\frac{mC_q}{c_q^2\sin^2\theta(-\delta)} \lvert t \rvert ^{-\frac{5}{3}}dt\\
&=\frac{3mC_q}{2c_q^2\sin^2 \theta(-\delta)} \lvert t \rvert ^{-\frac{2}{3}}-C_{2,\delta},
\end{aligned}
\end{equation}
where $C_{2,\delta}=\frac{3mC_q}{2c_q^2\sin^2 \theta(-\delta)} \lvert \delta \rvert ^{-\frac{2}{3}}$.

Then, combining (\ref{eqn:half of dot z square}), (\ref{eqn:middle term}) and (\ref{eqn:last term}), we obtain that
\begin{equation}\label{eqn:half of dot z square 1}
 \lvert \dot{z}(t) \rvert ^2\leq C_{3,\delta} \lvert t \rvert ^{-\frac{2}{3}}+C_{4,\delta},
\end{equation}
where $$C_{3,\delta}=2\left(\frac{\mu}{c_ac_q}+\frac{m}{c_q \sin \theta(-\delta)}\right)
+\frac{3mC_q}{c_q^2 \sin^2 \theta(-\delta)}>0  \quad \text{and }\quad
  C_{4,\delta}=2C_{1,\delta}-2C_{2,\delta}.$$
Therefore, by (\ref{eqn:estimate of J}) and (\ref{eqn:half of dot z square 1}), we have
\begin{equation}\label{eqn:estimate of J 1}
J(t)^2\leq \hat C^2\left(C_{3,\delta}+2C_{4,\delta} \lvert t \rvert ^{\frac{2}{3}}\right) \lvert t \rvert ^{\frac{2}{3}}.
\end{equation}
Hence, $C_J>0$ exists.

To show the lower of $J(t)$. From (\ref{eqn:1+1+1-body}),  (\ref{eqn:sundman}) and Theorem~\ref{thm:main 3.1}, for $t \in (-\delta,0)$,  we have
\begin{equation*}
\begin{aligned}
\dot J(t)=z(t) \times \ddot{z}(t)
&=\frac{m z(t)\times q(t)}{ \lvert z(t)-q(t) \rvert ^3}=\frac{m a(t) \lvert q(t) \rvert ^2\sin\theta(t)}{(a^2(t)+1+2a(t)\cos\theta(t))^{\frac{3}{2}} \lvert q(t) \rvert ^3}\\
&\geq \frac{m a(t)\sin\theta(t)}{(a(t)+1)^3C_q} \lvert t \rvert ^{-\frac{2}{3}}
\geq \frac{m c_a\sin\theta(t)}{(C_a+1)^3C_q} \lvert t \rvert ^{-\frac{2}{3}},
\end{aligned}
\end{equation*}
which is always positive. Then due to the assumption $\theta_*^-\in(0,\pi)$,  there exists $\epsilon\in(0,\delta)$ sufficiently small such that,  for $t \in (-\epsilon,0)$,
\begin{align*}
\dot J(t) \geq \check{C} \lvert t \rvert ^{-\frac{2}{3}},  \quad
\text{ where } \  \check{C}=\frac{m c_a\min\{\sin\theta_*^-,\sin\theta(-\epsilon)\}}{2(C_a+1)^3C_q}>0.
\end{align*}
Moreover, we observe that $J(t)<0$ since $\theta_*^-\in(0,\pi)$ and $\dot \theta(t)<0$ by Lemma \ref{lem:monotonity=0-}.
Then, for $t\in(-\epsilon,0)$,
\begin{equation*}
 \lvert J(t) \rvert
=-J(t)
=-J(0)+\int^0_t\dot J(s)ds
\geq \int^0_t\check{C} \lvert s \rvert ^{-\frac{2}{3}}ds
= 3\check{C} \lvert t \rvert ^{\frac{1}{3}},
\end{equation*}
where $J(0)=0$ by (\ref{eqn:estimate of J 1}). Hence, $c_{J}>0$ exists and the lemma holds.
\end{proof}
Now we are ready to prove Theorem \ref{thm:main 3.2}.

\begin{proof}[Proof of Theorem~\ref{thm:main 3.2}]
By contradiction, we assume $\theta_*^-\in(0,\pi)$. Since $z(0)=0$, by using Lemma \ref{lem: estimates of J} and Theorem~\ref{thm:main 3.1}, we see that
\begin{align}\label{eqn:dotangle}
c_JC_q^{-2}|t|^{-1} <  |\dot \theta(t)| <  C_Jc_q^{-2}|t|^{-1},\ \forall t\in (-\epsilon,0).
\end{align}
Combining \eqref{eqn:dotangle} with the monotonicity of $\theta(t)$, see Lemma \ref{lem:monotonity=0-}, we conclude that $$\theta(-\epsilon)-\theta(0)=\int^0_{-\epsilon}-\dot \theta(t)dt=\int^0_{-\epsilon}|\dot \theta(t)|dt=\infty.$$
This provides a contradiction, since the left hand side is finite. Therefore, $\theta_*^-\in\{0,\pi\}$. Moreover, if $\theta(t)\not\equiv \pi$, by Lemma \ref{lem:monotonity=0-} and the conclusion above, $\theta_*^-=0$ must happens. Hence, the proof is done.
\end{proof}

\subsection{Sundman-Sperling estimates}\label{subsec:sundman}

In this section, we aim to prove Theorem \ref{main1} completely, which involves the Sundman-Sperling estimates near the three-body collision in the restricted one-center-two-body problem.
For general restricted multi-body problem, once there exists a three-body collision, which is composed of a fixed particle, a moving primary and a massless particle, we believe the minimizer will also satisfy the Sundman-Sperling estimates near the three-body collision, since the impact of non-colliding particles is negligible.



The main steps of this proof is as follows:
\begin{itemize}
\item We first prove Lemma \ref{lem:limita*}, i.e. the limit $a^*=\lim_{t\rightarrow 0^-}a(t)$ exists, by using Theorem \ref{thm:main 3.1} and Theorem \ref{thm:main 3.2}. The technique of inflection and critical points is needed.

\item By using Lemma \ref{lem: zeros of h(a,theta)}, together with the technique of inflection and critical points, we prove that $a^*\in\{\alpha_1,\alpha_2,\alpha_3\}$, see Lemma \ref{lem:a*=alpha2}, \ref{lem:a*=alpha13}.

\item To complete the proof of Theorem \ref{main1}, it is sufficient to show that both $b(t)=\dot r(t)/\frac{d}{dt}|q(t)|$ and $c(t)=\ddot r(t)/\frac{d^2}{dt^2}|q(t)|$ are converging into $\{\alpha_1,\alpha_2,\alpha_3\}$ as $t\rightarrow 0^-$. We prove the former case in Lemma \ref{lem:b*=alpha123} and left the latter in the proof of Theorem \ref{main1}.
\end{itemize}


Recall that $z(t)=r(t)e^{i\theta(t)}$ is the action minimizer of $\mathcal{A}_{-T,0}$ on $\Omega^{-T,0}_{A,A_0}$. By assumption, we have $z(0)=0$, $z(t)$ is smooth on $[-t_0,0)$ and $z(-t_0)\in \mathbb{C}^+_*$ for some $t_0\in(0,T]$. Based on the previous results, we have the following lemma.

\begin{Lem}\label{lem:limita*}
$a^*:=\lim_{t\rightarrow 0^-}a(t)\in(0,+\infty)$.
\end{Lem}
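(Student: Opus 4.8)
The plan is to prove that $\underline a^-:=\liminf_{t\to0^-}a(t)$ and $\bar a^-:=\limsup_{t\to0^-}a(t)$ coincide; since Theorem~\ref{thm:main 3.1} already gives $0<c_a\le\underline a^-\le\bar a^-\le C_a<+\infty$, this immediately yields $a^*\in(0,+\infty)$. By Proposition~\ref{prop: convergency of a}(a), if $\bar a^-\le\alpha_1$ then $a$ already converges and we are done; so we may assume $\bar a^->\alpha_1$, and then Proposition~\ref{prop: convergency of a}(b) gives $\underline a^-\ge\alpha_1$. I would then argue by contradiction, assuming $\alpha_1\le\underline a^-<\bar a^-$.

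The first step is to select good critical points of $a$. Since $\underline a^-<\bar a^-$, the function $a$ oscillates near $0$, so a routine open-set argument produces local minima $s_k\to0^-$ with $a(s_k)\to\underline a^-$ and local maxima $S_k\to0^-$ with $a(S_k)\to\bar a^-$; at each such point $\dot a=0$, hence by \eqref{eqn:dot a and ddot a} we have $\dot r=a\,\frac{d}{dt}|q|$ there. Combining this with $a<C_a$ (Theorem~\ref{thm:main 3.1}), the Sundman estimates of Lemma~\ref{lem:sundman}, and the upper bound $r(t)\le\hat C|t|^{2/3}$ (Lemma~\ref{lem: upper bound of r}), the product $\dot r^2 r$ stays bounded along $\{s_k\}$ and $\{S_k\}$. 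Moreover $\tan\eta(t)\to0$ as $t\to0^-$: if $\theta\not\equiv\pi$ this follows from the last assertion of Lemma~\ref{lem:dot theta d-} (which gives $|\theta-\theta_d|\to\pi$, i.e. $\eta\to\pm\pi$), and if $\theta\equiv\pi$ then $z\in\mathbb R^-$, so $\dot z\in\mathbb R$ and $\eta\in\{0,\pi\}$. Therefore the correction term $\frac1\mu\tan^2\eta\,\dot r^2 r$ appearing in Lemma~\ref{lem: ddot a(t) and h(a,theta)} tends to $0$ along $\{s_k\}$ and $\{S_k\}$, while $\theta(s_k),\theta(S_k)\to\theta_*^-$ with $\theta_*^-\in\{0,\pi\}$ by Theorem~\ref{thm:main 3.2}.

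Next I would run the sign analysis of $h_{m/\mu}$ from Lemmas~\ref{lem:zero set of h}--\ref{lem: zeros of h(a,theta)}, separating on $\theta_*^-$. If $\theta_*^-=0$, then since $s_k$ is a local minimum, Lemma~\ref{lem: ddot a(t) and h(a,theta)} gives $h_{m/\mu}(a(s_k),\theta(s_k))+o(1)\ge0$; letting $k\to\infty$ and using the continuity of $h_{m/\mu}$ away from the singular point $(1,\pi)$, we get $h_{m/\mu}(\underline a^-,0)\ge0$, hence $\underline a^-\ge\alpha_2$. Symmetrically, the local maxima $S_k$ give $h_{m/\mu}(\bar a^-,0)\le0$, hence $\bar a^-\le\alpha_2$, so $\bar a^-\le\underline a^-$, a contradiction. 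If $\theta_*^-=\pi$, then $\theta\equiv\pi$ (Theorem~\ref{thm:main 3.2}), the correction term vanishes identically, and since $z$ has no $z$--$q$ collision on $(-T,0]$ (Theorem~\ref{thm:two-body}) the continuous nonvanishing function $a-1$ has constant sign on $(-t_0,0)$, i.e. $a$ lies entirely in $(0,1)$ or entirely in $(1,+\infty)$. In the first case $\bar a^-\le1$ while $\bar a^->\alpha_1$, so $a(S_k)\in(\alpha_1,1)$ for large $k$, where $h_{m/\mu}(\cdot,\pi)>0$, contradicting $h_{m/\mu}(a(S_k),\pi)\le0$. In the second case $\underline a^-\ge1$; if $\underline a^-<\alpha_3$ then $a(s_k)\in(1,\alpha_3)$ for large $k$, where $h_{m/\mu}(\cdot,\pi)<0$, contradicting $h_{m/\mu}(a(s_k),\pi)\ge0$; hence $\underline a^-\ge\alpha_3$, whence $\bar a^->\alpha_3$ and $a(S_k)>\alpha_3$ for large $k$, where $h_{m/\mu}(\cdot,\pi)>0$, again contradicting $h_{m/\mu}(a(S_k),\pi)\le0$.

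The main obstacle is the interaction between the two ingredients just used: one must show that the correction term $\frac1\mu\tan^2\eta\,\dot r^2 r$ genuinely dies at the chosen critical points (this needs both $r\le\hat C|t|^{2/3}$ and $\tan\eta\to0$, the latter resting on the monotonicity lemmas of Section~\ref{sec21}), and one must cope with the fact that $h_{m/\mu}$ blows up at $(1,\pi)$, so in the case $\theta_*^-=\pi$ a naive continuity passage to the limit is unavailable — this is exactly why the exclusion of $z$--$q$ collisions and the sign of $h_{m/\mu}(\cdot,\pi)$ on the two sides of $a=1$ must be invoked in place of continuity. A minor technical point is the verification that local extrema of $a$ with values tending to $\underline a^-$ and to $\bar a^-$ exist, which follows from the oscillation of $a$ by a standard open-set argument.
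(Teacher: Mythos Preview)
Your proof is correct and follows essentially the same approach as the paper: argue by contradiction assuming $\underline a^-<\bar a^-$, pick local extrema of $a$, use Lemma~\ref{lem: ddot a(t) and h(a,theta)} together with the fact that the $\tan^2\eta\,\dot r^2 r$ term dies along critical points, and then read off a contradiction from the sign of $h_{m/\mu}(\cdot,\theta_*^-)$ via Lemma~\ref{lem: zeros of h(a,theta)} in the two cases $\theta_*^-=0$ and $\theta_*^-=\pi$. The only organizational differences are that you invoke Proposition~\ref{prop: convergency of a} up front to secure $\underline a^-\ge\alpha_1$ (the paper re-derives this within its case analysis), and in the case $\theta_*^-=\pi$, $a\in(0,1)$ you use the local maxima rather than the paper's dichotomy on $\underline a$; neither changes the substance.
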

\begin{proof}
According to Theorem \ref{thm:main 3.2}, the proof will be divided into two cases: $\theta_*^-=0$ and $\theta_*^-=\pi$. We first denote that $\underline{a}=\liminf_{t \rightarrow 0^-}a(t)$ and $\overline{a}=\limsup_{t \rightarrow 0^-}a(t)$. By Theorem~\ref{thm:main 3.1}, we see that  $c_a< \underline{a}\leq \overline{a} < C_a$ for some $c_a, C_a \in (0,+\infty)$.

By contradiction, we assume $\underline{a} < \overline{a}$.

Case 1: Assume $\theta_*^-=0$.

If $\underline{a}<\alpha_2$, on the one hand, there exists a  sequence  of  moments~$\{t_k\}$ with $t_k \rightarrow 0^-$    such  that
\begin{align}\label{eqn:assume1}
a(t_k)< (\underline{a}+\alpha_2)/2,\ \ \dot{a}(t_k)=0 \ \text{ and } \  \ddot{a}(t_k) \geq 0.
\end{align}
By $(\ref{eqn:dot a and ddot a})$ and Lemma~\ref{lem: ddot a(t) and h(a,theta)}, $(\ref{eqn:assume1})$ implies that $\dot{r}(t_k)=a(t_k)\frac{d}{dt}|q(t_k)|$ and
\begin{equation}\label{eqn:h(tk)>0}
h_{m/\mu}(a(t_k),\theta(t_k))+\frac{\tan^2\eta(t_k)}{\mu}a^3(t_k)\left(\frac{d}{dt} \lvert q(t_k) \rvert \right)^2 \lvert q(t_k) \rvert \geq0,
\end{equation}
where $\eta(t)$  denotes  the  angle  from  $z(t)$  to  $\dot{z}(t)$.

On the other hand, up to a subsequence, we assume the limit $ \lim_{k\rightarrow} a(t_k)=\hat a$ exists. Then, since  $\hat a\leq(\underline{a}+\alpha_2)/2<\alpha_2$, together with Lemma \ref{lem: zeros of h(a,theta)}(c)  and $\theta_*^-=0$, we have
\begin{align*}
\lim_{k\rightarrow +\infty}h_{m/\mu}(a(t_k),\theta(t_k)) = h_{m/\mu}(\hat a,0)<-\epsilon,
\end{align*}
for some $\epsilon>0$ sufficiently small. Moreover, by Theorem~\ref{thm:main 3.1} and $(\ref{eqn:sundman})$, we conclude that both $a(t_k)$ and
$\left(\frac{d}{dt} \lvert q(t_k) \rvert \right)^2 \lvert q(t_k) \rvert \leq C_q^2 \lvert t_k \rvert ^{-\frac{2}{3}}C_q \lvert t_k \rvert ^{\frac{2}{3}}=C_q^3$ are bounded. As $k\rightarrow +\infty$, by using Lemma~\ref{lem:dot theta d-}, that is  $\tan^2\eta(t_k)\rightarrow0$, we conclude that the left side of (\ref{eqn:h(tk)>0}) is strictly negative when $k$ is large enough, which gives a contradiction.

If $\underline{a}\geq \alpha_2$. Then we see that $\bar a>\alpha_2$. Analog to the situation before, there exists a sequence $\{s_k\}$ with $s_k \rightarrow 0^-$  such that
\begin{align}\label{eqn:assume2}
a(s_k) > (\underline{a}+\bar a)/2,\ \ \dot{a}(s_k)=0 \ \text{ and } \  \ddot{a}(s_k) \leq 0.
\end{align}
By $(\ref{eqn:dot a and ddot a})$ and Lemma~\ref{lem: ddot a(t) and h(a,theta)} again,
$(\ref{eqn:assume2})$ implies that $\dot{r}(s_k)=a(s_k)\frac{d}{dt}|q(s_k)|$ and
\begin{equation}\label{eqn:h(tk)<0}
h_{m/\mu}(a(s_k),\theta(s_k))+\frac{\tan^2\eta(s_k)}{\mu}a^3(s_k)\left(\frac{d}{dt} \lvert q(s_k) \rvert \right)^2 \lvert q(s_k) \rvert \leq0.
\end{equation}
Up to a subsequence, we assume the limit $\lim_{k\rightarrow +\infty}a(s_k)=\check a$ exists.
Then, since $\check a\geq(\underline a+ \overline a)/2>\alpha_2$, same as the situation before, we can conclude that the left side of (\ref{eqn:h(tk)<0}) is strictly positive when $k$ is large enough,  which leads to another contradiction.
Hence, the limit $a^*$ exists. The proof of Case 1 is completed.

Case 2: Assume $\theta_*^-=\pi$. By Lemma \ref{lem:monotonity=0-}, the solution satisfies $z(t)\in (-\infty,q(t))$ or $z(t)\in (q(t),0)$ for any $t\in[-t_0,0)$. For the former case, we see that $a(t)>1$ for any $t\in[-t_0,0)$. This implies that $\overline a>\underline a\geq 1$. Same as in Case 1, when $\underline a\in[1,\alpha_3)$, one can find a sequence $t_k$ such that \eqref{eqn:h(tk)>0} holds for any $k$. However, as $k$ sufficiently large, one can choose a subsequence, also denote by $t_k$, such that $a(t_k)\rightarrow \hat a\in [1^+,\alpha_3)$. Since Lemma \ref{lem: zeros of h(a,theta)}, we see that $h_{m/\mu}(a,\pi)<0$ for any $a\in(1,\alpha_3)$ and $h_{m/\mu}(a,\pi)>0$ for any $a\in(\alpha_3,+\infty)$. In particular, as $a\rightarrow 1^+$, $h_{m/\mu}(a,\pi)\rightarrow -\infty$. Therefore, for sufficiently large $k$, \eqref{eqn:h(tk)>0} become negative, which is a contradiction. Moreover, when $\underline a\in(\alpha_3,+\infty)$, by following the previous argument, we also obtain a contradiction. Hence, the limit $a^*$ exists. Similarly, by using the same strategy for $\underline a\in(0,\alpha_1)$ and $\underline a\in (\alpha_1,1)$, the latter case also lead to a contradiction. The proof of Case 2 is completed.


\end{proof}

In order to characterize the limit $a^*$, we first provide the following asymptotic estimates of $\dot{r}$.

\begin{Lem}\label{lem:rangedotr}
Assume $\theta_*^-\in\{0,\pi\}$. There exist $\epsilon>0$ and $0<c_d<C_d<\infty$ such that
\begin{equation*}
c_d |t|^{\frac{1}{3}}  <   |\dot{r}(t)|   <  C_d|t|^{\frac{1}{3}}, \quad \forall\  t\in(-\epsilon,0).
\end{equation*}
\end{Lem}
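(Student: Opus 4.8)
The plan is to read off the two-sided bound on $|\dot r|$ from a two-sided bound on the speed $|\dot z|$, using the decomposition
\[
\dot r(t)^2=|\dot z(t)|^2-\frac{J(t)^2}{r(t)^2},\qquad J(t):=z(t)\times\dot z(t)=r^2(t)\dot\theta(t),
\]
so that $|\dot r(t)|\le|\dot z(t)|$ always, while the lower estimate $|\dot r(t)|^2\ge|\dot z(t)|^2-J(t)^2/r(t)^2$ becomes useful once $|J(t)|$ is shown to be of lower order than $r(t)\,|\dot z(t)|$. Throughout I would use what is already available under the hypothesis $\theta_*^-\in\{0,\pi\}$: $a(t)=r(t)/|q(t)|$ stays in a fixed compact subinterval of $(0,+\infty)$ and converges to $a^*\in(0,+\infty)$ (Theorem~\ref{thm:main 3.1}, Lemma~\ref{lem:limita*}); $r(t)$ and $|q(t)|$ are comparable to $|t|^{2/3}$ (Lemma~\ref{lem:sundman}); and $\theta(t)$ converges monotonically to $\theta_*^-\in\{0,\pi\}$ (Theorem~\ref{thm:main 3.2}, Lemma~\ref{lem:monotonity=0-}).

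A preliminary step is to show $|z(t)-q(t)|\asymp|t|^{2/3}$. Since $|z-q|^2=|q|^2\big((a+\cos\theta)^2+\sin^2\theta\big)\to|q|^2(a^*+\cos\theta_*^-)^2$, and $a^*+\cos\theta_*^-\ne0$ is automatic when $\theta_*^-=0$, the only case to exclude is $\theta_*^-=\pi$ together with $a^*=1$. In that case $\theta(t)\equiv\pi$, $z$ moves on the negative real axis, and $z(t)\ne q(t)$ on $[-t_0,0)$ by Theorem~\ref{thm:two-body}, so $a(t)\ne1$ throughout and hence is always $<1$ or always $>1$; if $a^*=1$, then in the one-dimensional equation $\ddot r=-\mu/r^2\pm m/\big(|q|(1-a)\big)^2$ the second term (the $q$--$z$ interaction) dominates $\mu/r^2$ and grows faster than $|t|^{-4/3}$, which forces $\dot r$ to blow up with a fixed sign and contradicts $r(0)=0$ (when $a<1$) or the bound $r(t)\le C_a|q(t)|\asymp|t|^{2/3}$ (when $a>1$). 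Alternatively one may invoke finiteness of $\mathcal A_{-T,0}(z)$, i.e.\ $\int\tfrac{m}{|z-q|}\,dt<+\infty$.

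Next I would bound $|\dot z|^2$ using the energy identity \eqref{eqn:half of dot z square}. For the upper bound, $\mu/r$ and $m/|z-q|$ are $O(|t|^{-2/3})$ by the preceding step, and the correction integral is $O(|t|^{-2/3})$ since its integrand is $O(|s|^{-5/3})$; hence $|\dot z(t)|^2\le C|t|^{-2/3}$ near $0$, and with $|\dot r|\le|\dot z|$ this already gives $|\dot r(t)|\le C_d|t|^{-1/3}$. For the lower bound I would extract the leading asymptotics of each term: $\mu/r+m/|z-q|\sim c_1|t|^{-2/3}$ with $c_1>0$, and since $\cos\psi(s)$ has a definite limit $\pm1$ and $|z-q|^{-2}|\dot q|\sim c_2|s|^{-5/3}$, the correction integral equals $c_3|t|^{-2/3}$ up to a bounded remainder; thus $|\dot z(t)|^2=M|t|^{-2/3}+o(|t|^{-2/3})$ for a constant $M$. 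Here $M\ge0$ is forced by $|\dot z|^2\ge0$, and $M=0$ is impossible: it would give $|\dot r(t)|\le|\dot z(t)|=o(|t|^{-1/3})$, hence $r(t)=\int_t^0(-\dot r)\,ds\le\int_t^0|\dot r|\,ds=o(|t|^{2/3})$ and so $a(t)\to0$, contradicting $a^*>0$. Therefore $M>0$ and $|\dot z(t)|^2\ge\tfrac{M}{2}|t|^{-2/3}$ near $0$.

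It remains to bound $J$. If $\theta_*^-=\pi$ then $\dot\theta\equiv0$, so $J\equiv0$ and $\dot r(t)^2=|\dot z(t)|^2\ge\tfrac{M}{2}|t|^{-2/3}$, closing the argument. If $\theta_*^-=0$, then $J$ extends continuously to $t=0$ with $J(0)=0$ (otherwise $\dot\theta=J/r^2$ would blow up like $|t|^{-4/3}$ and $\theta$ would have infinite total variation near $0$, contradicting its convergence), while $\dot J(t)=z\times\ddot z=\frac{m\,a\sin\theta}{|q|\,(a^2+1+2a\cos\theta)^{3/2}}>0$ and $\dot J(t)\le C\,\theta(-\epsilon)\,|t|^{-2/3}$ since $\theta$ is decreasing; integrating, $|J(t)|=\int_t^0\dot J(s)\,ds\le 3C\,\theta(-\epsilon)\,|t|^{1/3}$, hence $J(t)^2/r(t)^2\le C'\theta(-\epsilon)^2|t|^{-2/3}$. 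Shrinking $\epsilon$ so that $C'\theta(-\epsilon)^2\le\tfrac{M}{4}$ (possible because $\theta(-\epsilon)\to\theta_*^-=0$) yields $\dot r(t)^2\ge\tfrac{M}{4}|t|^{-2/3}$ on $(-\epsilon,0)$, the desired lower bound. The corresponding statement on $(0,T)$ follows from the reversibility of \eqref{eqn:1+1+1-body}. The steps I expect to be most delicate are the degenerate configuration $\theta_*^-=\pi$, $a^*=1$, where the crude estimates fail and one must argue directly with the one-dimensional equation, and the bookkeeping needed to identify the leading coefficient $M$ of $|\dot z|^2$ and confirm $M>0$; the rest is routine once $a^*$ exists and $\theta_*^-\in\{0,\pi\}$.
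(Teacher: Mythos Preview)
Your approach is correct but proceeds along a genuinely different route from the paper. The paper works directly with $\ddot r$: from the identity $\ddot r=\bigl(-\tfrac{\mu}{a^2}-\tfrac{m(a+\cos\theta)}{(a^2+1+2a\cos\theta)^{3/2}}\bigr)|q|^{-2}+\tan^2\eta\,\tfrac{\dot r^2}{r}$, it first drops the nonnegative $\tan^2\eta$ term to get $\ddot r\ge -C|t|^{-4/3}$ and integrates to obtain the bound $|\dot r|\le C_d|t|^{-1/3}$; then it feeds this back (together with $\tan\eta\to0$ from Lemma~\ref{lem:dot theta d-}) to make the $\tan^2\eta$ term negligible, giving $\ddot r\le -c|t|^{-4/3}$ and hence the lower bound. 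Your decomposition $\dot r^2=|\dot z|^2-J^2/r^2$ instead recycles the energy identity \eqref{eqn:half of dot z square} (which the paper introduced for a different purpose in Lemma~\ref{lem: estimates of J}) to control $|\dot z|$ from both sides, and handles the angular contribution by integrating $\dot J$ and exploiting $\theta(-\epsilon)\to0$. Your indirect argument that the leading coefficient $M$ of $|\dot z|^2|t|^{2/3}$ is positive---deducing $a(t)\to0$ from $|\dot r|=o(|t|^{-1/3})$---is a nice touch that sidesteps computing $M$ explicitly. The paper's bootstrap is more self-contained and avoids the sign bookkeeping in the correction integral of the energy identity; your route is more modular and makes the radial/angular split transparent. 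One remark: your exclusion of the degenerate case $\theta_*^-=\pi$, $a^*=1$ is sketchy as written (the phrase ``grows faster than $|t|^{-4/3}$'' presupposes a rate for $a\to1$, which is not yet known, and the finite-action alternative faces the same issue), though the idea can be made rigorous along the lines you indicate; the paper's own Case~2 also tacitly uses $(a^*-1)^{-2}$, which is ultimately justified only by the later Lemma~\ref{lem:a*=alpha13} (whose proof does not depend on the present lemma).
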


\begin{proof}
Firstly, we introduce the following useful identity
 \begin{align}\label{eqn:ddotequation2}
\ddot{r}(t) = \left( -\frac{\mu}{a^2(t)}-\frac{m(a(t)+\cos\theta(t))}{(a^2(t)+1+2a(t)\cos\theta(t))^{\frac{3}{2}}}\right)|q(t)|^{-2}+\tan^2\eta(t)\frac{\dot{r}^2(t)}{r(t)},
\end{align}
where $\eta(t)$ denotes the angle from $z(t)$ to $\dot{z}(t)$. The computation can be found in (\ref{eqn:ddotrequation}) and (\ref{eqn:ddotrh}). We split the proof in two cases.



Case 1: Assume $\theta_*^-=0$.

By Lemma~\ref{lem:limita*}, for any $\epsilon>0$, we see that
\begin{align}\label{eqn:rangeepsilon}
\left|\left(-\frac{\mu}{a^2(t)}-\frac{m(a(t)+\cos\theta(t))}{a^2(t)+1+2a(t)\cos\theta(t)^{\frac{3}{2}}}\right) - \left(-\frac{\mu}{(a^*)^2}-\frac{m}{(a^*+1)^2}\right) \right| <\epsilon,
\end{align}
for $|t|>0$ sufficiently small.

On the one hand, fixing a $t_0<0$ near $0$. By using (\ref{eqn:sundman}), (\ref{eqn:ddotequation2}) and (\ref{eqn:rangeepsilon}),  we have
\begin{align*}
\dot{r}(t) - \dot{r}(t_0)
 = \int_{t_0}^t \ddot{r}(s) ds
& \geq \int_{t_0}^t \left( -\frac{\mu}{a^2(s)}-\frac{m(a(s)+\cos\theta(s))}{(a^2(s)+1+2a(s)\cos\theta(s))^{\frac{3}{2}}}\right)|q(s)|^{-2} ds \\
& \geq \int_{t_0}^t \left(-\frac{\mu}{(a^*)^2}-\frac{m}{(a^*+1)^2} -\epsilon\right) c_q^{-2}|s|^{-\frac{4}{3}} ds \\
& = -C'_d\left( |t|^{-\frac{1}{3}}-|t_0|^{-\frac{1}{3}}\right),
\end{align*}
where $C'_d:= 3c_q^{-2}\left(\frac{\mu}{(a^*)^2}+\frac{m}{(a^*+1)^2} +\epsilon\right)>0$.  Then, by slightly enlarging $C'_d$, we derive the lower bound for $\dot{r}$: for some $C_d>C'_d$,
\begin{align}\label{eqn:dotrlower1}
\dot{r}(t) \geq -C'_d|t|^{-\frac{1}{3}}+C'_d|t_0|^{-\frac{1}{3}}+\dot{r}(t_0) > -C_d|t|^{-\frac{1}{3}},
\end{align}
for $|t|$ sufficiently small.

On the other hand,  due to inequalities (\ref{eqn:sundman}),  (\ref{eqn:dotrlower1}), Lemma~\ref{lem:dot theta d-} and Lemma~\ref{lem:limita*}, as~$t$~tends to $0^-$,   $\dot{r}^2(t)|q(t)|a^{-1}(t)$ is bounded and $\tan^2\eta(t)$ tends to 0.
Then, fixing a $t_0<0$ near 0 again,
by a similar computation as above, we have
\begin{align*}
\dot{r}(t) - \dot{r}(t_0)
& = \int_{t_0}^t \left( -\frac{\mu}{a^2(s)}-\frac{m(a(s)+\cos\theta(s))}{(a^2(s)+1+2a(s)\cos\theta(s))^{\frac{3}{2}}}+\tan^2\eta(s)\frac{\dot{r}^2(s)}{a(s)}|q(s)| \right)|q(s)|^{-2} \ ds \\
& \leq \int_{t_0}^t \left(-\frac{\mu}{(a^*)^2}-\frac{m}{(a^*+1)^2} +2\epsilon\right) C_q^{-2}|s|^{-\frac{4}{3}} ds \\
& = -c'_d\left( |t|^{-\frac{1}{3}}-|t_0|^{-\frac{1}{3}}\right),
\end{align*}
 where $c'_d:= 3C_q^{-2}\left(\frac{\mu}{(a^*)^2}+\frac{m}{(a^*+1)^2} -2\epsilon\right)>0$.
Finally, after reducing $c'_d$ slightly,  we derive the upper bound of $\dot{r}$: for some $c_d<c'_d$,
\begin{align*}
\dot{r}(t) \leq -c'_d|t|^{-\frac{1}{3}}+c'_d|t_0|^{-\frac{1}{3}}+\dot{r}(t_0)
< -c_d|t|^{-\frac{1}{3}},
\end{align*}
for $|t|$ sufficiently small.  This proves the lemma in Case 1.

Case 2. Assume $\theta_*^-=\pi$.

By Lemma \ref{lem:monotonity=0-}, we know that $\theta(t)\equiv \pi$ for any $t\in(-t_0,0)$. Since the angle $\eta(t)$ between $z(t)$ and $\dot z(t)$ is either $0$ or $\pi$ if $\dot r\neq0$, then $\tan\eta(t)\frac{\dot r(t)}{r(t)}\equiv 0$ for every $t\in(-t_0,0)$. By Lemma \ref{lem:limita*}, for any $\epsilon>0$, we have
\begin{align}\label{eqn:rangeepsilon1}
\left|\left(-\frac{\mu}{a^2(t)}-\frac{m(a(t)+\cos\theta(t))}{a^2(t)+1+2a(t)\cos\theta(t)^{\frac{3}{2}}}\right) - \left(-\frac{\mu}{(a^*)^2}-\frac{m}{(a^*-1)^2}\right) \right| <\epsilon,
\end{align}
for $|t|$ sufficiently small. Following the previous strategy. Fixing a $t_0<0$ near 0. By using (\ref{eqn:sundman}), (\ref{eqn:ddotequation2}) and (\ref{eqn:rangeepsilon1}), we have
$$\begin{aligned}
\dot{r}(t) - \dot{r}(t_0)
 = \int_{t_0}^t \ddot{r}(s) ds
& = \int_{t_0}^t \left( -\frac{\mu}{a^2(s)}-\frac{m(a(s)+\cos\theta(s))}{(a^2(s)+1+2a(s)\cos\theta(s))^{\frac{3}{2}}}\right)|q(s)|^{-2} ds \\
& \geq \int_{t_0}^t \left(-\frac{\mu}{(a^*)^2}-\frac{m}{(a^*-1)^2} -\epsilon\right) c_q^{-2}|s|^{-\frac{4}{3}} ds \\
& = -C''_d\left( |t|^{-\frac{1}{3}}-|t_0|^{-\frac{1}{3}}\right),
\end{aligned}$$
where $C''_d:= 3c_q^{-2}\left(\frac{\mu}{(a^*)^2}+\frac{m}{(a^*-1)^2} +\epsilon\right)>0$.
Moreover, same as before, we also obtain that
\begin{align*}
\dot{r}(t) - \dot{r}(t_0)
& = \int_{t_0}^t \left( -\frac{\mu}{a^2(s)}-\frac{m(a(s)+\cos\theta(s))}{(a^2(s)+1+2a(s)\cos\theta(s))^{\frac{3}{2}}}
\right)|q(s)|^{-2} \ ds \\
& \leq \int_{t_0}^t \left(-\frac{\mu}{(a^*)^2}-\frac{m}{(a^*-1)^2} +\epsilon\right) C_q^{-2}|s|^{-\frac{4}{3}} ds \\
& = -c''_d\left( |t|^{-\frac{1}{3}}-|t_0|^{-\frac{1}{3}}\right),
\end{align*}
 where $c''_d:= 3C_q^{-2}\left(\frac{\mu}{(a^*)^2}+\frac{m}{(a^*-1)^2} -\epsilon\right)>0$. By choosing $\epsilon>0$ sufficiently small, both $C''_d$ and $c''_d$ are positive. Then by enlarging $C''_d$ and reducing $c''_d$ as before, we obtain the lower and upper bound for $\dot r$, no matter $\alpha^*=\alpha_1$ or $\alpha_3$. This proves the lemma in Case 2 and the proof is now completed.
\end{proof}

Now we introduce the following lemma, i.e. when the limiting angle $\theta_*^-=0$, then $r$ and $|q|$ tend to be proportional near the three-body collision. Moreover, the limiting ratio can be determined only by the mass ratio $m/\mu$.

\begin{Lem}\label{lem:a*=alpha2}
Assume $\theta_*^-=0$, then $a^* = \alpha_2$, where $\alpha_2$ is the unique zero of \eqref{eqn:h(a,0)} in $(1,+\infty)$.
\end{Lem}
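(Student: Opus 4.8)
The plan is to combine Lemma~\ref{lem:limita*} (the limit $a^*=\lim_{t\to0^-}a(t)$ exists and is finite) with the zero-set analysis of $h_{m/\mu}$ from Lemmas~\ref{lem:zero set of h} and~\ref{lem: zeros of h(a,theta)}, via the technique of critical and inflection points of $a(t)$, exactly as in the proof of Lemma~\ref{lem:limita*}. Since $\theta_*^-=0$, the function $h_{m/\mu}(a^*,0)$ has a definite sign unless $a^*=\alpha_2$: it is strictly negative for $a^*<\alpha_2$ and strictly positive for $a^*>\alpha_2$ by Lemma~\ref{lem: zeros of h(a,theta)}(c). So the goal is to show that either inequality leads to a contradiction with the existence of the limit $a^*$. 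First I would record, using $(\ref{eqn:dot a and ddot a})$ and Lemma~\ref{lem: ddot a(t) and h(a,theta)}, that at any critical point $\hat t$ of $a(t)$ one has $\mathrm{sgn}\,\ddot a(\hat t)=\mathrm{sgn}\big(h_{m/\mu}(a(\hat t),\theta(\hat t))+\tfrac1\mu\tan^2\eta(\hat t)\dot r^2(\hat t)r(\hat t)\big)$, and that the perturbation term $\tfrac1\mu\tan^2\eta(\hat t)\dot r^2(\hat t)r(\hat t)$ is negligible: by Lemma~\ref{lem:rangedotr} we have $\dot r^2(t)r(t)=O(|t|^{2/3}\cdot|t|^{2/3})$ — more precisely $\dot r^2 r \le C_d^2|t|^{2/3}\cdot\hat C|t|^{2/3}\to 0$ — and $\tan\eta(t)\to0$ by Lemma~\ref{lem:dot theta d-}, so the bracket is dominated by $h_{m/\mu}(a(\hat t),\theta(\hat t))$ for $|\hat t|$ small.

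Next, suppose for contradiction that $a^*<\alpha_2$. Since $a(t)\to a^*$, there is $\epsilon>0$ with $a(t)<(a^*+\alpha_2)/2<\alpha_2$ for all $t\in(-\epsilon,0)$, and $\theta(t)\to0$; hence by continuity of $h_{m/\mu}$ and Lemma~\ref{lem: zeros of h(a,theta)}(c) there is $\delta_0>0$ with $h_{m/\mu}(a(t),\theta(t))<-\delta_0$ on $(-\epsilon',0)$ for some $\epsilon'\le\epsilon$. Combined with the negligibility of the perturbation term, this forces $\ddot a(\hat t)<0$ at every critical point $\hat t\in(-\epsilon',0)$. But a $C^1$ function on $(-\epsilon',0)$ converging to a finite limit $a^*$ as $t\to0^-$ must have a critical point arbitrarily close to $0$ at which $\dot a$ changes sign appropriately: more carefully, if $a(t)$ were eventually monotone it would still need $\dot a(t)\to0$ along a sequence, and if it is not eventually monotone it has both local maxima and local minima accumulating at $0$; in either case one produces a critical point $\hat t$ near $0$ which is a local minimum of $a$, i.e. with $\ddot a(\hat t)\ge0$, contradicting $\ddot a(\hat t)<0$. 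The symmetric argument, using $h_{m/\mu}(a(t),\theta(t))>\delta_0>0$ near $0$ when $a^*>\alpha_2$, produces a critical point that is a local maximum with $\ddot a(\hat t)\le0$, again a contradiction. Therefore $a^*=\alpha_2$.

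A cleaner way to extract the critical point, which I would actually use to avoid case analysis on monotonicity, mirrors the device in Lemma~\ref{lem: upper bound of r} and the end of the proof of Theorem~\ref{thm:main 3.1}: pick any sequence $t_k\to0^-$ with $a(t_k)\ne a^*$ having a definite sign — say $a(t_k)<a^*$ (if no such sequence exists then $a(t)\ge a^*$ eventually, and one uses $a(t_k)>a^*$ instead; if $a(t)\equiv a^*$ eventually there is nothing to prove) — and on each interval $[t_k,s_k]$ where $a(s_k)$ returns toward $a^*$, Rolle/interior-extremum gives $\hat t_k\in(t_k,0)$ with $\dot a(\hat t_k)=0$ and $\ddot a(\hat t_k)\ge0$ (a local min) or $\le0$ (a local max) depending on the side; then feed $\hat t_k\to0^-$ into the sign inequality for $h_{m/\mu}$ to contradict. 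The main obstacle is precisely this step — guaranteeing the existence of critical points of $a(t)$ accumulating at $t=0$ with the correct second-order sign — since $a(t)$ is only known to be $C^1$ with a finite limit, not monotone, and one must handle the degenerate possibility $\ddot a=0$ at such points (here Lemma~\ref{lem: ddot a(t) and h(a,theta)} already covers equality) and the possibility that $a(t)$ is eventually constant equal to something other than $\alpha_2$, which would directly contradict the strict sign of $h_{m/\mu}$ along a constant critical path. Once the critical-point sequence is in hand, the rest is the routine limiting argument with the bounds from Lemmas~\ref{lem:sundman}, \ref{lem:rangedotr}, \ref{lem:dot theta d-} already established.
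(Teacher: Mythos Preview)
Your approach via critical points of $a(t)$ has a genuine gap precisely where you flag ``the main obstacle'': once Lemma~\ref{lem:limita*} gives $a(t)\to a^*$, nothing rules out $a(t)$ being eventually \emph{strictly} monotone on some $(-\epsilon,0)$, in which case $\dot a$ has no zeros there and Lemma~\ref{lem: ddot a(t) and h(a,theta)} is simply inapplicable. Your claim that eventual monotonicity with a finite limit forces $\dot a(t_k)\to0$ along a sequence is false in general, and here $\dot a=(\dot r-a\,\tfrac{d}{dt}|q|)/|q|$ is a priori of order $|t|^{-1}$, so not even bounded; in any case $\dot a(t_k)\to0$ is strictly weaker than $\dot a(t_k)=0$, which the critical-point identity requires. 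The Rolle-type device in your second paragraph fails for the same reason: if $a$ increases strictly to $a^*<\alpha_2$, there is no interval on which $a$ ``returns,'' hence no interior extremum to extract. The critical-point method worked in Lemma~\ref{lem:limita*} only because the contradiction hypothesis there ($\underline a<\bar a$) \emph{manufactures} oscillation; once convergence is known, that mechanism disappears.

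The paper bypasses this by abandoning critical points for a double-integration comparison. Fixing $a_\epsilon=a^*\mp\epsilon$ and using identity~$(\ref{eqn:ddotrh})$ --- which holds at \emph{every} $t$, not only at critical points --- together with $h_{m/\mu}(a^*,0)\ne0$ and the vanishing of the $\tan^2\eta$ term, one obtains a uniform sign on $\ddot r(t)-a_\epsilon\,\tfrac{d^2}{dt^2}|q(t)|$, bounded in absolute value below by $c_\epsilon|q(t)|^{-2}$. Integrating once from a fixed $t_0$ controls $\dot r-a_\epsilon\,\tfrac{d}{dt}|q|$ to order $|t|^{-1/3}$ with a definite sign; integrating again from $t$ to $0$ (using $r(0)=|q(0)|=0$) forces the wrong sign on $(a(t)-a_\epsilon)|q(t)|$. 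This is the argument you should supply. (Aside: $\dot r^2 r$ is $O(1)$, not $o(1)$, since the proof of Lemma~\ref{lem:rangedotr} gives $|\dot r|\sim|t|^{-1/3}$; the perturbation term still vanishes because $\tan\eta\to0$, so your conclusion there survives even though the arithmetic slipped.)
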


\begin{proof}


To show $a^*\leq \alpha_2$.  Assume not, that is $a^*>\alpha_2$.
Let $\epsilon >0$ and $a_{\epsilon}=a^*-\epsilon$. Since $a(t)$ is convergent at $t=0$, see Lemma \ref{lem:limita*}, we know that $a(t)-a_{\epsilon}>0$ for $|t|>0$ sufficiently small. 
Moreover, combining (\ref{eqn:sundman}), Lemma~\ref{lem:dot theta d-}, Lemma~\ref{lem: zeros of h(a,theta)}(c) and Lemma ~\ref{lem:limita*}, \ref{lem:rangedotr}, we have
\begin{align*}
\lim_{t \rightarrow 0^-}h_{m/\mu}(a(t),\theta(t))=h_{m/\mu}(a^*,0)>0 \quad \text{ and } \quad
 \lim_{t \rightarrow 0^-}\tan^2\eta(t)\frac{\dot{r}^2(t)}{a(t)}|q(t)|=0.
\end{align*}

Then by (\ref{eqn:ddotrh}) and $\frac{d^2}{dt^2}|q(t)|=-\frac{\mu}{|q(t)|^2}$, we compute that, for $|t|>0$ sufficiently small,
\begin{equation}\label{eqn:ddotrh2}
\begin{aligned}
\ddot{r}(t)-a_{\epsilon}\frac{d^2}{dt^2}|q(t)|
&= \left(\ddot{r}(t)-a(t)\frac{d^2}{dt^2}|q(t)|\right)+\left(a(t)-a_{\epsilon}\right)\frac{d^2}{dt^2}|q(t)|  \\
&=\left(h_{m/\mu}(a(t),\theta(t))+\frac{\tan^2\eta(t)}{\mu}\dot r^2(t)r(t)-a^2(t)(a(t)-a_{\epsilon})\right)\frac{\mu}{r^2(t)} \\
& \geq \left(h_{m/\mu}(a^*,0)-\epsilon-a^2(t)\cdot2\epsilon\right)\frac{\mu}{r^2(t)} \\
& > \left(\frac{1}{2(a^*)^2}h_{m/\mu}(a^*,0)-\frac{2\epsilon}{(a^*)^2}-2\epsilon\right)\frac{\mu}{|q(t)|^2}
=c_{\epsilon}|q(t)|^{-2},
\end{aligned}
\end{equation}
where $c_{\epsilon}:= \left(\frac{1}{2(a^*)^2}h_{m/\mu}(a^*,0)-\frac{2\epsilon}{(a^*)^2}-2\epsilon\right)\mu.$
Since $\epsilon>0$ is arbitrary,  we choose $\epsilon>0$ sufficiently small such that
$c_{\epsilon} >0$.

Fix a $t_0<0$ near 0. By integration and Lemma~\ref{lem:sundman},  we have,   for $t \in (t_0,0)$,
\begin{align*}
\dot{r}(t)-a_{\epsilon}\frac{d}{dt}|q(t)|
& = c_0 +\int_{t_0}^{t}\ddot{r}(s)-a_{\epsilon}\frac{d^2}{dt^2}|q(s)| \ ds
 \geq c_0+\int_{t_0}^{t} c_{\epsilon}|q(s)|^{-2} \ ds \\
& \geq c_0 + \frac{3c_{\epsilon}}{C_q^2}\left( |t|^{-\frac{1}{3}}-|t_0|^{-\frac{1}{3}}\right)
=   \frac{3c_{\epsilon}}{C_q^2}|t|^{-\frac{1}{3}} +c_1,
\end{align*}
where $c_0:=\dot{r}(t_0)-a_{\epsilon}\frac{d}{dt}|q(t_0)| $ and $c_1:=c_0- \frac{3c_{\epsilon}}{C_q^2}|t_0|^{-\frac{1}{3}}$.
By integration again,  we obtain
\begin{align*}
0 & > -(a(t)-a_{\epsilon})|q(t)| =-(r(t)-a_{\epsilon}|q(t)|)  \\
& =\int_{t}^{0} \dot{r}(s)-a_{\epsilon}\frac{d}{dt}|q(s)| \ ds
 \geq \int_{t}^{0}  \frac{3c_{\epsilon}}{C_q^2}|s|^{-\frac{1}{3}} +c_1 \ ds
  =  \left(\frac{9c_{\epsilon}}{2C_q^2} +c_1|t|^{\frac{1}{3}}\right)|t|^{\frac{2}{3}} > 0,
\end{align*}
for $|t|>0$ sufficiently small.  Then, we obtain a contradiction and we conclude that $a^*\leq \alpha_2$.

To prove $a^*\geq \alpha_2$. 
By contradiction, we assume $a^* < \alpha_2$. Let $\epsilon >0$ and $a_{\epsilon}=a^*+\epsilon$. By Lemma \ref{lem:limita*} again, we have $a(t)-a_{\epsilon}\in (-2\epsilon,0)$ for $|t|>0$ sufficiently small. Similar to the previous case, we have
\begin{align}\label{eqn:limithtan}
\lim_{t \rightarrow 0^-}h_{m/\mu}(a(t),\theta(t))=h_{m/\mu}(a^*,0)<0 \quad \text{ and } \quad
 \lim_{t \rightarrow 0^-}\tan^2\eta(t)\frac{\dot{r}^2(t)}{a(t)}|q(t)|  =0,
\end{align}

Then similar to the estimates in (\ref{eqn:ddotrh2}), for $|t|>0$ sufficiently small, we compute that
\begin{equation}\label{eqn:ddotrh3}
\begin{aligned}
\ddot{r}(t)-a_{\epsilon}\frac{d^2}{dt^2}|q(t)|
&=\left(h_{m/\mu}(a(t),\theta(t))+\frac{\tan^2\eta(t)}{\mu}\dot r^2(t)r(t)-a^2(t)(a(t)-a_{\epsilon})\right)\frac{\mu}{r^2(t)} \\
& \leq \left(h_{m/\mu}(a^*,0)+2\epsilon+a^2(t)\cdot2\epsilon\right)\frac{\mu}{r^2(t)} \\
& < \left(\frac{1}{2(a^*)^2}h_{m/\mu}(a^*,0)+\frac{3\epsilon}{(a^*)^2}+2\epsilon\right)\frac{\mu}{|q(t)|^2}
=-C_{\epsilon}|q(t)|^{-2},
\end{aligned}
\end{equation}
where $C_{\epsilon}:=-\left(\frac{1}{2(a^*)^2}h_{m/\mu}(a^*,0)+\frac{3\epsilon}{(a^*)^2}+2\epsilon\right)\mu$.  Since $\epsilon>0$ is also arbitrary, we can choose $\epsilon>0$ sufficiently small such that $C_{\epsilon}>0$.

Next, we fix a $t_0<0$ near 0. By integration and Lemma~\ref{lem:sundman}, for any $t \in (t_0,0)$, we have
\begin{align*}
\dot{r}(t)-a_{\epsilon}\frac{d}{dt}|q(t)|
& = C_0 +\int_{t_0}^{t}\ddot{r}(s)-a_{\epsilon}\frac{d^2}{dt^2}|q(s)| \ ds
 \leq C_0-\int_{t_0}^{t} C_{\epsilon}|q(s)|^{-2} \ ds \\
& \leq C_0 - \frac{3C_{\epsilon}}{C_q^2}\left( |t|^{-\frac{1}{3}}-|t_0|^{-\frac{1}{3}}\right)
=   -\frac{3C_{\epsilon}}{C_q^2}|t|^{-\frac{1}{3}} +C_1,
\end{align*}
where $C_0:=\dot{r}(t_0)-a_{\epsilon}\frac{d}{dt}|q(t_0)| $ and $C_1:=C_0+ \frac{3C_{\epsilon}}{C_q^2}|t_0|^{-\frac{1}{3}}$.
By integration again,  we obtain
\begin{align*}
0 & < -(a(t)-a_{\epsilon})|q(t)| =-(r(t)-a_{\epsilon}|q(t)|)  \\
& =\int_{t}^{0} \dot{r}(s)-a_{\epsilon}\frac{d}{dt}|q(s)| \ ds
 \leq \int_{t}^{0}  -\frac{3C_{\epsilon}}{C_q^2}|s|^{-\frac{1}{3}} +C_1 \ ds
  =  \left( - \frac{9C_{\epsilon}}{2C_q^2} +C_1|t|^{\frac{1}{3}}\right)|t|^{\frac{2}{3}} < 0,
\end{align*}
for $|t|>0$ sufficiently small.  Then, we obtain a contradiction and prove that $a^*\geq \alpha_2$. The proof is now completed.
\end{proof}

Near the three-body collision, the ratio of $|z|$ and $|q|$ has been characterized in the case $\theta_*^-=0$. Now we consider the case $\theta_*^-=\pi$.

\begin{Lem}\label{lem:a*=alpha13}
Assume $\theta_*^-=\pi$, then $a^*\in\{\alpha_1,\alpha_3\}$, where $\alpha_1<1<\alpha_3$ are the unique two zeros of \eqref{eqn:h(a,pi)} in $(0,1)\cup(1,+\infty)$.
\end{Lem}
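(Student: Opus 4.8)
The plan is to imitate closely the proof of Lemma~\ref{lem:a*=alpha2}, taking advantage of the fact that the hypothesis $\theta_*^-=\pi$ is far more rigid than $\theta_*^-=0$. Indeed, by Lemma~\ref{lem:monotonity=0-} the case $\theta_*^-=\pi$ forces $\theta(t)\equiv\pi$ on $(-t_0,0)$, so $z(t)$ remains on the negative real axis; in particular the angle $\eta(t)$ from $z(t)$ to $\dot z(t)$ lies in $\{0,\pi\}$ whenever $\dot r(t)\neq 0$, hence the term $\tan^2\eta(t)\,\dot r^2(t)r(t)$ in \eqref{eqn:ddotrh} vanishes identically and
\begin{equation*}
\ddot r(t)-a(t)\,\tfrac{d^2}{dt^2}|q(t)|=h_{m/\mu}(a(t),\pi)\,\frac{\mu}{r^2(t)},\qquad t\in(-t_0,0).
\end{equation*}
Since the minimizer has no $z$--$q$ collision on $(-T,0]$ by Theorem~\ref{thm:two-body}, we have $a(t)\neq 1$ for every $t\in(-t_0,0)$, so either $a(t)<1$ throughout $(-t_0,0)$ (equivalently $z(t)\in(q(t),0)$) or $a(t)>1$ throughout $(-t_0,0)$ (equivalently $z(t)\in(-\infty,q(t))$). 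By Lemma~\ref{lem:limita*} and Theorem~\ref{thm:main 3.1}, the limit $a^*=\lim_{t\to 0^-}a(t)$ exists in $(0,+\infty)$, and it suffices to prove $a^*=\alpha_1$ in the first alternative and $a^*=\alpha_3$ in the second, as this will also rule out $a^*=1$.

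I would first record, from Case~2 and Case~3 of the proof of Lemma~\ref{lem:zero set of h} and from Lemma~\ref{lem: zeros of h(a,theta)}(d), that $h_{m/\mu}(\cdot,\pi)$ is strictly increasing on $[0,1)$ from $-1$ to $+\infty$ with unique zero $\alpha_1$, and strictly increasing on $(1,+\infty)$ from $-\infty$ to $+\infty$ with unique zero $\alpha_3$; in particular $h_{m/\mu}(a,\pi)<0$ for $a\in[0,\alpha_1)\cup(1,\alpha_3)$, $>0$ for $a\in(\alpha_1,1)\cup(\alpha_3,+\infty)$, and $h_{m/\mu}(a,\pi)\to\pm\infty$ as $a\to 1^{\mp}$. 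In the alternative $a(t)<1$ (so $a^*\in(0,1]$): if $a^*>\alpha_1$, then whether $a^*\in(\alpha_1,1)$ or $a^*=1$ one has $\liminf_{t\to 0^-}h_{m/\mu}(a(t),\pi)>0$, so choosing $\epsilon>0$ small with $a^*-\epsilon>\alpha_1$ and setting $a_\epsilon=a^*-\epsilon$ gives $a(t)-a_\epsilon>0$ and $h_{m/\mu}(a(t),\pi)-a^2(t)(a(t)-a_\epsilon)\geq c_\epsilon>0$ for $|t|$ small; then exactly as in \eqref{eqn:ddotrh2} one gets $\ddot r(t)-a_\epsilon\frac{d^2}{dt^2}|q(t)|\geq c'_\epsilon|q(t)|^{-2}>0$, and integrating twice against Lemma~\ref{lem:sundman} yields $-(a(t)-a_\epsilon)|q(t)|=-(r(t)-a_\epsilon|q(t)|)>0$ for $|t|$ small, contradicting $a(t)-a_\epsilon>0$. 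Conversely, if $a^*<\alpha_1$ then $h_{m/\mu}(a^*,\pi)<0$, and with $a_\epsilon=a^*+\epsilon$ (so $a(t)-a_\epsilon\in(-2\epsilon,0)$ for $|t|$ small) the computation of \eqref{eqn:ddotrh3} gives $\ddot r(t)-a_\epsilon\frac{d^2}{dt^2}|q(t)|\leq -C_\epsilon|q(t)|^{-2}<0$, whence after two integrations $-(a(t)-a_\epsilon)|q(t)|<0$ for $|t|$ small, contradicting $a(t)-a_\epsilon<0$. Hence $a^*=\alpha_1$.

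The alternative $a(t)>1$ (so $a^*\in[1,+\infty)$) is treated identically with $\alpha_1$ replaced by $\alpha_3$: if $a^*>\alpha_3$ then $h_{m/\mu}(a^*,\pi)>0$ and one copies the first sub-argument; if $a^*<\alpha_3$ then $\limsup_{t\to 0^-}h_{m/\mu}(a(t),\pi)<0$ — this covers the borderline $a^*=1$, where $h_{m/\mu}(a(t),\pi)\to-\infty$ — and one copies the second sub-argument. The only step that needs a word of care is precisely this borderline $a^*=1$: there $h_{m/\mu}(a(t),\pi)$ is unbounded, so the two sub-arguments must be phrased via $\liminf/\limsup$ of $h_{m/\mu}(a(t),\pi)$ rather than its value at $a^*$, and one must note that $\mu/r^2(t)$ stays bounded below by a constant multiple of $|t|^{-4/3}$ (since $r(t)=a(t)|q(t)|$ with $a(t)$ bounded above near $0$ by Theorem~\ref{thm:main 3.1} and $|q(t)|\le C_q|t|^{2/3}$ by Lemma~\ref{lem:sundman}), so the divergence of $h_{m/\mu}$ only strengthens the relevant inequality rather than spoiling the double integration. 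This yields $a^*=\alpha_3$ and completes the proof; the bookkeeping of the singular value $a^*=1$ is the one genuinely delicate point, everything else being a transcription of Lemma~\ref{lem:a*=alpha2} with the $\tan^2\eta$ term removed.
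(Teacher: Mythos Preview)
Your proposal is correct and follows essentially the same line as the paper's own proof: exploit $\theta(t)\equiv\pi$ to kill the $\tan^2\eta$ term in \eqref{eqn:ddotrh}, split according to whether $a(t)<1$ or $a(t)>1$, and then run the two one-sided contradiction arguments of Lemma~\ref{lem:a*=alpha2} with $h_{m/\mu}(\cdot,\pi)$ in place of $h_{m/\mu}(\cdot,0)$. Your treatment of the borderline $a^*=1$ via $\liminf/\limsup$ of $h_{m/\mu}(a(t),\pi)$ is in fact slightly more explicit than the paper's sketch, which simply lists $a^*\in(\alpha_1,1]$ as one case without isolating the singular endpoint.
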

\begin{proof}
Since $\theta(t)\equiv \pi$ on $(-t_0,0)$, there are only two possibilities, i.e. $z(t)\in(-\infty,q(t))$ and $z(t)\in(q(t),0)$. We aim to show that, $a^*=\alpha_1$ for the form case, and $a^*=\alpha_3$ for the latter case. Since the proof is similar to Lemma \ref{lem:a*=alpha2}, then we only sketch the proof for $a^*=\alpha_1$ and omit the proof for $a^*=\alpha_3$.

We split the proof in two situations: to show the contradictions for $a^*\in(0,\alpha_1)$ and $a^*\in(\alpha_1,1]$.

Assume $a^*\in(\alpha_1,1]$. Let $\epsilon >0$ and $a_{\epsilon}=a^*-\epsilon$. By Lemma \ref{lem:limita*}, we know that $a(t)-a_{\epsilon}>0$ for $|t|>0$ sufficiently small.
Moreover, since $\theta(t)\equiv \pi$, the angle $\eta(t)$ from $z(t)$ to $\dot z(t)$ is $0$ or $\pi$ if $\dot r(t)\neq0$. Then by Lemma~\ref{lem: zeros of h(a,theta)}(c), we have
\begin{align*}
\lim_{t \rightarrow 0^-}h_{m/\mu}(a(t),\theta(t))=h_{m/\mu}(a^*,0)>0 \quad \text{ and } \quad
 \tan^2(\eta(t))\dot{r}^2(t)r(t)=0,\quad \forall t\in(-t_0,0).
\end{align*}
By the computation in (\ref{eqn:ddotrh2}), for $|t|>0$ sufficiently small, we have
\begin{equation*}
\begin{aligned}
\ddot{r}(t)-a_{\epsilon}\frac{d^2}{dt^2}|q(t)|
&=\left(h_{m/\mu}(a(t),\theta(t))-a^2(t)(a(t)-a_{\epsilon})\right)\frac{\mu}{r^2(t)} \\
& > \left(\frac{1}{2(a^*)^2}h_{m/\mu}(a^*,0)-\frac{2\epsilon}{(a^*)^2}-2\epsilon\right)\frac{\mu}{|q(t)|^2}
=c_{\epsilon}|q(t)|^{-2},
\end{aligned}
\end{equation*}
where $c_{\epsilon}:= \left(\frac{1}{2(a^*)^2}h_{m/\mu}(a^*,0)-\frac{2\epsilon}{(a^*)^2}-2\epsilon\right)\mu.$
Since $\epsilon>0$ is arbitrary,  we choose $\epsilon>0$ sufficiently small such that
$c_{\epsilon} >0$. On the other hand, fix a $t_0<0$ near $0$. By following exactly the argument in Lemma~\ref{lem:a*=alpha2}, we obtain a contradiction that $0>-(a(t)-a_\epsilon)|q(t)|>0$ and we have $a^*\leq \alpha_1$.

Assume $a^*\in(0,\alpha_1)$. Let $\epsilon >0$ and $a_{\epsilon}=a^*+\epsilon$. By Lemma~\ref{lem:limita*} again, we have $a(t)-a_{\epsilon}\in (-2\epsilon,0)$ for $|t|>0$ sufficiently small. Similar to the previous case, we have
\begin{align*}
\lim_{t \rightarrow 0^-}h_{m/\mu}(a(t),\theta(t))=h_{m/\mu}(a^*,0)<0 \quad \text{ and } \quad
 \tan^2\eta(t)\dot{r}^2(t)r(t)=0,
\end{align*}
Similar to the estimates \eqref{eqn:ddotrh3}, for $|t|>0$ sufficiently small, we compute that
\begin{equation*}
\begin{aligned}
\ddot{r}(t)-a_{\epsilon}\frac{d^2}{dt^2}|q(t)|
&=\left(h_{m/\mu}(a(t),\theta(t))-a^2(t)(a(t)-a_{\epsilon})\right)\frac{\mu}{r^2(t)} \\
& < \left(\frac{1}{2(a^*)^2}h_{m/\mu}(a^*,0)+\frac{3\epsilon}{(a^*)^2}+2\epsilon\right)\frac{\mu}{|q(t)|^2}
=-C_{\epsilon}|q(t)|^{-2},
\end{aligned}
\end{equation*}
where $C_{\epsilon}:=-\left(\frac{1}{2(a^*)^2}h_{m/\mu}(a^*,0)+\frac{3\epsilon}{(a^*)^2}+2\epsilon\right)\mu$.  Since $\epsilon>0$ is also arbitrary, we can choose $\epsilon>0$ sufficiently small such that $C_{\epsilon}>0$. Next, we fix a $t_0<0$ near 0. By using the same strategy as in Lemma \ref{lem:a*=alpha13}, we obtain a similar contradiction, $0<-(a(t)-a_\epsilon)|q(t)|<0$. This means $a^*\geq \alpha_1$. The proof is now completed.
\end{proof}

To further prove the Sundman-Sperling estimates for the three-body collision, we study the ratio of velocities for $z$ and $q$, based on the characterization of the ratio of positions.

\begin{Lem}\label{lem:b*=alpha123}
Let $b(t)=\dot{r}(t)/\frac{d}{dt}|q(t)|$. Then we have $b^*:=\lim_{t\rightarrow 0^-}b(t)$ exists. Moreover,
\begin{enumerate}
\item[$(a)$] if $\theta_*^-=0$, then $b^*= \alpha_2$,
\item[$(b)$] if $\theta_*^-=\pi$, then $b^*\in\{\alpha_1,\alpha_3\}$,
\end{enumerate}
where $\alpha_2$ solves \eqref{eqn:h(a,0)} and $\alpha_1,\alpha_3$ solve \eqref{eqn:h(a,pi)}.
\end{Lem}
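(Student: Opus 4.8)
The plan is to deduce the convergence of $b(t)$ from that of the companion ratio $c(t):=\ddot r(t)\big/\tfrac{d^2}{dt^2}|q(t)|$, whose limit can be read off explicitly from the equation of motion. Write $\rho(t)=|q(t)|$; by $(Q3)$ the primary moves radially, so $\tfrac{d^2}{dt^2}\rho=-\mu\rho^{-2}$, and by Proposition~\ref{pro:sundmanq-} both $\tfrac{d}{dt}\rho$ and $\tfrac{d^2}{dt^2}\rho$ are strictly negative on some $(-\delta_0,0)$, so $b$ and $c$ are well defined and $C^1$ there. Differentiating $b=\dot r/\tfrac{d}{dt}\rho$ and substituting $\ddot r=c\,\tfrac{d^2}{dt^2}\rho$ gives the identity
\[
\dot b(t)=\frac{\tfrac{d^2}{dt^2}\rho(t)}{\tfrac{d}{dt}\rho(t)}\,\bigl(c(t)-b(t)\bigr),\qquad\text{with}\quad \frac{\tfrac{d^2}{dt^2}\rho(t)}{\tfrac{d}{dt}\rho(t)}=\frac{1+o(1)}{3\,|t|}>0 \ \text{ as } t\to0^-,
\]
the coefficient's asymptotics coming from Proposition~\ref{pro:sundmanq-}(b),(c). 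The decisive feature is that this positive coefficient is non-integrable at $0$.

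Second, I would show $c(t)\to a^*$. Dividing identity~\eqref{eqn:ddotequation2} by $\tfrac{d^2}{dt^2}\rho=-\mu\rho^{-2}$ gives
\[
c(t)=\frac{1}{a^2(t)}+\frac{m}{\mu}\,\frac{a(t)+\cos\theta(t)}{\bigl(a^2(t)+1+2a(t)\cos\theta(t)\bigr)^{3/2}}-\frac{1}{\mu}\,\tan^2\eta(t)\,\frac{\dot r^2(t)}{a(t)}\,\rho(t).
\]
The last term tends to $0$: $\tan^2\eta(t)\to0$ by Lemma~\ref{lem:dot theta d-}, while $\dot r^2\rho/a=\dot r^2\rho^2/r$ stays bounded by Lemma~\ref{lem:rangedotr} and Theorem~\ref{thm:main 3.1}; cf.~\eqref{eqn:limithtan}. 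Since $a(t)\to a^*$ (Lemma~\ref{lem:limita*}) and $\theta(t)\to\theta_*^-\in\{0,\pi\}$ (Theorem~\ref{thm:main 3.2}), and since $(a^*,\theta_*^-)\in\mathcal S$, i.e. $h_{m/\mu}(a^*,\theta_*^-)=0$ by Lemmas~\ref{lem:zero set of h}, \ref{lem:a*=alpha2} and~\ref{lem:a*=alpha13}, the defining relation of $h$ rearranges to $\tfrac{m}{\mu}\,\tfrac{a^*+\cos\theta_*^-}{((a^*)^2+1+2a^*\cos\theta_*^-)^{3/2}}=a^*-\tfrac{1}{(a^*)^2}$, so the first two terms of the limit of $c$ collapse to $\tfrac{1}{(a^*)^2}+a^*-\tfrac{1}{(a^*)^2}=a^*$. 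Hence $c(t)\to a^*$.

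Third, a trapping argument finishes the proof. Fix $\epsilon>0$ and choose $\delta\in(0,\delta_0)$ with $|c(t)-a^*|<\epsilon/2$ and $\tfrac{d^2}{dt^2}\rho(t)/\tfrac{d}{dt}\rho(t)>\tfrac1{4|t|}$ on $(-\delta,0)$. The $\dot b$-identity shows that $[a^*-\epsilon,a^*+\epsilon]$ is invariant forward in time: at the upper endpoint $c-b<0$ so $\dot b<0$, and at the lower endpoint $c-b>0$ so $\dot b>0$. On the other hand $b$ cannot remain outside this interval on all of $(-\delta,0)$: if $b(t)>a^*+\epsilon$ throughout, then $\dot b(t)<-\epsilon/(8|t|)$, whence $b(t)<b(-\delta)-\tfrac{\epsilon}{8}\log(\delta/|t|)\to-\infty$, contradicting $b(t)>a^*+\epsilon$; symmetrically $b(t)<a^*-\epsilon$ throughout forces $b\to+\infty$. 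By continuity $b$ must therefore enter the interval and then stay inside it, so $\limsup_{t\to0^-}|b(t)-a^*|\le\epsilon$; letting $\epsilon\downarrow0$ gives $b^*=\lim_{t\to0^-}b(t)=a^*$, and the stated values of $b^*$ follow from Lemmas~\ref{lem:a*=alpha2} and~\ref{lem:a*=alpha13}. The one genuinely delicate point is the limit $c(t)\to a^*$, which rests on the vanishing of the perturbative term $\tan^2\eta\,\dot r^2\rho^2/r$ (drawing on the earlier asymptotic control of $\eta$, $\dot r$, $r$ and $a$) and on the algebraic cancellation enforced by $h_{m/\mu}(a^*,\theta_*^-)=0$; everything else is soft.
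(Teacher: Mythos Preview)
Your argument is correct and takes a genuinely different route from the paper's. The paper works in two stages: it first shows $b^*$ exists by a contradiction with critical-point sequences (if $\underline b^-<\overline b^-$, pick $t_k$ near a local extremum of $b$ and derive a sign contradiction from the expression for $\ddot r - b\,\tfrac{d^2}{dt^2}|q|$ in terms of $h_{m/\mu}$), and then pins down $b^*=a^*$ by estimating $|a(t/2)-a(t)|$ via $\dot a = (b-a)\tfrac{d}{dt}\rho/\rho$ and showing it stays bounded away from zero if $b^*\neq a^*$, contradicting the convergence of $a$; only afterwards, in the proof of Theorem~\ref{main1}, does the paper compute $c(t)\to a^*$ from~\eqref{eqn:ddotlimit9}. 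You invert this order: $c(t)\to a^*$ follows directly from~\eqref{eqn:ddotequation2} together with $a\to a^*$, $h_{m/\mu}(a^*,\theta_*^-)=0$, and the vanishing of the $\tan^2\eta$ term---none of which needs the present lemma, so there is no circularity---and then the scalar identity $\dot b=(\ddot\rho/\dot\rho)(c-b)$ with a positive, non-integrable coefficient forces $b$ to track $c$, yielding both existence and value of $b^*$ in one stroke. Your route is cleaner and more conceptual (a relaxation ODE with divergent rate), while the paper's stays parallel to the critical-point technique used throughout Section~\ref{sec3}. One small remark: for case~$(b)$ you invoke Lemma~\ref{lem:dot theta d-} for $\tan\eta\to0$, but its final clause assumes $\theta\not\equiv\pi$; when $\theta_*^-=\pi$ one has $\theta\equiv\pi$ and hence $\dot\theta\equiv0$, so the term $\tan^2\eta\,\dot r^2 r/\mu=r^3\dot\theta^2/\mu$ vanishes identically, which is even simpler.
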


\begin{proof}
To show (a). Following the strategy of Lemma \ref{lem:a*=alpha2}. We first prove the existence of $b^*$, then prove $b^*=\alpha_2$. Denote $\underline{b}^-=\liminf_{t\rightarrow 0^-}b(t)$ and $\overline{b}^-=\limsup_{t\rightarrow 0^-}b(t)$. Unlike behavior of $a(t)$, $\underline{b}^-$ and $\overline{b}^-$ might be $\pm \infty$.

To show $b^*$ exists. By contradiction, we assume $\underline{b}^-<\overline{b}^-$.

Case 1: Assume $\underline{b}^-<\alpha_2$. There exists an $\epsilon>0$ small and a sequence of moments $\{t_k\}$ with $t_k \rightarrow 0^-$ such that
\begin{align}\label{eqn:limitb*}
b(t_k)<\alpha_2-\epsilon,\quad \text{ and } \quad \dot{b}(t_k)= \frac{\ddot{r}(t_k)-b(t_k)\frac{d^2}{dt^2}|q(t_k)|}{\frac{d}{dt}|q(t_k)|} \leq 0,
\end{align}
for any $k$ sufficiently large.

From Lemma~\ref{lem:a*=alpha2},  it is clear that $a(t_k)-b(t_k)\geq\epsilon/2$ for sufficiently large $k$. By similar arguments in (\ref{eqn:limithtan}) and (\ref{eqn:ddotrh3}), together with the fact that $h_{m/\mu}(\alpha_2,0)=0$, we conclude that \begin{equation}\label{eqn:ddotrh6}
\begin{aligned}
\ddot{r}(t_k)&-b(t_k)\frac{d^2}{dt^2}|q(t_k)| \\
&=\left(h_{m/\mu}(a(t_k),\theta(t_k))+\frac{\tan^2\eta(t_k)}{\mu}\dot r^2(t_k)r(t_k)-a^2(t_k)(a(t_k)-b(t_k))\right)\frac{\mu}{r^2(t_k)} \\
& \leq \left(h_{m/\mu}(\alpha_2,0)-\frac{\epsilon}{4}a^2(t_k)\right)\frac{\mu}{r^2(t_k)}<0,
\end{aligned}
\end{equation}
for $k$ sufficiently large. This contradicts to (\ref{eqn:limitb*}) since $\dot{b}(t_k) \leq 0$ and $\frac{d}{dt}|q(t_k)|<0$.

Case 2: Assume $\underline{b}^-\geq \alpha_2$. There also exists an $\epsilon>0$ and a sequence  $\{s_k\}$ with $s_k \rightarrow 0^-$  such  that
\begin{align}\label{eqn:limitb*2}
b(s_k)>\alpha_2+\epsilon,\quad \text{ and } \quad \dot{b}(s_k)= \frac{\ddot{r}(s_k)-b(s_k)\frac{d^2}{dt^2}|q(s_k)|}{\frac{d}{dt}|q(s_k)|} \geq 0,
\end{align}
for $k$ sufficiently large.

From Lemma~\ref{lem:a*=alpha2} again,  it is clear that $a(t_k)-b(t_k)\leq-\epsilon/2$ for sufficiently large $k$. By a similar argument in (\ref{eqn:ddotrh6}),  for $k$ sufficiently large, we have
\begin{equation*}
\begin{aligned}
\ddot{r}(t_k)-b(t_k)\frac{d^2}{dt^2}|q(t_k)|
\geq \left(h_{m/\mu}(\alpha_2,0)+\frac{\epsilon}{4}a^2(t_k)\right)\frac{\mu}{r^2(t_k)}>0,
\end{aligned}
\end{equation*}
which leads to a contradiction with (\ref{eqn:limitb*2}). Therefore, the limit $\underline{b}^-=\overline b^-=b^*$ exists.

To show $b^*=\alpha_2$. By contradiction, we assume $b^* \neq \alpha_2$.  Let $\epsilon = \frac{1}{2}|b^*-\alpha_2|>0$,  then for $|t|$ sufficiently small, we have
\begin{equation}\label{eqn:final}
\begin{aligned}
\left| a(t/2)-a(t)\right|
&= \left| \int_{t}^{t/2} \dot{a}(s)ds \right|
 = \left| \int_{t}^{t/2} \frac{\dot{r}(s)|q(s)|-r(s)\frac{d}{dt}|q(s)|}{|q(s)|^2} ds \right| \\
& = \left| \int_{t}^{t/2} \frac{(b(s)-a(s))\frac{d}{dt}|q(s)|}{|q(s)|} ds \right| \geq \int_{t}^{t/2}(|b^*-\alpha_2|-\epsilon)\frac{\frac{d}{dt}|q(s)|}{|q(s)|} ds \\
& \geq \frac{|b^*-\alpha_2|}{2} \int_{t}^{t/2}\frac{c_q}{C_q}|s|^{-1} ds
= \frac{c_q }{2C_q}  |b^*-\alpha_2|  \ln 2 >0.
\end{aligned}
\end{equation}
The first inequality above follows from the fact that $(b(s)-a(s))\frac{d}{dt}|q(s)|$
 does not change  the sign when  $|s|$ is sufficiently small,  and the second inequality follows from $(\ref{eqn:sundman})$.  However,  from the fact that $a(t)$ is convergent,  we observe that $\lim_{t \rightarrow 0^-}|a(t/2)-a(t)|=0$,  which contradicts (\ref{eqn:final}). The proof of (a) is completed.

To show (b). By Lemma \ref{lem:monotonity=0-}, we know that $\theta(t)\equiv \pi$ for any $t\in(-t_0,0)$. Since the angle $\eta(t)$ between $z(t)$ and $\dot z(t)$ is either $0$ or $\pi$ if $\dot r\neq0$, then $\tan\eta(t)\frac{\dot r(t)}{r(t)}\equiv 0$ for every $t\in(-t_0,0)$. Similar to Lemma \ref{lem:a*=alpha13}, we first prove that $b^*$ exists, then prove $b^*=\alpha_1$ or $b^*=\alpha_3$.

Since the proof of $b^*=\alpha_1$ and $b^*=\alpha_3$ are similar, we only sketch the proof for $b^*=\alpha_1$. By contradiction,
assume $\underline{b}^-<\overline b^-$, where $\underline{b}^-$ and $\overline{b}^-$ might be $\pm \infty$. Similar as (a), we will show that both $\underline{b}^-<\alpha_1$ and $\underline{b}^-\geq \alpha_1$ will lead to a contradiction.

Case 1: When $\underline{b}^-<\alpha_1$, there exists an $\epsilon>0$ small and a sequence of moments $\{t_k\}$ with $t_k \rightarrow 0^-$ such that, for $k$ sufficiently large, we have
\begin{align}\label{eqn:limitb*3}
b(t_k)<\alpha_1-\epsilon,\quad \text{ and } \quad \dot{b}(t_k)= \frac{\ddot{r}(t_k)-b(t_k)\frac{d^2}{dt^2}|q(t_k)|}{\frac{d}{dt}|q(t_k)|} \leq 0.
\end{align}
From Lemma~\ref{lem:a*=alpha13},  it is clear that $a(t_k)-b(t_k)>\epsilon/2$. Same as the proof of (a), together with the fact that $h_{m/\mu}(\alpha_1,0)=0$,  we have
\begin{equation}\label{eqn:ddotrh7}
\begin{aligned}
\ddot{r}(t_k)-b(t_k)\frac{d^2}{dt^2}|q(t_k)|
&=\left(h_{m/\mu}(a(t_k),\theta(t_k))-a^2(t_k)(a(t_k)-b(t_k))\right)\frac{\mu}{r^2(t_k)} \\
& \leq \left(h_{m/\mu}(\alpha_1,0)-\frac{\epsilon}{4}a^2(t_k)\right)\frac{\mu}{r^2(t)} <0,
\end{aligned}
\end{equation}
for $k$ sufficiently large. This contradicts to (\ref{eqn:limitb*3}) since $\dot{b}(t_k) \leq 0$ and $\frac{d}{dt}|q(t_k)|<0$.

Case 2: When $\underline{b}^-\geq\alpha_1$, there exists an $\epsilon>0$ small and a sequence $\{s_k\}$ with $s_k \rightarrow 0^-$  such that, for $k$ sufficiently large, we have
\begin{align}\label{eqn:limitb*4}
b(s_k)>\alpha_1+\epsilon,\quad \text{ and } \quad \dot{b}(s_k)= \frac{\ddot{r}(s_k)-b(s_k)\frac{d^2}{dt^2}|q(s_k)|}{\frac{d}{dt}|q(s_k)|} \geq 0.
\end{align}
This means that $a(s_k)-b(s_k)<-\epsilon/2$.
Similar to the argument in (\ref{eqn:ddotrh7}), we have
\begin{equation*}
\begin{aligned}
\ddot{r}(t_k)-b(t_k)\frac{d^2}{dt^2}|q(t_k)|
 \geq \left(h_{m/\mu}(\alpha_1,0)+\frac{\epsilon}{4}a^2(t_k)\right)\frac{\mu}{r^2(t)}>0,
\end{aligned}
\end{equation*}
for $k$ sufficiently large. This also leads to a contradiction with (\ref{eqn:limitb*4}).  Hence, the limit $\underline{b}^-=\overline b^-=b^*$ exists.

To show $b^*=\alpha_1$. By contradiction, we assume $b^* \neq \alpha_1$.  Let $\epsilon = \frac{1}{2}|b^*-\alpha_1|>0$,  then same as the computation in (a), we conclude that, for $|t|$ sufficiently small,
\begin{equation}\label{eqn:final1}
\begin{aligned}
\left| a(t/2)-a(t)\right|
= \frac{c_q }{2C_q}  |b^*-\alpha_1|  \ln 2 >0.
\end{aligned}
\end{equation}
However, $\lim_{t \rightarrow 0^-}|a(t/2)-a(t)|=0$, since $a(t)$ is convergent. This contradicts to (\ref{eqn:final1}). Therefore, (b) holds as we expected. The proof is now completed.
\end{proof}

Now we are ready to prove Theorem~\ref{main1}.
\begin{proof}[Proof of Theorem~\ref{main1}]
Recall that the restricted one-center-two-body problem \eqref{eqn:1+1+1-body} is symmetric with respect to the real axis, and the action functional $\mathcal{A}$ is invariant under the complex conjugation. Therefore, it is sufficient to prove this theorem on $[-T,0]$. Moreover, by Lemma \ref{lem:isolated}, there exists a moment $t_0\in(0,T]$, such that $z(t)$ is smooth on $(-t_0,0)$, $z(-t_0)\in\mathbb{C}^+_*$ and $z(0)=0$.

To prove (a). By Theorem \ref{thm:two-body}, since $z(t)$ is a minimizer, there exists no two-body collision on $[-T,0)$. Then by taking $t_0=T$, we see that (a) is a direct conclusion of Lemma \ref{lem:monotonity=0-} and Lemma~\ref{thm:main 3.2}.

To prove (b). The first two estimates of \eqref{eqn:sundman1} follow from Lemma \ref{lem:a*=alpha2} - \ref{lem:b*=alpha123}. To show the third estimate. Let $c(t)=\ddot r(t)/\frac{d^2}{dt^2}|q(t)|$. According to (\ref{eqn:ddotrh}) and \eqref{eqn:newton}, we have
\begin{align}\label{eqn:ddotlimit9}
c(t)-a(t)
& = -\frac{1}{a^2(t)}\left(h_{m/\mu}(a(t),\theta(t))+\frac{1}{\mu}\tan^2(\eta(t))\dot r^2(t)r(t)\right).
\end{align}
Combing Lemma~\ref{lem:a*=alpha2}, \ref{lem:a*=alpha13}, Lemma \ref{lem: zeros of h(a,theta)} and Lemma \ref{lem:dot theta d-}, we have $a_*=\lim_{t \rightarrow 0^-} a(t)\in\{\alpha_i\}_{i=1}^3$,
$$
\lim_{t \rightarrow 0^-}h_{m/\mu}(a(t),\theta(t))=h_{m/\mu}(\alpha_*,0)=0\quad \mathrm{and}\quad \lim_{t \rightarrow 0^-}\tan^2\eta(t)\frac{\dot{r}^2(t)}{a(t)}|q(t)|=0.
$$
Then as $t\rightarrow 0^-$, we obtain from \eqref{eqn:ddotlimit9} that $\lim_{t\rightarrow 0^-}c(t)=a_*\in\{\alpha_i\}_{i=1}^3$. In particular, $(b_1)$ and $(b_2)$ follow. The proof is now completed.
\end{proof}

\section{Application: Classical solutions with prescribed boundary angles}\label{sec4}

In this section, as an application of Theorem \ref{main1}, i.e. the Sundman-Sperling estimates to the restricted one-center-two-body problem \eqref{eqn:1+1+1-body}, we aim to prove Theorem \ref{thm:main1.1}, i.e. the existence of the collision-free solutions $z(t)$ jointing from the ray $e^{\phi i}\mathbb{R}^+$ to $e^{\phi_0i}\mathbb{R}^+$ in $t\in[-T,0]$, where $(\phi,\phi_0)\in[0,\pi]\times[0,\pi]$ with $\phi\neq \phi_0$. Similar in $[0,T]$ with $\phi,\phi_0$ switched. 
The strategy of this proof is first to apply Theorem~\ref{thm:two-body} to exclude the two-body collisions, then apply Theorem~\ref{main1} and the local deformation method to exclude the three-body collision for prescribed boundary angles $\phi,\phi_0$.


Recall from \eqref{eqn:action0} that the action functional
\begin{align*}
\mathcal A_{a,b}(z)=\int_{a}^{b}\frac{1}{2}|\dot z|^2+U(z,t)dt,\quad \forall z\in H^1([a,b],\mathbb{C}),
\end{align*}
where
the potential $U(z,t)$ is given in (\ref{eqn:potential}). It is well-known that for any $a<b$, $\mathcal A_{a,b}$ is a weakly lower semi-continuous on $H^1([a,b], \mathbb C)$ and the equation (\ref{eqn:1+1+1-body}) is the Euler-Lagrange equation of $\mathcal A_{a,b}$. This implies that the critical points of $\mathcal A_{a,b}$ in $H^1([a,b],\mathbb C)$ are weak solutions of the restricted one-center-two-body problem (\ref{eqn:1+1+1-body}) on $t \in [a,b]$.

Consider the path space
\begin{align*}
\Omega^{a,b}_{\phi_1,\phi_2}=\{x = re^{\theta i} \in H^1([a,b],\mathbb C): \ r\in \mathbb R^+,\ \theta(a)=\phi_1,  \  \theta(b)=\phi_2\}.
\end{align*}
We can obtain the following lemma.
\begin{Lem}\label{lem:minimizer}
Given $T>0$  and a collision Kepler system  $(q,c)$ which satisfies (\ref{eqn:newton}) and  $(Q1) - (Q3)$.
For any $\phi,  \phi_0 \in [0,\pi)$ with $\phi \neq \phi_0$,  the action functional $\mathcal A_{-T,0}$ (resp.   $\mathcal A_{0,T}$) attains its infimum on $\Omega^{-T,0}_{\phi,\phi_0}$ (resp.   $\Omega^{0,T}_{\phi_0,\phi}$).
\end{Lem}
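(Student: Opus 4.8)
The plan is a direct-method argument: produce a minimizing sequence, extract a weak $H^1$ limit, and identify it as a minimizer; the only step that is not entirely formal is a coercivity estimate, which is precisely where the hypothesis $\phi\neq\phi_0$ enters. First I would record that the infimum is finite. The arc $z(t)=e^{i\theta(t)}$ with $\theta$ affine from $\phi$ to $\phi_0$ has argument in $[\min\{\phi,\phi_0\},\max\{\phi,\phi_0\}]\subset[0,\pi)$ since $\phi,\phi_0\in[0,\pi)$; hence it stays on the unit circle and off the negative real axis, so it lies in $\Omega^{-T,0}_{\phi,\phi_0}$, avoids the center and the primary $q(t)\in\mathbb R^-$, and has finite action. (The same arc serves for $\Omega^{0,T}_{\phi_0,\phi}$.)

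Next, given a minimizing sequence $\{z_n\}$ with $\mathcal A_{-T,0}(z_n)\le M$, the inequality $U\ge0$ (which holds since $\mu,m>0$) gives $\int_{-T}^0|\dot z_n|^2\le 2M$, and then $\|z_n\|_{L^\infty}\le|z_n(-T)|+\sqrt{T}\,\|\dot z_n\|_{L^2}$; so it suffices to bound the endpoint radius $R_n:=|z_n(-T)|$. Suppose $R_n\to+\infty$. Then $|z_n(t)|\ge R_n-\sqrt{2MT}\ge R_n/2$ for all $t$ and all large $n$, so $z_n$ never vanishes and admits a continuous argument $\vartheta_n$ with $\vartheta_n(-T)=\phi$ and $\vartheta_n(0)\equiv\phi_0\pmod{2\pi}$; its total variation is at least $|\phi-\phi_0|>0$, whence, by Cauchy--Schwarz,
\[
\int_{-T}^0|\dot z_n|^2\ \ge\ \Big(\tfrac{R_n}{2}\Big)^2\int_{-T}^0\dot\vartheta_n^2\ \ge\ \frac{R_n^2\,|\phi-\phi_0|^2}{4T}\ \longrightarrow\ +\infty,
\]
a contradiction. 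Hence $R_n$, and therefore $\|z_n\|_{H^1}$, stays bounded.

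Passing to a subsequence, $z_n\rightharpoonup z_*$ weakly in $H^1([-T,0],\mathbb C)$ and uniformly on $[-T,0]$ by the compact Sobolev embedding $H^1\hookrightarrow C^0$. Uniform convergence of the endpoints gives $z_*(-T)\in e^{i\phi}\mathbb R^+$ and $z_*(0)\in e^{i\phi_0}\mathbb R^+$ (the rays being closed, consistent with the paper's convention $\mathbb R^-=(-\infty,0]$), so $z_*\in\Omega^{-T,0}_{\phi,\phi_0}$; and since $\mathcal A_{-T,0}$ is weakly lower semicontinuous on $H^1$, as recalled above, $\mathcal A_{-T,0}(z_*)\le\liminf_n\mathcal A_{-T,0}(z_n)=\inf_{\Omega^{-T,0}_{\phi,\phi_0}}\mathcal A_{-T,0}$, so $z_*$ is a minimizer. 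The case $\mathcal A_{0,T}$ on $\Omega^{0,T}_{\phi_0,\phi}$ is identical, or follows from the reversibility of \eqref{eqn:1+1+1-body}.

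I expect the coercivity step to be the only real obstacle: without $\phi\neq\phi_0$ a minimizing sequence could escape to infinity along larger and larger circular arcs with action tending to $0$, and the angular-variation lower bound above is what rules this out. Everything else — weak $H^1$ compactness, compactness of $H^1\hookrightarrow C^0$, and weak lower semicontinuity of the Tonelli functional $\tfrac12|\dot z|^2+U$ with $U\ge0$ lower semicontinuous — is routine and already recorded in the text. If one reads the definition of $\Omega^{a,b}_{\phi_1,\phi_2}$ as requiring nonzero endpoints, the one remaining point is that $z_*$ might meet a collision at $t=-T$ or $t=0$; but such a collision is of the type that Theorem~\ref{main1} together with the local deformation Propositions~\ref{pro:local1}--\ref{pro:local3} are designed to exclude in the proof of Theorem~\ref{thm:main1.1}, so no generality is lost here.
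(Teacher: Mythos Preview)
Your argument is correct and follows the same overall direct-method scheme as the paper; the only substantive step, coercivity, you handle by a somewhat different route.

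The paper establishes a genuine Poincar\'e-type inequality on the whole path space: for any $z=re^{i\theta}\in\Omega^{-T,0}_{\phi,\phi_0}$ and any $t$, the perpendicular distances from $z(t)$ to the two boundary rays satisfy $|\sin(\phi-\theta(t))|\,|z(t)|\le|z(t)-z(-T)|$ and $|\sin(\phi_0-\theta(t))|\,|z(t)|\le|z(t)-z(0)|$, and summing these together with H\"older yields a pointwise bound $|z(t)|^2\le \tfrac{T}{2C_{\phi,\phi_0}}\|\dot z\|_{L^2}^2$ with an explicit constant $C_{\phi,\phi_0}=\min\{\sin^2\tfrac{\phi-\phi_0}{2},\cos^2\tfrac{\phi-\phi_0}{2}\}>0$. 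This gives $\|z\|_{L^2}\le \tfrac{T}{\sqrt{2C_{\phi,\phi_0}}}\|\dot z\|_{L^2}$ for \emph{every} admissible path, hence coercivity directly.

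Your version instead bounds only minimizing sequences: once $\|\dot z_n\|_{L^2}\le\sqrt{2M}$ the sole remaining parameter is $R_n=|z_n(-T)|$, and if $R_n\to\infty$ the path stays at radius $\ge R_n/2$, so the mandated angular change $|\phi-\phi_0|$ forces $\int|\dot z_n|^2\ge (R_n/2)^2|\phi-\phi_0|^2/T\to\infty$. This is slightly more elementary (no trigonometric identity is needed) but less quantitative, and it is a proof by contradiction rather than an inequality valid uniformly on the path space. Both arguments extract exactly the same geometric content from the hypothesis $\phi\neq\phi_0$. You also make explicit two points the paper leaves implicit: the finiteness of the infimum via a test arc on the unit circle, and the weak closedness of the boundary constraints (with the endpoint-collision caveat deferred, correctly, to the later exclusion arguments).
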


\begin{proof}
Here, because of the reversibility of \eqref{eqn:1+1+1-body}, we only show the existence of minimizer of $\mathcal A_{-T,0}$ on $\Omega^{-T,0}_{\phi,\phi_0}$ and omit the case for $\mathcal A_{0,T}$ on $\Omega^{0,T}_{\phi_0,\phi}$.

Since $\mathcal{A}_{-T,0}$ is weakly lower semi-continuous on $H^1([a,b],\mathbb{C})$, it is sufficient to show $\mathcal A_{-T,0}$ is coercive in $\Omega^{-T,0}_{\phi,\phi_0}$,
that is $\mathcal A_{-T,0}(z)  \rightarrow +\infty \
\text{as} \ \|z\|_{L^2([-T,0],\mathbb C)} \rightarrow  +\infty$.

Let $z=re^{\theta i}\in \Omega^{-T,0}_{\phi,\phi_0}$.  Since $\theta(-T) = \phi$ and $\theta(0)=\phi_0$,  we have
\begin{align*}
\lvert \sin(\phi-\theta(t)) \rvert  \cdot \lvert z(t)\rvert \leq \lvert z(t)-z(-T)\rvert \ \ \text{and} \ \
 \lvert \sin(\phi_0-\theta(t))\rvert\cdot \lvert z(t)\rvert  \leq \lvert z(t)-z(0)\rvert,
\end{align*}
for $t \in [-T,0]$.

By Holder's inequality, we further obtain that
\begin{align*}
&  \lvert \sin(\phi-\theta(t))\rvert^2  \lvert z(t)\rvert^2 \leq \lvert z(t)-z(-T)\rvert^2
\leq \left( \int_{-T}^t \lvert \dot{z}(s)\rvert ds  \right)^2
\leq (t+T)\int_{-T}^t \lvert \dot{z}(s)\rvert^2 ds, \\
&  \lvert \sin(\phi_0-\theta(t))\rvert^2  \lvert z(t)\rvert^2 \leq \lvert z(t)-z(0)\rvert^2
\leq \left( \int_{t}^0 \lvert \dot{z}(s)\rvert ds  \right)^2
\leq (-t)\int_{t}^0 \lvert \dot{z}(s)\rvert^2 ds.
\end{align*}
for $t \in [-T,0]$. Then we compute that
\begin{align*}
\|\dot{z}\|^2_{L^2([-T,0],\mathbb C)}
&\geq \left( \frac{\lvert \sin(\phi-\theta(t))\rvert^2}{t+T} +\frac{\lvert \sin(\phi_0-\theta(t))\rvert^2}{-t} \right) \lvert z(t)\rvert^2 \\
&\geq \frac{1}{T} \left(\lvert \sin(\phi-\theta(t))\rvert^2+\lvert \sin(\phi_0-\theta(t))\rvert^2 \right) \lvert z(t)\rvert^2 \\
&\geq \frac{2}{T} C_{\phi,\phi_0} \lvert z(t)\rvert^2,
\end{align*}
where
\begin{align*}
C_{\phi,\phi_0}:=\min\left\{ \sin^2\left(\frac{\phi-\phi_0}{2} \right),  \cos^2\left(\frac{\phi-\phi_0}{2} \right) \right\}>0, \quad \forall \phi,\phi_0\in[0,\pi),
\end{align*}
and the last inequality can be checked by using direct computation. Therefore, we have
\begin{align*}
\|z\|_{L^2([-T,0],\mathbb C)} \leq \frac{T}{\sqrt{2C_{\phi,\phi_0}}} \|\dot{z}\|_{L^2([-T.0],\mathbb C)}.
\end{align*}
Since $\mathcal A_{-T,0}(z) \geq \frac{1}{2}\|\dot{z}\|^2_{L^2([-T,0])}$,  we conclude that
\begin{align*}
\mathcal A_{-T,0}(z)  \rightarrow +\infty \quad
\text{as} \quad \|z\|_{L^2([-T,0],\mathbb C)} \rightarrow  +\infty.
\end{align*}
This completes the proof.
\end{proof}

As previously stated, the minimizer obtained in Lemma~\ref{lem:minimizer} is a weak solution of the restricted one-center-two-body problem (\ref{eqn:1+1+1-body}). It becomes a classical solution if it does not encounter any collisions.
Therefore, to prove Theorem \ref{thm:main1.1}, 
it is sufficient to exclude the two-body and three-body collisions in the minimizer of $\mathcal A_{-T,0}$ on $\Omega_{\phi,\phi_0}^{-T,0}$. The proof on $t\in[0,T]$ is similar.

We first exclude the two-body collisions in the following theorem.
\begin{Thm}\label{thm:two-body1}
Given $T>0$ and a collision Kepler system $(q,c)$ satisfying (\ref{eqn:newton}) and  $(Q1) - (Q3)$. Let $\mathcal{A}_{-T,0}\ (\mathcal{A}_{0,T})$ be an action functional on $\Omega^{-T,0}_{\phi,\phi_0}\ (\Omega^{0,T}_{\phi,\phi_0})$ as in \eqref{eqn:action0} and assume $z(t)$ is an associated minimizer. Then $z(t)$ possesses no two-body collision on $[-T,0]\ (\text{or}\ [0,T])$. 
\end{Thm}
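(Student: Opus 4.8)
The plan is to follow the argument already carried out for Theorem~\ref{thm:two-body}, supplying the two ingredients that are genuinely new here: a reduction to the angular range $\theta(t)\in[0,\pi]$ that respects the \emph{ray} boundary conditions, and a short check that no collision can be hidden at the endpoints $t=-T,0$. \emph{Step 1 (folding).} Writing $z=re^{i\theta}$, let $g:\mathbb R\to[0,\pi]$ be the $1$-Lipschitz ``triangle wave'' that folds $\mathbb R$ onto $[0,\pi]$, so $g|_{[0,\pi]}=\mathrm{id}$ and $\cos(g(s))=\cos s$ for every $s$. The folded path $\tilde z=re^{ig(\theta)}$ still lies in $\Omega^{-T,0}_{\phi,\phi_0}$ since $\phi,\phi_0\in[0,\pi)$ are fixed by $g$; it has the same modulus $r$, hence by \eqref{eqn:potential} the same potential (recall $U$ depends on $\theta$ only through $\cos\theta$, Lemma~\ref{lem:anglepotential}), and $|(g\circ\theta)'|\le|\dot\theta|$ a.e., so $\mathcal A_{-T,0}(\tilde z)\le\mathcal A_{-T,0}(z)=\inf$ and $\tilde z$ is again a minimizer. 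Moreover $\tilde z$ has exactly the same two-body collision moments as $z$: the set $\{t:r(t)=0\}$ is untouched, and $\tilde z(t)=q(t)$ iff $z(t)=q(t)$ because $q(t)\in\mathbb R^-$ forces $z$ onto the negative real axis, a locus preserved by $g$. So it suffices to exclude two-body collisions for $\tilde z$, and I may henceforth assume $\theta(t)\in[0,\pi]$ on $[-T,0]$.

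\emph{Step 2 (locating a putative collision).} Suppose, for contradiction, that $\Delta_q(z)\cup\Delta_c(z)\neq\emptyset$, and pick $\tau\in\Delta_\xi(z)$ with $\xi\in\{c,q\}$; by Lemma~\ref{lem:isolated} such $\tau$ are isolated. At $t=0$ one has $q(0)=c=0$, so $z(0)=0$ would be a three-body collision, not a two-body one, whence $\tau\neq0$. At $t=-T$ the point $z(-T)$ lies on $e^{i\phi}\mathbb R^+$ with $\phi\in[0,\pi)$, so $z(-T)\neq 0=c$; and since $q(-T)\in\mathbb R^-\setminus\{0\}$ has argument $\pi\neq\phi$ (here $(Q3)$ enters), also $z(-T)\neq q(-T)$. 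Hence $\tau\in(-T,0)$.

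\emph{Step 3 (local deformation and conclusion).} On $(-T,0)$ the hypotheses of Propositions~\ref{pro:sundmanzc} and \ref{pro:sundmanzq} hold, so the one-sided limit angles $\theta^{\pm}_{\xi,\tau}$ exist. Since $\theta\in[0,\pi]$, the argument of $z$ lies in $[0,\pi]$, and for $\xi=q$ the vector $z-q$ stays in the closed upper half-plane (as $q\in\mathbb R$ and $\mathrm{Im}\,z\ge0$); in either case $\theta^{\pm}_{\xi,\tau}\in[0,\pi]$, so $|\theta^{+}_{\xi,\tau}-\theta^{-}_{\xi,\tau}|\le\pi<2\pi$. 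Proposition~\ref{pro:local1} then gives a collision-free $\eta\in H^1([\tau-\delta,\tau+\delta],\mathbb C)$ coinciding with $z$ near $\tau\pm\delta$ with strictly smaller action on $[\tau-\delta,\tau+\delta]$; gluing $\eta$ back produces a competitor in $\Omega^{-T,0}_{\phi,\phi_0}$ with $\mathcal A_{-T,0}$ strictly below the minimum, a contradiction. Thus $z$ (equivalently $\tilde z$, hence the original minimizer) has no two-body collision on $[-T,0]$, and the case $[0,T]$ follows from the reversibility of \eqref{eqn:1+1+1-body}.

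\emph{Main obstacle.} The one step that is not a verbatim repetition of Theorem~\ref{thm:two-body} is securing the winding bound $|\theta^{+}_{\xi,\tau}-\theta^{-}_{\xi,\tau}|<2\pi$ without the ``reflect into $\mathbb C^+$'' symmetry, which is no longer legitimate once the endpoints are pinned to fixed rays; this is precisely what the folding in Step~1 is designed to supply, and the only real content there is verifying that folding changes neither the action nor the two-body collision set. The endpoint bookkeeping of Step~2 is elementary but indispensable, since a collision with $c$ or $q$ exactly at $t=-T$ would escape the interior deformation lemmas; it is here that the assumptions $(Q3)$ and $\phi<\pi$ are used.
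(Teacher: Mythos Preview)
Your Step~2 contains a genuine gap: the assertion ``$z(-T)\neq 0=c$'' is unjustified. The ray $e^{i\phi}\mathbb R^+$ \emph{does} contain the origin --- the paper works with $\mathbb R^+=[0,\infty)$, as is clear from the hypothesis ``$0\in A_0$'' in Theorem~\ref{main1} and from the entire three-body analysis at $t=0$ --- so $z(-T)=0$ is an admissible boundary value in $\Omega^{-T,0}_{\phi,\phi_0}$, and a two-body collision with $c$ at $t=-T$ cannot be dismissed on boundary grounds. Ruling out this endpoint collision is in fact the whole new content of the paper's proof over Theorem~\ref{thm:two-body}: the paper invokes the one-sided deformation Proposition~\ref{pro:local2} with $\theta_0=\phi$, which applies whenever $|\phi-\theta^+_{c,-T}|<\pi$ (i.e.\ $\phi\neq 0$, or $\phi=0$ with $\theta^+_{c,-T}\neq\pi$), and in the residual case $\phi=0$, $\theta^+_{c,-T}=\pi$ observes that the minimizer is trapped on the negative real axis between $c$ and $q$ with $z(-T)=z(0)=0$, so the reflection $z\mapsto -z$ lies in the same path space with strictly smaller potential and equal kinetic energy.

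Your folding in Step~1 is a clean and correct substitute for the paper's conjugation reduction (and nicely makes explicit why the collision sets are preserved), and Step~3 correctly handles interior $\tau\in(-T,0)$ via Proposition~\ref{pro:local1}. But neither step touches the boundary case $\tau=-T$, and Proposition~\ref{pro:local1} is two-sided and does not apply there; you need the one-sided Proposition~\ref{pro:local2} together with the degenerate-case reflection, or an equivalent argument.
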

\begin{proof}
We only prove this theorem for $\mathcal A_{-T,0}$ on $\Omega_{\phi,\phi_0}^{-T,0}$. By applying Theorem \ref{thm:two-body}, it is sufficient to consider the case $\tau=-T$.

Firstly, without loss of generality, we can assume $z(t)\in \mathbb{C}^+_*$ is smooth on $(-T,0)$ by using conjugation and Theorem \ref{thm:two-body}. 
By assumption that $\phi\in[0,\pi)$, the two-body collision at $t=-T$ must be between $z$ and $c$. When the situation $\phi \neq 0$ or the situation $\phi =0$ with $\theta_{c,-T}^+\neq \pi$ occurs, we have $ \lvert \theta(-T)-\theta_{c,-T}^+ \rvert <\pi$. Then by using Proposition \ref{pro:local2}, we get a contradiction to the assume that $z$ is a minimizer.

When the situation $\phi = 0$ with $\theta_{c,-T}^+=\pi$ occurs,  the particle $z$ will lie on the negative real axis until next collision happens (at $\tau' \in (-T,0]$).  By Theorem \ref{thm:two-body}, we only need to consider the case $\tau'=0$.  In this situation, $z(-T)=z(0)=0$ and $z(t) \in (q(t),0) \subset (-\infty,0)$ on $(-T,0)$.  It is clear that $-z \in \Omega^{-T,0}_{\phi,\phi_0}$ and  $\mathcal A_{-T,0}(-z)<\mathcal A_{-T,0}(z)$ since $-z$ has same kinetic energy with $z$ and less potential energy than $z$.  This causes a contradiction and the proof now is completed.
\end{proof}

Now we are ready to prove Theorem \ref{thm:main1.1}.

\begin{proof}[Proof of Theorem \ref{thm:main1.1}]
According to Lemma \ref{lem:minimizer} and Theorem \ref{thm:two-body1}, we know that $\mathcal{A}_{-T,0}$ admits a minimizer $z(t)$ on $\Omega^{-T,0}_{\phi,\phi_0}$ with no two-body collision on $[-T,0]$. By the properties of minimizer, it is sufficient to prove (a), (c) and (d).

As in the proof of Theorem \ref{thm:two-body1}, we can assume $z(t)\in\mathbb{C}^+_*$ is smooth for all $t\in(-T,0)$.

To show (a). It is sufficient to exclude the three-body collision of $z$, i.e. $z(0)\neq 0$. Assume $z(0)=0$ by contradiction. Since $\phi_0,\phi\neq \pi$, by Theorem \ref{main1} (c.f. Theorem  \ref{thm:main 3.2}), we have $\theta_*^-=0$. Consider the following two cases.

Case 1: $\phi_0=0$. Write $z(t)=x(t)+iy(t)$. Choose an $\epsilon>0$ sufficiently small. We define a new path $\hat{z}_{\epsilon}(t)=\hat{x}_{\epsilon}(t)+i\hat{y}_{\epsilon}(t)$ by
\begin{eqnarray*}
  \hat{x}_{\epsilon}(t) :=
  \left\{
  \begin{array}{ll}
 \ x(t),  & \text{if $t \in [-T, -\epsilon]$},
  \vspace{1ex}\\
 x(-\epsilon),  & \text{if $t \in [-\epsilon,  0]$},
  \vspace{1ex}\\
  \end{array}
  \right.
 \quad\quad \hat{y}_{\epsilon}(t)=y(t), \quad \text{ for all $t \in [-T,0]$},
\end{eqnarray*}
see the left picture in Figure~\ref{fig:testpath}. It is easy to check that $\hat{z}_{\epsilon} \in \Omega^{-T,0}_{\phi,0}$. Since $\theta_*^-=0$, for any $\epsilon>0$ sufficiently small, we have $0<x(t) <x(-\epsilon)$ for every $t\in(-\epsilon,0)$. Then for any $t\in(-\epsilon,0)$, we conclude that
\begin{align*}
 \lvert \dot{z}(t) \rvert ^2- \lvert \dot{\hat{z}}_{\epsilon}(t) \rvert ^2
&=\dot{x}^2(t) \geq 0,
\end{align*}
\begin{align*}
\frac{\mu}{ \lvert z(t) \rvert }-\frac{\mu}{ \lvert \hat{z}_{\epsilon}(t) \rvert }
=\frac{\mu}{\sqrt{x^2(t)+y^2(t)}}-\frac{\mu}{\sqrt{x^2(-\epsilon)+y^2(t)}}>0,
\end{align*}
and
\begin{align*}
\frac{m}{ \lvert z(t)-q(t) \rvert }-\frac{m}{ \lvert \hat{z}_{\epsilon}(t)-q(t) \rvert }
=\frac{\mu}{\sqrt{(x(t)-q(t))^2+y^2(t)}}-\frac{\mu}{\sqrt{(x(-\epsilon)-q(t))^2+y^2(t)}}>0.
\end{align*}
These inequalities show that
$\mathcal A_{-T,0}(z)-\mathcal A_{-T,0}(\hat{z}_{\epsilon})>0$.
This leads to a contradiction to the minimizer. Case 1 is proved.

Case 2: $\phi_0 \in (0,\pi/2]$. Choose an $\epsilon>0$ sufficiently small and define $\tilde{z}_{\epsilon}(t)=\tilde{x}_{\epsilon}(t)+i\tilde{y}_{\epsilon}(t)$ by
\begin{eqnarray*}
  \tilde{x}_{\epsilon}(t) :=
  \left\{
  \begin{array}{ll}
 x(t),  & \text{if $t \in [-T, -\epsilon]$},
  \vspace{1ex}\\
   x(t),  & \text{if $t \in (-\epsilon, t_{\epsilon}]$},
  \vspace{1ex}\\
 x(t_{\epsilon}),  & \text{if $t \in (t_{\epsilon},  0]$},
  \vspace{1ex}\\
  \end{array}
  \right.
 \quad\quad \tilde{y}_{\epsilon}(t):=
 \left\{
 \begin{array}{ll}
 \ y(t),  & \text{if $t \in [-T,-\epsilon]$},
 \vspace{1ex}\\
 y(-\epsilon), & \text{if $t \in (-\epsilon,t_{\epsilon}]$},
 \vspace{1ex}\\
 y(-\epsilon),  & \text{if $t \in (t_{\epsilon},  0]$},
  \vspace{1ex}\\
 \end{array}
 \right.
\end{eqnarray*}
see the right picture in Figure~\ref{fig:testpath}. where $t_{\epsilon} \in (-\epsilon,0]$ is a moment with $x(t_{\epsilon})+iy(-\epsilon) \in e^{\phi_0i}\mathbb R^+$.
Since $\theta_*^-=0$, for $\epsilon>0$ sufficiently small, we have $\theta(-\epsilon)< \phi_0$, then the moment $t_{\epsilon}$ exists. We see that $\tilde{z}_{\epsilon} \in \Omega^{-T,0}_{\phi,\phi_0}$. Moreover, for $\epsilon>0$ sufficiently small, we have
\begin{align*}
0<x(t) \leq \tilde{x}_{\epsilon}(t)\quad \text{and} \quad  0<y(t) <\tilde{y}_{\epsilon}(t), &  \quad \text{ for all } t\in(-\epsilon,0).
\end{align*}
Then for $t\in(-\epsilon,0)$, we conclude that
\begin{align*}
 \lvert \dot{z}(t) \rvert ^2- \lvert \dot{\tilde{z}}_{\epsilon}(t) \rvert ^2
&\geq \dot{y}^2(t) \geq 0,
\end{align*}
\begin{align*}
\frac{\mu}{ \lvert z(t) \rvert }-\frac{\mu}{ \lvert \tilde{z}_{\epsilon}(t) \rvert }
=\frac{\mu}{\sqrt{x^2(t)+y^2(t)}}-\frac{\mu}{\sqrt{\tilde{x}_{\epsilon}^2(t)+\tilde{y}_{\epsilon}^2(t)}}>0,
\end{align*}
and
\begin{align*}
\frac{m}{ \lvert z(t)-q(t) \rvert }-\frac{m}{ \lvert \tilde{z}_{\epsilon}(t)-q(t) \rvert }
=\frac{\mu}{\sqrt{(x(t)-q(t))^2+y^2(t)}}-\frac{\mu}{\sqrt{(\tilde{x}_{\epsilon}(t)-q(t))^2+\tilde{y}_{\epsilon}^2(t)}}>0.
\end{align*}
These inequalities implies that
$\mathcal A_{-T,0}(z)-\mathcal A_{-T,0}(\tilde{z}_{\epsilon})>0$.
This leads to a contradiction to the minimizer. Case 2 is proved and the proof of (a) is completed.

To show (c). Recall that $z(t)$ is smooth on $(-T,0)$. By Corollary \ref{cor:monotonity1}, if (c) is false, then either $z(t)\in \mathbb{R}^+$ or $z(t)\in \mathbb{R}^-$. However, since $z(-T)\neq 0\neq z(0)$ by (a), $\phi\in[0,\pi)$ and $\phi\neq \phi_0$, neither of them happens. This gives a contradiction. Moreover, if $\min\{\phi,\phi_0\}=0$, then $\theta(t)$ must be strictly monotone on $[-T,0]$, i.e. $t_*\in\{0,\pi\}$. Otherwise, if $t_*\in(-T,0)$, then $\theta(t_*)=0$, which contradicts to the strictness of the monotonicity for $\theta(t)$ on $[-T,t_*]$ and $[t_*,0]$. Hence, (c) holds.

Finally, (d) follows directly from the variational computation of $\mathcal{A}_{-T,0}$ on $\Omega^{-T,0}_{\phi,\phi_0}$ at the collision-free minimizer $z|_{[-T,0]}$.
\end{proof}
\begin{Rmk}
When $\phi_0\in(\pi/2,\pi)$, the three-body collision is difficult to exclude. In fact, unlike the case of two-body collisions, the three-body collision involving two different singularities, which are asymptotic to each other as $t\rightarrow 0^\pm$. The behavior of these two singularities highly impact the action of the local deformation pathes. This causes a huge difficulty to the exclusion of the three-body collision.
\end{Rmk}

\begin{figure}[ht]
\begin{center}
\centering
\includegraphics[width=1\textwidth]{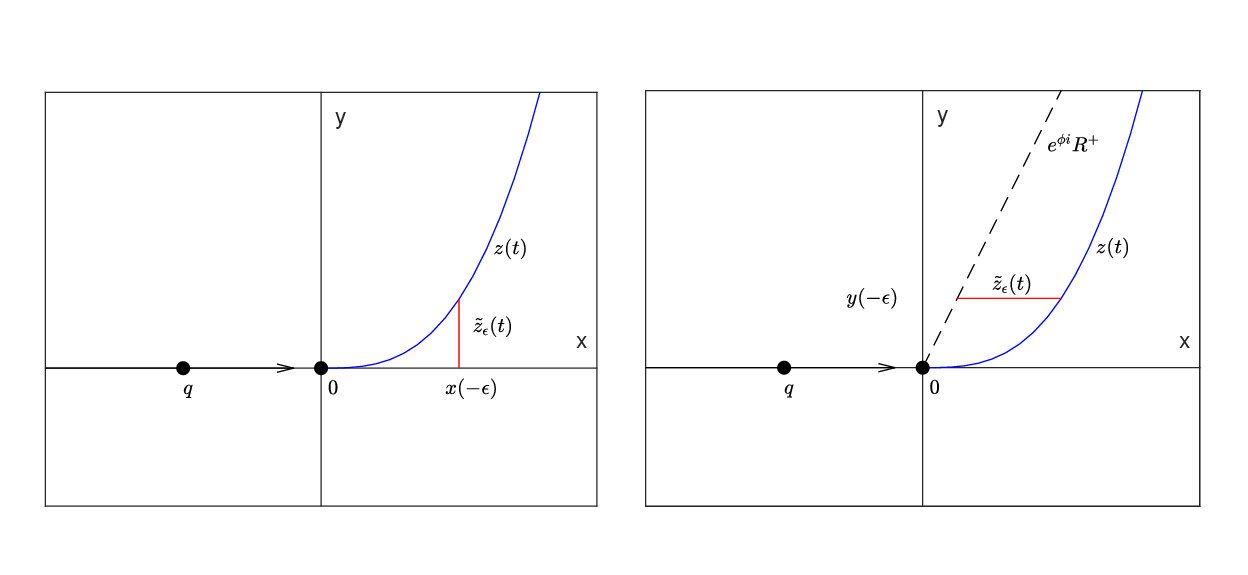}
\caption{Local deformation paths.}
\label{fig:testpath}
\end{center}
\end{figure}

\section{Appendix: Periodic and quasi-periodic solutions}\label{sec:periodic}

In one of the pioneer work \cite{Gordon77} of variational method to the $N$-body problem, Gordon mentioned a class of extended Kepler collision solution with negative energy which allow the bodies bounce towards arbitrary directions after collisions.

Let $\psi \in (-\pi,\pi)$ and  $c \equiv 0$.
In this appendix, we consider a special class of extended Kepler collision solution $q_{\psi}$ of (\ref{eqn:newton}) which has negative energy and reflect a fixed angle $\psi$ after each collision. Notice that $q_{\psi}$ is periodic ($\psi/\pi \in \mathbb Q$) or quasi-periodic ($\psi / \pi \notin \mathbb Q$) since the energy of $q_{\psi}$ is constant except the moment of collision \cite[Sec.3.3]{Chenciner02}.

More precisely, without loss of generality, we assume the extended Kepler collision orbit $q_{\psi}$ satisfies the following conditions:
\begin{enumerate}
\item[$(Q1_{\psi})$]  $q_{\psi}$ collides with $c$ at moment $t=2kT$ for each $k \in \mathbb Z$.
\item[$(Q2_{\psi})$] $q_{\psi}$ is smooth on $(2kT,2(k+1)T)$,  that is $q_{\psi} \neq 0$ on $(2kT,2(k+1)T)$,  for each $k \in \mathbb Z$.
\item[$(Q3_{\psi})$] $q_{\psi}|_{(2kT,2(k+1)T)}$ lies on the ray $e^{(\pi-k\psi)i}\mathbb R^+$,  for each  $k \in \mathbb Z$.
\end{enumerate}

According to assumptions $(Q1_{\psi})-(Q3_{\psi})$, we obtain the following properties:
\begin{enumerate}
\item[$(Q4_{\psi})$] $q_{\psi}((2k+1)T-t)=q_{\psi}((2k+1)T+t)$ on $[0,T]$,  for each $k \in \mathbb Z$.
\item[$(Q5_{\psi})$] $\dot{q}_{\psi}((2k+1)T)=0$,  for each $k \in \mathbb Z$.
\end{enumerate}

\begin{figure}[ht]
\begin{center}
\includegraphics[width=0.75\textwidth]{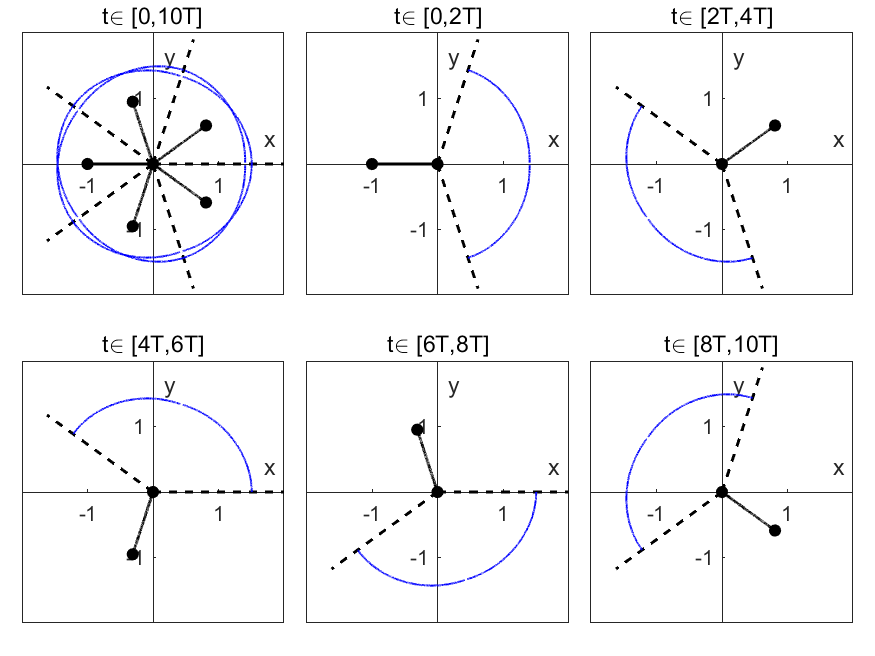}
\caption{Extended Kepler collision orbit $q_{-4\pi/5}$ and the solution $z_{-4\pi/5}$.
}
\label{fig:seprate}
\end{center}
\end{figure}

Consider the extended Kepler collision system $(q_{\psi},c)$ which satisfies $(Q1_{\psi})-(Q3_{\psi})$. According to Theroem~\ref{thm:main1.1} and  ~\ref{thm:main1.2}, there exists two collision-free solutions $z_{\psi}|_{[0,T]}\in\Omega^{0,T}_{\psi/2,0}$ and $z_{\psi}|_{[T,2T]}\in\Omega^{T,2T}_{0,-\psi/2}$ of the restricted one-center-two-body problem (\ref{eqn:1+1+1-body}) satisfying
(see Figure~\ref{fig:seprate})
\begin{enumerate}
\item[$\bullet$] $ z_{\psi}(t) = \bar z_{\psi}(2T-t)$ on $[0,T]$ and $z_{\psi}$ is smooth at $t=T$.
\item[$\bullet$] $ z_{\psi}(0) \in e^{\psi i/2}\mathbb R^+, z_{\psi}(T) \in \mathbb R^+$ and $z_{\psi}(2T) \in e^{-\psi i/2}\mathbb R^+$.
\item[$\bullet$] $z_{\psi}$ is orthogonal to $ e^{\psi i/2}\mathbb R^+$, $\mathbb R^+$ and $e^{-\psi i/2}\mathbb R^+$ at moment $t=0,T,2T$, respectively.
\end{enumerate}

Moreover,  by the symmetry of $q_\phi$,
the domain of solution $z_{\psi}|_{[0,2T]}$ can be extended to
$\mathbb R$  by choosing $z_{\psi}$ as following (see Figure~\ref{fig:seprate})
\begin{align*}
z_{\psi}(t)=z_{\psi}(\hat{t})e^{-k\psi i}, \quad \text{ where } k \in \mathbb Z, \ \hat{t}\in[0,2T)\ \text{with}\ t=2kT+\hat{t}.
\end{align*}
Note that $z_{\psi}(t)$ is well-defined on $\mathbb{R}$. For each $k\in \mathbb{Z}$, $z_{\psi}$ is smooth and orthogonal to $e^{\left(1/2-k\right)\psi i}\mathbb R^+$ at $t=2kT$. In particular, the three bodies $(z_{\psi}, q_{\psi},c)$ form a periodic ($\psi / \pi \in \mathbb Q$) or quasi-periodic ($\psi / \pi \notin \mathbb Q$) of the restricted one-center-two-body system.

\begin{Dataava}
All data needed to evaluate the conclusions in the paper are present in the paper. Additional data related to this paper may be requested from the authors.    \vspace{1em}
\end{Dataava}

\begin{Conflict}
The authors declare that they have no conflict of interests.   \vspace{0.5em}
\end{Conflict}

\begin{Acknow}
It is a pleasure to thank K.C. Chen and W.T. Kuang for discussions.
Hsu is supported by National Natural Science Foundation of China under grant (12101363,  12271300),
Natural Science Foundation of Shandong Province, China under grant (ZR2020QA013),
National Science Foundation for Young Scientists of Fujian Province under grant (2023J01123)
and Scientific Research Funds of Huaqiao University under grant (22BS101).
Liu is supported by National Natural Science Foundation of China under grants (12071255, 12171281).
\end{Acknow}

\end{document}